\documentclass[11pt,oneside,english]{amsart}
\usepackage[T1]{fontenc}
\usepackage[latin9]{inputenc}
\usepackage{geometry}
\geometry{verbose,bmargin=3cm,lmargin=3cm,rmargin=3cm}
\usepackage{color}
\usepackage{babel}
\usepackage{units}
\usepackage{amsmath}
\usepackage{amsbsy}
\usepackage{amstext}
\usepackage{amsthm}
\usepackage{amssymb}
\usepackage{setspace}
\usepackage{esint}
\usepackage[numbers]{natbib}
\onehalfspacing
\usepackage[unicode=true,pdfusetitle,
 bookmarks=true,bookmarksnumbered=true,bookmarksopen=true,bookmarksopenlevel=1,
 breaklinks=false,pdfborder={0 0 1},backref=section,colorlinks=true]
 {hyperref}

\makeatletter
\numberwithin{equation}{section}
\numberwithin{figure}{section}
\theoremstyle{plain}
\newtheorem{thm}{\protect\theoremname}[section]
\theoremstyle{plain}
\newtheorem{conjecture}[thm]{\protect\conjecturename}
\theoremstyle{definition}
\newtheorem{defn}[thm]{\protect\definitionname}
\theoremstyle{definition}
\newtheorem*{defn*}{\protect\definitionname}
\theoremstyle{remark}
\newtheorem{rem}[thm]{\protect\remarkname}
\theoremstyle{plain}
\newtheorem{cor}[thm]{\protect\corollaryname}
\theoremstyle{plain}
\newtheorem{prop}[thm]{\protect\propositionname}
\theoremstyle{plain}
\newtheorem{lem}[thm]{\protect\lemmaname}

\global\long\def\C{\mathbb{C}}%
\global\long\def\R{\mathbb{R}}%
\global\long\def\Q{\mathbb{Q}}%
\global\long\def\N{\mathbb{N}}%
\global\long\def\Z{\mathbb{Z}}%
\global\long\def\a{\mathfrak{a}}%

\DeclareMathOperator{\Hom}{Hom}

\DeclareMathOperator{\SL}{SL}
\DeclareMathOperator{\GL}{GL}
\DeclareMathOperator{\PGL}{PGL}

\DeclareMathOperator{\SU}{SU}

\DeclareMathOperator{\End}{End}
\DeclareMathOperator{\tr}{tr}
\DeclareMathOperator{\sph}{sph}
\DeclareMathOperator{\rank}{rank}
\DeclareMathOperator{\spann}{span}
\DeclareMathOperator{\re}{Re}
\DeclareMathOperator{\triv}{triv}
\DeclareMathOperator{\Aut}{Aut}
\DeclareMathOperator{\Ad}{Ad}
\DeclareMathOperator{\diag}{diag}
\DeclareMathOperator{\charr}{char}
\DeclareMathOperator{\Det}{Det}
\DeclareMathOperator{\cusp}{cusp}
\DeclareMathOperator{\nt}{nt}

\global\long\def\n#1{\left\Vert #1\right\Vert }%
%
%

\usepackage{tikz}
\usetikzlibrary{matrix}

\makeatother

\providecommand{\conjecturename}{Conjecture}
\providecommand{\corollaryname}{Corollary}
\providecommand{\definitionname}{Definition}
\providecommand{\lemmaname}{Lemma}
\providecommand{\propositionname}{Proposition}
\providecommand{\remarkname}{Remark}
\providecommand{\theoremname}{Theorem}

\begin{document}

\title{On Sarnak's Density Conjecture and its Applications}
\author{Konstantin Golubev and Amitay Kamber}
\begin{abstract}
Sarnak's Density Conjecture is an explicit bound on the multiplicities of non-tempered representations in a sequence of cocompact congruence arithmetic lattices in a semisimple Lie group, which is motivated by the work of Sarnak and Xue (\citep{sarnak1991bounds}). The goal of this work is to discuss similar hypotheses, their interrelation and applications. We mainly focus on two properties -- the spectral Spherical Density Hypothesis and the geometric Weak Injective Radius Property. Our results are strongest in the $p$-adic case, where we show that the two properties are equivalent, and both imply Sarnak's
General Density Hypothesis. One possible application is that either the Spherical Density Hypothesis or the Weak Injective Radius Property imply Sarnak's Optimal Lifting Property (\citep{sarnak2015lettermiller}).
Conjecturally, all those properties should hold in great generality. We hope that this work will motivate their proofs in new cases.
\end{abstract}

\thanks{Konstantin Golubev, ETH, k.golubev@gmail.com\\
Amitay Kamber, Centre for Mathematical Sciences, Wilberforce Road, Cambridge CB3 0WB, UK.
email: ak2356@dpmms.cam.ac.uk}

\maketitle

\section{Introduction}

Let $k$ be a local field (Archimedean or non-Archimedean), let $G$
be the $k$-rational points of a semisimple algebraic group defined
over $k$, let $\Gamma_1\subset G$ be a lattice, and let $(\Gamma_N) $
be a sequence of finite index subgroups of $\Gamma_1$ with $[\Gamma_{1}:\Gamma_N]\to\infty$.
There are various results about the multiplicities in the decomposition of $L^{2}(\Gamma_N\backslash G)$ into irreducible representation
(e.g., \citep{degeorge1978limit,sauvageot1997principe,abert2017growth}).
An extremely strong property is the \emph{very naive Ramanujan property}, stating that if $\pi$ is non-tempered and non-trivial, then it does not appear in the decomposition. However, the \emph{very naive Ramanujan property} is usually not true in high rank (see, e.g., \citep{burger1992ramanujan}).
Notice that we do not make a distinction between cusp forms and non-cusp
forms -- the \emph{naive Ramanujan conjecture} states that cusp forms are tempered, and it is also not true in general (\citep{howe1979counterexample}).
Moreover, even when the naive Ramanujan conjecture is expected to be true, it is usually out of reach by the existing methods.

Recently, Sarnak made a density conjecture which is an approximation to the very naive Ramanujan property and should serve as a replacement of it for applications. Some instances of this general idea were previously
given for hyperbolic surfaces (\citep{sarnak2015lettermiller,golubev2019cutoff}),
and for graphs (\citep{bordenave2021cutoff,kamber2019p}). Our goal here is to give a general framework for similar density conjectures and their use in applications.

We give a geometric and somewhat elementary approach to the problem. An alternative approach based on deep results in the Langlands program may be found in an ongoing work of Shai Evra. 

To state Sarnak's Density Conjecture, we first set some notations.
Let $(\pi,V)$ be a unitary irreducible representation
of $G$ and let $0\ne v_{0}\in V$ be a $K$-finite vector, where
$K$ is a maximal compact subgroup of $G$, which is good in the sense of Bruhat and Tits in the non-Archimedean case (\citep{bruhat1972groupes}).
We let $1\le p(\pi)\le\infty$ be the infimum over $p\ge1$ such that
the matrix coefficient $\beta\colon G\to\C$, $\beta(g)=\left\langle v_{0},\pi(g)v_{0}\right\rangle $
is in $L^{p}(G)$. It is a well-known fact that $p(\pi)$
does not depend on the choice of $v_{0}$. Let $\Pi(G)$
be the set of isomorphism classes of irreducible unitary representations
of $G$ endowed with the Fell topology. For a cocompact lattice $\Gamma$
and $(\pi,V)\in\Pi(G)$, denote $m(\pi,\Gamma)=\dim\Hom_{G}(V,L^{2}(\Gamma\backslash G))$,
i.e., the multiplicity of $\pi$ in the decomposition of $L^{2}(\Gamma\backslash G)$ into irreducible representations.
For a subset $A\subset\Pi(G)$, denote $M(A,\Gamma,p)=\sum_{\pi\in A,p(\pi)\ge p}m(\pi,\Gamma)$.
\begin{conjecture}[Sarnak's Density Conjecture]
Let $G$ be a real, semisimple, almost-simple and simply connected
Lie group, let $\Gamma_1$ be a cocompact arithmetic lattice of
$G$ and let $(\Gamma_N)$ be a sequence of finite index congruence
subgroups of $\Gamma_1$, with $[\Gamma_{1}:\Gamma_N]\to\infty$.
Then for every precompact subset $A\subset\Pi(G)$ and
$\epsilon>0$ there exists a constant $C_{\epsilon,A}$ such that
for every $N$ and $p>2$,
\[
M(A,\Gamma,p)\le C_{A,\epsilon}[\Gamma_{1}:\Gamma_N]^{2/p+\epsilon}.
\]
\end{conjecture}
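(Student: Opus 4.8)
Since the statement is a conjecture, I can only propose a strategy; the natural one, following Sarnak and Xue, converts the multiplicity count into a lattice-point count via the pre-trace formula, routed through the two properties named in the abstract. First I would reduce to spherical representations: a precompact $A\subset\Pi(G)$ meets only finitely many minimal $K$-types, so fixing one, say $\tau$, with associated idempotent $e_{\tau}$, bounding $M(A,\Gamma_N,p)$ reduces to bounding $\sum m(\pi,\Gamma_N)$ over the $\pi$ with $p(\pi)\ge p$ and $\pi(e_{\tau})\ne 0$; and since $\pi(e_{\tau})\ne 0$ yields a $K$-fixed vector after tensoring with a fixed finite-dimensional representation, it suffices to treat $\tau=\triv$, i.e.\ to bound the multiplicities of $K$-\emph{spherical} $\pi$ with $p(\pi)\ge p$. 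This is how the ``General'' bound would follow from the ``Spherical'' one, and the reduction is insensitive to whether $k$ is Archimedean or not.

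For the spherical count I would work on $L^{2}(\Gamma_N\backslash G)$ with the self-adjoint convolution operator $T=T_R$ attached to a symmetric neighbourhood $B_R$ (a ``ball'' of size $R$, chosen along a direction of the positive chamber specified below). On the spherical part $T$ acts by the scalar $\widehat{\mathbf 1_{B_R}}(\pi)=\int_{B_R}\varphi_{\pi}$, and unfolding the Hilbert--Schmidt norm of its kernel gives
\[
\sum_{\pi\text{ spherical}}m(\pi,\Gamma_N)\left(\int_{B_R}\varphi_{\pi}\right)^{2}=\tr(T^{*}T)=\mathrm{vol}(\Gamma_N\backslash G)\sum_{\gamma\in\Gamma_N}\bigl|B_R\cap\gamma B_R\bigr|.
\]
The $\gamma=e$ term is $\asymp[\Gamma_1:\Gamma_N]\,\mathrm{vol}(B_R)$; the rest is supported on $\Gamma_N\cap B_{2R}$ and is $\le\mathrm{vol}(B_R)\,\bigl|(\Gamma_N\cap B_{2R})\setminus\{e\}\bigr|$. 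The \emph{Weak Injective Radius Property} is precisely the assertion that this remainder never exceeds what square-root cancellation allows, i.e.\ that for every $R$,
\[
\tr(T^{*}T)\ll_{\epsilon}[\Gamma_1:\Gamma_N]\,\mathrm{vol}(B_R)+\mathrm{vol}(B_R)^{2+\epsilon}.
\]

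Next I would amplify the non-tempered spectrum: by the sharp, two-sided pointwise bounds on spherical functions, $p(\pi)\ge p$ forces $\varphi_{\pi}\gg_{\epsilon}\Xi^{2/p}$ along a suitable direction of the chamber, so that, taking $B_R$ along that direction, $\int_{B_R}\varphi_{\pi}\gg_{\epsilon}\mathrm{vol}(B_R)^{1-1/p-\epsilon}$. Keeping only the $\pi$ with $p(\pi)\ge p$ on the spectral side and feeding in the previous display,
\[
\sum_{p(\pi)\ge p}m(\pi,\Gamma_N)\ll_{\epsilon}[\Gamma_1:\Gamma_N]\,\mathrm{vol}(B_R)^{2/p-1+\epsilon}+\mathrm{vol}(B_R)^{2/p+\epsilon},
\]
and the optimal choice $\mathrm{vol}(B_R)\asymp[\Gamma_1:\Gamma_N]$ makes both terms $\asymp[\Gamma_1:\Gamma_N]^{2/p+\epsilon}$; reinstating the finitely many $K$-types from the first step only changes the constant. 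This would prove the conjecture.

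The whole argument is soft except for one input: the Weak Injective Radius Property in the regime $\mathrm{vol}(B_R)\asymp[\Gamma_1:\Gamma_N]$, and this is the step I expect to be the main obstacle. Unconditionally one has only the elementary injectivity/girth bound for congruence subgroups, $\mathrm{vol}(B_{2R})\ll[\Gamma_1:\Gamma_N]^{c}$ for some fixed $c<1$ (from the congruence condition, via a height/product-formula estimate on the size of a nontrivial $\gamma\equiv 1$), and plugging this into the last display yields only the weaker exponent $1-c(1-2/p)$ — the Sarnak--Xue bound, which is trivial as $p\to\infty$. Bridging the gap to the full range is exactly a square-root-cancellation statement for these counts, and in the cases where it is known it is imported from progress toward Ramanujan--Petersson (Deligne; Jacquet--Langlands and base change for inner forms of $\mathrm{GL}_n$; etc.). So the realistic form of a proof is: establish the two reductions unconditionally, thereby reducing Sarnak's Density Conjecture to the Weak Injective Radius Property — equivalently, in the $p$-adic case, to the Spherical Density Hypothesis — and then verify the latter wherever Ramanujan-type input is available, the general case remaining the genuine open core.
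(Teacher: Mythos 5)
Since the statement is Sarnak's Density Conjecture, the paper of course does not prove it; what it does is develop the exact framework you sketch, so the right question is whether your reduction chain is airtight. It is not, in two places that coincide precisely with what the paper flags as open.

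First, your reduction from a general precompact $A\subset\Pi(G)$ to the spherical case by fixing a minimal $K$-type and ``tensoring with a fixed finite-dimensional representation'' does not go through. Finite-dimensional representations of a non-compact semisimple $G$ are not unitary, $L^2(\Gamma_N\backslash G)\otimes F$ is not $L^2$ of anything with a clean comparison of multiplicities, and $p(\pi\otimes F)$ is not controlled in terms of $p(\pi)$ in a way that makes the argument close. The paper pointedly does \emph{not} deduce the General Density Hypothesis from the Spherical one in the Archimedean case; indeed Conjecture~\ref{conj:injective radius implies density archimedean} states exactly this implication (from Weak Injective Radius to both General and Spherical Density) as open, and the only unconditional Archimedean results are for singletons (Theorem~\ref{thm:Diophantine implies pointwise}, via leading exponents) and for rank $1$ (Theorem~\ref{thm:rank one injective radius implies density}). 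In the non-Archimedean case the paper reaches general $A$, but by an entirely different mechanism: Bernstein's description of $C_c(K'\backslash G/K')$ and the sets $\Pi(G)_{K'\text{-}\sph}$ (Theorem~\ref{thm:p-adic injective radius implies density}), not a $K$-type reduction.

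Second, the amplification step is not ``soft.'' You assert that $p(\pi)\ge p$ forces $\varphi_\pi\gg_\epsilon \Xi^{2/p}$ along a direction, hence $\int_{B_R}\varphi_\pi\gg \operatorname{vol}(B_R)^{1-1/p-\epsilon}$. The two-sided pointwise lower bound on spherical functions that this requires is exactly the content of Conjecture~\ref{conj:mat-coeff-lower-bound}, which the paper states as open in the Archimedean high-rank case: for unitary spherical $\pi$ the Satake parameter $\lambda$ satisfies only $-\bar\lambda = w\lambda$ (Kostant), so $\lambda$ can be genuinely complex, $\varphi_\lambda$ oscillates, and $\int_{B_R}\varphi_\pi$ can suffer cancellation. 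This is precisely why the paper works with $\|\chi_{d_0/2}b_{x,\delta}\|_2^2$ (an $L^2$-integral, no cancellation) in the density-to-counting direction, and with the amplified test function $f_{d_0}=g_{d_0/2}^{*}\ast g_{d_0/2}$, $g_{d_0/2}(g)=\chi_{d_0/2}(g)q^{l(g)/p(\pi)}\overline{\langle v,\pi(g)v\rangle}$, in the counting-to-multiplicity direction for a single $\pi$; a raw $\int_{B_R}\varphi_\pi$ will not do. In rank $1$ (where the non-tempered $\lambda$ are forced to be real) and in the $p$-adic case (via the operators $h_{K',\beta_i}$) these lower bounds are established, and only there.

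So your conclusion should be corrected: the reduction of Sarnak's Density Conjecture to the Weak Injective Radius Property with parameter $\alpha=1$ is itself conditional in the Archimedean high-rank case on (i) a lower bound of the type of Conjecture~\ref{conj:mat-coeff-lower-bound} uniform over the relevant family of representations, and (ii) a mechanism to pass from spherical to general $A$ (the paper handles singletons via leading exponents; the uniform version is open). Your diagnosis of the unconditional state of affairs — that the elementary injectivity-radius bound only yields the parametrized exponent $1-\alpha(1-2/p)$ for some $\alpha<1$ — is correct and matches the paper's Corollary~\ref{cor:principal congruence} and Theorem~\ref{thm:non-principal congruence subgroups}.
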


We refer to a sequence of lattices satisfying this multiplicity property
as a sequence which satisfies the \emph{General Density Hypothesis}.
A similar conjecture appeared in the work of Sarnak and Xue (\citep{sarnak1991bounds}), but they only considered the case when $A=\left\{ \pi\right\} $ is a singleton. In such a case, we say that the sequence of lattices satisfies the \emph{Pointwise Multiplicity Hypothesis}.

We will prefer to work with a different spectral definition, the \emph{Spherical
Density Hypothesis}, which is easier to use for applications, and
concerns only spherical representations. Let $\Pi(G)_{\sph}\subset\Pi(G)$
be the set of isomorphism classes of spherical representations, i.e.,
of irreducible unitary representations with a non-zero $K$-invariant
vector. In the $p$-adic case, or the rank $1$ case, the set $\left\{ \pi\in\Pi(G)_{\sph}:p(\pi)>2\right\} $
is precompact, and the Spherical Density Hypothesis is simply the
case when $A=\left\{ \pi\in\Pi(G)_{\sph}:p(\pi)>2\right\} $
in the General Density Hypothesis. When $G$ is Archimedean of high rank, $\left\{ \pi\in\Pi(G)_{\sph}:p(\pi)>2\right\} $
is not necessarily precompact, so we associate to a spherical representation $(\pi,V)\in\Pi(G)$ a number $\lambda(\pi)\in\R_{\ge0}$, which is the eigenvalue of the Casimir operator on the $K$-invariant subspace of $\pi$, and define:
\begin{defn}
The sequence $(\Gamma_N) $ of cocompact lattices
satisfies the \emph{Spherical Density Hypothesis} if:
\begin{itemize}
\item In the $p$-adic or rank $1$ case, for every $\epsilon>0$ there
exists $C_{\epsilon}$ such that for every $N\ge1$, $p>2$, 
\[
M(\Pi(G)_{\sph},\Gamma_N,p)\le C_{\epsilon}[\Gamma_{1}:\Gamma_N]^{2/p+\epsilon}.
\]
\item In the general Archimedean case, there exists $L>0$ large enough,
such that for every $\epsilon>0$ there exists $C_{\epsilon}$ such
that for every $\lambda\ge0$, $N\ge1$, $p>2$, 
\[
M(\left\{ \pi\in\Pi(G)_{\sph}:\lambda(\pi)\le\lambda\right\} ,\Gamma_N,p)\le C_{\epsilon}(1+\lambda)^{L}[\Gamma_{1}:\Gamma_N]^{2/p+\epsilon}.
\]
\end{itemize}
\end{defn}

In the second case, the Spherical Density Hypothesis does not a priori
follows from the General Density Hypothesis.

To state our main geometric definition, we need to set some more notations. We view $G$, $\Gamma_1$ and the sequence $(\Gamma_N)$ as fixed. We use the standard $O,\Theta,o$ notations, where for example $f(N,\epsilon)=O_{\epsilon}(g(N,\epsilon))$
says that for every $\epsilon$ there exists $C$ depending only on
$\epsilon$ (and on $G,\Gamma_1,(\Gamma)_N$), such that $f(N,\epsilon)\le Cg(N,\epsilon)$
for $N$ large enough. The notation $f(N,\epsilon)\ll_{\epsilon}g(N,\epsilon)$
is the same as $f(N,\epsilon)=O_{\epsilon}(g(N,\epsilon))$
and $f(N,\epsilon)\asymp_{\epsilon}g(N,\epsilon)$ is the same as
$f(N,\epsilon)\ll_{\epsilon}g(N,\epsilon)$ and $g(N,\epsilon)\ll_{\epsilon}f(N,\epsilon)$.

We fix a Cartan decomposition $G=KA_{+}K$ and an Iwasawa decomposition
$G=KP$. Let $\delta\colon P\to \R_{>0}$ be the left modular character of
$P$ (see Section~\ref{sec:Preliminaries} for more details).

We define a length $l\colon G\to \R_{\ge 0}$ by first denoting for $a\in A_{+}$, $l(a)=\log_{q}\delta(a)$,
where $q$ is equal to $e$ in the Archimedean case and is equal to the size of the quotient field of $k$ otherwise. Then we extend $l$ to $G$ using the Cartan decomposition, i.e., $l(k_{1}ak_{2})=l(a)$. Finally,
we define a metric on $G/K$ by $d(x,y)=l(x^{-1}y)$.

The Weak Injective Radius Property is based on the lattice point counting approach of Sarnak and Xue (\citep[Conjecture 2]{sarnak1991bounds}).
Given an element $y=\Gamma_N y\in\Gamma_N\backslash\Gamma_1$,
we denote 
\[
\boldsymbol{N}(\Gamma_N,d_{0},y)=\#\left\{ \gamma\in\Gamma_N:l(y^{-1}\gamma y)\le d_{0}\right\} .
\]

\begin{defn}
The sequence $(\Gamma_N)$ satisfies the \emph{Weak Injective Radius Property} if for every $0\le d_{0}\le2\log_{q}([\Gamma_1:\Gamma_N])$,
$\epsilon>0$,
\[
\frac{1}{[\Gamma_{1}:\Gamma_N]}\sum_{y\in\Gamma_N\backslash\Gamma_{1}}\boldsymbol{N}(\Gamma_N,d_{0},y)\ll_{\epsilon}[\Gamma_1:\Gamma_N]^{\epsilon}q^{d_{0}/2}.
\]
\end{defn}

This definition is somewhat different from \citep[Conjecture 2]{sarnak1991bounds}.
For rank $1$, it is slightly weaker (see Proposition~\ref{prop:Counting for large}),
while for higher rank we use a different length. In this form, the
Weak Injective Radius Property follows from the Spherical Density
Hypothesis -- see Theorem~\ref{thm:density implies counting} below. On the other hand, the Weak Injective Radius Property makes sense also for non-uniform lattices.

We can now state our intended application. First, we say that a sequence
of lattices $(\Gamma_N) $ has a Spectral Gap if there
exists $p_{0}<\infty$ such that $p(\pi)\le p_{0}$ for
every non-trivial spherical $\pi\in\Pi(G)$ weakly contained
in $L^{2}(\Gamma_N\backslash G)$. This definition can be applied to the non-uniform case as well, and in the cocompact case we may replace
\emph{weakly contained} by $m(\pi,\Gamma)>0$.

We look at the natural action $\pi_N\colon \Gamma_1\to\Aut(\Gamma_N\backslash\Gamma_1)$,
defined by $\pi_N(\gamma)(\Gamma_N\gamma')=\Gamma_N\gamma'\gamma^{-1}$.
Given $x,y\in\Gamma_N\backslash\Gamma_1$, we look for a \emph{small}
element $\gamma\in\Gamma_1$ such that $\pi_N(\gamma)x=y$.
A very general way of measuring how small is an element is by the
Cartan Decomposition, $G=KA_{+}K$. For $\gamma\in\Gamma\subset G$
we let $a_{\gamma}$ be the element in $A_{+}$ in the Cartan decomposition
of $\gamma$. We also fix some norm $\n{\cdot}_{\a}$ on the underlying
coroot space of $A_{+}$. By \citep{duke1993density}, the number
of $\gamma\in\Gamma_1$ with $\n{a_{\gamma}-a}_{\a}<\delta$
is $\asymp_{\Gamma_{1}}q^{l(a)}$. Therefore, the following
definition is optimal.
\begin{defn}
The sequence $(\Gamma_N) $ has the \emph{Optimal Lifting Property} if for every $\epsilon>0$, for every $a\in A_{+}$
with $l(a)\ge(1+\epsilon)\log_{q}([\Gamma_{1}:\Gamma_N])$,
\[
\#\left\{ (x,y)\in(\Gamma_N\backslash\Gamma_1)^{2}:\exists\gamma\in\Gamma_1\text{ s.t. }\pi_N(\gamma)x=y,\n{a_{\gamma}-a}_{\a}<\epsilon\n a_{\a}\right\} =(1-o_{\epsilon}(1))[\Gamma_{1}:\Gamma_N]^{2}.
\]
\end{defn}

Conjecturally, every sequence of congruence subgroups of an arithmetic
lattice in an almost-simple and simply connected Lie group satisfies
the Optimal Lifting Property. We refer to Conjecture~\ref{conj: weak injective radius conjetcure}
for a full statement.

The following two theorems show that our two main properties imply
the Optimal Lifting Property.
\begin{thm}
\label{thm:injective radius implies optimal lifting}Let $(\Gamma_N)$
be a sequence of lattices having a Spectral Gap and satisfying the
Weak Injective Radius Property. Then the sequence $(\Gamma_N)$
has the Optimal Lifting Property.
\end{thm}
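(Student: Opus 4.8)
The plan is to run a second-moment (Cauchy--Schwarz) argument on the lattice point count underlying the Optimal Lifting Property, using the Weak Injective Radius Property for the ``short geodesic'' part of the resulting error term and the Spectral Gap for the rest. Fix $\epsilon>0$ and $a\in A_{+}$ with $l(a)\ge(1+\epsilon)\log_{q}M$, where $M:=[\Gamma_{1}:\Gamma_N]$, and let $B_{a}=\{g\in G:\n{a_{g}-a}_{\a}<\epsilon\n a_{\a}\}$, which we take symmetric ($B_{a}^{-1}=B_{a}$) at no cost. For $x,y\in\Gamma_N\backslash\Gamma_{1}$ put $n(x,y)=\#\{\gamma\in\Gamma_{1}\cap B_{a}:\pi_N(\gamma)x=y\}$, so the quantity to estimate is $\#\{(x,y):n(x,y)\ge1\}$. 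Each $\gamma$ sends a fixed $x$ to exactly one $y$, so $\sum_{x,y}n(x,y)=M\cdot\#(\Gamma_{1}\cap B_{a})$, and by \citep{duke1993density} one has $\#(\Gamma_{1}\cap B_{a})\asymp_{\epsilon}q^{l(a)}$, hence $\#(\Gamma_{1}\cap B_{a})\gg_{\epsilon}M^{1+\epsilon}\gg M$. Since $\#\{(x,y):n(x,y)\ge1\}\ge(\sum_{x,y}n(x,y))^{2}/\sum_{x,y}n(x,y)^{2}$ by Cauchy--Schwarz and is trivially $\le M^{2}$, it suffices to prove $\sum_{x,y}n(x,y)^{2}\le(1+o_{\epsilon}(1))\,\#(\Gamma_{1}\cap B_{a})^{2}$.

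Next I would open the square. The operator $A_{a}=\sum_{\gamma\in\Gamma_{1}\cap B_{a}}\pi_N(\gamma)$ on $\ell^{2}(\Gamma_N\backslash\Gamma_{1})$ (counting-measure inner product) is self-adjoint, scales the constants by $\#(\Gamma_{1}\cap B_{a})$, and satisfies $\sum_{x,y}n(x,y)^{2}=\n{A_{a}}_{\mathrm{HS}}^{2}=\#(\Gamma_{1}\cap B_{a})^{2}+\n{A_{a}|_{V_{0}}}_{\mathrm{HS}}^{2}$, where $V_{0}$ is the orthocomplement of the constants; so it remains to show $\n{A_{a}|_{V_{0}}}_{\mathrm{HS}}^{2}=o_{\epsilon}(\#(\Gamma_{1}\cap B_{a})^{2})$. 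Unfolding, $\sum_{x,y}n(x,y)^{2}=\sum_{x}\#\{(\gamma,\gamma')\in(\Gamma_{1}\cap B_{a})^{2}:\gamma^{-1}\gamma'\in\Gamma_N^{(x)}\}$ with $\Gamma_N^{(x)}=\gamma_{x}^{-1}\Gamma_N\gamma_{x}$ for any representative $\gamma_{x}$ of $x$; the diagonal $\gamma=\gamma'$ contributes $M\cdot\#(\Gamma_{1}\cap B_{a})=o_{\epsilon}(\#(\Gamma_{1}\cap B_{a})^{2})$, and writing $r(\eta)=\#(\Gamma_{1}\cap B_{a}\cap B_{a}\eta^{-1})$ the off-diagonal equals $\sum_{x}\sum_{e\ne\eta\in\Gamma_N^{(x)}}r(\eta)$, where only $\eta$ with $l(\eta)\le(2+C\epsilon)l(a)$ occur, by the triangle inequality for $d$ and the comparison $\n\cdot_{\a}\asymp l$ on $A_{+}$. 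The reason this presentation is useful is that $\sum_{x}\#\{\eta\in\Gamma_N^{(x)}:l(\eta)\le s\}=\sum_{y}\boldsymbol N(\Gamma_N,s,y)$, exactly the quantity bounded by the Weak Injective Radius Property for $s\le2\log_{q}M$.

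I would then split the contributing $\eta$ at the threshold $2\log_{q}M$. For \emph{short} $\eta$ ($l(\eta)\le2\log_{q}M$) I would insert the slab estimate $r(\eta)\ll_{\epsilon}q^{l(a)-l(\eta)/2}(1+l(a))^{O(1)}$ --- a count of $\Gamma_{1}$-points in $B_{a}\cap B_{a}\eta^{-1}$, whose volume is $\asymp q^{l(a)-l(\eta)/2}$ --- interchange the two summations, and sum by parts against $\sum_{y}\boldsymbol N(\Gamma_N,s,y)\ll_{\epsilon}M^{1+\epsilon}q^{s/2}$; this bounds the short contribution by $\ll_{\epsilon}q^{l(a)}M^{1+\epsilon}(\log M)^{O(1)}$, which is $o_{\epsilon}(q^{2l(a)})=o_{\epsilon}(\#(\Gamma_{1}\cap B_{a})^{2})$ once the $\epsilon$ in the Weak Injective Radius Property is taken small relative to the Optimal-Lifting parameter, using $q^{l(a)}\ge M^{1+\epsilon}$. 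For \emph{long} $\eta$ (present only when $l(a)$ is large) the sets $B_{a}\cap B_{a}\eta^{-1}$ live at scales $\gg M$, so the Spectral Gap gives effective equidistribution of each conjugate $\Gamma_N^{(x)}$ (all sharing the exponent $p_{0}$, through the matrix-coefficient decay $|\v{v_{0},\pi(g)v_{0}}|\ll_{\epsilon}q^{-l(g)(1/p_{0}-\epsilon)}$ for $p(\pi)\le p_{0}$): comparing $\sum_{e\ne\eta\in\Gamma_N^{(x)},\,l(\eta)>2\log_{q}M}r(\eta)$ to its volume analogue and summing over $x$, the main term reproduces $\#(\Gamma_{1}\cap B_{a})^{2}$ while the remainder is $o_{\epsilon}$ of it because $q^{l(a)}\gg M$; alternatively, once $l(a)$ exceeds a spectral-gap-dependent multiple of $\log_{q}M$ one may simply use $\n{A_{a}|_{V_{0}}}_{\mathrm{op}}\ll_{\epsilon}q^{l(a)(1-1/p_{0}+\epsilon)}$ and $\n{A_{a}|_{V_{0}}}_{\mathrm{HS}}^{2}\le M\n{A_{a}|_{V_{0}}}_{\mathrm{op}}^{2}=o(q^{2l(a)})$.

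The hard part will be the long-$\eta$ regime. One needs the volume comparison to produce the \emph{leading} constant $1+o_{\epsilon}(1)$ --- not merely $O(1)$, which would only yield a positive proportion of the pairs $(x,y)$ --- and to do so uniformly in $l(a)$ up to $(2+C\epsilon)l(a)$ while assuming only some spectral gap $p_{0}<\infty$. This forces either a careful matching of leading constants in the relevant (thin) lattice point asymptotics, or a separation of the range of $l(a)$: for $l(a)$ in a bounded window above $\log_{q}M$ one arranges that the Weak Injective Radius Property alone controls the whole off-diagonal, and the operator-norm estimate finishes the larger $l(a)$. Reconciling the two hypotheses across the full range of $l(a)$, together with the thin-region lattice point counts for the fixed lattice $\Gamma_{1}$ that feed into the short-$\eta$ bound, is the main technical content.
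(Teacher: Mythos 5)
Your second–moment framework is the right shape and tracks essentially the same quantity the paper bounds (after normalization, $\sum_{x,y}n(x,y)^{2}$ is the discrete analogue of $\sum_{x}\n{f_{a}b_{x,\delta}}_{2}^{2}$ for a probability measure $f_{a}$ supported on the slab $B_{a}$), but your resolution of the off-diagonal is structured differently from the paper's, and the difference is exactly where your proof has a genuine gap --- one you yourself flag in the last paragraph.

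Two concrete problems. First, the split at $l(\eta)=2\log_{q}M$ does not separate cleanly. Long $\eta$ are present for \emph{every} $l(a)\ge(1+\epsilon)\log_{q}M$, not only for large $l(a)$, since $l(\eta)$ ranges up to $(2+C\epsilon)l(a)$. Your fallback operator-norm estimate $\n{A_{a}|_{V_{0}}}_{\mathrm{HS}}^{2}\le M\n{A_{a}|_{V_{0}}}_{\mathrm{op}}^{2}\ll Mq^{2l(a)(1-1/p_{0}+\epsilon)}$ is $o(q^{2l(a)})$ only when $l(a)\gtrsim\frac{p_{0}}{2}\log_{q}M$; the window $(1+\epsilon)\log_{q}M\le l(a)\lesssim\frac{p_{0}}{2}\log_{q}M$ is not covered, and it is nonempty whenever $p_{0}>2$, i.e.\ outside the Ramanujan case. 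The alternative --- an equidistribution argument producing leading constant $1+o_{\epsilon}(1)$ uniformly over the conjugates $\Gamma_{N}^{(x)}$ --- is precisely what you call ``the hard part'' and do not carry out. Second, the slab count $r(\eta)\ll_{\epsilon}q^{l(a)-l(\eta)/2}(1+l(a))^{O(1)}$ is a lattice-point estimate in a thin region for the fixed lattice $\Gamma_{1}$; it is plausible but not free, and the paper deliberately sidesteps it by working with the smooth bi-$K$-invariant kernels $\chi_{d_{0}},\psi_{d_{0}}$ and the bump functions $b_{x,\delta}$ rather than with sharp lattice-point counts.

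The paper closes both issues with a different decomposition that you should compare against. Instead of a dichotomy over $l(\eta)$, the paper first extends the Weak Injective Radius bound to \emph{all} lengths via Lemma~\ref{lem:Varrying d} and Proposition~\ref{prop:Counting for large} (so $\frac{1}{M}\sum_{y}\boldsymbol{N}(\Gamma_{N},d_{0},y)\ll_{\epsilon}M^{\epsilon}q^{d_{0}\epsilon}(q^{d_{0}}/M+q^{d_{0}/2})$ for every $d_{0}\ge0$), and feeds this through the Convolution Lemma to get a single polynomial bound $\sum_{x}\n{f_{a}b_{x,\delta}}_{2}^{2}\ll_{\epsilon_{1}}q^{\epsilon_{1}l(a)}$ whenever $l(a)\ge\log_{q}M$ (Lemma~\ref{lem:Weak-Injective-Radius-Implies-Spectral}). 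Then the Spectral Gap is used not as an equidistribution statement over a regime of $\eta$, but as an \emph{amplifier}: replacing $f_{a}$ by $f_{a}'=A_{a'}\ast f_{a}$ with $l(a')=\epsilon'l(a)$ costs only a slight widening of the slab (absorbed into the $\epsilon$ of the Optimal Lifting definition) and buys an extra decay $q^{-c\epsilon'l(a)}$ on $V_{0}$ which kills the $q^{\epsilon_{1}l(a)}$ once $\epsilon_{1}$ is chosen small relative to $\epsilon'$ (Lemma~\ref{lem:L^2 bound-2}). This removes the problematic intermediate window entirely and needs no leading-constant matching. If you keep your combinatorial set-up, the fix is to import these two ideas: bound the whole off-diagonal by the extended counting estimate of Proposition~\ref{prop:Counting for large} (rather than truncating at $2\log_{q}M$), and then widen $B_{a}$ by an $\epsilon'$-slab convolution to let the spectral gap do its work uniformly.
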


\begin{thm}
\label{thm:density implies counting}Let $(\Gamma_N)$ be a sequence of cocompact lattices satisfying the Spherical Density Hypothesis. Then $(\Gamma_N)$ satisfies the Weak Injective Radius
Property. Therefore, assuming also Spectral Gap, the sequence also has the Optimal Lifting Property.
\end{thm}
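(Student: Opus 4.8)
The final sentence is immediate from Theorem~\ref{thm:injective radius implies optimal lifting}: once the Weak Injective Radius Property is known, adding the Spectral Gap hypothesis produces the Optimal Lifting Property. So the content is the implication that the Spherical Density Hypothesis implies the Weak Injective Radius Property, which I would prove by the pre-trace (``lattice-point counting'') method of Sarnak and Xue, carried out so as to track $p(\pi)$ throughout. Fix $d_0$ with $0\le d_0\le 2\log_q[\Gamma_1:\Gamma_N]$ and write $B_r=\{g\in G:l(g)\le r\}$, so $\mathrm{vol}(B_r)\asymp q^{r}$ up to polynomial factors in $r$ (this is where the normalization $l(a)=\log_q\delta(a)$ is used). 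Fix a constant $c_1$, set $g=\mathbf 1_{B_{d_0/2+c_1}}$, and put $f=c_0^{-1}\,g*g^{*}$ with $c_0$ chosen so that $f\ge \mathbf 1_{B_{d_0}}$; this is possible because $(G/K,d)$ is a geodesic space --- a non-positively curved symmetric space or a CAT(0) affine building --- so for $l(x)\le d_0$ the convolution $g*g^{*}(x)$ is bounded below by $\mathrm{vol}(B_{c_1})>0$, by placing a point at distance about $d_0/2$ from each end of a geodesic from $eK$ to $xK$. The reason for the choice $f=c_0^{-1}g*g^{*}$ is that its spherical (Macdonald / Harish--Chandra) transform is non-negative, $\widehat f(\pi)=c_0^{-1}|\widehat g(\pi)|^{2}\ge0$.

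Since $f$ is bi-$K$-invariant and $f\ge \mathbf 1_{B_{d_0}}\ge 0$, for every $y\in\Gamma_1$ we have $\boldsymbol N(\Gamma_N,d_0,y)\le \sum_{\gamma\in\Gamma_N}f(y^{-1}\gamma y)=K_f(\Gamma_N y,\Gamma_N y)$, where $K_f$ is the automorphic kernel of $R(f)$ on $L^{2}(\Gamma_N\backslash G)$. As $R(f)=R(g)R(g)^{*}$ is positive of trace class (cocompactness) and commutes with the right $K$-action, $K_f(x,x)=\sum_{j}\widehat f(\pi_j)\,|e_j(x)|^{2}$, the sum over the spherical constituents $\pi_j$ of $L^{2}(\Gamma_N\backslash G)$ with $e_j$ the $L^{2}$-normalized right-$K$-invariant vector. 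Summing over the fibre $\{\Gamma_N y:y\in\Gamma_1\}$ of $\Gamma_N\backslash G\to\Gamma_1\backslash G$ above $\Gamma_1 e$ and isolating the trivial representation $\pi_0$,
\[
\sum_{y\in\Gamma_N\backslash\Gamma_1}\boldsymbol N(\Gamma_N,d_0,y)\ \le\ \widehat f(\triv)\sum_{y}|e_0(\Gamma_N y)|^{2}\ +\ \sum_{j\ge1}\widehat f(\pi_j)\sum_{y}|e_j(\Gamma_N y)|^{2},
\]
legitimate since every term on the right is non-negative. Here $|e_0|^{2}\equiv\mathrm{vol}(\Gamma_N\backslash G)^{-1}$, so the trivial contribution is $\widehat f(\triv)\,[\Gamma_1:\Gamma_N]/\mathrm{vol}(\Gamma_N\backslash G)=\widehat f(\triv)/\mathrm{vol}(\Gamma_1\backslash G)\ll\int_G f\asymp q^{d_0}$. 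For $j\ge1$ I would fix once and for all a small symmetric $\Omega\subset K$ with $\Gamma_1\cap\Omega\Omega^{-1}=\{e\}$; then the sets $\Gamma_N y\,\Omega\subset\Gamma_N\backslash G$ are pairwise disjoint over $y\in\Gamma_N\backslash\Gamma_1$, and since in the non-Archimedean case $e_j$ is right-$K$-invariant, hence constant on each $\Gamma_N y\,\Omega$, one gets $\sum_{y}|e_j(\Gamma_N y)|^{2}\le \mathrm{vol}(\Omega)^{-1}\|e_j\|_2^2=O_{\Gamma_1}(1)$. (In the general Archimedean case $e_j$ is only a Casimir eigenfunction of eigenvalue $\lambda(\pi_j)$, and elliptic/Sobolev regularity gives the weaker $\sum_{y}|e_j(\Gamma_N y)|^{2}\ll (1+\lambda(\pi_j))^{L}$ for a fixed $L=L(G)$; this is exactly why the Archimedean form of the Spherical Density Hypothesis carries a $(1+\lambda)^{L}$, and the argument proceeds verbatim with that cutoff.)

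It remains to bound $\sum_{j\ge1}\widehat f(\pi_j)$. The standard estimates for spherical functions ($|\varphi_\pi(x)|\ll_\epsilon \Xi(x)^{2/(p(\pi)+\epsilon)}$ with $\Xi(x)\asymp q^{-l(x)/2}$ up to polynomial factors) give $\widehat g(\pi)\ll_\epsilon q^{(d_0/2)(1-1/p(\pi))+\epsilon d_0}$, hence $\widehat f(\pi)\ll_\epsilon q^{d_0(1-1/p(\pi))+\epsilon d_0}$. Writing $V=[\Gamma_1:\Gamma_N]$ and $c(p)=\#\{j\ge1:p(\pi_j)>p\}$, the Spherical Density Hypothesis gives $c(p)\le M(\Pi(G)_{\sph},\Gamma_N,p)\ll_\epsilon V^{2/p+\epsilon}$, and Abel summation against this bound yields
\[
\sum_{j\ge1}\widehat f(\pi_j)\ \ll_\epsilon\ q^{d_0}q^{\epsilon d_0}\sum_{j\ge1}q^{-d_0/p(\pi_j)}\ \ll_\epsilon\ q^{d_0}\,d_0\,V^{\epsilon}\!\int_0^{1/2}\!\bigl(V^{2}q^{-d_0}\bigr)^{u}du\ \ll_\epsilon\ q^{d_0}V^{\epsilon}\bigl(1+Vq^{-d_0/2}\bigr),
\]
the decisive point being that $d_0\le 2\log_q V$ makes $V^{2}q^{-d_0}\ge1$, so the geometric integral is controlled by its value at $u=\tfrac12$. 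Combining with the trivial contribution $\ll q^{d_0}$ and dividing by $V$,
\[
\frac{1}{[\Gamma_1:\Gamma_N]}\sum_{y\in\Gamma_N\backslash\Gamma_1}\boldsymbol N(\Gamma_N,d_0,y)\ \ll_\epsilon\ \frac{q^{d_0}}{V}V^{\epsilon}+q^{d_0/2}V^{\epsilon}\ \ll_\epsilon\ q^{d_0/2}[\Gamma_1:\Gamma_N]^{\epsilon},
\]
the last inequality being again the statement $q^{d_0/2}\le V$. This is the Weak Injective Radius Property.

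The places where real work is concealed --- and where I expect the main obstacle --- are: (i) the spherical-transform estimate with the \emph{sharp} exponent $q^{d_0(1-1/p(\pi))}$, since any loss beyond $q^{\epsilon d_0}$ destroys the balance at the endpoint $d_0=2\log_q[\Gamma_1:\Gamma_N]$, so one must go through the precise Macdonald/Harish--Chandra asymptotics, polynomial factors included; (ii) in the Archimedean high-rank case, the uniform control of $\sum_y|e_j(\Gamma_N y)|^{2}$, which is what compels the Casimir cutoff and the $(1+\lambda)^{L}$ weight and is the reason the Archimedean Spherical Density Hypothesis is phrased as it is; and (iii) setting up the metric geometry of $(G/K,d)$ --- triangle inequality, approximate midpoints, $\mathrm{vol}(B_r)\asymp q^{r}$ --- with enough care to justify both $f\ge \mathbf 1_{B_{d_0}}$ and the volume estimates, which is routine in the symmetric-space or affine-building setting but should be recorded in the preliminaries.
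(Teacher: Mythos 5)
Your proposal is, in substance, the paper's proof: both use the pre-trace formula (automorphic kernel) to reduce the Weak Injective Radius Property to a spectral sum over the spherical constituents of $L^{2}(\Gamma_N\backslash G)$, bound each spherical transform by $q^{d_0(1-1/p(\pi)+\epsilon)}$ via Harish-Chandra estimates (the paper's Corollary~\ref{cor:Hecke Bounds}), isolate the trivial representation, control $\sum_y|e_j(\Gamma_N y)|^2$ by disjointness of small balls, and finish by Abel summation against the density bound (the paper's Lemma~\ref{lem:intimidating sums lemma}). In the non-Archimedean and rank-one cases your argument is complete and your endgame computation is correct; the paper differs only cosmetically (it works with a smooth cut-off $\chi_{d_0}$ and the test vectors $b_{x,\delta}$ rather than your $f=c_0^{-1}g*g^{*}$ and pointwise kernel evaluation).

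However, there is a genuine gap in the Archimedean higher-rank case, which ``the argument proceeds verbatim with that cutoff'' papers over. With $g=\mathbf 1_{B_{d_0/2+c_1}}$ a sharp indicator, the spherical transform $\widehat f(\pi)=c_0^{-1}|\widehat g(\pi)|^2$ decays only polynomially in $\lambda(\pi)$ (if at all) along the non-tempered spectrum, whereas your Sobolev bound gives \emph{growth} $\sum_y|e_j(\Gamma_N y)|^{2}\ll(1+\lambda_j)^{L}$, and the Archimedean Spherical Density Hypothesis itself carries a further $(1+\lambda)^{L'}$ factor in its right-hand side. The resulting sum $\sum_{j}\widehat f(\pi_j)(1+\lambda_j)^{L}$ then has no reason to converge: the one-variable Abel summation you wrote down is against $M(\Pi(G)_{\sph},\Gamma_N,p)$ with no $\lambda$-cutoff, but in high rank this quantity is $+\infty$. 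The paper closes exactly this hole in two steps you do not take: (i) replacing pointwise evaluation at $\Gamma_Ny$ by pairing against the \emph{smooth} localizers $b_{x,\delta}$, so that the Paley--Wiener theorem applied to $h_\delta$ yields \emph{rapid decay} $\sum_y|\langle b_{x,\delta},e_j\rangle|^{2}\ll_{L'}(1+\lambda_j)^{-L'}$ (Lemma~\ref{lem:Covering Lemma 2}, ``moreover'' part) rather than polynomial growth; and (ii) performing the summation by parts in the two variables $(p,\lambda)$ simultaneously (Lemma~\ref{lem:intimidating sums lemma high rank}). The first of these is not optional: without rapid $\lambda$-decay on the spectral side you cannot control the tail, and replacing your indicator $g$ by a fixed-scale smooth bump (so that its spherical transform decays rapidly in $\lambda$ uniformly in $d_0$, by Paley--Wiener) is the minimal fix, after which the two-variable summation is routine. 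Everything else in your write-up, including the lower bound $f\ge\mathbf1_{B_{d_0}}$ via a midpoint in the Cartan decomposition and the endgame inequality at the threshold $d_0=2\log_q[\Gamma_1:\Gamma_N]$, is sound.
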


The definition of Optimal Lifting is based on the main result in an
influential letter of Sarnak (\citep{sarnak2015lettermiller}), who
proved the Optimal Lifting Property for principal congruence subgroups
of $\SL_{2}(\Z)$, by utilizing a version of the Spherical
Density Hypothesis proved by Huxley (\citep{huxley1986exceptional}).

In the cocompact case, one may relate the Optimal Lifting Property
to almost-diameter of the quotient space as follows. If we give $X_N=\Gamma_N\backslash G/K$
the quotient metric $d$, the Optimal Lifting Property implies that
the distances between the points of $X_N$ are concentrated at the optimal location $\log_{q}(\mu(X_N))$, i.e. for every
$\epsilon>0$,
\[
\mu\left(\left\{ (x,y)\in X_N\times X_N:d(x,y)<(1+\epsilon)\log_{q}([\Gamma_{1}:\Gamma_N])\right\} \right)=(1-o_{\epsilon}(1))\mu^{2}(X_N).
\]

This concentration of distances phenomena was proven for Ramanujan
graphs by Sardari (\citep{sardari1diameter}) and Lubetzky-Peres (\citep{lubetzky2016cutoff}), who also related it to the cutoff phenomena. In higher dimensions, similar results for Ramanujan complexes appear in \citep{kamber2016lpcomplex,lubetzky2020random}.
Theorem~\ref{thm:injective radius implies optimal lifting} implies
that one may get results which are almost as strong, as long as we
assume only the far weaker Spherical Density Hypothesis or the Injective Radius Property.

The Weak Injective Radius Property is intended as the arithmetic,
or geometric, input to our approach, and we discuss it further in
Section~\ref{sec:Applications-and-Open}. There are a few cases where
it is known, most notably, following the work of Sarnak and Xue, for
principal congruence subgroups of arithmetic lattices in $\SL_{2}(\R)$
and $\SL_{2}(\C)$ (see Subsection~\ref{subsec:Sarnak-Xue work}).
In a companion paper by the second named author and Hagai Lavner,
the Weak Injective Radius Property is proved for some non-principal
congruence subgroups of $\SL_{3}(\Z)$, and this result is closely related to the works \citep{blomer2017applications,blomer2019density} (see Subsection~\ref{subsec:Gamma0} for a full discussion).
If we allow ourselves to relax the definition and add a parameter
$0<\alpha\le1$ to the definition of the Weak Injective Radius Property (see Subsection~\ref{subsec:Adding-a-Parameter}), then
it is quite straightforward to show that \emph{principal} congruence subgroups of arithmetic groups satisfy the Weak Injective Radius Property with some explicit parameter $\alpha>0$ (see Corollary~\ref{cor:principal congruence}).
As a matter of fact, recent results in \citep{abert2017growth,finis2018approximation}
show that every sequence of congruence subgroups satisfy the Weak Injective Radius Property with some explicit parameter $\alpha>0$
(see Theorem~\ref{thm:non-principal congruence subgroups}). However,
one must have $\alpha=1$ for the Optimal Lifting application.

\subsection{\label{subsec:intro part 2}The Relations Between the Different Properties}

We already stated that the Spherical Density Hypothesis implies the
Weak Injective Radius Property. For the deduction of spectral results
from the Weak Injective Radius Property, we have partial results in
the Archimedean case and full results in the $p$-adic case.

We believe that the following is true:
\begin{conjecture}
\label{conj:injective radius implies density archimedean}The Weak
Injective Radius Property implies both the General Density Hypothesis
and the Spherical Density Hypothesis.
\end{conjecture}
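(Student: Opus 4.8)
The plan is to run the pre-trace formula with a bi-$K$-invariant test function supported on a ball of radius $2\log_q[\Gamma_1:\Gamma_N]$, and to balance its geometric side---which is exactly controlled by the Weak Injective Radius Property---against its spectral side, where non-tempered representations are weighted heavily. Write $N=[\Gamma_1:\Gamma_N]$, put $D\approx\log_q N$, let $B_D=\{g\in G: l(g)\le D\}$, and take $f=\mathbf{1}_{B_D}\ast\mathbf{1}_{B_D}$ (possibly smoothed in the Archimedean case). Then $f$ is bi-$K$-invariant, supported in $B_{2D}$, and positive-definite, so its spherical transform satisfies $\widehat f(\pi)=\widehat{\mathbf{1}_{B_D}}(\pi)^2\ge 0$ for every spherical $\pi$, and the pre-trace formula on the cocompact quotient gives $\sum_\pi m(\pi,\Gamma_N)\widehat f(\pi)=\int_{\Gamma_N\backslash G}\sum_{\gamma\in\Gamma_N}f(x^{-1}\gamma x)\,dx$. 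Note that only the Weak Injective Radius Property will be used, not a spectral gap.

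For the geometric side I would write a fundamental domain of $\Gamma_N\backslash G$ as $\bigsqcup_y yF$ over representatives $y$ of $\Gamma_N\backslash\Gamma_1$ with $F$ a fixed fundamental domain of $\Gamma_1\backslash G$; after $x=yz$ the inner sum becomes $\sum_{\gamma\in\Gamma_N}f(z^{-1}(y^{-1}\gamma y)z)$ with $z$ in the bounded set $F$. The elementary input is the autocorrelation bound $f(h)=\mu(B_D\cap hB_D)\ll q^{D-l(h)/2}$ for $l(h)\le 2D$ (with harmless polynomial corrections in $l(h)$ in the Archimedean case), a consequence of the exponential volume growth $\mu(B_r)\asymp q^r$. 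Splitting the $\gamma$-sum dyadically according to the value of $l(y^{-1}\gamma y)$ and using $|l(z^{-1}hz)-l(h)|=O(1)$ for $z\in F$, the geometric side is $\ll\sum_{t\le 2D+O(1)}q^{D-t/2}\sum_y\boldsymbol{N}(\Gamma_N,t+O(1),y)$. Since $2D+O(1)\le 2\log_q N$, the Weak Injective Radius Property bounds each inner sum by $O_\epsilon(N^{1+\epsilon}q^{t/2})$; the $q^{t/2}$ cancels the $q^{-t/2}$, the $O(\log N)$ values of $t$ cost at most $N^\epsilon$, and we obtain $\sum_\pi m(\pi,\Gamma_N)\widehat f(\pi)\ll_\epsilon N^{1+\epsilon}q^D$.

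For the spectral side I would discard all but the representations with $p(\pi)\ge p$ and use the lower bound $\widehat{\mathbf{1}_{B_D}}(\pi)=\int_{B_D}\varphi_\pi\gg_p q^{D(1-1/p)}$, which is the quantitative content of the definition of $p(\pi)$: the spherical function does not decay faster than $q^{-l(\cdot)/p(\pi)}$ in its worst direction, and along a tube around that direction its contribution to $\int_{B_D}\varphi_\pi$ is $\asymp q^{D(1-1/p(\pi))}\ge q^{D(1-1/p)}$. In the $p$-adic and rank-one cases this is exactly Macdonald's explicit formula (respectively the classical Jacobi-function asymptotics), the set $\{\pi\in\Pi(G)_{\sph}:p(\pi)\ge p\}$ is compact, so the implied constant is uniform, and $\varphi_\pi$ is positive there, with no cancellation. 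Combining the two sides, $q^{2D(1-1/p)}M(\Pi(G)_{\sph},\Gamma_N,p)\ll_\epsilon N^{1+\epsilon}q^D$; taking $D=\log_q N-O(1)$ and rearranging gives $M(\Pi(G)_{\sph},\Gamma_N,p)\ll_\epsilon N^{2/p+\epsilon}$, the Spherical Density Hypothesis. The General Density Hypothesis for a precompact $A\subset\Pi(G)$ follows by the same mechanism one level deeper: a representation in $A$ is fixed by some small open compact $K'$, one repeats the argument with $K'$-bi-invariant (equivalently, $K$-finite of boundedly many $K$-types) test functions, and since $l$, and hence the Weak Injective Radius Property, ignore the level, only the spectral lower bound must be rechecked, with $\varphi_\pi$ replaced by a $K'$-matrix coefficient of the same decay.

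The main obstacle---and the reason the statement is only conjectural---is that spectral lower bound in the higher-rank Archimedean case. There the spherical function $\varphi_\lambda$ is a Harish-Chandra series that genuinely oscillates away from the dominant direction, so $\int_{B_D}\varphi_\lambda$ need not be of order $q^{D(1-1/p(\pi))}$: cancellation could in principle push it down toward the tempered size $q^{D/2}$, which would destroy the argument. Overcoming this seems to require either a positivity (or controlled-oscillation) statement for the relevant complementary-series parameters, or a cleverer test function whose spherical transform is an honest nonnegative bump concentrated near the non-tempered parameter while keeping both $\|f\|_\infty$ and the support radius at the critical scale. A further difficulty is that the Archimedean Spherical Density Hypothesis carries the extra factor $(1+\lambda)^L$ and must hold uniformly over a growing range of Casimir eigenvalues, whereas the Weak Injective Radius Property as stated is scale-free; bridging this gap presumably calls for an Archimedean refinement of the counting hypothesis in the spirit of \citep[Conjecture 2]{sarnak1991bounds} that retains the spectral parameter, which is not available in general.
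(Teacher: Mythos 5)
This statement is labelled a \emph{conjecture} in the paper, so there is no complete proof to compare against: the paper itself only establishes it in special cases (non-Archimedean, Theorem~\ref{thm:p-adic injective radius implies density}; rank one Archimedean, Theorem~\ref{thm:rank one injective radius implies density}; and pointwise, Theorem~\ref{thm:Diophantine implies pointwise}), and leaves the general Archimedean higher-rank case open, reducing it via Definition~\ref{def:Good family} and Proposition~\ref{prop:good family proposition} to the trace lower bound of Conjecture~\ref{conj:mat-coeff-lower-bound}. Your proposal takes exactly the paper's framework: pre-trace formula, geometric side controlled by the Weak Injective Radius Property, spectral side needing a lower bound on the transform of a test function at non-tempered parameters, and you correctly pinpoint the obstruction as the missing lower bound on spherical functions (or traces) for complementary-series parameters in higher rank Archimedean, together with the uniformity in the Casimir eigenvalue needed for the $(1+\lambda)^L$ factor. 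Both diagnoses match the paper's own open problems.

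Where your sketch is too optimistic is in the claim that the non-Archimedean case already follows from $f=\mathbf{1}_{B_D}\ast\mathbf{1}_{B_D}$ and Macdonald's formula ``with no cancellation.'' Even $p$-adically the spherical transform $\int_{B_D}\varphi_\pi$ need not be $\gg q^{D(1-1/p(\pi))}$: Macdonald's expression is an alternating sum over the Weyl group, and for higher-rank groups (and already for bi-regular trees in rank one) genuine cancellation occurs. The paper's Theorem~\ref{thm:p-adic injective radius implies density} therefore does \emph{not} use a ball-convolution test function; it uses the Bernstein decomposition of the Hecke algebra (Section~\ref{sec:Bernstein-Theory-of-NonBacktracking}) to build a good family from non-backtracking operators $h_{K',\beta_i}^{m_i}$ whose spectra are controlled by Theorem~\ref{thm:Temperedness by Riemann}, and in the rank-one $p$-adic case (Proposition~\ref{prop:Rank1 lower bound- padic}) restricts to $\nu(a)$ even precisely to dodge the sign issue. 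So the conjecture should be viewed as reduced to the existence of a ``good family'' (Proposition~\ref{prop:good family proposition}), and your argument, as stated, does not supply such a family even in the $p$-adic case, let alone the Archimedean one; the content you are missing is exactly what Sections~\ref{sec:Bernstein-Theory-of-NonBacktracking} and~\ref{sec:pointwise lower bound} do in the cases where it is known.
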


In the $p$-adic case, one can choose larger and larger sets covering $\Pi(G)$ as follows. For a compact open subgroup $K'$ of $G$, let $\Pi(G)_{K^{\prime}-\sph}$ be the set of isomorphism classes of irreducible unitary representations with a non-zero $K^{\prime}$-invariant vector. If we have a sequence $( K_{m}^{\prime})$ of arbitrarily small compact open subgroups (i.e., they generate the topology of $G$ near the identity), then we have 
\[
\Pi(G)=\bigcup_{m}\Pi(G)_{K_{m}^{\prime}-\sph}.
\]

We can now state:
\begin{thm}
\label{thm:p-adic injective radius implies density}Conjecture~\ref{conj:injective radius implies density archimedean} is true when $G$ is non-Archimedean. More precisely, there exists a sequence $( K_{m}^{\prime})$ consisting of arbitrarily
small compact open subgroups of $G$, such that if the Weak Injective
Radius Property holds for a sequence of cocompact lattices $(\Gamma_N)$,
then for every $N\ge1$, $m\ge1$, $p>2$, $\epsilon>0$, 
\[
M(\Pi(G)_{K_{m}^{\prime}-\sph},\Gamma_N,p)\ll_{K_{m}^{\prime},\epsilon}[\Gamma_{1}:\Gamma_N]^{2/p+\epsilon}.
\]
\end{thm}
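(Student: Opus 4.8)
\emph{Proof proposal.} The plan is to run a pre-trace (Selberg) argument in the spirit of Sarnak and Xue \citep{sarnak1991bounds}, but backwards: feed the geometric input of the Weak Injective Radius Property into the geometric side of a trace identity on $L^{2}(\Gamma_N\backslash G)$ and read off a multiplicity bound from the spectral side. Let $K$ be the good maximal compact of the excerpt, and for the exhausting family take $(K'_m)$ to be the principal congruence subgroups of $K$ — any sequence of arbitrarily small compact open subgroups will do for the argument, and the role of this specific choice is explained at the end. Since $K'=K'_m$ is open and $\Gamma_N$ is cocompact (finite index over the cocompact $\Gamma_1$), $L^{2}(\Gamma_N\backslash G)^{K'}=L^{2}(\Gamma_N\backslash G/K')$ is finite dimensional, so it suffices to bound $S:=\sum_{\pi\in\Pi(G)_{K'-\sph},\,p(\pi)\ge p}m(\pi,\Gamma_N)$.

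First I would fix $r$ with $2r=2\log_{q}([\Gamma_{1}:\Gamma_N])-C$ for a suitable constant $C$, so $q^{r}\asymp[\Gamma_{1}:\Gamma_N]$, and let $\phi_{r}$ be the indicator of the regular part of a thin metric shell at distance $\asymp r$ (a union of polynomially-in-$r$ many regular $K'$-double cosets $K'g_{0}K'$ with $l(g_{0})$ within $O(1)$ of $r$), and set $f_{r}=\phi_{r}*\phi_{r}^{*}$, where $\phi^{*}(g)=\overline{\phi(g^{-1})}$. The properties I would use are: $\pi(f_{r})\ge 0$ for every $\pi$; $\operatorname{supp}f_{r}\subseteq\{g:l(g)\le 2r\}$; $f_{r}(e)=\n{\phi_{r}}_{2}^{2}\asymp_{\epsilon,K'}q^{r}[\Gamma_{1}:\Gamma_N]^{\epsilon}$; and — the crucial one, which matches the exponent $q^{d_{0}/2}$ in the Weak Injective Radius Property — $|f_{r}(g)|\ll_{\epsilon,K'}q^{\,r-l(g)/2}[\Gamma_{1}:\Gamma_N]^{\epsilon}$ for all $g$. (For trees and in rank one this is essentially the Sarnak--Xue test function; in higher rank one needs a refinement, reflecting that the metric $d(x,y)=l(x^{-1}y)$ on $G/K$ behaves ``tree-like'' for the relevant volume counts.) Then consider the positive trace-class operator $R_{\Gamma_N}(f_{r})\colon \psi\mapsto\int_{G}f_{r}(g)\psi(\cdot\,g)\,dg$ on $L^{2}(\Gamma_N\backslash G)$ and evaluate its trace, $\tr R_{\Gamma_N}(f_{r})=\int_{\Gamma_N\backslash G}\sum_{\gamma\in\Gamma_N}f_{r}(x^{-1}\gamma x)\,dx$, in two ways.

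For the geometric side I would separate $\gamma = e$, which contributes $f_{r}(e)\,\mu(\Gamma_N\backslash G)\ll_{\epsilon,K'}q^{r}[\Gamma_{1}:\Gamma_N]^{1+\epsilon}$, from $\gamma\ne e$. For the latter, bound $|f_{r}(x^{-1}\gamma x)|\ll_{\epsilon,K'}q^{\,r-l(x^{-1}\gamma x)/2}[\Gamma_{1}:\Gamma_N]^{\epsilon}$, split dyadically according to $l(x^{-1}\gamma x)=t\in\{1,\dots,2r\}$, pass from $\int_{\Gamma_N\backslash G}$ to $\sum_{y\in\Gamma_N\backslash\Gamma_{1}}$ by writing a $\Gamma_N$-fundamental domain as the union of $[\Gamma_{1}:\Gamma_N]$ translates of a fixed \emph{compact} $\Gamma_{1}$-fundamental domain (the sole use of cocompactness of $\Gamma_{1}$; it costs a bounded shift $d_{0}\mapsto d_{0}+2D$), and apply the Weak Injective Radius Property at scale $d_{0}=t+2D\le 2\log_{q}([\Gamma_{1}:\Gamma_N])$ to get $\int_{\Gamma_N\backslash G}\#\{\gamma\in\Gamma_N:l(x^{-1}\gamma x)=t\}\,dx\ll_{\epsilon}[\Gamma_{1}:\Gamma_N]^{1+\epsilon}q^{t/2}$. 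Since the decay factor $q^{-t/2}$ of $f_{r}$ exactly cancels the growth $q^{t/2}$ of the count, each of the $O(r)$ dyadic blocks contributes $\ll_{\epsilon,K'}q^{r}[\Gamma_{1}:\Gamma_N]^{1+\epsilon}$, whence $\tr R_{\Gamma_N}(f_{r})\ll_{\epsilon,K'}q^{r}[\Gamma_{1}:\Gamma_N]^{1+\epsilon}$. For the spectral side, positivity of $\pi(f_{r})$ gives $\tr R_{\Gamma_N}(f_{r})=\sum_{\pi}m(\pi,\Gamma_N)\tr\pi(f_{r})\ge\sum_{\pi\in\Pi(G)_{K'-\sph},\,p(\pi)\ge p}m(\pi,\Gamma_N)\langle v_{\pi},\pi(f_{r})v_{\pi}\rangle$ for unit $K'$-fixed vectors $v_{\pi}$, and $\langle v_{\pi},\pi(f_{r})v_{\pi}\rangle=\n{\pi(\phi_{r})^{*}v_{\pi}}^{2}\ge|\langle v_{\pi},\pi(\phi_{r})v_{\pi}\rangle|^{2}$. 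Writing $\varphi_{\pi}(g)=\langle v_{\pi},\pi(g)v_{\pi}\rangle$, which is \emph{constant on $K'$-double cosets} because $v_{\pi}$ is $K'$-fixed, $\langle v_{\pi},\pi(\phi_{r})v_{\pi}\rangle$ is a sum over the regular $g_{0}$ in the shell of $\operatorname{vol}(K'g_{0}K')\varphi_{\pi}(g_{0})$. Since $p(\pi)\ge p$ and $\pi$ is non-tempered, its dominant exponent along the minimal parabolic is real and strictly positive; by Casselman's asymptotic expansion of matrix coefficients, $\varphi_{\pi}(g_{0})$ is, for $g_{0}$ regular with $l(g_{0})\asymp r$, at least a positive multiple of $q^{\,-l(g_{0})/p(\pi)-\epsilon l(g_{0})}$ (so all terms in the sum have the same sign), and taking $g_{0}$ near the ray realizing the threshold $p(\pi)$ yields $\langle v_{\pi},\pi(\phi_{r})v_{\pi}\rangle\gg_{\epsilon,K'}q^{\,r(1-1/p)-\epsilon r}$, hence $\langle v_{\pi},\pi(f_{r})v_{\pi}\rangle\gg_{\epsilon,K'}q^{\,2r(1-1/p)-\epsilon r}$. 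Combining the two sides and using $q^{r}\asymp[\Gamma_{1}:\Gamma_N]$,
\[
S\ \ll_{\epsilon,K'}\ q^{\,r(2/p-1)+\epsilon r}\,[\Gamma_{1}:\Gamma_N]^{1+\epsilon}\ \ll_{\epsilon,K'}\ [\Gamma_{1}:\Gamma_N]^{2/p+\epsilon},
\]
uniformly in $p>2$; rescaling $\epsilon$ completes the proof.

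I expect two steps to require care. The milder one is producing $f_{r}$ with the decay $|f_{r}(g)|\ll q^{\,r-l(g)/2+\epsilon r}$: classical for trees and rank one, but in higher rank the convolution square of a shell need not decay so cleanly in $l(g)$, and one must refine the test function (or replace the pointwise bound by a suitable averaged one). The main obstacle is the \emph{uniform} spectral lower bound: the matrix coefficient of a $K'_m$-fixed vector is a generalised ($\tau$-)spherical function, and one needs its leading asymptotics along a regular ray — with positive leading coefficient and polynomially controlled error, \emph{uniformly} over all non-tempered $K'_m$-spherical $\pi$. This uniformity is precisely why the theorem produces a \emph{particular} sequence $(K'_m)$: chosen so that the non-tempered part of the $K'_m$-spherical dual is cut out by finitely many bounded parameters (e.g., via Satake/Langlands data together with Moy--Prasad / Bushnell--Kutzko-type control of the occurring $K$-types), rather than treating an arbitrary compact open subgroup.
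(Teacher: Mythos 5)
Your overall frame — a pre-trace argument in which the geometric side is controlled by the Weak Injective Radius Property and the spectral side requires a lower bound on $\tr\pi(f_r)$ for each non-tempered $K'$-spherical $\pi$ — does match the paper's structure (Proposition~\ref{prop:Trace equivalent to Lattice point Counting} and Proposition~\ref{prop:good family proposition}, with the geometric bookkeeping as in Corollary~\ref{cor:trace equivalence}). But the step you yourself flag as the ``main obstacle'' — the uniform spectral lower bound — is where the proposal has a genuine gap, and the mechanism you sketch to close it is not the right one.

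Concretely: you take $\phi_r$ to be the indicator of a thin shell (a union of regular double cosets at radius $\asymp r$) and then write $\langle v_\pi,\pi(\phi_r)v_\pi\rangle$ as a sum of $\operatorname{vol}(K'g_0K')\,\varphi_\pi(g_0)$ over those cosets, asserting that ``all terms in the sum have the same sign'' because the dominant exponent is real. This is not justified: the matrix coefficient of a non-tempered unitary $\pi$ is complex valued in general, and even along a regular direction the subleading exponents contribute oscillatory terms of the same order at moderate $r$. Summing over many double cosets in a shell can produce catastrophic cancellation, and Casselman's asymptotics (whose non-Archimedean form gives an \emph{eventual} expansion, not a quantitative positivity statement at a prescribed radius $r\asymp\log[\Gamma_1:\Gamma_N]$) do not control this. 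You also initially suggest that ``any sequence of arbitrarily small compact open subgroups will do,'' which is false: the theorem needs specific $K'$. Your later diagnosis — that the specific $K'_m$ are chosen to bound Moy--Prasad/Bushnell--Kutzko parameters for the occurring types — is not the paper's mechanism and would not by itself produce the needed sign control.

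What the paper actually does in Section~\ref{sec:Bernstein-Theory-of-NonBacktracking} is to replace the shell-indicator by a \emph{single} Hecke operator power. Bernstein's decomposition (Theorem~\ref{thm:Bernstein Decomposition}) yields, for a cofinal family of $K'$ (normal in $K$, with the double-coset multiplicativity property), elements $h_{K',\beta_1},\dots,h_{K',\beta_r}$ of $C_c(K'\backslash G/K')$ that are ``non-backtracking,'' i.e.\ $h_{K',\beta_i}^m=h_{K',\beta_i^m}$, together with finite sets $A,B$ such that every $h_{K',g}$ factors as $h_{K',a}h_{K',\beta_1}^{m_1}\cdots h_{K',\beta_r}^{m_r}h_{K',b}$. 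Theorem~\ref{thm:Temperedness by Riemann} then characterizes $p$-temperedness of $\pi$ by the eigenvalue condition $|\lambda|\le q_{K',\beta_i}^{1-1/p}$ for all eigenvalues $\lambda$ of $\pi(h_{K',\beta_i})$ on $V^{K'}$. For $\pi$ with $p(\pi)\ge p>2$ some $h_{K',\beta_i}$ therefore has an eigenvalue of modulus $\ge q_{K',\beta_i}^{1-1/p}$; taking $f_{d_0}=\sum_i (h_{\beta_i}^{m_i})^*h_{\beta_i}^{m_i}$, which is non-negative, one gets $\tr\pi(f_{d_0})\ge\|\pi(f_{d_0})\|\ge\|\pi(h_{\beta_i}^{m_i})\|^2\ge|\lambda|^{2m_i}\gg q^{d_0(1-1/p)}$. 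This is entirely algebraic: the non-backtracking identity makes the eigenvalue grow exactly geometrically, with no oscillation and no error term, and the positivity $\tr\pi(f_{d_0})\ge\|\pi(f_{d_0})\|$ replaces any argument about signs of matrix-coefficient values. That is the idea missing from your proposal, and it is also why the specific $(K'_m)$ appear: they are exactly the subgroups for which Bernstein's factorization is available.
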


In the Archimedean case, we do not know even whether the Weak Injective
Radius Property implies the Spherical Density Hypothesis. However,
for rank $1$ it was essentially proven in \citep{sarnak1991bounds}
(see the remark after the statement of Theorem 3 in \citep{sarnak1991bounds}):
\begin{thm}
\label{thm:rank one injective radius implies density}If $G$ is of
rank one and $(\Gamma_N) $ is a sequence of cocompact
lattices, then the Weak Injective Radius Property implies the Spherical
Density Hypothesis.
\end{thm}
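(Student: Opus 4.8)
The plan is to run the lattice-point counting argument of Sarnak and Xue \citep{sarnak1991bounds}: manufacture a nonnegative, bi-$K$-invariant test function whose spherical transform is bounded below on the exceptional spherical spectrum, feed it into the pre-trace formula, and control the geometric side by the Weak Injective Radius Property. Fix $N$ and abbreviate $V=[\Gamma_1:\Gamma_N]$. For a parameter $d_0>0$ let $\psi_{d_0}$ denote the indicator function of the ball $\{g\in G:l(g)\le d_0/2\}$ (symmetric and bi-$K$-invariant), and set $f_{d_0}=\psi_{d_0}*\psi_{d_0}^{*}$; then $f_{d_0}\ge 0$ is bi-$K$-invariant and supported in $\{l(g)\le d_0\}$. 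Bi-$K$-invariance forces $\tr\pi(f_{d_0})=0$ unless $\pi$ is spherical, and $\tr\pi(f_{d_0})=|\widehat{\psi_{d_0}}(\pi)|^{2}\ge 0$ for spherical $\pi$, where $\widehat{\psi_{d_0}}(\pi)=\int_G\psi_{d_0}(g)\varphi_\pi(g)\,dg$ with $\varphi_\pi$ the spherical function. Since the $\Gamma_N$ are cocompact, $R(f_{d_0})=R(\psi_{d_0})R(\psi_{d_0})^{*}$ is trace class and the pre-trace formula gives
\[
\sum_{\pi\in\Pi(G)_{\sph}}m(\pi,\Gamma_N)\,|\widehat{\psi_{d_0}}(\pi)|^{2}\;=\;\int_{\Gamma_N\backslash G}\ \sum_{\gamma\in\Gamma_N}f_{d_0}(x^{-1}\gamma x)\,dx .
\]

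For the spectral side I would use the rank-one structure theory. If $\pi\in\Pi(G)_{\sph}$ has $p(\pi)=p'\in(2,\infty]$ then $\varphi_\pi\ge 0$ and $\varphi_\pi(a)\asymp q^{-l(a)/p'}$ as $l(a)\to\infty$, while $\mathrm{vol}(\{l(g)\le t\})\asymp q^{t}$; integrating over $\{l(g)\le d_0/2\}$ yields $\widehat{\psi_{d_0}}(\pi)\gg q^{(d_0/2)(1-1/p')}$, the implied constant being uniform in $\pi$ thanks to the explicit Harish--Chandra/Macdonald expansion of $\varphi_\pi$ in rank one. Hence $|\widehat{\psi_{d_0}}(\pi)|^{2}\gg q^{d_0(1-1/p')}\ge q^{d_0(1-1/p)}$ whenever $p'\ge p$, and since the remaining terms on the left are nonnegative,
\[
\int_{\Gamma_N\backslash G}\ \sum_{\gamma\in\Gamma_N}f_{d_0}(x^{-1}\gamma x)\,dx\ \gg\ q^{d_0(1-1/p)}\,M(\Pi(G)_{\sph},\Gamma_N,p)\qquad(p>2).
\]

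The geometric side requires two moves. First, pass from the integral over $\Gamma_N\backslash G$ to the sum over $\Gamma_N\backslash\Gamma_1$: fixing a relatively compact fundamental domain $F$ for $\Gamma_1$ and putting $D=2\sup_{f\in F}l(f)<\infty$, the decomposition $\Gamma_N\backslash G=\bigsqcup_{y\in\Gamma_N\backslash\Gamma_1}\Gamma_N yF$ together with the subadditivity of $l$ (which holds because $d(x,y)=l(x^{-1}y)$ is a left-invariant metric) gives $|l(f^{-1}y^{-1}\gamma yf)-l(y^{-1}\gamma y)|\le D$, whence
\[
\int_{\Gamma_N\backslash G}\#\{\gamma\in\Gamma_N:l(x^{-1}\gamma x)\le s\}\,dx\ \le\ \mathrm{vol}(\Gamma_1\backslash G)\sum_{y\in\Gamma_N\backslash\Gamma_1}\boldsymbol{N}(\Gamma_N,s+D,y)\ \ll_{\epsilon}\ V^{1+\epsilon}q^{s/2}
\]
for $0\le s\le 2\log_q V-D$, the last bound being the Weak Injective Radius Property applied at $s+D\le 2\log_q V$. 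Second --- and this is essential --- one must not bound $f_{d_0}$ by its supremum $\asymp q^{d_0/2}$ on its support (that would cost a spurious factor of $V$), but rather by the rank-one decay estimate $f_{d_0}(g)\ll q^{(d_0-l(g))/2}$ on $\{l(g)\le d_0\}$, which just expresses the volume of the intersection of two balls of radius $d_0/2$ whose centres are at distance $l(g)$. Plugging this in, summing by parts in $l(x^{-1}\gamma x)$, and invoking the previous display gives
\[
\int_{\Gamma_N\backslash G}\ \sum_{\gamma\in\Gamma_N}f_{d_0}(x^{-1}\gamma x)\,dx\ \ll_{\epsilon}\ V^{1+\epsilon}q^{d_0/2}\qquad(0\le d_0\le 2\log_q V-D).
\]

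Comparing the two sides yields $M(\Pi(G)_{\sph},\Gamma_N,p)\ll_{\epsilon}V^{1+\epsilon}q^{d_0(1/p-1/2)}$; since $1/p-1/2<0$ one takes $d_0$ as large as allowed, $d_0=2\log_q V-D$, so that $q^{d_0(1/p-1/2)}\ll V^{2/p-1}$, and concludes $M(\Pi(G)_{\sph},\Gamma_N,p)\ll_{\epsilon}V^{2/p+\epsilon}=[\Gamma_1:\Gamma_N]^{2/p+\epsilon}$ --- the Spherical Density Hypothesis, uniformly in $p>2$ once the uniformity of the spherical lower bound in the second paragraph is secured. The step I expect to be the real obstacle is the geometric one: transferring the averaged count of the Weak Injective Radius Property (a statement about the finite set $\Gamma_N\backslash\Gamma_1$) to the $\Gamma_N\backslash G$-integral of the trace formula, and exploiting the decay of $f_{d_0}$ away from the identity so as to land on the exponent $2/p$ rather than $1+2/p$. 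The reliance on the sharp rank-one asymptotics $\varphi_\pi(a)\asymp q^{-l(a)/p(\pi)}$ is exactly what the argument cannot replicate in higher rank, which is why Conjecture~\ref{conj:injective radius implies density archimedean} remains open there.
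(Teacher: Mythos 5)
Your proof follows the paper's argument directly, just without its intermediate abstractions. Your test function $f_{d_0}=\psi_{d_0}*\psi_{d_0}^*$ is the paper's $\chi_{d_0/2}*\chi_{d_0/2}$ from Theorem~\ref{thm:rank 1 has good family}; the intersection-of-balls decay $f_{d_0}(g)\ll q^{(d_0-l(g))/2}$ is the Convolution Lemma~\ref{lem:convolution lemma} (which the paper proves by an $L^2$-operator-norm argument precisely so that the same estimate survives in higher rank, where the geometric picture is murkier); your transfer between the $\Gamma_N\backslash G$ integral and the $\Gamma_N\backslash\Gamma_1$ sum, together with the summation by parts against the decay of $f_{d_0}$, is Corollary~\ref{cor:trace equivalence}; and your spherical lower bound is Proposition~\ref{prop:Rank1 lower bound- Archimedean}. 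The paper packages all of this in the ``good family'' framework (Definition~\ref{def:Good family}, Proposition~\ref{prop:good family proposition}) --- pure bookkeeping for the rank-one case, but it is what lets the same shell also produce Theorems~\ref{thm:Diophantine implies pointwise} and~\ref{thm:p-adic injective radius implies density}. One caveat you gloss over: the uniform asymptotic $\varphi_\pi(a)\asymp q^{-l(a)/p(\pi)}$ with $\varphi_\pi\ge 0$ is not correct as stated for all $a$ in the non-Archimedean rank-one case when the Bruhat--Tits tree is bi-regular but not regular, where an $L^2$ spherical representation can have $\re\lambda(\omega)>0$ (Matsumoto's observation, recalled in Subsection~\ref{subsec:Lower-Bounds-rank1}). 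The paper sidesteps this by taking $f_{d_0}=A_a*A_a^*$ with $\nu(a)$ even (Proposition~\ref{prop:Rank1 lower bound- padic}); your version needs the same restriction on the support of $\psi_{d_0}$.
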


For general representations in the Archimedean case, we have the following
theorem. For rank $1$ it was proven in \citep[Theorem 3]{sarnak1991bounds}.
\begin{thm}
\label{thm:Diophantine implies pointwise}If the sequence $(\Gamma_N) $
of cocompact lattices satisfies the Weak Injective Radius Property,
then $(\Gamma_N) $ satisfies the Pointwise Multiplicity
Hypothesis.
\end{thm}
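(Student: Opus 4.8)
The plan is to convert the geometric counting input into a spectral bound on a single multiplicity $m(\pi,\Gamma_N)$ via a standard test-function argument on $\Gamma_N\backslash G$, carefully choosing the test function so that its spectral side isolates $\pi$ (up to a constant depending only on the fixed precompact set $\{\pi\}$) and its geometric side is controlled by $\boldsymbol{N}(\Gamma_N,d_0,y)$ summed over $y$. Fix a non-tempered irreducible unitary $\pi$ with $p=p(\pi)>2$. First I would pick a $K$-finite (or $K$-invariant, after averaging) vector $v_0\in V_\pi$ and build, for a cutoff radius $d_0$, a function $\phi=\phi_{d_0}$ on $G$ supported on $\{l(g)\le d_0\}$, bi-$K$-finite, and approximately equal to a positive multiple of the matrix coefficient $g\mapsto\langle v_0,\pi(g)v_0\rangle$ on that ball; concretely one takes $\phi_{d_0}=\chi_{d_0}\cdot\psi$ where $\psi$ is a suitable $K$-bi-invariant (or $K$-finite) function and $\chi_{d_0}$ is the indicator of the ball of radius $d_0$. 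The key analytic point is that the operator $\pi(\phi_{d_0})$ on $V_\pi$ has a large eigenvalue: since $\beta(g)=\langle v_0,\pi(g)v_0\rangle$ is (almost) $L^p$ but not $L^{p-\eta}$, on the ball of radius $d_0$ one has $\int_{l(g)\le d_0}|\beta(g)|^2\,dg \gg_\eta q^{(1-2/p-\eta)d_0}$ for $d_0$ large; this follows from the decay rate $|\beta(g)| \asymp \delta^{1/p}(a_g)$ built into the definition of $p(\pi)$ together with the volume growth $\mu(\{l(g)\le d_0\})\asymp q^{d_0}$ of balls in $G/K$. Hence $\langle \pi(\phi_{d_0})v_0,v_0\rangle \gg_\eta q^{(1-2/p-\eta)d_0}\n{v_0}^2$.

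Next I would compare the two expressions for $\operatorname{tr}$ of $R(\phi_{d_0}*\phi_{d_0}^*)$ on $L^2(\Gamma_N\backslash G)$, or more simply evaluate the reproducing inequality on the $\pi$-isotypic component only. On the spectral side, positivity of $\phi_{d_0}*\phi_{d_0}^*$ gives
\[
m(\pi,\Gamma_N)\,\bigl|\langle\pi(\phi_{d_0})v_0,v_0\rangle\bigr|^2
\;\ll\; \sum_{\sigma \text{ weakly contained in }L^2(\Gamma_N\backslash G)} m(\sigma,\Gamma_N)\,\n{\sigma(\phi_{d_0})}_{HS}^2,
\]
but since the full trace is hard to control for high rank (this is exactly the Spherical Density obstruction), I would instead use the reproducing-kernel formulation: for an automorphic vector $0\ne w\in L^2(\Gamma_N\backslash G)$ in the $\pi$-isotypic part with $w(e)$ normalized, the pointwise identity $\langle R(\phi_{d_0})w,w\rangle = \sum_{\gamma\in\Gamma_N}\phi_{d_0}(y^{-1}\gamma y)\,\langle\cdots\rangle$ at a point $y$ shows that the geometric side is bounded by $\n{\phi_{d_0}}_\infty\,\boldsymbol{N}(\Gamma_N,d_0,y)$. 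Averaging over $y\in\Gamma_N\backslash\Gamma_1$ and invoking the Weak Injective Radius Property for $d_0\le 2\log_q([\Gamma_1:\Gamma_N])$ yields
\[
m(\pi,\Gamma_N)\,q^{(1-2/p-\eta)d_0} \;\ll_{\pi,\epsilon}\; [\Gamma_1:\Gamma_N]^{\epsilon}\,q^{d_0/2}.
\]
Finally I would optimize in $d_0$: taking $d_0 = 2\log_q([\Gamma_1:\Gamma_N])$, the right-hand side is $[\Gamma_1:\Gamma_N]^{1+\epsilon}$ while $q^{(1-2/p-\eta)d_0}=[\Gamma_1:\Gamma_N]^{2-4/p-2\eta}$, so $m(\pi,\Gamma_N)\ll_{\pi,\epsilon}[\Gamma_1:\Gamma_N]^{4/p-1+2\eta+\epsilon}$. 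This is weaker than $2/p$ when $p<4$, so one gains by iterating or by replacing the crude pointwise bound with an $L^2$-averaged one that keeps the full mass of $\beta$ rather than just its sup — the sharper version compares $\int_{\Gamma_N\backslash\Gamma_1}\langle R(\phi_{d_0}*\phi_{d_0}^*)w,w\rangle$, whose spectral side is $\gg m(\pi,\Gamma_N)\,q^{2(1-2/p-\eta)d_0}$ and whose geometric side is $\ll [\Gamma_1:\Gamma_N]^\epsilon q^{d_0/2}\cdot q^{d_0(1-2/p)}$ after using that $\phi_{d_0}*\phi_{d_0}^*$ still decays like a matrix coefficient of $p(\pi)$; re-optimizing then gives the clean exponent $2/p+\epsilon$.

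The main obstacle I anticipate is precisely the passage from the crude bound to the sharp one: controlling $\phi_{d_0}*\phi_{d_0}^*$, rather than $\phi_{d_0}$ alone, on the geometric side, because its support doubles to radius $2d_0$ and one needs the Weak Injective Radius Property up to radius $2\log_q([\Gamma_1:\Gamma_N])$ — which is exactly the range in which the hypothesis is stated, so this is not an accident. One must check that the convolution square is still bounded pointwise (or in the relevant weighted norm) by a function decaying at rate governed by $p(\pi)$, which requires the elementary estimate $\int_G \chi_{d_0}(x)\chi_{d_0}(x^{-1}g)\,dx \ll d_0^{O(1)} q^{\max(l(g),\,?)}\cdot(\text{decay})$ — a rank-dependent but purely combinatorial convolution-of-balls computation. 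A secondary technical point is handling $K$-finiteness: since $\pi$ need not be spherical, $v_0$ lives in some $K$-type $\tau$, and $\phi_{d_0}$ must be taken $\tau$-isotypic on both sides; this only changes constants (depending on $\dim\tau$, hence on the fixed $\pi$), so it does not affect the final exponent, but it must be stated cleanly. Everything else — volume growth of balls, the $L^p$-decay characterization of $p(\pi)$, positivity of $R(\phi*\phi^*)$ — is standard and can be cited.
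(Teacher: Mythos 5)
Your overall strategy --- build a bi-$K$-finite test function $\phi_{d_0}$ concentrated on a ball, correlated with the matrix coefficient $\beta(g)=\langle v_0,\pi(g)v_0\rangle$, apply the pre-trace formula to the positive-definite $\phi_{d_0}\ast\phi_{d_0}^{*}$, drop all spectral terms except the one for $\pi$, and bound the geometric side by the Weak Injective Radius Property via the Convolution Lemma --- is the paper's, packaged there through Proposition~\ref{prop:good family proposition} and Theorem~\ref{thm: good family pointwise intro}. But there are two substantial gaps.

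First, the claimed lower bound $\int_{l(g)\le d_0}|\beta|^2\,dg\gg_\eta q^{d_0(1-2/p-\eta)}$ is the technical heart of the theorem, and you justify it by ``$|\beta(g)|\asymp\delta^{1/p}(a_g)$ built into the definition of $p(\pi)$'', which is false. The definition of $p(\pi)$, together with Theorem~\ref{thm:CHH Lp leading coeff}, gives a pointwise \emph{upper} bound of this form, but no pointwise lower bound: the exponents in the asymptotic expansion of $\beta$ along $A_+$ generally have nonzero imaginary parts, so $\beta$ oscillates and has zeros tending to infinity. Nor does the non-integrability $\int_G|\beta|^{p'}\,dg=\infty$ for $p'<p$ rescue you: integrating by parts against the pointwise upper bound controls at most a $\limsup$ of the growth rate of $d\mapsto\int_{l\le d}|\beta|^2$, whereas the trace argument needs a $\liminf$ --- a priori the partial $L^2$-mass could plateau over long intervals. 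Ruling that out is precisely what the Casselman--Harish-Chandra--Mili\v{c}i\'{c} expansion (Theorem~\ref{thm:Leading exponents expansion}) and the non-cancellation Lemmas~\ref{lem:Terrible technical lemma} and~\ref{lem:terrible technical lemma2} accomplish, and this is essentially the whole content of Section~\ref{sec:pointwise lower bound}.

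Second, your arithmetic does not close with the test function as written. With $\phi_{d_0}$ proportional to $\chi_{d_0}\overline\beta$ (unweighted), the spectral side of $\phi_{d_0}\ast\phi_{d_0}^{*}$ is, as you say, $\gg m(\pi)\,q^{2d_0(1-2/p-\eta)}$, but the geometric bound furnished by Corollary~\ref{cor:trace equivalence} at radius $2d_0=2\alpha\log_q[\Gamma_1:\Gamma_N]$ is $\ll[\Gamma_1:\Gamma_N]^{1+\alpha+\epsilon}$, which gives only $m(\pi)\ll[\Gamma_1:\Gamma_N]^{1-\alpha+4\alpha/p+\epsilon}$, i.e.\ exponent $4/p$ (not $2/p$) at $\alpha=1$. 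The geometric bound $\ll[\Gamma_1:\Gamma_N]^\epsilon q^{d_0/2}\,q^{d_0(1-2/p)}$ you quote for the ``sharper version'' is missing the $[\Gamma_1:\Gamma_N]$ coming from the sum over $y$, and the finer decay of $\phi_{d_0}\ast\phi_{d_0}^*$ you appeal to is neither stated nor proved. The device that cleanly upgrades $4/p$ to $2/p$, which the paper uses and you are missing, is to \emph{weight} the test function by $q^{l(g)/p(\pi)}$: take $g_{d_0/2}(g)=\chi_{d_0/2}(g)\,q^{l(g)/p(\pi)}\,\overline{\beta(g)}$ and $f_{d_0}=g_{d_0/2}^{*}\ast g_{d_0/2}$. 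The weight cancels the decay of $|\beta|$, so $|g_{d_0/2}|\ll_\epsilon q^{d_0\epsilon}\chi_{d_0/2}$, Lemma~\ref{lem:convolution lemma} still gives $|f_{d_0}|\ll_\epsilon q^{d_0\epsilon}\psi_{d_0}$, and the geometric side is unchanged; meanwhile the spectral lower bound improves to $\tr\pi(f_{d_0})\gg q^{d_0(1-1/p-\epsilon)}$, since one integrates $q^{l/p}|\beta|^2$ rather than $|\beta|^2$ against the ball. Taking $d_0=2\alpha\log_q[\Gamma_1:\Gamma_N]$ then yields $m(\pi)\ll[\Gamma_1:\Gamma_N]^{1-\alpha(1-2/p)+\epsilon}$, which is $2/p+\epsilon$ at $\alpha=1$.
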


Figure~\ref{fig:SX properties relation} summarizes the different
relations between our main properties for a sequence of cocompact
lattices.

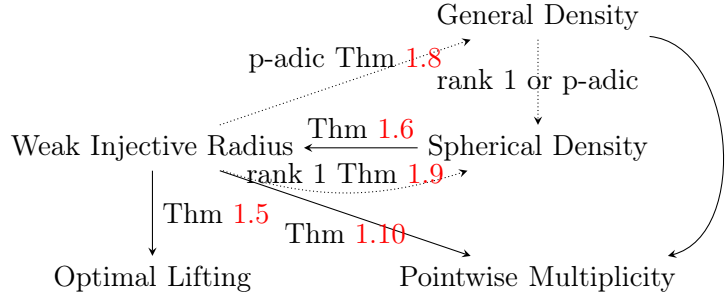
\begin{figure}[h]
\begin{tikzpicture}
\matrix (m) [matrix of math nodes, row sep=3em,
	column sep=3em]{
	 						   & \text{General Density} 	 &  \\
	\text{Weak Injective Radius}& \text{Spherical Density} 	&     \\
	\text{Optimal Lifting}	& \text{Pointwise Multiplicity} &	 \\};

\path[-stealth]     
	(m-2-1) edge node [right] {Thm \ref{thm:injective radius implies optimal lifting}}(m-3-1)
	(m-2-2) edge node [above]{Thm \ref{thm:density implies counting}} (m-2-1)
	(m-2-1) edge [densely dotted] node [above] {p-adic Thm \ref{thm:p-adic injective radius implies density}}(m-1-2)
	(m-1-2) edge [densely dotted] node  {rank 1 or p-adic}(m-2-2)
	(m-1-2) edge [bend left = 80](m-3-2)
	(m-2-1) edge node [below] {Thm \ref{thm:Diophantine implies pointwise}}(m-3-2)
	(m-2-1) edge [densely dotted,bend right=18] node [above] {rank 1 Thm \ref{thm:rank one injective radius implies density}}(m-2-2);
\end{tikzpicture}

\caption{\label{fig:SX properties relation}The relations between our main
properties for a sequence of cocompact lattices.}
\end{figure}
Let us end this introduction by stating some open problems this work
leads to.

The main open problem is to prove either the Spherical Density Hypothesis
or the Injective Radius Property for new cases, which would lead to
a proof of the Optimal Lifting Property. See Conjecture~\ref{conj: weak injective radius conjetcure}
for a general conjecture for the Archimedean case. Very few cases
of this conjecture are known for groups of rank greater than $1$.
We remark that it seems that the problem is harder for principal congruence
subgroups, and easier for group that are far from being normal, such
as $\Gamma_{0}(N)$ of $\SL_{m}(\Z)$ (see Subsection~\ref{subsec:Gamma0}).
See, for example, the \emph{Density Amplification} phenomena for
graphs in \citep{golubev2019cutoff2}.

In the more technical side, a main problem this work does not resolve
is Conjecture~\ref{conj:injective radius implies density archimedean},
which would show, in particular, that the Spherical Density Hypothesis
and the Injective Radius Property are indeed equivalent, also for
the Archimedean high rank case. It relates in particular to the understanding
of uniform \emph{lower bounds} on matrix coefficients, see, e.g., Conjecture~\ref{conj:mat-coeff-lower-bound}.

Finally, we only discuss multiplicities for cocompact lattices. We
strongly believe that the Weak Injective Radius Property has spectral
implications for non-uniform lattices as well, e.g., for bounds of
multiplicities of representations in the discrete spectrum. This problem
is related to the concentration of $L^{2}$-mass of non-tempered automorphic functions away from the cusp, in a uniform way. In hyperbolic spaces this problem is essentially solved, even for some discrete groups that are not lattices, thanks to the work of Gamburd on hyperbolic surfaces (\citep{gamburd2002spectral}) and the work of Magee on general hyperbolic spaces (\citep{magee2015quantitative}). Therefore, Theorem~\ref{thm:rank one injective radius implies density} can be generalized to such cases.

\subsection*{Structure of the Article}

In Section~\ref{sec:Applications-and-Open} we give some applications
of our results, and state some open problems.

In Section~\ref{sec:Main-Ideas} we present the main ideas behind
the proofs.

In Section~\ref{sec:Preliminaries} we collect various results, mainly
from representation theory. In particular, we discuss upper bounds
for matrix coefficients, which are well understood. Using those upper
bounds, we prove the essential Convolution Lemma~\ref{lem:convolution lemma}.

In Section~\ref{sec:Weak Injective Radius implies density} we discuss
the Weak Injective Radius Property and the spectral results
it implies. We prove Proposition~\ref{prop:good family proposition},
which reduces Theorem~\ref{thm:Diophantine implies pointwise}, Theorem~\ref{thm:rank one injective radius implies density},
and Theorem~\ref{thm:p-adic injective radius implies density} to
finding some explicit and strict lower bounds on matrix coefficients.

In Section~\ref{sec:Injective Radius implies Lifitng} we prove Theorem~\ref{thm:injective radius implies optimal lifting}.

In Section~\ref{sec:Density implies Stuff} we discuss the Spherical
Density Hypothesis and the results it implies. We prove Theorem~\ref{thm:density implies counting}
and prove Theorem~\ref{thm:spherical density implies optimal lifting},
which is a version of Theorem~\ref{thm:injective radius implies optimal lifting}
which assumes the Spherical Density Hypothesis and has stronger implications.

In Section~\ref{sec:Bernstein-Theory-of-NonBacktracking} we discuss
Bernstein description of the Hecke algebra in the non-Archimedean
case, and prove Theorem~\ref{thm:good family padic intro}, which
implies Theorem~\ref{thm:p-adic injective radius implies density}
together with Proposition~\ref{prop:good family proposition}

In Section~\ref{sec:pointwise lower bound} we discuss the theory
of leading coefficients and prove Theorem~\ref{thm: good family pointwise intro},
which implies Theorem~\ref{thm:Diophantine implies pointwise} together
with Proposition~\ref{prop:good family proposition}.

\subsection*{Acknowledgments}

The authors are thankful to Amos Nevo and Peter Sarnak for their support of this project.

During the work on this project, the first author was supported by the SNF grant 200020-169106 at ETH
Zurich. This work was part of the Ph.D. thesis of the second author at the Hebrew University of Jerusalem, under the guidance of Prof.
Alexander Lubotzky, and was supported by the ERC grant 692854.

\section{\label{sec:Applications-and-Open}Applications and Open Problems}

\subsection{Ramanujan Graphs and Complexes}

We shortly note that the results of this paper, and in particular
Theorem~\ref{thm:injective radius implies optimal lifting} (or the
stronger Theorem~\ref{thm:spherical density implies optimal lifting})
apply to Ramanujan Complexes, by which we mean here the situation
when $G$ is $p$-adic, $\Gamma_1$ is cocompact, and no non-tempered
and non-trivial spherical representation appears in the decomposition
of $L^{2}(\Gamma_{N}\backslash G)$. In this case, the
sequence $(\Gamma_N) $ obviously satisfies the Spherical
Density Hypothesis. The Ramanujan complexes themselves are the quotients
of the Bruhat-Tits buildings of $G$ by $\Gamma_{N}$. Such complexes were constructed (with the same definition) by Lubotzky, Samuels and Vishne (see \citep{lubotzky2005ramanujan,lubotzky2005explicit}).
Similar results to Theorem~\ref{thm:spherical density implies optimal lifting}
appear in \citep{kamber2016lpcomplex,lubetzky2020random,chapman2019cutoff}.

Note that the definition we use is not the same as in the more modern
approach to Ramanujan complexes, where one considers not only spherical
representation but also representations with a non-trivial vector
fixed by the Iwahori subgroup (See \citep[Subsection 2.3]{lubotzky2018high}
and the references within).

The standard way of proving that a complex is a Ramanujan complex
is to use results from the Langlands program (e.g.~\citep{lafforgue2002chtoucas}), and to eventually apply Deligne's proof of the Weil conjectures (\citep{deligne1974conjecture}).
This approach has obvious limitations, and in particular it seems
that one must assume that $G$ is $p$-adic for it to succeed. We
refer to \citep{evra2018ramanujan} for some recent work on this subject,
and to an ongoing and yet unpublished work of Shai Evra. 

We will avoid diving deeper into this subject, as it is based on spectral input, unlike our approach which is geometric.

\subsection{\label{subsec:Adding-a-Parameter}Adding a Parameter to the Properties}

It is useful to add a parameter $0<\alpha\le1$ to the properties,
with $\alpha=1$ being equivalent to the property without the parameter.
\begin{defn*}
We say that the sequence $(\Gamma_N) $ satisfies
the \emph{Weak Injective Radius Property with parameter $\alpha$,}
if for every $0\le d_{0}\le2\alpha\log_{q}([\Gamma_1:\Gamma_N])$,
$\epsilon>0$,
\[
\frac{1}{[\Gamma_1:\Gamma_N]}\sum_{y\in\Gamma_{N}\backslash\Gamma_{1}}\boldsymbol{N}(\Gamma_N,d_{0},y)\ll_{\epsilon}[\Gamma_1:\Gamma_N]^{\epsilon}q^{d_{0}/2}.
\]
\end{defn*}
\begin{defn*}
We say that the sequence $(\Gamma_N) $ satisfies
the General Density Hypothesis with parameter $\alpha$ if for every
precompact subset $A\subset\Pi(G)$ and every $\epsilon>0$,
$N$ and $p\ge2$,
\[
M(A,\Gamma,p)\ll_{\epsilon,A}[\Gamma_1:\Gamma_N]^{1-\alpha(1-2/p)+\epsilon}.
\]
\end{defn*}
One may similarly define the Spherical Density Hypothesis with parameter
$\alpha$ and the Pointwise Density Hypothesis with parameter $\alpha$,
by changing the exponent $2/p$ to $1-\alpha(1-2/p).$

Figure~\ref{fig:SX properties relation} remains true if we replace
the properties with their parameterized version, for the same parameter
$0<\alpha\le1$, with the exception that the derivation of the Optimal
Lifting Property from the Spherical Density Hypothesis requires $\alpha =1$.

In the work itself, we work with the parameterized version of the properties.

\subsection{\label{subsec:Density-from-inj-radius}Congruence Subgroups of Arithmetic Groups}

Let $\Gamma_1$ be an arithmetic lattice in a semisimple non-compact
Lie group $G$. Following \citep{sarnak1991bounds}, by arithmetic
we mean that $G$ is defined over $\Q$, $\Gamma_1\subset G(\Q)$,
there is a $\Q$-embedding $\rho\colon G\to \GL_{n}$ and $\Gamma_1$ is
commensurable with $\rho^{-1}(\GL_{n}(\Z))$.

In this case we may define a sequence of principle congruence subgroups
$(\Gamma_N) $ of $\Gamma_1$ by letting 
\[
\Gamma_{N}=\Gamma_1(N)=\Gamma_1\cap\rho^{-1}(\left\{ A\in \GL_{n}(\Z):A\equiv I\mod N\right\}).
\]

It is a well known fact that such subgroups have an injective radius which is logarithmic in the index (e.g., \citep[Lemma 1]{sarnak1991bounds}
or \citep[Proposition 16]{guth2014quantum}).

Let us shortly give the argument -- we assume by moving to a finite
index in $\Gamma_1$ that $\Gamma_1\subset\rho^{-1}(\GL_{n}(\Z))$.
In this case $\Gamma_{N}$ is normal in $\Gamma_1$ and it is left
to verify that for every $d_{0}\le2\alpha\ln([\Gamma_1:\Gamma_N])$
and $\epsilon>0$,
\[
\#\left\{ \gamma\in\Gamma_{N}:l(\gamma)\le d_{0}\right\} \ll_{\epsilon}e^{d_{0}(1/2+\epsilon)}.
\]
We note that there exists a constant $C$ such that for $g\in G$
outside a compact set the length $l(g)$ we defined satisfies
\[
C^{-1}\ln(\n{\rho(g)})\le l(g)
\]
where $\|\cdot\|\colon \GL_{n}(\R)\to\R_{\ge0}$ is the maximal
absolute value of an entry of the matrix. Since each element of $\rho(\Gamma_N)$ which is not the identity has an entry of size $N$, We deduce that for every $N$ large enough, 
\[
\left\{ \gamma\in\Gamma_{N}:l(\gamma)<C\ln(N)\right\} =\{ I\} .
\]

On the other hand, there is an injective map of $\Gamma_1/\Gamma_N$ into $\GL_n(\Z/N\Z)$, so
\[
[\Gamma_1:\Gamma_N]\ll N^{n^{2}}
\]
(and the exponent $n^2$ can usually be improved).

Combining the different estimates, for $d_{0}<2\frac{C}{2n^{2}}\ln([\Gamma_{1}:\Gamma_N])\le2\frac{C}{2n^{2}}N^{n^{2}})=C\ln(N)$,
it holds that for $N$ large enough,
\[
\#\left\{ \gamma\in\Gamma_{N}:l(\gamma)\le d_{0}\right\} =1.
\]

Therefore, the Weak Injective Radius Property is satisfied with parameter
$\alpha=\frac{C}{2n^{2}}$.
\begin{rem}
Pushing this argument further, we may take the parameter to be $\alpha=\frac{C}{d}$, where $d = \dim G$, at least when $G$ is split.
For example, for the principal congruence subgroups of $\SL_{2}(\Z)$
one gets this way the parameter $\alpha=\frac{2}{3}$, while $\alpha=1$
can be reached by a better analysis, see Subsection~\ref{subsec:Sarnak-Xue work} below.
\end{rem}

As a direct application of this fact we have:
\begin{cor}
\label{cor:principal congruence}Let $G$ be Archimedean, let $\Gamma_1$
be an arithmetic lattice and let $(\Gamma_N) $ be
the sequence of principle congruence subgroups of $\Gamma_1$. Then
the sequence $(\Gamma_N) $ satisfies the Weak Injective
Radius Property with parameter $\alpha=\alpha(\Gamma_1)$.
\end{cor}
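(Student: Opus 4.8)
The plan is to recognise that the entire argument has already been written out in the subsection preceding the statement, so the ``proof'' of the Corollary is just its repackaging. Concretely, I would prove the much stronger claim that there is $\alpha=\alpha(\Gamma_1)>0$ such that, for $N$ large, $\boldsymbol{N}(\Gamma_N,d_0,y)=1$ for \emph{every} $y\in\Gamma_N\backslash\Gamma_1$ and every $d_0$ in the admissible range $0\le d_0\le 2\alpha\log_q([\Gamma_1:\Gamma_N])$. Given this, the left-hand side of the Weak Injective Radius Property equals $\tfrac{1}{[\Gamma_1:\Gamma_N]}\cdot[\Gamma_1:\Gamma_N]\cdot 1=1$, which is $\le[\Gamma_1:\Gamma_N]^{\epsilon}q^{d_0/2}$ for all $\epsilon>0$; since the $O$-notation only asks for the bound for $N$ large, this is the Property with parameter $\alpha$. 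All the content is therefore in producing $\alpha$.

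For that I would argue as in the text. The only fact about $\Gamma_1$ that is used is that, by commensurability of $\Gamma_1$ with $\rho^{-1}(\GL_n(\Z))$, the matrices $\rho(g)$ and $\rho(g)^{-1}$ have uniformly bounded denominators for $g\in\Gamma_1$ (equivalently, one may pass to a finite-index subgroup and assume $\rho(\Gamma_1)\subset\GL_n(\Z)$, which only affects the value of $\alpha(\Gamma_1)$). Since $\rho(\gamma)-I\in N\cdot M_n(\Z)$ for $\gamma\in\Gamma_N=\Gamma_1(N)$, the identity $\rho(y^{-1}\gamma y)-I=\rho(y)^{-1}(\rho(\gamma)-I)\rho(y)$ shows that, after clearing a bounded denominator, this is a nonzero integer matrix with every entry divisible by $N$ whenever $\gamma\ne I$; hence $\n{\rho(y^{-1}\gamma y)}\gg_{\Gamma_1} N$. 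Now feed in the two estimates already recorded before the Corollary: there is $C>0$ with $C^{-1}\ln\n{\rho(g)}\le l(g)$ for $g$ outside a fixed compact set, and the injection $\Gamma_1/\Gamma_N\hookrightarrow\GL_n(\Z/N\Z)$ gives $[\Gamma_1:\Gamma_N]\ll N^{n^2}$, i.e.\ $\ln N\gg n^{-2}\ln([\Gamma_1:\Gamma_N])$. Combining, for $N$ large every non-identity $\gamma\in\Gamma_N$ satisfies $l(y^{-1}\gamma y)\ge c\ln([\Gamma_1:\Gamma_N])$ with $c=c(\Gamma_1)>0$ a fixed multiple of $(Cn^2)^{-1}$; choosing $\alpha:=c/4$ then forces $d_0\le 2\alpha\log_q([\Gamma_1:\Gamma_N])<l(y^{-1}\gamma y)$ uniformly in $d_0$ in the admissible range, so only $\gamma=I$ contributes and $\boldsymbol{N}(\Gamma_N,d_0,y)=1$, as needed.

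I do not expect a genuine obstacle: the inequality is extremely lossy --- it discards a full power of $[\Gamma_1:\Gamma_N]$ relative to what the Property allows --- precisely because principal congruence subgroups have essentially optimal logarithmic injective radius. The only points requiring a little care are the passage to $\rho(\Gamma_1)\subset\GL_n(\Z)$ (or the equivalent bounded-denominator bookkeeping for coset representatives, needed to handle the conjugation $y^{-1}\gamma y$ for general $y$) and tracking the constants that determine $\alpha(\Gamma_1)$. Obtaining the sharper value $\alpha=C/\dim G$ mentioned in the Remark would instead require estimating the Cartan projection of $\gamma$ rather than merely $\n{\rho(\gamma)}$, but that refinement is not needed here, where any $\alpha>0$ suffices.
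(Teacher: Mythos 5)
Your proof is correct and follows essentially the same route as the paper's own discussion preceding the Corollary: the integrality of $\rho(\gamma)-I$ (entries divisible by $N$) combined with the estimates $C^{-1}\ln\n{\rho(g)}\le l(g)$ and $[\Gamma_1:\Gamma_N]\ll N^{n^2}$ forces every nontrivial conjugate in $\Gamma_N$ to have $l$-length $\gg \ln([\Gamma_1:\Gamma_N])$, so the count $\boldsymbol{N}(\Gamma_N,d_0,y)$ is identically $1$ throughout the admissible range. The only cosmetic difference is that the paper passes to a finite-index subgroup to make $\Gamma_N$ normal and thereby reduce to $y=e$, whereas you bound $l(y^{-1}\gamma y)$ directly via $\rho(y)^{-1}(\rho(\gamma)-I)\rho(y)$ and bounded denominators --- both are fine and yield the same conclusion, differing only in the unspecified constant $\alpha(\Gamma_1)$.
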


As a matter of fact, this corollary can be easily be extended to principal
congruence subgroups of $S$-arithmetic groups, once those are properly
defined.

For arbitrary congruence subgroups, let us recall some of the results
of \citep{finis2018approximation} (the same result can be deduced
from \citep[Theorem 1.11 and Theorem 5.6]{abert2017growth}).

First, for $\gamma\in\Gamma_1$, we denote 
\[
c_{\Gamma_{N}}(\gamma)=\left|\left\{ y\in\Gamma_{N}\backslash\Gamma_{1}:y\gamma y^{-1}\in\Gamma_N\right\} \right|,
\]
which is the number of fixed points of the right action of $\gamma$
on $\Gamma_{N}\backslash\Gamma_1$. Then the Weak Injective Radius
Property with parameter $\alpha$ is equivalent to the fact that for
every $0\le d_{0}\le2\alpha\ln([\Gamma_1:\Gamma_N])$,
$\epsilon>0$,
\[
\frac{1}{[\Gamma_1:\Gamma_N]}\sum_{l(\gamma)\le d_{0}}c_{\Gamma_{N}}(\gamma)\ll_{\epsilon}[\Gamma_1:\Gamma_N]^{\epsilon}e^{d_{0}(\nicefrac{1}{2}+\epsilon)}.
\]

A congruence subgroup is a subgroup of one of the groups $\Gamma_1(N)$
as above.
\begin{thm}[{Following \citep[Corollary 5.9]{finis2018approximation}}]
\label{thm:non-principal congruence subgroups}Let $G$ be Archimedean,
semisimple, almost-simple and simply connected, let $\Gamma_1$
be an arithmetic lattice and let $(\Gamma_N) $ be
a sequence of congruence subgroups of $\Gamma_1$. Then there exist
constants $\mu>0,c>0$ depending only on $\Gamma_1$ such that for
every $d_{0}>0$, $N\ge1$ it holds that 
\begin{equation}
\frac{1}{[\Gamma_1:\Gamma_N]}\sum_{\gamma\in\Gamma_1,l(\gamma)\le d_{0}}c_{\Gamma_{N}}(\gamma)\ll1+e^{cd_{0}}[\Gamma_1:\Gamma_N]^{-\mu}.\label{eq:7samurai}
\end{equation}

In particular, the injective radius property holds with parameter
$\alpha=\frac{\mu}{c}$.
\end{thm}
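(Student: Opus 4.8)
The statement is a translation of \citep[Corollary 5.9]{finis2018approximation} (one could equally invoke \citep[Theorems 1.11 and 5.6]{abert2017growth}) into the normalisation used here, and the plan is: quote the relevant uniform estimate from \emph{loc.\ cit.}, convert between the matrix norm $\n{\rho(\cdot)}$ and our length $l$, separate off the contribution of the centre (which accounts for the ``$1$'' on the right of \eqref{eq:7samurai}), and finally read off the parameter $\alpha$ for the Weak Injective Radius Property.

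The only substantial input is the following uniform bound, whose validity rests on $G$ being semisimple, almost-simple and simply connected: there are constants $A>0$ and $\delta>0$, depending only on $\Gamma_1$ and on the fixed $\Q$-embedding $\rho\colon G\hookrightarrow\GL_n$, such that for every $X\ge1$ and every congruence subgroup $\Gamma_N\le\Gamma_1$,
\[
\sum_{\gamma\in\Gamma_1\setminus Z(G),\ \n{\rho(\gamma)}\le X}c_{\Gamma_N}(\gamma)\ \ll\ X^{A}[\Gamma_1:\Gamma_N]^{1-\delta}.
\]
I take this as a black box. Its content is that for \emph{noncentral} $\gamma$ the condition $\delta\gamma\delta^{-1}\in\Gamma_N$ singles out a genuinely small set of cosets $\delta\Gamma_N$: by strong approximation (Matthews--Vaserstein--Weisfeiler, Nori) one reduces, prime by prime, to counting $\bar\delta\in G(\Z/p^{k})$ with $\bar\delta\bar\gamma\bar\delta^{-1}$ in the reduction of $\Gamma_N$, and Lang--Weil-type estimates show these form a proportion $\ll(\#G(\Z/p^{k}))^{-\delta}$ of $G(\Z/p^{k})$; hence $c_{\Gamma_N}(\gamma)\ll[\Gamma_1:\Gamma_N]^{1-\delta}$ for each such $\gamma$, and summing over the $\ll X^{A}$ noncentral $\gamma\in\Gamma_1$ with $\n{\rho(\gamma)}\le X$ gives the display.

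Granting this, first note that central elements $z\in Z(G)\cap\Gamma_1$ satisfy $l(z)=0$ (they lie in $K$), that $c_{\Gamma_N}(z)=[\Gamma_1:\Gamma_N]$ if $z\in\Gamma_N$ and $0$ otherwise, and that there are at most $\#Z(G)$ of them, so their total contribution to $\sum_{l(\gamma)\le d_0}c_{\Gamma_N}(\gamma)$ is $O([\Gamma_1:\Gamma_N])$. For the noncentral terms, recall from Subsection~\ref{subsec:Density-from-inj-radius} that there is a compact $\Omega\subset G$ and a constant $C>0$ with $\ln\n{\rho(g)}\le C\,l(g)$ for $g\notin\Omega$; since $\sup_{g\in\Omega}\n{\rho(g)}<\infty$ it follows that $\{\gamma\in\Gamma_1:l(\gamma)\le d_0\}\subseteq\{\gamma\in\Gamma_1:\n{\rho(\gamma)}\le e^{C'd_0}\}$ for a suitable $C'=C'(\Gamma_1)$ and all $d_0\ge0$. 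Applying the black-box estimate with $X=e^{C'd_0}$ gives $\sum_{\gamma\in\Gamma_1\setminus Z(G),\ l(\gamma)\le d_0}c_{\Gamma_N}(\gamma)\ll e^{C'Ad_0}[\Gamma_1:\Gamma_N]^{1-\delta}$; adding back the central contribution, dividing by $[\Gamma_1:\Gamma_N]$, and setting $c=C'A$, $\mu=\delta$ (both functions of $\Gamma_1$ alone) yields exactly \eqref{eq:7samurai}.

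For the last assertion one combines \eqref{eq:7samurai} with the reformulation of the parameterised Weak Injective Radius Property recalled just before the theorem: taking $\alpha=\mu/c$ and $0\le d_0\le2\alpha\ln([\Gamma_1:\Gamma_N])$ one has $e^{cd_0}[\Gamma_1:\Gamma_N]^{-\mu}\le[\Gamma_1:\Gamma_N]^{\mu}$, so the average in that reformulation is $\ll[\Gamma_1:\Gamma_N]^{\mu}$, and a routine comparison with $[\Gamma_1:\Gamma_N]^{\epsilon}e^{d_0(1/2+\epsilon)}$ over this range of $d_0$ gives the property with $\alpha=\mu/c$. The whole difficulty is concentrated in the black-box estimate --- everything after it is bookkeeping --- and one should note that the argument is lossy: it produces some $\alpha>0$ but typically $\alpha\ll1$, whereas the Optimal Lifting application of Theorem~\ref{thm:injective radius implies optimal lifting} requires $\alpha=1$, which remains the open problem highlighted in the introduction.
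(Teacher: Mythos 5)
Your proof follows essentially the same route as the paper: quote the Finis--Lapid bound on $c_{\Gamma_N}(\gamma)$ for noncentral $\gamma$, compare $\n{\rho(\cdot)}$ with $l$ as in Subsection~\ref{subsec:Density-from-inj-radius}, sum over the lattice points, separate off the (finitely many) central elements which supply the ``$1$'', and read off $\alpha$. The only cosmetic difference is that you package the input as a summed estimate in $\n{\rho(\cdot)}$ while the paper quotes a pointwise bound $c_{\Gamma_N}(\gamma)\ll e^{c'l(\gamma)}[\Gamma_1:\Gamma_N]^{1-\mu}$ and sums afterwards; the bookkeeping at the end is identical to the paper's, including the harmless overstatement of the parameter as $\mu/c$ (the inequalities as written actually give $\alpha=\mu/(2c-1)$, a distinction that evaporates since $\mu,c$ are not explicit).
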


\begin{proof}
By \citep[Corollary 5.9]{finis2018approximation}, there exists a constant $c'$ such that for every $\gamma\in\Gamma_1$ which does not belong to a proper normal subgroup of $G$ it holds that 
\[
\frac{1}{[\Gamma_1:\Gamma_N]}c_{\Gamma_{N}}(\gamma)\ll e^{c'l(\gamma)}[\Gamma_1:\Gamma_N]^{-\mu}.
\]
We remark that the dependence on $\gamma$ in \citep{finis2018approximation}
is different, but it is obvious that $e^{cl(\gamma)}$ for $c$ large enough is an upper bound on it. By our assumptions on $G$, the number of $\gamma\in\Gamma_1$ belonging to a proper normal subgroup is $\ll 1$.

By summing over all $\gamma\in\Gamma_1$ with $l(\gamma)\le d_{0}$
we get
\begin{align*}
\frac{1}{[\Gamma_1:\Gamma_N]}\sum_{\gamma\in\Gamma_1,l(\gamma)\le d_{0}}c_{\Gamma_{N}}(\gamma) & \ll\frac{[\Gamma_1:\Gamma_N]}{[\Gamma_1:\Gamma_N]}+e^{c'd_{0}}[\Gamma_1:\Gamma_N]^{-\mu}\sum_{\gamma\in\Gamma_1,l(\gamma)\le d_{0}}1\\
 & \ll1+e^{(1+c')d_{0}}[\Gamma_1:\Gamma_N]^{-\mu},
\end{align*}
which implies Equation~$\eqref{eq:7samurai}$ with $c=1+c'$.

When $d_{0}\le2\frac{\mu}{c}\ln([\Gamma_{1}:\Gamma_N])$,
or $[\Gamma_1:\Gamma_N]^{-\mu}\le e^{d_{0}c/2}$ we get
\[
\frac{1}{[\Gamma_1:\Gamma_N]}\sum_{\gamma\in\Gamma_1,l(\gamma)\le d_{0}}c_{\Gamma_{N}}(\gamma)\ll e^{d_{0}/2},
\]
which implies the Weak Injective Radius Property with parameter $\alpha=\frac{\mu}{c}$,
as needed.
\end{proof}
In the cocompact case, Theorem~\ref{thm:Diophantine implies pointwise}
implies the Pointwise Multiplicity Hypothesis with parameter $\alpha=\frac{\mu}{c}$,
namely 
\[
m(\pi,\Gamma_N)\ll_{\pi,\epsilon}[\Gamma_1:\Gamma_N]^{1-\frac{\mu}{c}(1-\nicefrac{2}{p(\pi)})+\epsilon}.
\]
It should be compared with \citep[Theorem 7.15]{abert2017growth},
which states
\[
m(\pi,\Gamma_N)\ll_{\pi}[\Gamma_1:\Gamma_N]^{1-\alpha(\pi)}.
\]

Finally, we with to make the following conjecture:
\begin{conjecture}
\label{conj: weak injective radius conjetcure}Let $G$ be Archimedean,
semisimple, almost-simple and simply connected, let $\Gamma_1$
be an arithmetic lattice and let $(\Gamma_N) $ be
a sequence of congruence subgroups of $\Gamma_1$. Then the sequence
$(\Gamma_N) $ satisfies the Weak Injective Radius
Property with parameter $\alpha=1$, and if $\Gamma_1$ is cocompact
then the sequence $(\Gamma_N) $ satisfies the General
Density Hypothesis with parameter $\alpha=1$.

As a corollary, the sequence $(\Gamma_N) $ has the
Optimal Lifting Property.
\end{conjecture}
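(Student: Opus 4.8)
The Optimal Lifting conclusion follows from the Weak Injective Radius Property with parameter $\alpha=1$ together with a Spectral Gap, by Theorem~\ref{thm:injective radius implies optimal lifting}, and a uniform Spectral Gap for congruence subgroups of arithmetic lattices is known (property $(\tau)$). So the task is to establish the two stated properties, and the plan is to decouple them: first prove the purely geometric Weak Injective Radius Property with $\alpha=1$ for all congruence subgroups; then, in the cocompact case, upgrade to the General Density Hypothesis with $\alpha=1$, either by proving Conjecture~\ref{conj:injective radius implies density archimedean} in the Archimedean high-rank range and feeding in the first step, or by a direct automorphic argument. In the cocompact case one could alternatively aim straight at the Spherical Density Hypothesis with $\alpha=1$, since by Theorem~\ref{thm:density implies counting} that already recovers Weak Injective Radius with $\alpha=1$; but the non-uniform case would still need its own geometric treatment.

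For the first step one would refine the count behind Theorem~\ref{thm:non-principal congruence subgroups} so as to replace its exponent $\mu/c$ by $1$. Concretely, stratify the $\gamma\in\Gamma_1$ with $l(\gamma)\le d_0$ according to, roughly, the conductor $m\mid N$ with $\gamma\equiv I\bmod m$; bound the number of fixed points $c_{\Gamma_N}(\gamma)$ by the index of the associated centralizer congruence subgroup; and bound the number of $\gamma$ in each stratum by a Lipschitz-type lattice-point count in the matrix realization $\rho(G)\subset\GL_n$. Summing, the dominant contribution should be that of the nearly central elements, and the Sarnak--Xue heuristic \citep{sarnak1991bounds} predicts exactly square-root cancellation, i.e.\ a bound $\ll_\epsilon [\Gamma_1:\Gamma_N]^{\epsilon}q^{d_0/2}$ throughout the range $d_0\le 2\log_q([\Gamma_1:\Gamma_N])$. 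For \emph{principal} congruence subgroups this is essentially available (it is how Corollary~\ref{cor:principal congruence} is proved, losing only in the exponent $\alpha(\Gamma_1)$), so the real content is the non-principal case, where the combinatorics of how conjugacy classes of $\Gamma_1$ meet $\Gamma_N$ must be pushed to the optimal exponent.

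For the second step there are two natural routes. The representation-theoretic route is to prove Conjecture~\ref{conj:injective radius implies density archimedean} in full: by the method of Proposition~\ref{prop:good family proposition} this reduces to producing sufficiently strong and, crucially, \emph{uniform} lower bounds on spherical matrix coefficients (in the spirit of Conjecture~\ref{conj:mat-coeff-lower-bound}), after which the geometric count of the first step enters the Convolution Lemma~\ref{lem:convolution lemma} and bounds the multiplicities $M(A,\Gamma_N,p)$. The analytic route --- used for $\SL_2$ by Huxley and Sarnak--Xue, and for $\SL_3$ by Blomer and collaborators \citep{huxley1986exceptional,sarnak1991bounds,blomer2019density} --- is to insert the same geometric count into the spectral side of a Kuznetsov or Arthur--Selberg trace formula and combine it with the strongest known approximations to the Ramanujan conjecture; for classical groups this is a plausible, if technically heavy, program.

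The main obstacle is the high-rank ($\rank G\ge 2$) case of the second step. There the uniform matrix-coefficient lower bounds that would drive ``Weak Injective Radius $\Rightarrow$ Density'' are not known --- this is precisely what Conjecture~\ref{conj:injective radius implies density archimedean} leaves open --- while the automorphic route needs simultaneously an optimal geometric count \emph{and} strong progress towards Ramanujan, neither available in general. Thus the geometric half (the Weak Injective Radius Property with $\alpha=1$, at least for principal congruence subgroups) looks the more accessible, whereas the density statement in Archimedean high rank, and with it the Optimal Lifting Property in full generality, remains genuinely open; the ongoing Langlands-theoretic work of Evra \citep{evra2018ramanujan} may resolve cases resembling the $p$-adic situation already handled by Theorem~\ref{thm:p-adic injective radius implies density}.
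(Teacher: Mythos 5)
This is a conjecture; the paper states it without proof as the main open problem, so there is no argument of the paper's to compare yours against. What you have written is a roadmap rather than a proof, and it is accurate: the Optimal Lifting corollary does follow from the Weak Injective Radius Property with $\alpha=1$ plus Spectral Gap via Theorem~\ref{thm:injective radius implies optimal lifting}, and you are right to supply the Spectral Gap hypothesis via property $(\tau)$, which the paper leaves implicit. The decoupling of the two claims, the observation that Theorem~\ref{thm:density implies counting} gives the reverse implication (Spherical Density $\Rightarrow$ Weak Injective Radius) so only the geometric claim needs a separate proof in the non-uniform case, and the identification of Conjecture~\ref{conj:injective radius implies density archimedean} (together with Conjecture~\ref{conj:mat-coeff-lower-bound}) as the missing ``geometric $\Rightarrow$ spectral'' bridge in high Archimedean rank all match the paper's structure as summarized in Figure~\ref{fig:SX properties relation}. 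Your description of the geometric step as pushing the Theorem~\ref{thm:non-principal congruence subgroups} exponent $\mu/c$ to $1$ is also the right frame.

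Two small corrections. First, your closing sentence ties the openness of the Optimal Lifting Property to the density statement, but by Theorem~\ref{thm:injective radius implies optimal lifting} Optimal Lifting needs only the Weak Injective Radius Property and Spectral Gap and no density input; it is open because the Weak Injective Radius Property with $\alpha=1$ is open in high rank, not because General Density is. Second, when you say ``the geometric count of the first step enters the Convolution Lemma~\ref{lem:convolution lemma} and bounds the multiplicities,'' the more precise mechanism in the paper is: the geometric count enters via the trace-formula equivalence of Corollary~\ref{cor:trace equivalence}, the Convolution Lemma is used to build candidate good families $f_{d_0}$ satisfying property (2) of Definition~\ref{def:Good family}, and the genuinely missing ingredient is the uniform lower bound on $\tr\pi(f_{d_0})$ (property (1)), which is what Conjecture~\ref{conj:mat-coeff-lower-bound} would provide. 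Neither correction changes the substance of your assessment.
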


The conjecture generalizes Conjecture~$1$ and Conjecture~$2$ from
the work of Sarnak and Xue (\citep{sarnak1991bounds}).
Finally, a similar conjecture should also hold in the $S$-arithmetic setting when $G$ is $p$-adic.

\subsection{\label{subsec:Sarnak-Xue work}The Work of Sarnak and Xue and its
Implications}

Sarnak and Xue proved the Weak Injective Radius Property with parameter
$\alpha=1$ for principal congruence subgroups of cocompact arithmetic lattices in $\SL_{2}(\R)$ and $\SL_{2}(\C)$,
and proved the Weak Injective Radius Property with parameter $\alpha=5/6$ for principal congruence subgroups of cocompact arithmetic lattices of $\SU(2,1)$.

We refer to \citep[Section 3]{sarnak1991bounds} for their calculations. Their argument actually works also for principal congruence subgroups of lattices in $\SL_2$ over $p$-adic fields coming from division algebras. See, e.g., \citep[Theorem 4.4.4]{davidoff2003elementary}.

As a simple example, let us prove here the Weak Injective Radius Property for the principal congruence subgroups of $\SL_{2}(\Z)$. In this case, we need to show that the number $\boldsymbol{N}(T,\Gamma(N))$,
of solutions to $ad-bc=1$, with $a\equiv d\equiv1\mod N$, $b\equiv c\equiv0\mod N$
and $\max\left\{ \left|a\right|,\left|b\right|,\left|c\right|,\left|d\right|\right\} \le T$,
for $T\le N^{3}$, is bounded by
\[
\boldsymbol{N}(T,\Gamma(N))\ll_{\epsilon}T^{1+\epsilon}.
\]

This is done as follows (see also \citep[Proposition 5.3]{gamburd2002spectral}).
From the congruence condition it follows that
\[
a+d-2=-(a-1)(d-1)+bc\equiv0\mod N^{2}.
\]
One may therefore choose $a+d$ in $(2T/N^{2}+1)$ ways,
and choose $(a,d)$ in $(2T/N^{2}+1)(2T/N+1)$
ways.

If $ad\ne1$, from $bc=1-ad$ and bounds on the divisor function,
there are $T^{\epsilon}$ ways of choosing $bc$. If $ad=1$ it is
also simple to bound the number of possibilities of $b,c$ by $4(T/N+1)$.
In total, we get for $T\le N^{3}$,
\[
\boldsymbol{N}(T,\Gamma(N))\ll_{\epsilon}T^{\epsilon}(T/N^{2}+1)(T/N+1)\ll_{\epsilon}T^{1+\epsilon}.
\]

\subsection{\label{subsec:Gamma0}On Some Congruence Subgroups of $\SL_3(\Z)$}

Consider the following subgroups of $\Gamma_{1}=\SL_{3}(\Z)$:
\begin{align*}
\Gamma_{0}(N)&=\left\{ \left(\begin{array}{ccc}
* & * & *\\*
* & * & *\\
a & b & *
\end{array}\right)\in \SL_{3}(\Z):a\equiv b\equiv0\mod N\right\} \subset \\
\Gamma_{2}(N)&=\left\{ \left(\begin{array}{ccc}
* & * & *\\*
c & * & *\\
a & b & *
\end{array}\right)\in \SL_{3}(\Z):a\equiv b\equiv c \equiv 0\mod N\right\} \subset\Gamma_{1}=\SL_{3}(\Z).
\end{align*}

In a companion paper by the second named author and Hagai Lavner (\citep{kamber2019optimal}), the following is proven:
\begin{thm}
The sequences of lattices $(\Gamma_{0}(N))$ and $(\Gamma_2(N))$, for $N$ prime,
have the Weak Injective Radius Property (with parameter $1$).

As a result, the sequences have the Optimal Lifting Property.
\end{thm}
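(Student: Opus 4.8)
The plan is to establish the Weak Injective Radius Property with parameter $\alpha=1$ in the equivalent counting form recorded in Subsection~\ref{subsec:Density-from-inj-radius}: writing $\Gamma_N$ for either $\Gamma_{0}(N)$ or $\Gamma_{2}(N)$ and $q=e$, it suffices to prove that for every $\epsilon>0$ and every $0\le d_{0}\le 2\ln[\Gamma_{1}:\Gamma_N]$,
\[
\sum_{\gamma\in\Gamma_{1},\ l(\gamma)\le d_{0}}c_{\Gamma_N}(\gamma)\ \ll_{\epsilon}\ [\Gamma_{1}:\Gamma_N]^{1+\epsilon}\,e^{d_{0}/2}.
\]
The key structural input, valid because $N$ is prime, is that $\Gamma_{1}=\SL_3(\Z)$ surjects onto $\SL_3(\mathbb{F}_N)$ and $\Gamma_N$ is the full preimage of a parabolic subgroup $Q\subseteq\SL_3(\mathbb{F}_N)$ -- the stabilizer of a plane (equivalently a point of $\mathbb{P}^2(\mathbb{F}_N)$) when $\Gamma_N=\Gamma_{0}(N)$, and a Borel subgroup when $\Gamma_N=\Gamma_{2}(N)$. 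Thus $\Gamma_N\backslash\Gamma_{1}\cong Q\backslash\SL_3(\mathbb{F}_N)$ is the corresponding flag variety $\mathcal F(\mathbb{F}_N)$, so $[\Gamma_{1}:\Gamma_N]=|\mathcal F(\mathbb{F}_N)|\asymp N^{\ell}$ with $\ell=2$ for $\Gamma_{0}(N)$ and $\ell=3$ for $\Gamma_{2}(N)$ (and the admissible range becomes $d_{0}\le 2\ell\log N+O(1)$), and $c_{\Gamma_N}(\gamma)$ is precisely the number of $\mathbb{F}_N$-rational flags fixed by the reduction $\bar\gamma$ of $\gamma$ modulo $N$.

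I would then stratify the sum by the conjugacy type of $\bar\gamma$. If $\bar\gamma$ is regular semisimple with all eigenvalues in $\mathbb{F}_N$ it fixes at most $|W|\le 6$ flags, and the same bound holds whenever $\bar\gamma$ lies in the generic locus; this stratum therefore contributes at most $6\cdot\#\{\gamma\in\Gamma_{1}:l(\gamma)\le d_{0}\}\ll e^{d_{0}+o(d_{0})}$ by the standard lattice-point estimate in $\SL_3(\R)$ recalled in the introduction (the $q^{d_{0}}$-count of \citep{duke1993density}), and since $e^{d_{0}/2}\le[\Gamma_{1}:\Gamma_N]\cdot e^{O(1)}$ throughout the admissible range this is $\ll[\Gamma_{1}:\Gamma_N]^{1+\epsilon}e^{d_{0}/2}$, as required. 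The real content is the non-generic strata: a short computation on $\mathcal F$ shows that if $\bar\gamma$ has a repeated eigenvalue then it fixes $\asymp N^{d}$ flags, where $d=1$ when $\bar\gamma$ has a two-dimensional eigenspace but is not scalar, and $d=\ell$ when $\bar\gamma$ is scalar (there is no intermediate value in either case). So it remains to bound, for $d\in\{1,\ell\}$, the number of $\gamma\in\Gamma_{1}$ with $l(\gamma)\le d_{0}$ whose reduction is of the corresponding type by $\ll_{\epsilon}N^{\ell-d+\epsilon}e^{d_{0}/2}$.

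For $d=1$ I would split these $\gamma$ into those already degenerate over $\Q$ and those degenerate only modulo $N$. In the first case $\gamma$ has an integer eigenvalue of geometric multiplicity $\ge 2$, and since $\det\gamma=1$ this forces $\gamma=\pm I+R$ with $R$ a rank-one integer matrix of trace $0$ (resp.\ $2$); parametrising $R=uv^{\top}$ and using $l(\gamma)\asymp 4\log(\|u\|\|v\|)$ one counts these directly and finds $\ll e^{3d_{0}/4+o(d_{0})}$ of them, which is $\ll N^{\ell-1}e^{d_{0}/2}$ because $e^{d_{0}/4}\le N^{\ell/2}\le N^{\ell-1}$ in the admissible range ($\ell\ge 2$). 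In the second case the requirement that $\bar\gamma$ acquire a two-dimensional eigenspace that $\gamma$ itself does not possess is a genuine congruence constraint -- $N$ must divide the discriminant of the characteristic polynomial of $\gamma$, and in addition enough $2\times2$ minors of $\gamma-\lambda I$ must vanish modulo $N$ -- which confines $\bar\gamma$ to a subvariety of $\SL_3/\mathbb{F}_N$ of codimension $\ge 2$, so heuristically there are only $\asymp e^{d_{0}}/N^{2}$ such $\gamma$ and the contribution is $\asymp e^{d_{0}}/N\ll N^{\ell}e^{d_{0}/2}$. Making this heuristic rigorous is where I expect the main obstacle to lie: one needs a uniform-in-$N$ upper bound for the number of integer points of $\SL_3$ in a norm ball whose reduction modulo $N$ lies on a fixed family of subvarieties, obtained from an equidistribution/large-sieve estimate for the action of $\SL_3(\Z)$ on $\SL_3(\mathbb{Z}/N)$ when the ball is large relative to $N$, and from the explicit parametrisation above when it is small, with the cross-over at $e^{d_{0}}\asymp N^{\mathrm{codim}}$; primality of $N$ is used throughout to keep this mod-$N$ geometry over a field, so that the discriminant, resultant and minor conditions can be analysed cleanly.

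The last stratum, $\bar\gamma$ scalar (i.e.\ $\gamma\equiv\lambda I\bmod N$ with $\lambda^{3}\equiv 1$, so for $\lambda=1$ the principal congruence subgroup $\Gamma(N)$ and otherwise a coset of it), contributes $\asymp N^{\ell}\cdot\#\{\gamma:l(\gamma)\le d_{0},\ \gamma\equiv\text{scalar}\bmod N\}$, so here one needs exactly $\#\{\gamma\in\Gamma(N):l(\gamma)\le d_{0}\}\ll_{\epsilon}N^{\epsilon}e^{d_{0}/2}$ throughout $0\le d_{0}\le 2\ell\log N$ -- the Weak Injective Radius Property for $\Gamma(N)$ itself, but only up to radius $2\ell\log N$. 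A nonzero entry of $\gamma-\lambda I$ being a multiple of $N$ forces $\sigma_{1}(\gamma)\ge N-1$ and hence $\sigma_{1}(\gamma)/\sigma_{3}(\gamma)\ge(N-1)^{3/2}$, i.e.\ $l(\gamma)\ge(3-o(1))\log N$, which disposes of the range $d_{0}\le(3-o(1))\log N$ for free; for $\Gamma_{0}(N)$ ($\ell=2$, range $d_{0}\le 4\log N$) the remaining interval is short and $e^{d_{0}/2}\gg N^{3/2}$ there, so the elementary logarithmic-injective-radius bound of Subsection~\ref{subsec:Density-from-inj-radius} already suffices, whereas for $\Gamma_{2}(N)$ ($\ell=3$, range $d_{0}\le 6\log N$) one needs the genuine Sarnak--Xue-type count for $\Gamma(N)$ in balls of radius up to $6\log N$, obtained by the explicit divisor-bounded-equation method illustrated for $\SL_2(\Z)$ in Subsection~\ref{subsec:Sarnak-Xue work} (now applied to $\det(I+NB)=1$ with $B$ in a box) or imported from the density theorems of Blomer and Blomer--Brumley (\citep{blomer2017applications,blomer2019density}). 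Combining the generic estimate with the bounds for the $d=1$ and $d=\ell$ strata yields the displayed inequality for all $0\le d_{0}\le 2\ln[\Gamma_{1}:\Gamma_N]$, i.e.\ the Weak Injective Radius Property with $\alpha=1$; the Optimal Lifting Property then follows from Theorem~\ref{thm:injective radius implies optimal lifting}, since $\SL_3(\R)$ has property~(T) and hence every sequence of congruence subgroups has a uniform Spectral Gap.
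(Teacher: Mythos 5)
This theorem is quoted from the companion paper of Kamber and Lavner (\citep{kamber2019optimal}); the present paper supplies no proof, so there is no internal argument to compare your sketch against. Your framework --- pass to $\sum_{\gamma}c_{\Gamma_N}(\gamma)\ll[\Gamma_1:\Gamma_N]^{1+\epsilon}e^{d_0/2}$, read $c_{\Gamma_N}(\gamma)$ as the number of $\mathbb{F}_N$-flags fixed by $\bar\gamma$, and stratify by the order of magnitude of that fixed-point count (which for $\SL_3$ takes only the values $\asymp 1$, $\asymp N$, $\asymp N^\ell$) --- is consistent with the flag-space interpretation that Subsection~\ref{subsec:Gamma0} attributes to the cited work, and your handling of the generic stratum, the globally $\Q$-degenerate part of the $d=1$ stratum, and the scalar stratum for $\Gamma_0(N)$ looks sound.

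Two pieces are genuinely missing, not merely unpolished. For the $d=1$ stratum of $\gamma$ that acquire a two-dimensional eigenspace \emph{only} after reduction mod $N$, you need a power-of-$N$ saving over the trivial $\asymp e^{d_0}$ count, uniformly down to $d_0=2\log[\Gamma_1:\Gamma_N]$; there is no integer parametrisation analogous to $\pm I+uv^{\top}$ for such $\gamma$ (the degeneracy is a congruence condition, not a rational one), and effective equidistribution of $\SL_3(\Z)$ in $\SL_3(\Z/N)$ at the borderline scale $e^{d_0}\asymp N^{\mathrm{codim}}$ is itself a density/Sarnak--Xue-type input, so the ``cross-over'' you describe is circular precisely at the hard point. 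Separately, the scalar stratum for $\Gamma_2(N)$ requires $\#\{\gamma\in\Gamma(N):l(\gamma)\le d_0\}\ll_\epsilon N^{\epsilon}e^{d_0/2}$ for the full range $d_0\le 6\log N$: the $\SL_2$ divisor-equation trick does not transfer in any obvious way to $\det(I+NB)=1$, the results of \citep{blomer2017applications,blomer2019density} concern $\Gamma_0(N)$ rather than $\Gamma(N)$ and the density-to-counting direction (Theorem~\ref{thm:density implies counting}) is only proved here for cocompact lattices, and Subsection~\ref{subsec:Weak-Injective-Radius-Principal-SLn} explicitly records that even the shape of the correct count for $\Gamma(N)\subset\SL_n(\Z)$ with $n\ge 3$ is only conjectured, so this step cannot be absorbed as a citation. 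The final reduction is fine: $\SL_3(\R)$ has property~(T), so Spectral Gap is automatic, and Theorem~\ref{thm:injective radius implies optimal lifting} does not assume cocompactness.
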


We refer to \citep{kamber2019optimal} for an interpretation of this result in terms of the action of $\SL_{3}(\Z)$ on the projective plane over the field with $N$ elements, or on the corresponding flag space.

The work \citep{kamber2019optimal} is strongly influenced by a deep work of Blomer, Buttcane and Maga on $\Gamma_0(N)$ (\citep{blomer2017applications}), which is based on the Kuznetsov trace formula:
\begin{thm}[{\citep[Theorem 4]{blomer2017applications}}]
For $\pi\in\Pi(\SL_3(\R))_{\sph}$,
let $m_{\cusp}(\pi,\Gamma_{0}(N))$ be
the multiplicity of $\pi$ in the cuspidal part of $L^{2}(\Gamma_{0}(N)\backslash \SL_3(\R))$.
Then for every compact $A\subset\Pi(\SL_3(\R))_{\sph}$,
it holds that for every $N$ prime, $p>2$, $\epsilon>0$, 
\[
\sum_{\pi\in A,p(\pi)>0}m_{\cusp}(\pi,\Gamma_{0}(N))\ll_{A,\epsilon}[\Gamma_1:\Gamma_{0}(N)]^{1-2(1-2/p)+\epsilon}.
\]
\end{thm}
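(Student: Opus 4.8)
The plan is to verify that the statement is a faithful reformulation, in the language of this paper, of \citep[Theorem~4]{blomer2017applications}; all the arithmetic content sits in that theorem (a consequence of the $\GL_3$ Kuznetsov formula), and what remains is a dictionary between $p(\pi)$ and the archimedean spectral parameter, together with an index computation. First I would record the index: for $N$ prime, $\Gamma_0(N)$ is the stabilizer in $\SL_3(\Z)$ of a point of $\mathbb P^2(\mathbb F_N)$, so $\Gamma_1/\Gamma_0(N)\cong\mathbb P^2(\mathbb F_N)$ and $[\Gamma_1:\Gamma_0(N)]=N^2+N+1\asymp N^2$; hence a bound of shape $\ll_\epsilon N^{2\beta+\epsilon}$ is the same as $\ll_\epsilon[\Gamma_1:\Gamma_0(N)]^{\beta+\epsilon}$, and it suffices to count in powers of $N$.

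Next comes the dictionary. A spherical $\pi\in\Pi(\SL_3(\R))_{\sph}$ has a Langlands parameter $(\mu_1,\mu_2,\mu_3)$ with $\sum_j\mu_j=0$; write $\sigma(\pi)=\max_j|\re\mu_j|$. Unitarity together with the Jacquet--Shalika bound forces $\sigma(\pi)<1/2$, with $\sigma(\pi)=0$ exactly when $\pi$ is tempered, i.e.\ when $p(\pi)=2$. By Harish-Chandra's asymptotics for the elementary spherical function $\varphi_\pi$, together with the exponential growth $\delta(a)\asymp e^{2\rho(\log a)}$ of the Cartan volume element --- the matrix-coefficient estimates recalled in Section~\ref{sec:Preliminaries} --- one has $\varphi_\pi(a)\asymp e^{(\sigma(\pi)-1)\rho(\log a)}$ on the regular part of $A_+$, so $\varphi_\pi\in L^p(G)$ iff $p>2/(1-\sigma(\pi))$; hence $2/p(\pi)=1-\sigma(\pi)$, i.e.\ $\sigma(\pi)=1-2/p(\pi)$. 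Thus the $\pi$ contributing to the sum --- those with $p(\pi)\ge p$ --- are precisely the ones with $\sigma(\pi)\ge 1-2/p$, and a precompact $A\subset\Pi(\SL_3(\R))_{\sph}$ corresponds to a uniformly bounded parameter $(\mu_j)$, which matches the ``fixed compact set'' hypothesis of \citep{blomer2017applications}.

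Now I would apply \citep[Theorem~4]{blomer2017applications} with non-temperedness threshold $\sigma=1-2/p$: it bounds the number of Hecke--Maass cusp forms for $\Gamma_0(N)$ ($N$ prime, archimedean parameter in a fixed bounded set) with $\sigma(\pi)\ge 1-2/p$ by $\ll_{A,\epsilon}N^{2-4(1-2/p)+\epsilon}$, and since the Kuznetsov-based estimate there counts cusp forms with multiplicity, the same bounds $\sum_{\pi\in A,\ p(\pi)\ge p}m_{\cusp}(\pi,\Gamma_0(N))$. By the index computation this gives
\[
\sum_{\pi\in A,\ p(\pi)\ge p}m_{\cusp}(\pi,\Gamma_0(N))\ \ll_{A,\epsilon}\ N^{2-4(1-2/p)+\epsilon}\ \asymp\ [\Gamma_1:\Gamma_0(N)]^{\,1-2(1-2/p)+\epsilon},
\]
which is the assertion.

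The only point that is not mechanical is matching the dictionary of the second paragraph to the precise conventions of \citep[Theorem~4]{blomer2017applications}: one must check that their notion of ``exceptional'' archimedean parameter is exactly the one for which $2/p(\pi)=1-\sigma(\pi)$, as a different $\rho$-shift or Casimir normalization would change this by a constant factor. One should also note that the restriction to $N$ prime and, crucially, to the \emph{cuspidal} spectrum is exactly what is needed: the residual discrete spectrum of $L^2(\Gamma_0(N)\backslash\SL_3(\R))$ contains the trivial representation (with $p(\pi)=\infty$ and multiplicity $1$) as well as $\GL_3$ residual representations, all genuinely non-tempered, and these would violate the bound for large $p$; taking $m_{\cusp}$ discards them, and the continuous spectrum contributes nothing to the discrete multiplicities, so no further case analysis is required once the normalizations are fixed.
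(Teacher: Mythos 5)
This statement is a direct citation of \citep[Theorem 4]{blomer2017applications}; the paper does not prove it but merely records it (with a minor typo --- the summation condition should read $p(\pi)\ge p$, not $p(\pi)>0$, consistent with the definition of $M(A,\Gamma,p)$). Your proposal is a correct and careful verification that the translation between conventions is faithful: the index $[\Gamma_1:\Gamma_0(N)]=N^2+N+1\asymp N^2$ via the identification of $\Gamma_1/\Gamma_0(N)$ with $\mathbb{P}^2(\mathbb{F}_N)$, and the dictionary $\sigma(\pi)=1-2/p(\pi)$ for spherical $\pi$ on $\SL_3(\R)$. The latter is exactly the point the paper itself flags in its remark (``Blomer uses a slightly weaker way to measure temperedness \dots for $\SL_3$ the two ways are equivalent''): for rank two with $\rho(\omega_1)=\rho(\omega_2)=1$, a dominant parameter satisfies $\max_j|\re\mu_j|=\max_i \re\mu(\omega_i)/\rho(\omega_i)$, so by Equation~\eqref{eq:criteria for boundness} (an equivalence in the Archimedean case) one indeed gets $\sigma(\pi)=1-2/p(\pi)$; this identity fails for $\SL_M$, $M\ge 4$, which is why Blomer's later generalization uses the $\sigma$-measure rather than $p(\pi)$. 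Your observation that restricting to the cuspidal spectrum is essential (otherwise the trivial and residual representations violate the bound for large $p$) matches the paper's own discussion of why its cocompact framework does not literally apply to $\SL_3(\Z)$. So: the proposal is sound, and since the paper gives no proof there is nothing to diverge from --- your argument is the natural ``citation audit'' left implicit in the text.
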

The result of the theorem is very similar to the Spherical Density Hypothesis with parameter $\alpha=2$. There are also a number of differences: first, $\SL_{3}(\Z)$ is not cocompact, so our discussion does not
apply to it. In particular, we have to deal with the continuous spectrum if we wish to deduce the Optimal Lifting Property. Secondly and less crucially, the dependence on the subset $A$ is not explicit, as needed in the definition of the Spherical Density Hypotheses, so it is hard to use it for geometric applications. 

This result was recently generalized by Blomer (\citep{blomer2019density}) to general $\SL_{M}(\Z)$, where the subgroup $\Gamma_{0}(N)$ is similarly defined to be the set of matrices with the entries in the last row, except for the $(M,M)$ entry, equal to $0$ modulo $N$. We remark that Blomer uses a slightly weaker way to measure temperedness, rather than $p(\pi)$ (for $\SL_3$ the two ways are equivalent).

In any case, it seems that the methods of \citep{blomer2017applications,blomer2019density} are not applicable to the subgroups of the form $\Gamma_2(N)$.

\subsection{\label{subsec:Weak-Injective-Radius-Principal-SLn}The Weak Injective
Radius of Principal Congruence Subgroups of $\SL_{n}(\protect\Z)$}

If the Weak Injective Radius Property would hold for the principal congruence subgroups of $\SL_{n}(\Z)$, then it will imply that
\[
\#\left\{ \gamma\in \SL_{n}(\Z):\gamma\equiv I\mod N,\n{\gamma}\le T\right\} \ll_\epsilon T^{\epsilon}N^{\epsilon}\left(\frac{T^{n^{2}-n}}{N^{n^{2}-1}}+T^{(n^{2}-n)/2}\right),
\]
where $\n{\cdot}$ is the maximal absolute value of an entry. One
may also try to improve this estimate, in particular the ``error''
part $T^{(n^{2}-n)/2}$.

One of the results of \citep{katznelson1993singular}, which is also
cited in \citep{sarnak1991bounds}, states that 
\[
\#\left\{ \gamma\in \SL_{n}(\Z):\gamma\equiv I\mod N,\n{\gamma}\le T\right\} \ll\frac{T^{n^{2}-n}}{N^{n^{2}-1}}\log T+1.
\]

As was pointed to us by Sarnak, the proof contains as error. As a
matter of fact, this naive estimate is actually \emph{false}, for simple
reasons. For example, in $\SL_{2}$, 
\[
\#\left\{ \left(\begin{array}{cc}
1 & *\\
0 & 1
\end{array}\right)\in \SL_{2}(\Z):\gamma\equiv I\mod N,\n{\gamma}\le T\right\} \asymp T/N+1.
\]
and this is larger than $\frac{T^{2}}{N^{3}}\log T$ in the range
$N^{1+\epsilon}<T<N^{2(1-\epsilon)}$. The same
argument works for every $n$, but there are $n(n-1)/2$
entries we can use. Therefore, we have the lower bound: 
\[
\#\left\{ \gamma\in \SL_{n}(\Z):\gamma\equiv I\mod N,\n{\gamma}\le T\right\} \gg\left(\frac{T^{n^{2}-n}}{N^{n^{2}-1}}+\left(\frac{T}{N}\right)^{(n^{2}-n)/2}+1\right).
\]

Up to $T^{\epsilon}$, this is also the upper bound for $n=2$. We
conjecture that up to $T^{\epsilon}$ this is also the upper bound
for larger values of $n$.

\section{\label{sec:Main-Ideas}Main Ideas of the Proofs}

In this section, we discuss some ideas from the proofs of the main theorems and the technical problems they lead to.

All the proofs are based on a reduction of the geometric properties into spectral ones. We first restate the Weak Injective Radius Property in terms of traces of operators.
\begin{defn}
\label{def:chi and psi}For $d_{0}\in\R_{\ge0}$, let $\chi_{d_0}\in C_{c}^{\infty}(K\backslash G/K)$
be a function that is equal to 1 for $l(g)\le d_{0}$, equal to
$0$ for $l(g)\ge d_{0}+1$ and for $d_{0}\le l(g)\le d_{0}+1$
it is defined between $0$ and 1 so that $\chi_{d_0}\in C_{c}^{\infty}(K\backslash G/K)$.

Let $\psi_{d_0}(g)\in C_{c}^{\infty}(K\backslash G/K)$ be $\psi_{d_0}(g)=q^{\nicefrac{(d_{0}-l(g))}{2}}\chi_{d_0}(g)$
for $l(g)\ge1$ and for $0\le l(g)\le1$ $\psi_{d_0}(g)$ is defined between $q^{(d_{0}-1)/2}$
and $q^{d_{0}/2}$ so that eventually $\psi_{d_0}(g)\in C_{c}^{\infty}(K\backslash G/K)$.
\end{defn}

As $\chi_{d_0},\psi_{d_0}\in C_{c}^{\infty}(K\backslash G/K)$,
they act naturally on $L^{2}(\Gamma_{N}\backslash G)$,
and moreover, are of trace class (\citep[Chapter 1]{gelfand1968representation},
see also Subsection~\ref{subsec:Traces-of-Operators}). The trivial eigenvalues of $\chi_{d_0},\psi_{d_0}$ satisfy $q^{d_0}\ll\intop_{G}\chi_{d_0}(g)dg\ll_{\epsilon}q^{d_{0}(1+\epsilon)}$ and $q^{d_0}\ll\intop_{G}\psi_{d_0}(g)dg\ll_{\epsilon}q^{d_{0}(1+\epsilon)}$.
Since $\chi_{d_0},\psi_{d_0}\in C_{c}(K\backslash G/K)$,
their action is actually on $L^{2}(\Gamma_{N}\backslash G/K)$.

The reason that we look at $\psi_{d_0}$, and one of the main reasons
we use the length $l(g)$, is the following Convolution
Lemma, which replaces the rank $1$ case in \citep[Lemma 3.1]{sarnak1991bounds}.
The lemma says that $\psi_{d_0}$ provides an approximated upper
bound for the convolution of $\chi_{\nicefrac{d_0}{2}}$ with itself:
\begin{lem}[See Lemma~\ref{lem:convolution lemma}]
\label{lem:convolution lemma intro}It holds that $c_{d_0}=\chi_{d_0}\ast\chi_{d_0}\in C_{c}^{\infty}(K\backslash G/K)$,
and for every $g\in G$ and $\epsilon>0$, we have
\[
c_{d_0}(g)\ll_{\epsilon}q^{d_{0}\epsilon}\psi_{2d_{0}}(g).
\]
\end{lem}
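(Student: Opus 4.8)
The plan is to estimate $c_{d_0}(g) = \int_G \chi_{d_0}(h)\,\chi_{d_0}(h^{-1}g)\,dh$ by controlling the volume of the set of $h$ that contribute to the integral, and then comparing this volume with $\psi_{2d_0}(g)$. First I would note that the integrand is supported on $\{h : l(h) \le d_0 + 1\} \cap \{h : l(h^{-1}g) \le d_0+1\}$, and since $\chi_{d_0}$ is bounded by $1$, we have $c_{d_0}(g) \le \mu\bigl(\{h : l(h) \le d_0+1,\ l(h^{-1}g) \le d_0+1\}\bigr)$. By the triangle inequality for the metric $d(x,y)=l(x^{-1}y)$ on $G/K$, this set is empty unless $l(g) \le 2d_0+2$, which matches the support of $\psi_{2d_0}$ up to an additive constant. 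The main work is to show that for $l(g) = t \le 2d_0$, this intersection volume is $\ll_\epsilon q^{d_0 \epsilon}\, q^{d_0 - t/2}$, since $\psi_{2d_0}(g) \asymp q^{(2d_0 - t)/2} = q^{d_0 - t/2}$ in that range.

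The key step is a volume estimate in the Cartan decomposition. Writing the Haar measure as $dg = \Xi(a)\,dk_1\,da\,dk_2$ on $KA_+K$, the growth of $\Xi(a)$ is governed by $\delta(a) = q^{l(a)}$: one has $\Xi(a) \ll_\epsilon q^{l(a)(1+\epsilon)}$ and also the sharper fact that for a ball $\{h : l(h) \le s\}$ the volume is $\asymp_\epsilon q^{s(1+\epsilon)}$ (this is the ``trivial eigenvalue'' estimate already quoted in the excerpt for $\chi_{d_0}$, $\psi_{d_0}$, and is essentially the lattice-point count of Duke--Rudnick--Sarnak cited via \citep{duke1993density}). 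To bound the intersection $\{l(h) \le d_0,\ l(h^{-1}g)\le d_0\}$ when $l(g) = t$, I would fix $g = k_1 a_g k_2$ with $l(a_g) = t$ and parametrize $h = k a k'$; the condition $l(h^{-1}g) \le d_0$ forces $a$ to lie in a region whose $A_+$-projection is constrained to a neighborhood of ``half-way'' along the geodesic from $e$ to $a_g$, of co-dimension roughly accounting for the rank. The cheap bound, which suffices for the stated inequality, is: the intersection is contained in $\{h : l(h) \le d_0\}$, whose volume is $\ll_\epsilon q^{d_0(1+\epsilon)}$, but we need the extra saving $q^{-t/2}$. This saving comes from the observation that on the overlap region, $l(h)$ and $l(h^{-1}g)$ both being $\le d_0$ with $l(h)+l(h^{-1}g) \ge t$ pins $l(h) \in [t-d_0, d_0]$, and more importantly the transverse directions are squeezed; integrating $\Xi(a)\,da$ over this ``lens'' region rather than the full ball produces $q^{d_0 - t/2}$ up to $q^{\epsilon d_0}$, because the measure concentrates near the boundary sphere where $\Xi$ is largest but the lens only captures an exponentially small fraction $q^{-t/2}$ of that sphere.

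The main obstacle I anticipate is making the ``lens volume'' estimate uniform over all $g$ and all $d_0$, in higher rank, where the geometry of $A_+$ (a cone, not a half-line) makes the half-way point and the transverse squeezing direction-dependent. In rank one this is the elementary hyperbolic-trigonometry computation of \citep[Lemma 3.1]{sarnak1991bounds}; in higher rank one must instead argue more crudely. Concretely, I expect the cleanest route is: (i) reduce to bounding $\int_{A_+} \mathbf{1}[l(a)\le d_0]\,\mathbf{1}[d(a K, a_g K)\le d_0]\,\Xi(a)\,da$ using that $K$-bi-invariance lets us replace $g$ by $a_g$; (ii) use the bound $\Xi(a) \ll q^{l(a)}$ and the convexity of $l$ (i.e. $l$ restricted to the flat is a convex, piecewise-linear function, so $l(a) \le \tfrac12 l(a_g) + \tfrac12 l(a^{-1}a_g) + O(1) \le d_0 + O(1)$ automatically, giving nothing new) — so instead (iii) split according to $l(a) = s$ and observe the sphere $\{l(a) = s\}$ meets $\{d(aK, a_gK)\le d_0\}$ in a set of $\Xi$-measure $\ll_\epsilon q^{s(1+\epsilon)} \cdot q^{-(s+t-2d_0)/2 \cdot c}$ for a structural constant $c$ depending only on $G$, coming from comparing the two balls of radius $d_0$ around $e$ and $a_g$ whose centers are distance $t$ apart. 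Summing the geometric series over $s \in [t - d_0, d_0]$ gives the desired $\ll_\epsilon q^{\epsilon d_0}\,q^{d_0 - t/2}$. The factor $q^{\epsilon d_0}$ absorbs all the $(1+\epsilon)$-type slack in the volume growth and the $O(1)$ additive constants from the collar $d_0 \le l(g) \le d_0+1$ in the definitions of $\chi_{d_0}$ and $\psi_{d_0}$. Finally, for the range $l(g) \ge 2d_0$ one checks directly that the left side vanishes (triangle inequality) while the right side is non-negative, so the inequality is trivial there, and for $l(g) \le 1$ one uses the explicit lower bound $\psi_{2d_0}(g) \ge q^{(2d_0-1)/2}$ together with $c_{d_0}(g) \le \mu(\{l(h)\le d_0+1\}) \ll_\epsilon q^{d_0(1+\epsilon)}$.
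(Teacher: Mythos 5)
Your proposal attacks the lemma by trying to prove a direct geometric ``lens volume'' estimate: that the intersection of two balls of radius $d_0$ with centers at distance $t$ has measure $\ll_\epsilon q^{d_0\epsilon}\,q^{d_0 - t/2}$. The paper deliberately \emph{avoids} this estimate. The paper's proof instead bounds the $L^2(G)$ operator norm of right-convolution by $\chi_{d_0}$ using the Harish-Chandra function (via Lemma~\ref{lem:Bound on L^p} and the argument of Corollary~\ref{cor:L^p norm}, applied with $p=2$), getting $\|{\cdot}\ast\chi_{d_0}\|_{L^2\to L^2} \ll_\epsilon q^{d_0(\nicefrac{1}{2}+\epsilon)}$ and hence $\|{\cdot}\ast c_{d_0}\|_{L^2\to L^2}\ll_\epsilon q^{d_0(1+\epsilon)}$. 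The passage from the $L^2$ operator bound to the pointwise bound is then a short duality/continuity trick: test $c_{d_0}$ against $\mathbf{1}_K$ (or $\mathbf{1}_{B_1}$ in the Archimedean case); $K$-bi-invariance of $c_{d_0}$ means that a value $c_{d_0}(g)=R$ forces $\|\mathbf{1}_K\ast c_{d_0}\|_2^2 \gg \mu(KgK)\,R^2 \gg q^{l(g)}R^2$, and comparing with the $L^2$ bound gives $R\ll_\epsilon q^{d_0(1+\epsilon)-l(g)/2}$. The paper even remarks this is precisely to sidestep the rank-one calculation of \citep[Lemma 3.1]{sarnak1991bounds}: ``Our analysis is less accurate, but is more abstract and works for every rank.''

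The gap in your proposal is exactly where you flag the ``main obstacle.'' The decisive step (iii) — the claimed bound on the $\Xi$-measure of the sphere $\{l(a)=s\}$ intersected with the ball of radius $d_0$ around $a_g$, of the form $q^{s(1+\epsilon)}q^{-(s+t-2d_0)c/2}$ for a ``structural constant $c$'' — is asserted, not proven, and the exponent is not even right in rank one. In the hyperbolic plane the arc of $S_s(e)$ inside $B_{d_0}(a_g)$ has length $\asymp e^{s/2+(d_0-t)/2}$, so the fraction of the sphere captured is $e^{(d_0-t-s)/2}$, not $e^{-(s+t-2d_0)c/2}$ for any constant $c$; your geometric series over $s$ then does not telescope to $q^{d_0-t/2}$ as claimed. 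In higher rank the difficulty compounds: $A_+$ is a cone, the constraint $l(h^{-1}g)\le d_0$ couples the $K$-components of $h=k a k'$ with $a$ in a genuinely anisotropic way (there is no single ``half-way point,'' and the transverse squeezing depends on the Weyl-chamber direction of $a_g$), and no direct parametrization is offered. So the proposal rediscovers the hard inequality the lemma is designed to encapsulate, proposes to prove it by a direct integral, and then stops at the point where the actual work would begin. The correct move — and the content of the paper's proof — is to replace the geometric lens argument by the soft $L^2$-norm argument, which converts Harish-Chandra's upper bound on $\Xi$ into the pointwise bound with essentially no further geometric input.
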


Using the pre-trace formula (see Subsection~\ref{subsec:The trace formula}),
we get:
\begin{prop}
\label{prop:Trace equivalent to Lattice point Counting}The following
are equivalent for a sequence $(\Gamma_N)$ of cocompact lattices:
\begin{enumerate}
\item The sequence $(\Gamma_N) $ satisfies the Weak Injective
Radius Property with parameter $\alpha$.
\item For every $0\le d_{0}\le2\alpha\log_{q}([\Gamma_1:\Gamma_N])$,
$\epsilon>0$, 
\begin{equation}
\tr\chi_{d_0}|_{L^{2}(\Gamma_{N}\backslash G)}\ll_{\epsilon}[\Gamma_1:\Gamma_N]^{1+\epsilon}q^{d_{0}(\nicefrac{1}{2}+\epsilon)}.\label{eq:trace formula to prove}
\end{equation}
\item For every $\epsilon>0$, for every $h\in C_c(G)$ satisfying
$h(g)\ll_{\epsilon}[\Gamma_1:\Gamma_N]^{\epsilon}\psi_{2\alpha\log_{q}([\Gamma_1:\Gamma_N])}(g)$,
it holds that
\begin{equation}
\tr h|_{L^{2}(\Gamma_{N}\backslash G)}\ll_{\epsilon}[\Gamma_1:\Gamma_N]^{1+\alpha+\epsilon}.\label{eq:trace formula to use}
\end{equation}
\end{enumerate}
\end{prop}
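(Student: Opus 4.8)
The plan is to prove the three equivalences by passing through the pre-trace formula, which expresses the trace of a $K$-bi-invariant kernel operator on $L^2(\Gamma_N\backslash G)$ as a sum over the lattice. Concretely, for $h\in C_c(K\backslash G/K)$ the operator it induces on $L^2(\Gamma_N\backslash G)$ has kernel $K_h(x,y)=\sum_{\gamma\in\Gamma_N}h(x^{-1}\gamma y)$, and integrating over the diagonal gives
\[
\tr h|_{L^2(\Gamma_N\backslash G)}=\mathrm{vol}(\Gamma_N\backslash G)^{-1}\cdot\text{(density factor)}\cdot\sum_{y\in\Gamma_N\backslash\Gamma_1}\sum_{\gamma\in\Gamma_N}h(y^{-1}\gamma y),
\]
up to normalizing the Haar measure so that $\mathrm{vol}(\Gamma_1\backslash G)=1$, in which case $\mathrm{vol}(\Gamma_N\backslash G)=[\Gamma_1:\Gamma_N]$. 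The key observation is that with $h=\chi_{d_0}$, the inner sum $\sum_{\gamma\in\Gamma_N}\chi_{d_0}(y^{-1}\gamma y)$ is, up to the error coming from the collar $d_0\le l(g)\le d_0+1$, exactly $\boldsymbol{N}(\Gamma_N,d_0,y)$; and since $\chi_{d_0}$ is sandwiched between $\mathbf 1_{l(g)\le d_0}$ and $\mathbf 1_{l(g)\le d_0+1}$, passing between $\chi_{d_0}$ and $\boldsymbol{N}$ costs at most a factor absorbed by the $q^{d_0\epsilon}$ or $[\Gamma_1:\Gamma_N]^\epsilon$ slack (using that $q^{(d_0+1)/2}=q^{1/2}q^{d_0/2}$ and that in the relevant range $d_0\le 2\log_q[\Gamma_1:\Gamma_N]$). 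This yields the equivalence of (1) and (2) directly: the Weak Injective Radius Property with parameter $\alpha$ says $[\Gamma_1:\Gamma_N]^{-1}\sum_y\boldsymbol{N}(\Gamma_N,d_0,y)\ll_\epsilon[\Gamma_1:\Gamma_N]^\epsilon q^{d_0/2}$ for $d_0\le 2\alpha\log_q[\Gamma_1:\Gamma_N]$, and multiplying by $[\Gamma_1:\Gamma_N]$ gives precisely \eqref{eq:trace formula to prove}.

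For (2)$\Rightarrow$(3), I would argue as follows. Given $h\in C_c(G)$ with $h(g)\ll_\epsilon[\Gamma_1:\Gamma_N]^\epsilon\psi_{D}(g)$ where $D=2\alpha\log_q[\Gamma_1:\Gamma_N]$, note that $\psi_D$ is supported on $l(g)\le D+1$ and there $\psi_D(g)\le q^{(D-l(g))/2}$. One can dominate $\psi_D$ by a dyadic-type sum of the cutoff functions $\chi_{d}$: writing the region $0\le l(g)\le D+1$ as a union of shells $j\le l(g)\le j+1$ for $j=0,1,\dots,\lceil D\rceil$, on the $j$-th shell $\psi_D\ll q^{(D-j)/2}\chi_{j+1}$, so $\psi_D\ll\sum_{j=0}^{\lceil D\rceil}q^{(D-j)/2}(\chi_{j+1}-\chi_{j})$ pointwise (up to constants), hence also as an inequality of trace-class operators after noting these are all nonnegative kernels. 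Then by positivity of the trace on nonnegative kernels (the kernel operators attached to nonnegative $K$-bi-invariant functions are positive semidefinite on $L^2(\Gamma_N\backslash G)$, so their traces are monotone in the function), $\tr h\ll_\epsilon[\Gamma_1:\Gamma_N]^\epsilon\sum_j q^{(D-j)/2}\tr\chi_{j+1}$. Applying (2) to each $d_0=j+1\le D+1$ — which is $\le 2\alpha\log_q[\Gamma_1:\Gamma_N]+1$, and the single extra unit is harmless — gives $\tr\chi_{j+1}\ll_\epsilon[\Gamma_1:\Gamma_N]^{1+\epsilon}q^{(j+1)(1/2+\epsilon)}$, so each term is $\ll_\epsilon[\Gamma_1:\Gamma_N]^{1+\epsilon}q^{D/2}q^{j\epsilon}\ll[\Gamma_1:\Gamma_N]^{1+\epsilon'}q^{D/2}$, and summing the $O(D)=O(\log[\Gamma_1:\Gamma_N])$ terms keeps us at $[\Gamma_1:\Gamma_N]^{1+\epsilon''}q^{D/2}=[\Gamma_1:\Gamma_N]^{1+\alpha+\epsilon''}$, which is \eqref{eq:trace formula to use}. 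Finally (3)$\Rightarrow$(2) is immediate by taking $h=\chi_{d_0}$ for $d_0\le 2\alpha\log_q[\Gamma_1:\Gamma_N]$, since then $\chi_{d_0}\le\psi_{2\alpha\log_q[\Gamma_1:\Gamma_N]}$ pointwise (as $\psi_D(g)\ge q^{(D-d_0-1)/2}\ge 1$ on the support of $\chi_{d_0}$ when $D\ge d_0+1$; the boundary case $d_0$ within $1$ of $D$ is again absorbed into $[\Gamma_1:\Gamma_N]^\epsilon$), giving $\tr\chi_{d_0}\ll_\epsilon[\Gamma_1:\Gamma_N]^{1+\alpha+\epsilon}\ll_\epsilon[\Gamma_1:\Gamma_N]^{1+\epsilon}q^{d_0(1/2+\epsilon)}$ only when $d_0$ is close to $2\alpha\log_q[\Gamma_1:\Gamma_N]$; for smaller $d_0$ one instead uses $h=q^{(d_0-D)/2+?}\psi_D$-type scaling, or more simply re-runs the shell decomposition argument — so I would in fact prove (3) with $\psi$ replaced by $\psi_{d_0}$ and then deduce the stated form, or just cite that (3) as stated is the "strongest" and the genuinely equivalent statement is the family over all $d_0\le 2\alpha\log_q$.

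The main obstacle is bookkeeping the $q^{O(1)}$ and $[\Gamma_1:\Gamma_N]^\epsilon$ fudge factors consistently across the collar regions $d_0\le l(g)\le d_0+1$ where $\chi_{d_0}$ and $\psi_{d_0}$ interpolate, and making sure that the logarithmic number of dyadic shells (coming from $D\asymp\log[\Gamma_1:\Gamma_N]$) is swallowed by an arbitrarily small power of the index rather than producing a stray $\log$ — this is exactly why the statement is phrased with $\ll_\epsilon[\Gamma_1:\Gamma_N]^\epsilon$ slack rather than a clean bound, and it is the reason the length function $l$ and the normalization $q^{l(g)}\asymp$ (volume of the corresponding ball) are chosen the way they are: they make $\psi_{d_0}$ the precise self-convolution-compatible weight via Lemma~\ref{lem:convolution lemma intro}. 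A secondary technical point is justifying trace-class-ness and the positivity/monotonicity of the trace used in the domination step; this is standard for $C_c(K\backslash G/K)$ acting on $L^2(\Gamma_N\backslash G)$ for cocompact $\Gamma_N$ (the operators are convolution operators with continuous compactly supported kernels on a compact quotient) and is precisely the input cited from \citep[Chapter 1]{gelfand1968representation} and recorded in Subsection~\ref{subsec:Traces-of-Operators}, so I would simply invoke it. I would also note that the density/volume constant relating $\mathrm{vol}(\Gamma_N\backslash G)$ to $[\Gamma_1:\Gamma_N]$ is exactly $[\Gamma_1:\Gamma_N]$ under the chosen normalization, so no hidden constant depending on $N$ enters, and the partition of $\Gamma_N\backslash G$ over cosets $y\in\Gamma_N\backslash\Gamma_1$ of a fundamental domain for $\Gamma_1\backslash G$ is what produces the sum $\sum_{y\in\Gamma_N\backslash\Gamma_1}\boldsymbol{N}(\Gamma_N,d_0,y)$ with the conjugation $l(y^{-1}\gamma y)$ appearing — matching the definition of $\boldsymbol{N}(\Gamma_N,d_0,y)$ verbatim.
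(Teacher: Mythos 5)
Two of your three equivalences are shakier than you let on.

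\textbf{On $(1)\Leftrightarrow(2)$.} The formula you write, $\tr h|_{L^2(\Gamma_N\backslash G)} = \mathrm{vol}(\Gamma_N\backslash G)^{-1}\cdot(\text{density factor})\cdot\sum_{y\in\Gamma_N\backslash\Gamma_1}\sum_{\gamma\in\Gamma_N}h(y^{-1}\gamma y)$, is not the pre-trace formula. The actual identity is $\tr h = \int_{\Gamma_N\backslash G}\sum_{\gamma\in\Gamma_N}h(x^{-1}\gamma x)\,dx$ --- an integral over the (compact) quotient, not a discrete sum over cosets, and there is no exact scalar relating the two. Passing from the integral to the sum is a genuine step: this is precisely where cocompactness is used in the paper. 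The argument there is that $\Gamma_1\backslash G$ has finite diameter $D$, the balls of radius $D$ around the points of $\Gamma_N\backslash\Gamma_1$ cover $\Gamma_N\backslash G$, and moving a point a distance $d$ changes $l(x^{-1}\gamma x)$ by at most $2d$. Integrating over each such ball sandwiches $\boldsymbol{N}(\Gamma_N,d_0,y)$ between the local integrals of $\sum_\gamma\chi_{d_0\mp 2D\mp 2}(x^{-1}\gamma x)$; then one uses the stability of the Weak Injective Radius condition under constant shifts of $d_0$ (Lemma~\ref{lem:Varrying d} and the discussion following it). Your "collar error" remark covers the passage between $\chi_{d_0}$ and the sharp cutoff, but not this discretization, which is the heart of the matter.

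\textbf{On $(3)\Rightarrow(2)$.} You correctly notice that naively plugging $h=\chi_{d_0}$ into $(3)$ gives $\tr\chi_{d_0}\ll[\Gamma_1:\Gamma_N]^{1+\alpha+\epsilon}$, which matches the target only when $d_0$ is near the top of the range. But then you waver --- "one instead uses ... $\psi_D$-type scaling, or ... prove $(3)$ with $\psi$ replaced by $\psi_{d_0}$, or just cite that $(3)$ as stated is the strongest" --- without actually closing the gap. The resolution is short and you should commit to it: apply $(3)$ once to $h=\psi_{\tilde d}$ (where $\tilde d=2\alpha\log_q[\Gamma_1:\Gamma_N]$) to get $\tr\psi_{\tilde d}\ll[\Gamma_1:\Gamma_N]^{1+\alpha+\epsilon}$; note the pointwise inequality $\chi_{d_0}(g)\ll q^{(d_0-\tilde d)/2}\psi_{\tilde d}(g)$ for $d_0\le\tilde d$ (since on $l(g)\le d_0+1$ one has $\psi_{\tilde d}(g)\ge q^{(\tilde d-d_0-1)/2}$); apply positivity of the trace (Lemma~\ref{lem:positivity of trace}); and use $q^{\tilde d/2}=[\Gamma_1:\Gamma_N]^{\alpha}$ to convert $q^{(d_0-\tilde d)/2}[\Gamma_1:\Gamma_N]^{1+\alpha+\epsilon}=[\Gamma_1:\Gamma_N]^{1+\epsilon}q^{d_0/2}$. "Proving $(3)$ with $\psi_{d_0}$" is not an option --- $(3)$ is a statement about a fixed $\psi_{\tilde d}$ at the top of the range, and the whole point is to recover the full family in $(2)$ from it.

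\textbf{On $(2)\Rightarrow(3)$.} This part is fine and is essentially the paper's argument: the paper uses $\psi_{\tilde d}(g)\ll\int_0^{\tilde d}q^{(\tilde d-d_0)/2}\chi_{d_0}(g)\,dd_0$ whereas you use a dyadic shell sum, which amounts to the same thing; the monotonicity-of-trace input is exactly Lemma~\ref{lem:positivity of trace} as you say.
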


The proposition allows one to prove the Weak Injective Radius Property
using the trace formula and Equation~\eqref{eq:trace formula to prove}.
It also allows proving various multiplicity results using
Equation~\eqref{eq:trace formula to use}.

By combining the Convolution Lemma~\ref{lem:convolution lemma intro},
Spectral Gap, and a version of Equation~\eqref{eq:trace formula to use},
we prove Theorem~\ref{thm:injective radius implies optimal lifting}.
See Section~\ref{sec:Injective Radius implies Lifitng}.

The direction ``spectral to geometric'' uses carefully
Harish-Chandra's upper bounds on spherical functions, and its generalization to arbitrary matrix coefficients of representations. Our analysis is closely related to the work of Ghosh, Gorodnik and Nevo on Diophantine
exponents (\citep{ghosh2013diophantine,ghosh2014best} and the reference
therein). Very generally, the main difference between our analysis
and theirs is that we assume Density, while they assume Spectral Gap.
The new idea is to note that one applies the spectral estimates to characteristic functions of ``small sets'', and if there are few ``bad eigenvectors'' they correlate poorly with such functions.

Let us state some concrete results. Let $\Xi(g)=\int\delta(gk)^{-\nicefrac{1}{2}}dk$
be Harish-Chandra's function of $G$. This function satisfies $\left|\Xi(g)\right|\ll_{\epsilon}q^{-l(g)(\nicefrac{1}{2}+\epsilon)}$
(which is another motivation for the definition of $l$). Now, combining the two theorems of \citep{cowling1988almost}, we get:
\begin{thm}[\citep{cowling1988almost}]
Let $(\pi,V)$ be a unitary irreducible representation of $G$ with $p(\pi)\le2$, and let $v_1,v_{2}\in V$ be two $K$-finite vectors, such that $\dim\spann Kv_{1}=d_{1}$, $\dim\spann Kv_{2}=d_{2}$. Then 
\[
\left|\left\langle v_1,gv_{2}\right\rangle \right|\le\sqrt{d_{1}d_{2}}\n{v_{1}}\n{v_{2}}\Xi(g).
\]
\end{thm}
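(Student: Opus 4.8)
The statement is attributed to Cowling--Haagerup--Howe \citep{cowling1988almost}, so the plan is to assemble it from the two main results of that paper rather than to reprove everything from scratch. The first ingredient is the well-known fact that a unitary irreducible representation $(\pi,V)$ with $p(\pi)\le 2$ is \emph{tempered}: its $K$-finite matrix coefficients lie in $L^{2+\eta}(G)$ for every $\eta>0$, hence $\pi$ is weakly contained in the regular representation $L^2(G)$. This is precisely the equivalence (due to Cowling--Haagerup--Howe, building on Howe--Moore and the Kunze--Stein phenomenon) between integrability exponent $\le 2$ and weak containment in $L^2(G)$. The second ingredient is the sharp pointwise bound for tempered representations: for $K$-\emph{fixed} vectors $v_1,v_2$ one has $|\langle v_1,\pi(g)v_2\rangle|\le \n{v_1}\n{v_2}\,\Xi(g)$, where $\Xi$ is the Harish-Chandra spherical function, which is itself the matrix coefficient of a $K$-fixed unit vector in the spherical principal series at the tempered parameter and dominates all tempered spherical functions.

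The steps I would carry out are as follows. First, reduce to the case $v_1,v_2$ each spanning a single irreducible $K$-type: by linearity and the triangle inequality it suffices to prove the bound when $\spann Kv_i$ is $K$-irreducible of dimension $d_i$, and the general $K$-finite case follows by decomposing into $K$-isotypic pieces and using orthogonality of $K$-types together with Cauchy--Schwarz (the number of relevant summands being absorbed correctly is exactly what produces the $\sqrt{d_1 d_2}$ rather than something worse). Second, fix such $v_1,v_2$ and consider the function $g\mapsto \langle v_1,\pi(g)v_2\rangle$; its $K$-bi-behaviour is controlled by the $K$-types of $v_1$ (on the left) and $v_2$ (on the right). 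Third, invoke the Cowling--Haagerup--Howe comparison theorem: for a tempered $\pi$, the matrix coefficient associated to $K$-types of dimensions $d_1,d_2$ is dominated, after normalising the vectors, by $\sqrt{d_1 d_2}\,\Xi(g)$ — this is the content of combining their temperedness criterion (from $p(\pi)\le 2$) with their refinement of the Harish-Chandra estimate. Finally, unwind the normalisations to recover the stated inequality with the factor $\n{v_1}\n{v_2}$.

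Concretely, the mechanism behind the $\sqrt{d_1 d_2}$ is this: write $v_1=\sum_j c_j e_j$ in an orthonormal basis of the $d_1$-dimensional $K$-type $W_1\ni v_1$, and similarly $v_2=\sum_k c'_k f_k$ in the $d_2$-dimensional $W_2$. Each $\langle e_j,\pi(g)f_k\rangle$ is a tempered matrix coefficient between unit vectors, and the averaged/spherical nature of $\Xi$ gives $|\langle e_j,\pi(g)f_k\rangle|\le \Xi(g)$ after one passes to the $K$-average (this is where the Harish-Chandra bound for the spherical function enters). Summing, $|\langle v_1,\pi(g)v_2\rangle|\le \sum_{j,k}|c_j||c'_k|\,\Xi(g)\le \sqrt{d_1}\n{v_1}\cdot\sqrt{d_2}\n{v_2}\,\Xi(g)$ by Cauchy--Schwarz on the index sets. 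This is the routine part.

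The main obstacle — and the reason I would lean on \citep{cowling1988almost} as a black box rather than redo it — is establishing the clean pointwise domination $|\langle e_j,\pi(g)f_k\rangle|\le \Xi(g)$ for \emph{all} tempered $\pi$ uniformly, not merely square-integrable ones, and doing so without losing constants depending on the $K$-type beyond the dimension factor. This requires the delicate part of Cowling--Haagerup--Howe: that temperedness (weak containment in $L^2(G)$) alone, with no integrability margin, forces matrix coefficients of $K$-fixed vectors to be bounded by $\Xi$, via an argument passing through the Kunze--Stein convolution inequality and a limiting/continuity argument in the Fell topology from the discrete series (or from $L^{2+\eta}$ coefficients) to the tempered boundary. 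The passage from $p(\pi)\le 2$ (the hypothesis here) to "tempered" is itself their theorem; once that is granted, and once their $K$-type-refined version of the $\Xi$-bound is granted, the assembly above is straightforward. I would therefore present the proof as: (i) cite Cowling--Haagerup--Howe for temperedness from $p(\pi)\le 2$ and for the $\Xi$-domination of tempered coefficients; (ii) do the elementary $K$-type bookkeeping above to get the $\sqrt{d_1 d_2}$ and the norm factors.
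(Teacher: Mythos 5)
The paper does not prove this theorem; it simply combines CHH's two main theorems, observing that $p(\pi)\le 2$ means all $K$-finite matrix coefficients are in $L^{2+\eta}$ for all $\eta>0$, which by CHH Theorem~1 is equivalent to weak containment in $L^2(G)$, and CHH Theorem~2 then gives the stated pointwise bound with the $\sqrt{d_1 d_2}$ already in it. Your high-level plan (cite CHH, combine their temperedness criterion with their pointwise estimate) is the same as the paper's and is correct.

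However, your ``concrete mechanism'' for the $\sqrt{d_1 d_2}$ factor is wrong and, as written, would actually circle back on the theorem it tries to illuminate. You claim that for unit vectors $e_j\in W_1$, $f_k\in W_2$ one has the pointwise bound $|\langle e_j,\pi(g)f_k\rangle|\le\Xi(g)$. But $e_j$ and $f_k$ are \emph{not} $K$-fixed: since $W_1,W_2$ are $K$-irreducible, the $K$-span of $e_j$ is all of $W_1$ (dimension $d_1$) and likewise for $f_k$. The bound CHH gives for such vectors is therefore $|\langle e_j,\pi(g)f_k\rangle|\le\sqrt{d_1 d_2}\,\Xi(g)$ — precisely the theorem again — not $\Xi(g)$. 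If you feed the correct CHH bound into your coordinate-wise Cauchy--Schwarz, the $\sum_{j,k}|c_j||c'_k|$ produces an additional factor $\sqrt{d_1 d_2}$, and you end up with $d_1 d_2$ rather than $\sqrt{d_1 d_2}$. The genuine source of $\sqrt{d_1 d_2}$ in CHH is quite different: one compares the sup-norm of the $K$-bi-finite function $k_1,k_2\mapsto\langle\pi(k_1 a k_2)\xi,\eta\rangle$ with its $L^1(K\times K)$-average (which \emph{is} bounded by $\Xi(a)$), and the loss in passing from the average to the value at the identity is controlled by Schur orthogonality / Peter--Weyl, giving exactly $\sqrt{d_1 d_2}$. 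An equivalent route replaces $\xi,\eta$ by the rank-$d_i$ projections $P_i$ onto $W_i$, viewed as genuinely $K$-\emph{fixed} Hilbert--Schmidt vectors of norm $\sqrt{d_i}$ in the tempered representation $\pi\otimes\bar\pi$, and applies the $K$-fixed case. Either way, the step you label ``routine'' is where the theorem actually lives, and the naive decomposition into coordinate matrix coefficients does not recover it.
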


However, this theorem is not sufficient for our uses, since we wish to consider $p(\pi)$ arbitrary. We remark that \citep{cowling1988almost}
does give some results for arbitrary $\pi$ and $p$, but they are not precise enough for us.

Let us state here a general theorem which generalizes the above, which gives a bound of the form that we need. For
$2\le p\le\infty$, let $\Xi_{p}(g)=\int\delta(gk)^{-\nicefrac{1}{p}}dk$
be the $p$-th version of Harish-Chandra's function of $G$. This function satisfies $\Xi_{p}(g)\ll\Xi(g)q^{-l(g)(\nicefrac{1}{p}-\nicefrac{1}{2})}\ll_{\epsilon}q^{-l(g)(\nicefrac{1}{p}+\epsilon)}$.
Then we have:
\begin{thm}[See Theorem~\ref{thm:CHH-Spherical Lp}]
\label{thm:Upper bounds intro}Let $(\pi,V)$ be a unitary
irreducible representation of $G$ with $p(\pi)\le p$,
$p\ge2$, and let $v_1,v_{2}\in V$ be two $K$-finite vectors,
such that $\dim\spann Kv_{1}=d_{1}$, $\dim\spann Kv_{2}=d_{2}$.
Then
\[
\left|\left\langle v_1,gv_{2}\right\rangle \right|\le\sqrt{d_{1}d_{2}}\n{v_{1}}\n{v_{2}}\Xi_{p}(g).
\]
\end{thm}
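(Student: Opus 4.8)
The plan is to reduce the estimate for a general unitary representation $(\pi, V)$ with $p(\pi) \le p$ to the known bound on the Harish-Chandra function $\Xi$ via tensoring and the Cowling--Haagerup--Howe type argument. First I would recall the basic fact that if $p(\pi) \le p$ then, up to replacing $\pi$ by a tensor power, the matrix coefficients become square-integrable: concretely, for an even integer $n \ge p$, the $n$-fold tensor power $\pi^{\otimes n}$ (or rather $\pi \otimes \bar\pi \otimes \cdots$) has matrix coefficients in $L^2(G)$, hence $\pi^{\otimes n}$ is weakly contained in the regular representation $L^2(G)$. The key classical input, which I would cite from \citep{cowling1988almost}, is that for a $K$-finite vector $w$ in a representation weakly contained in $L^2(G)$ with $\dim \spann Kw = e$, one has $|\langle w, gw\rangle| \le e \,\n{w}^2 \,\Xi(g)$; more generally for two such vectors $|\langle w_1, g w_2\rangle| \le \sqrt{e_1 e_2}\,\n{w_1}\n{w_2}\,\Xi(g)$.

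The main step is then a bootstrapping/interpolation argument to pass from the $L^2$-bound on the tensor power back to an $L^p$-type bound on $\pi$ itself. Given $K$-finite vectors $v_1, v_2 \in V$ with $\dim \spann K v_i = d_i$, consider in $\pi^{\otimes n}$ the vectors $v_i^{\otimes n}$; one checks $\dim \spann K (v_i^{\otimes n}) \le d_i^n$ and $\n{v_i^{\otimes n}} = \n{v_i}^n$, and the diagonal matrix coefficient satisfies $\langle v_1^{\otimes n}, g v_2^{\otimes n}\rangle = \langle v_1, g v_2\rangle^n$. Applying the $L^2$-bound to $\pi^{\otimes n}$ gives $|\langle v_1, g v_2\rangle|^n \le (d_1 d_2)^{n/2} \n{v_1}^n \n{v_2}^n \Xi(g)$, hence $|\langle v_1, g v_2\rangle| \le \sqrt{d_1 d_2}\,\n{v_1}\n{v_2}\,\Xi(g)^{1/n}$. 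Taking $n$ to be roughly $p$ and using the elementary pointwise estimate $\Xi(g)^{1/n} \ll \Xi(g) q^{-l(g)(1/n - 1/2)} \asymp q^{-l(g)/n}$, together with the definition $\Xi_p(g) = \int \delta(gk)^{-1/p}\,dk$ and the stated bound $\Xi_p(g) \ll \Xi(g) q^{-l(g)(1/p - 1/2)}$, one matches the target $\Xi_p(g)$ after cleaning up the discrepancy between the integer $n$ and the real number $p$. The cleanest route to the exact constant is probably to avoid losing anything: rather than crudely bounding $\Xi^{1/n}$, one directly identifies $\Xi(g)^{1/n}$ with a spherical-function-type integral and uses Hölder to compare it to $\Xi_p(g)$, which keeps the constant exactly $\sqrt{d_1 d_2}\,\n{v_1}\n{v_2}$.

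I expect the main obstacle to be two technical points. First, making the tensor-power argument rigorous when $p$ is not an even integer: one must either interpolate between the bounds for consecutive even integers $n$, or work with the sharper statement of \citep{cowling1988almost} that controls matrix coefficients directly in terms of $p(\pi)$ and the Harish-Chandra $\Xi$-function without integrality restrictions. Since the theorem is stated with $\Xi_p$ defined for all real $p \ge 2$, I would lean on the description of $\Xi_p$ as $\int_K \delta(gk)^{-1/p}\,dk$ and on the fact that $\pi$ with $p(\pi) \le p$ embeds its matrix coefficients into a suitable weighted $L^2$-space, then run the Cowling--Haagerup--Howe argument in that weighted setting directly — this is exactly what Theorem~\ref{thm:CHH-Spherical Lp} (referenced in the statement) should provide, so the "proof" here is really a matter of carefully quoting and assembling that result. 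Second, the $K$-finiteness bookkeeping: one must verify that the span bound $\dim \spann K(v^{\otimes n}) \le d^n$ and the normalization interact correctly with taking $n$-th roots so that no spurious factor of $d^{n}$ survives after the root is taken — this is where writing $d_i = \dim\spann K v_i$ rather than a tensor-power quantity is essential, and it works out because $(d_1 d_2)^{n/2}$ has exactly the $n$-th root $\sqrt{d_1 d_2}$.
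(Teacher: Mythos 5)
There is a genuine gap in your argument, and it is precisely the one the paper is trying to circumvent. The tensor-power route, applied with $k = \lceil p/2 \rceil$ copies (or, as you write, an even $n \ge p$), produces the bound $\left|\left\langle v_1,gv_{2}\right\rangle \right|\le\sqrt{d_{1}d_{2}}\n{v_{1}}\n{v_{2}}\Xi(g)^{1/k}$, which is exactly the paper's Theorem~\ref{thm:CHH-corollary} — and the paper explicitly remarks that ``this theorem is not sufficient for this work, since we need more precise bounds when $p(\pi)\notin 2\N$.'' The trouble is that this is a strictly \emph{weaker} bound than the stated $\Xi_p(g)$: Jensen's inequality with the concave function $t\mapsto t^{2/p}$ gives $\Xi_p(g) = \int_K \delta(gk)^{-1/p}\,dk \le \left(\int_K \delta(gk)^{-1/2}\,dk\right)^{2/p} = \Xi(g)^{2/p} \le \Xi(g)^{1/k}$. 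So even when $p$ is an even integer (say $p=2k$), tensoring ends at $\Xi^{2/p}$, not $\Xi_p$. Your proposed fix — comparing $\Xi^{1/n}$ to $\Xi_p$ ``using Hölder'' — goes in the wrong direction, since Hölder/Jensen only tells you $\Xi_p$ is the \emph{smaller} quantity; and the fallback you suggest, ``this is exactly what Theorem~\ref{thm:CHH-Spherical Lp} should provide,'' is circular, since that reference is essentially the statement under proof. Your $K$-finiteness bookkeeping ($\dim\spann K(v^{\otimes n})\le d^n$, $\n{v^{\otimes n}}=\n{v}^n$) is fine, but it only gets you as far as $\Xi^{1/k}$.

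The paper's actual proof (for $K$-fixed vectors, which it says suffices here with further work for $K$-finite vectors via CHH's ideas) does not tensor at all. It works directly in $L^p$: Lemma~\ref{lem:Bound on L^p} splits $\delta(\tilde p) = \delta(\tilde p)^{1/p'}\delta(\tilde p)^{1/p}$ inside the Iwasawa integral $\int_K\int_P f(k\tilde p)\delta(\tilde p)\,d\tilde p\,dk$ and applies Hölder, giving $\left|\left\langle f_1,gf_{2}\right\rangle \right|\le\n{f_{1}}_{p'}\n{f_{2}}_{p}\Xi_{p}(g)$ for $K$-fixed $f_1\in L^{p'},\, f_2\in L^{p}$; Corollary~\ref{cor:L^p norm} then bounds the operator $A_a$ on $L^p(K\backslash G)$ by $\Xi_p(a)$; Corollary~\ref{cor:pi(Aa) bound} uses the one-dimensionality of $V^K$ to realize the matrix coefficient $\left\langle \pi(g)v_1,v_{2}\right\rangle$ as an eigenvalue of $A_{a_g}$ on $L^{p+\epsilon}$, and bounds it by the operator norm. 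This gives $\Xi_p$ directly, with no integrality loss. If you want to recover the exact constant $\sqrt{d_1 d_2}$ for general $K$-finite vectors, you should reproduce the CHH argument but run the Hölder step with the $\delta^{1/p'}\cdot\delta^{1/p}$ splitting rather than the $\delta^{1/2}\cdot\delta^{1/2}$ splitting that produces $\Xi$; the tensor-power device is a red herring here.
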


We discuss such bounds further in Subsection~\ref{subsec:Harish-Chandra bounds}.
Using Theorem~\ref{thm:Upper bounds intro} one may deduce various \emph{upper bounds} on matrix coefficients, norms of operators, and traces of operators. In particular, a useful bound is:
\begin{thm}[See Corollary~\ref{cor:Hecke Bounds}]
\label{thm:Upper Bounds Xi Intro}Let $(\pi,V)$ be a
unitary irreducible representation of $G$ with $p(\pi)=p$,
$p\ge2$. Then for every $d_{0}>0$ and $\epsilon>0$ 
\[
\n{\pi(\chi_{d_0})}\ll_{\epsilon}q^{d_{0}(1-\nicefrac{1}{p}+\epsilon)}.
\]
\end{thm}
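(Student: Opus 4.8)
The plan is to bound $\n{\pi(\chi_{d_0})}$ directly using the pointwise bound on matrix coefficients from Theorem~\ref{thm:Upper bounds intro} (i.e.\ Theorem~\ref{thm:CHH-Spherical Lp}). The operator $\pi(\chi_{d_0})$ acts on $V$ and, since $\chi_{d_0}$ is bi-$K$-invariant, its image lies in the line of $K$-fixed vectors $V^K$ (which is at most one-dimensional when $\pi$ is spherical, and zero otherwise, in which case the statement is trivial). So the norm estimate reduces to estimating $\pi(\chi_{d_0})$ restricted to $V^K$, i.e.\ to estimating $\int_G \chi_{d_0}(g)\,\v{v_0,\pi(g)v_0}\,dg$ where $v_0$ is a unit $K$-fixed vector; the relevant matrix coefficient is then the spherical function $\varphi_\pi(g)=\v{v_0,\pi(g)v_0}$, which by Theorem~\ref{thm:Upper bounds intro} (with $d_1=d_2=1$) satisfies $|\varphi_\pi(g)|\le \Xi_p(g)$.

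First I would write $\n{\pi(\chi_{d_0})} \le \int_G \chi_{d_0}(g)\,|\varphi_\pi(g)|\,dg$ — more carefully, using that the operator sends $V^K$ into $V^K$ and annihilates its orthogonal complement, so the norm equals $\left|\int_G \chi_{d_0}(g)\varphi_\pi(g)\,dg\right|$, which is $\le \int_G \chi_{d_0}(g)\Xi_p(g)\,dg$. Second, I would insert the bound $\Xi_p(g)\ll_\epsilon q^{-l(g)(1/p+\epsilon)}$ recalled just before Theorem~\ref{thm:Upper bounds intro}. Third, since $\chi_{d_0}$ is supported on $l(g)\le d_0+1$ and bounded by $1$, this gives
\[
\n{\pi(\chi_{d_0})} \ll_\epsilon \int_{l(g)\le d_0+1} q^{-l(g)(1/p+\epsilon)}\,dg.
\]
Fourth, I would estimate this integral using the standard volume growth $\mathrm{vol}\{g: l(g)\le t\}\ll_\epsilon q^{t(1+\epsilon)}$ (the same estimate underlying the trivial-eigenvalue bounds $\int_G\chi_{d_0}\ll_\epsilon q^{d_0(1+\epsilon)}$ recorded in Definition~\ref{def:chi and psi}): decomposing dyadically in $l(g)$, the shell $l(g)\in[j,j+1]$ has volume $\ll_\epsilon q^{j(1+\epsilon)}$ and contributes $\ll_\epsilon q^{j(1+\epsilon)}q^{-j(1/p+\epsilon)} = q^{j(1-1/p)}q^{j\epsilon(1-1)}$, so summing over $0\le j\le d_0+1$ the geometric series is dominated by its top term $q^{d_0(1-1/p)}$, up to a factor $q^{d_0\epsilon}$ absorbing the $\epsilon$'s. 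This yields $\n{\pi(\chi_{d_0})}\ll_\epsilon q^{d_0(1-1/p+\epsilon)}$ as claimed.

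The only subtlety — and the step I would be most careful about — is the passage from the matrix-coefficient bound to the operator-norm bound when $p(\pi)=p$ is attained only as an infimum: Theorem~\ref{thm:Upper bounds intro} requires $p(\pi)\le p$, so one applies it with $p$ replaced by $p+\epsilon'$ (or notes $p(\pi)\le p$ directly since the hypothesis is $p(\pi)=p$), and the resulting loss is harmlessly absorbed into the $q^{d_0\epsilon}$. One should also confirm that for spherical $\pi$ the operator $\pi(\chi_{d_0})$ is indeed self-adjoint (as $\chi_{d_0}$ is real and symmetric, $\chi_{d_0}(g)=\chi_{d_0}(g^{-1})$ since $l$ is inversion-invariant) with rank $\le 1$, so its operator norm is exactly $|\v{v_0,\pi(\chi_{d_0})v_0}|/\n{v_0}^2$ — a clean one-line reduction rather than a genuine obstacle. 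In short, this is a direct consequence of Theorem~\ref{thm:Upper bounds intro} plus elementary volume estimates; no real difficulty is expected.
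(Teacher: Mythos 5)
Your proposal is correct and follows essentially the same route as the paper's proof of Corollary~\ref{cor:Hecke Bounds}: since $\pi(\chi_{d_0})$ factors through the (at most one-dimensional) $K$-fixed subspace, one reduces to $\int_{l(g)\le d_0+1}\Xi_p(g)\,dg$ and then combines the spherical-function decay with the volume estimate $\int_{l(g)\le t}dg\ll_\epsilon q^{t(1+\epsilon)}$. One small correction: the pointwise bound should read $\Xi_p(g)\ll_\epsilon q^{-l(g)(1/p-\epsilon)}$, not $q^{-l(g)(1/p+\epsilon)}$ (you inherited a sign typo in the paper's Subsection~\ref{sec:Main-Ideas}, which is stated correctly in Theorem~\ref{thm: Explicit Bounds}); this changes nothing in your conclusion since the extra $q^{l(g)\epsilon}$ is absorbed into the final $q^{d_0\epsilon}$.
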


Applying Theorem~\ref{thm:Upper Bounds Xi Intro} carefully allows us to deduce Theorem~\ref{thm:density implies counting}.

To prove Theorem~\ref{thm:Diophantine implies pointwise}, Theorem
\ref{thm:rank one injective radius implies density} and Theorem~\ref{thm:p-adic injective radius implies density},
which deduce multiplicity bounds from the Weak Injective Radius Property,
we use Equation~\eqref{eq:trace formula to use} of Proposition~\ref{prop:Trace equivalent to Lattice point Counting}.
To deduce from it upper bounds on multiplicity, one needs \emph{lower bounds} on traces on representations of functions $h\in C_{c}^{\infty}(G)$.
The following definition and proposition capture the situation:
\begin{defn}
\label{def:Good family}Let $A\subset\Pi(G)$ be a precompact
subset. We say that a family of functions $\left\{ f_{d_0}\right\} \subset C_{c}^{\infty}(G)$,
$d_{0}\in\R$, $d_{0}\ge D$ is \emph{good} for $A$ if it holds that:
\begin{enumerate}
\item For every $\pi\in A$, and $\epsilon>0$,
\[
q^{d_{0}(1-\nicefrac{1}{p(\pi)}-\epsilon)}\ll_{A,\epsilon}\tr(\pi(f_{d_0})).
\]
\item It holds that for every $g\in G$, $\epsilon>0$, $f_{d_0}(g)\ll_{\epsilon}q^{d_{0}\epsilon}\psi_{d_0}(g)$,
where $\psi_{d_0}(g)$ is from Definition~\ref{def:chi and psi}.
\item For every representation $\pi'\in\Pi(G)$, it holds that
$0\le\tr (\pi'(f_{d_0}))$.
\end{enumerate}
\end{defn}

\begin{rem}
If we replace $(2)$ by the slightly stronger condition $\left|f_{d_0}(g)\right|\ll_{\epsilon}q^{d_{0}\epsilon}\psi_{d_0}(g)$
and further assumes that $f_{d_0}$ is left and right $K$-finite, then
one actually has by Theorem~\ref{thm:Upper bounds intro} 
\[
\tr (\pi(f_{d_0}))\ll_{\pi,\epsilon}q^{d_{0}(1-\nicefrac{1}{p(\pi)}+\epsilon)},
\]
so the lower bound is rather tight.
\end{rem}

\begin{prop}
\label{prop:good family proposition}Let $A\subset\Pi(G)$
be a precompact subset, and assume that it has a good family of functions.
Under this condition, if the sequence $(\Gamma_N) $
of cocompact lattices satisfies the Weak Injective Radius Property
with parameter $\alpha$ then for every $N\ge1$, $p>2$, $\epsilon>0$,
\[
M(A,\Gamma_N,p)\ll_{A,\epsilon}[\Gamma_1:\Gamma_N]^{1-\alpha(1-\nicefrac{2}{p})+\epsilon}.
\]
\end{prop}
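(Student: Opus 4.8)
The plan is to combine the good family $\{f_{d_0}\}$ with the trace-formula reformulation of the Weak Injective Radius Property given in Proposition~\ref{prop:Trace equivalent to Lattice point Counting}. Fix the target set $d_0 = 2\alpha\log_q([\Gamma_1:\Gamma_N])$ (one may adjust by adding $\epsilon\log_q([\Gamma_1:\Gamma_N])$ to absorb lower-order terms, but let me suppress this). By property $(2)$ of Definition~\ref{def:Good family}, for every $\epsilon>0$ we have $f_{d_0}(g)\ll_\epsilon [\Gamma_1:\Gamma_N]^{\epsilon}\psi_{2\alpha\log_q([\Gamma_1:\Gamma_N])}(g)$, which is exactly the hypothesis on $h$ in part $(3)$ of Proposition~\ref{prop:Trace equivalent to Lattice point Counting}. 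Hence the Weak Injective Radius Property with parameter $\alpha$ (part $(1)$ of that proposition, which we are assuming) yields, via the equivalence, that
\[
\tr\bigl(f_{d_0}\big|_{L^2(\Gamma_N\backslash G)}\bigr)\ll_{A,\epsilon}[\Gamma_1:\Gamma_N]^{1+\alpha+\epsilon}.
\]

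Next I would expand the left-hand side spectrally. Decomposing $L^2(\Gamma_N\backslash G)=\bigoplus_{\pi\in\Pi(G)} m(\pi,\Gamma_N)\,\pi$, we get $\tr(f_{d_0}|_{L^2(\Gamma_N\backslash G)}) = \sum_{\pi}m(\pi,\Gamma_N)\tr(\pi(f_{d_0}))$. Property $(3)$ of Definition~\ref{def:Good family} says each summand $\tr(\pi(f_{d_0}))\ge 0$, so we may freely discard all terms except those with $\pi\in A$ and $p(\pi)\ge p$; this only decreases the sum. For the surviving terms, property $(1)$ of Definition~\ref{def:Good family} gives the lower bound $\tr(\pi(f_{d_0}))\gg_{A,\epsilon} q^{d_0(1-1/p(\pi)-\epsilon)}$. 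Since $p(\pi)\ge p$ and $d_0\ge 0$, the exponent $1-1/p(\pi)$ is minimized (over this range) at $p(\pi)=p$, so $q^{d_0(1-1/p(\pi)-\epsilon)}\ge q^{d_0(1-1/p-\epsilon)}$. Plugging in $d_0=2\alpha\log_q([\Gamma_1:\Gamma_N])$ gives $q^{d_0(1-1/p-\epsilon)}=[\Gamma_1:\Gamma_N]^{2\alpha(1-1/p)-2\alpha\epsilon}=[\Gamma_1:\Gamma_N]^{\alpha(2-2/p)-O(\epsilon)}$.

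Combining the two bounds,
\[
M(A,\Gamma_N,p)\cdot[\Gamma_1:\Gamma_N]^{\alpha(2-2/p)-O(\epsilon)}\ll_{A,\epsilon}\sum_{\pi\in A,\,p(\pi)\ge p}m(\pi,\Gamma_N)\tr(\pi(f_{d_0}))\le \tr(f_{d_0}|_{L^2})\ll_{A,\epsilon}[\Gamma_1:\Gamma_N]^{1+\alpha+\epsilon},
\]
so $M(A,\Gamma_N,p)\ll_{A,\epsilon}[\Gamma_1:\Gamma_N]^{1+\alpha-\alpha(2-2/p)+O(\epsilon)}=[\Gamma_1:\Gamma_N]^{1-\alpha(1-2/p)+O(\epsilon)}$. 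Rescaling $\epsilon$ gives the claimed estimate. The main technical point to be careful about is the choice of $d_0$: one needs $d_0\le 2\alpha\log_q([\Gamma_1:\Gamma_N])$ for the Weak Injective Radius Property to apply, and one needs $d_0$ as large as possible to make the lower bound on $\tr(\pi(f_{d_0}))$ strong; the boundary value $d_0=2\alpha\log_q([\Gamma_1:\Gamma_N])$ is forced, and a minor bookkeeping of $\epsilon$-powers of the index (to stay safely inside the admissible range and to cover the requirement $d_0\ge D$ for $N$ large, which holds automatically since $[\Gamma_1:\Gamma_N]\to\infty$) is the only subtlety. Everything else is a direct concatenation of the positivity, the lower bound, and the trace bound.
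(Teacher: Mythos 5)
Your proof is correct and follows essentially the same route as the paper's own argument: set $d_0=2\alpha\log_q([\Gamma_1:\Gamma_N])$, use the trace-formula reformulation (Proposition~\ref{prop:Trace equivalent to Lattice point Counting}, proved as Corollary~\ref{cor:trace equivalence}) together with property (2) of the good family to bound $\tr f_{d_0}|_{L^2(\Gamma_N\backslash G)}$ by $[\Gamma_1:\Gamma_N]^{1+\alpha+\epsilon}$, then expand spectrally and use positivity (property (3)) and the pointwise trace lower bound (property (1)) to extract $M(A,\Gamma_N,p)$. (For the record, the paper's proof has a typo writing $d_0=\alpha\log_q([\Gamma_1:\Gamma_N])$ where $2\alpha$ is meant, as its next line confirms; your choice is the intended one.)
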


Finding general lower bounds on traces (uniformly for a family of
representations) is not well studied. Two special cases, which appear (somewhat implicitly) in the work of Sarnak and Xue, correspond to
Theorem~\ref{thm:rank one injective radius implies density} and
Theorem~\ref{thm:Diophantine implies pointwise}:
\begin{enumerate}
\item In rank $1$, one has a simple classification of spherical irreducible
unitary representations. In the Archimedean case, for each $2<p\le\infty$
there is at most a single spherical irreducible unitary representation
$(\pi,V)$ with $p(\pi)=p$ (with a corresponding
spherical function $\Xi_{p}(g)$), and one can easily deduce lower
bounds on the trace of $f_{d_0}=\chi_{d_{0}/2}*\chi_{d_{0}/2}$,
and deduce Theorem~\ref{thm:rank one injective radius implies density}.
In the non-Archimedean case, Theorem~\ref{thm:rank one injective radius implies density}
reduces to some statement on graphs, see Subsection
\ref{subsec:Lower-Bounds-rank1}.
\item If one is interested in a single representation, one has the following, from which we deduce Theorem~\ref{thm:Diophantine implies pointwise}.
In the Archimedean case, it follows from the asymptotic behavior of
leading exponents (\citep[Chapter VIII]{casselman1982asymptotic,knapp2016representation}).
The non-Archimedean case is easier, and in any case follows from Theorem~\ref{thm:good family padic intro}
below.
\end{enumerate}
\begin{thm}[See Section~\ref{sec:pointwise lower bound}]
\label{thm: good family pointwise intro}Let $(\pi,V)\in\Pi(G)$.
Then the set $A=\left\{ \pi\right\} $ has a good family of functions.
\end{thm}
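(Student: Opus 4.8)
The goal is to produce, for a single fixed $(\pi,V)\in\Pi(G)$ with $p=p(\pi)$, a family $\{f_{d_0}\}_{d_0\ge D}\subset C_c^\infty(G)$ satisfying the three conditions of Definition~\ref{def:Good family}: a lower bound $\tr(\pi(f_{d_0}))\gg q^{d_0(1-1/p-\epsilon)}$, the majorization $f_{d_0}(g)\ll_\epsilon q^{d_0\epsilon}\psi_{d_0}(g)$, and positivity $\tr(\pi'(f_{d_0}))\ge 0$ for all $\pi'\in\Pi(G)$. The natural construction is to set $f_{d_0}=h_{d_0/2}^*\ast h_{d_0/2}$ for a suitable cutoff $h_{d_0/2}\in C_c^\infty(G)$, since convolving a function with its own adjoint forces $\pi'(f_{d_0})=\pi'(h_{d_0/2})^*\pi'(h_{d_0/2})\ge 0$, giving condition~(3) for free, and makes $\tr(\pi(f_{d_0}))=\|\pi(h_{d_0/2})\|_{\mathrm{HS}}^2$, which reduces the lower bound to exhibiting enough Hilbert--Schmidt mass. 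The subtlety is the choice of $h$: one cannot just take $h$ spherical, because for a general $\pi$ the $K$-fixed vectors may be zero, so $h$ must be chosen $K$-finite on both sides, supported where $l(g)\le d_0/2$ (so that the Convolution Lemma~\ref{lem:convolution lemma intro}, or rather the support bookkeeping $l(g_1g_2)\le l(g_1)+l(g_2)$, yields condition~(2) with $\psi_{d_0}$ on the nose up to $q^{d_0\epsilon}$), and projecting onto a fixed $K$-isotypic block in which $\pi$ has a nonzero vector.

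The key quantitative input is the asymptotic lower bound on matrix coefficients of $\pi$. Fix $K$-finite vectors $v_1,v_2\in V$ lying in $K$-types that occur in $\pi$, with $\langle v_1,\pi(g)v_2\rangle\not\equiv 0$. The theory of leading exponents (Casselman's asymptotics, \citep{casselman1982asymptotic,knapp2016representation}) in the Archimedean case — and the theory of leading coefficients in the $p$-adic case, which the excerpt defers to Theorem~\ref{thm:good family padic intro} — gives that along $A_+$ the dominant matrix coefficient decays exactly like $q^{-l(a)/p}$ up to polynomial-in-$l(a)$ factors, i.e. there is a nonnegative matrix coefficient $\beta$ with $\beta(a)\gg_\epsilon q^{-l(a)(1/p+\epsilon)}$ on a positive-measure cone of $A_+$. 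Then one builds $h_{d_0/2}$ as a right translate-and-average of the matrix coefficient cutoff: roughly, $\pi(h_{d_0/2})$ acts as (a multiple of) the spectral projection truncated to $l(g)\le d_0/2$ convolved against $\beta$, and
\[
\tr(\pi(f_{d_0}))=\|\pi(h_{d_0/2})\|_{\mathrm{HS}}^2\ \gg\ \int_{l(g)\le d_0/2}|\beta(g)|^2\,dg\ \gg_\epsilon\ \int_{l(a)\le d_0/2} q^{l(a)}\cdot q^{-l(a)(1+\epsilon)}\,da\ \gg_\epsilon\ q^{d_0(1-1/p-2\epsilon)/2}\cdot q^{d_0(1-1/p)/2},
\]
where the $q^{l(a)}$ is the volume growth of Cartan cells; arranging the normalization so that the total exponent is $d_0(1-1/p-\epsilon)$ is the bookkeeping one has to get right. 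A cleaner route, avoiding reconstructing $h$ by hand, is: take $f_{d_0}=\chi_{d_0/2}\ast(e_\tau^* \ast e_\tau)\ast\chi_{d_0/2}$ where $e_\tau$ is the idempotent of a $K$-type $\tau$ present in $\pi$, use positivity of this "two-sided $K$-finite sandwich," bound it above by $\psi_{d_0}$ via Lemma~\ref{lem:convolution lemma intro}, and bound $\tr(\pi(f_{d_0}))$ below by pairing against the explicit $\tau$-isotypic matrix coefficient and invoking the leading-exponent lower bound to see the integral does not lose more than $q^{d_0\epsilon}$.

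The main obstacle is condition~(1): producing a \emph{lower} bound on the trace that is uniform enough and matches the exponent $1-1/p(\pi)$. Unlike the upper bounds (Theorem~\ref{thm:Upper bounds intro}), which follow from Harish-Chandra majorization and are "off the shelf," the matching lower bound requires knowing that the leading exponent of $\pi$ is genuinely attained — i.e. that $p(\pi)$ is computed by an honest asymptotic, not merely an $L^p$-integrability threshold — and that the relevant matrix coefficient does not vanish on the region $l(g)\le d_0/2$ we integrate over. In the Archimedean case this is exactly the content of Casselman's asymptotic expansions and the identification of $p(\pi)$ with the leading exponent; the delicate point is handling possible cancellation/oscillation in subleading terms, which is why one integrates $|\beta|^2$ (a positive quantity) rather than $\beta$ itself, and why choosing $f_{d_0}$ as an honest $\ast$-square is essential. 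I expect the bulk of Section~\ref{sec:pointwise lower bound} to be devoted precisely to making this attained-leading-exponent lower bound rigorous and to checking the support/majorization estimate~(2) survives the convolution; positivity~(3) is immediate from the $\ast$-square structure.
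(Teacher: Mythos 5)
Your overall architecture is the right one and matches the paper: take $f_{d_0}=g^{\ast}\ast g$ for a $K$\nobreakdash-finite cutoff $g$ supported in $\{l\le d_0/2\}$, so that condition~(3) is automatic, condition~(2) follows from the upper bound $|\langle v,\pi(g)v\rangle|\ll_{\pi,v,\epsilon}q^{-l(g)(1/p(\pi)+\epsilon)}$ together with the Convolution Lemma, and the work is all in the trace lower bound~(1), for which the theory of leading exponents (Knapp, Chapter~VIII) is indeed the key input. You also correctly flag that the $\ast$\nobreakdash-square structure is what makes the integrand nonnegative and defeats oscillation, and that most of Section~\ref{sec:pointwise lower bound} is devoted to making the ``leading exponent is attained'' lower bound rigorous.

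However, your explicit construction is missing the one ingredient that makes the exponents come out right, and as written the displayed arithmetic does not close. The paper's cutoff is not $\chi_{d_0/2}(g)\overline{\langle v,\pi(g)v\rangle}$ but
\[
g_{d_0/2}(g)=\chi_{d_0/2}(g)\,q^{\,l(g)/p(\pi)}\,\overline{\langle v,\pi(g)v\rangle},
\]
and the extra weight $q^{\,l(g)/p(\pi)}$ is essential. Running your own computation without it: $\tr\pi(f_{d_0})\ge\|\pi(g_{d_0/2})\|^2\ge|\langle v,\pi(g_{d_0/2})v\rangle|^2$, and $\langle v,\pi(g_{d_0/2})v\rangle=\int\chi_{d_0/2}(g)\,|\langle v,\pi(g)v\rangle|^2dg$. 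Using $|\langle v,\pi(a)v\rangle|\approx q^{-l(a)/p}$ and $S(a)\approx q^{l(a)}$, this integral is $\asymp q^{(d_0/2)(1-2/p)}$, so the trace lower bound is $\asymp q^{d_0(1-2/p)}$, which falls short of the target $q^{d_0(1-1/p)}$ by a factor $q^{d_0/p}$ whenever $p>2$. The weight $q^{\,l(g)/p}$ exactly supplies the missing $q^{d_0/p}$: it turns the integrand into $q^{\,l(a)(1-1/p)}$ on a Cartan cell, so the integral becomes $\asymp q^{(d_0/2)(1-1/p)}$ and its square is $q^{d_0(1-1/p)}$. Your displayed chain of inequalities (with the $q^{-l(a)(1+\epsilon)}$ factor and the final product) is not internally consistent with this; the only way to reach $q^{d_0(1-1/p-\epsilon)}$ is to insert the weight. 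Your alternative candidate $f_{d_0}=\chi_{d_0/2}\ast e_\tau\ast\chi_{d_0/2}$ has the same problem: its trace reduces to $\sum_j\|\pi(\chi_{d_0/2})e_j\|^2$, and lower\nobreakdash-bounding $\|\pi(\chi_{d_0/2})e_j\|$ by $|\int\chi_{d_0/2}\langle e_j,\pi(g)e_j\rangle dg|$ reintroduces the oscillation you were trying to avoid, while bounding it instead by pairing against the weighted matrix coefficient is precisely the paper's $g_{d_0/2}$. Finally, note that the paper never proves a pointwise lower bound on $|\langle v,\pi(a)v\rangle|$ on a cone; after the asymptotic expansion (Theorem~8.32 of Knapp) it lower\nobreakdash-bounds only the integral $\int q^{l(g)/p}|\langle v,\pi(g)v\rangle|^2dg$, and Lemmas~\ref{lem:Terrible technical lemma} and~\ref{lem:terrible technical lemma2} exist specifically to handle cancellation among complex exponents and wall behavior in that integral; a pointwise bound along a cone would be a stronger and unproven statement.
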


Finally, we provide the following theorem, which implies Theorem~\ref{thm:p-adic injective radius implies density}
together with Proposition~\ref{prop:good family proposition}.
\begin{thm}[See Theorem~\ref{thm:Good family p-adic}]
\label{thm:good family padic intro}Let $G$ be non-Archimedean.
Then there exists a set $\left\{ K'\right\} $ of arbitrarily small
open-compact subgroups of $G$, such that for every $K'$, $\Pi(G)_{K'-sph}$
has a good family.
\end{thm}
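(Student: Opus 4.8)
The plan is to use Bernstein's description of the Hecke algebra and the theory of the $\PGL$-type Plancherel measure to build, for each compact open subgroup $K'$, a family of functions $\{f_{d_0}\}$ supported on the ball $l(g)\le d_0$ which are ``non-backtracking'' in the sense that their trace on every representation is a polynomial in the Satake/Bernstein parameters with a controlled leading term. First I would reduce to the case where $K'$ is an Iwahori-type subgroup (or more precisely a subgroup of the form $K_m' = \ker(K\to K/K_m)$ for a congruence filtration), since it suffices to produce one good family for a cofinal sequence of such $K'$; any smaller $K'$ can be handled by the same construction or by inflating along $\Pi(G)_{K'-\sph}\supset\Pi(G)_{K''-\sph}$ when $K''\subset K'$. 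For such $K'$, the subcategory of smooth representations generated by their $K'$-fixed vectors decomposes along the Bernstein components, and on each component the relevant Hecke algebra $\mathcal{H}(G,K')$ is (Morita equivalent to) an affine-Hecke-type algebra whose irreducible modules are parametrized by a complex torus (the unramified characters of a Levi) modulo a finite Weyl group.

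The key steps, in order: (1) Fix $K'$ and decompose $C_c^\infty(G/\!/K')$ into its Bernstein components; for each component choose a ``geometric basis'' of the Hecke algebra indexed by $l$-balls, analogous to the non-backtracking operators $A_n$ on a tree — this is exactly the content of Section~\ref{sec:Bernstein-Theory-of-NonBacktracking}. (2) On a fixed component with parameter torus $T_c$, the trace of the basis element $A_{d_0}$ attached to the radius-$d_0$ ball, evaluated at a representation with parameter $t\in T_c$, is essentially $q^{d_0/2}\cdot p_{d_0}(t)$ where $p_{d_0}$ is a Laurent polynomial (a matrix-coefficient-of-$T_c$ type expression) whose values grow like $q^{d_0(1/2-1/p(\pi))}$ when $p(\pi)>2$, because $p(\pi)$ is read off from how far $t$ is from the unitary subtorus. (3) Take $f_{d_0}$ to be a suitable positive linear combination (over the finitely many components meeting the precompact set one cares about, with an Iwahori- or $K'$-projector inserted) of these basis elements so that condition~(3) of Definition~\ref{def:Good family} — positivity of $\tr(\pi'(f_{d_0}))$ for \emph{every} $\pi'\in\Pi(G)$ — holds; the natural candidate is something like $f_{d_0} = e_{K'} * \chi_{d_0/2}*\chi_{d_0/2}^{*} * e_{K'}$ adjusted by the Bernstein basis so that it acts as a positive operator, mimicking the $\sum_{j\le d_0} A_j$ trick that makes the tree operator positive. (4) Verify condition~(2), the pointwise bound $f_{d_0}(g)\ll_\epsilon q^{d_0\epsilon}\psi_{d_0}(g)$: this follows because each Bernstein basis element is supported in an $l$-ball of radius $\le d_0$ and its $L^\infty$-norm on the sphere $l(g)=t$ is $\ll q^{t/2}$ up to the bounded combinatorics of the building, which is precisely the profile of $\psi_{d_0}$. (5) Verify condition~(1), the trace lower bound $q^{d_0(1-1/p(\pi)-\epsilon)}\ll\tr(\pi(f_{d_0}))$: this is where the explicit polynomial $p_{d_0}(t)$ from step (2) is used — for $\pi$ with $p(\pi)=p$ one has a parameter $t$ at ``distance $1/2-1/p$'' from the unitary axis, and the dominant exponential term of $p_{d_0}(t)$ gives exactly $q^{d_0(1-1/p)}$ after accounting for the extra $q^{d_0/2}$; one must check no cancellation occurs, which is guaranteed by positivity (step 3) together with the fact that the leading Bernstein basis element $A_{d_0}$ contributes with a definite sign.

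The main obstacle I expect is step (3) combined with step (5): arranging \emph{global} positivity of $\tr(\pi'(f_{d_0}))$ over \emph{all} of $\Pi(G)$ (not just the component at hand) while simultaneously keeping the trace \emph{large} on the non-tempered representations in $\Pi(G)_{K'-\sph}$. Over a single Bernstein component this is the familiar tree/graph computation (a non-negative combination of non-backtracking operators), but $C_c^\infty(G/\!/K')$ for $K'$ smaller than a good maximal compact sees infinitely many components, and the parameter torus of a given component can be higher-dimensional (a Levi of rank $>1$), so the ``polynomial $p_{d_0}$'' is a genuine multivariable Laurent polynomial and controlling its positivity and its leading term requires the Macdonald/Matsumoto spherical-function formula on the affine Hecke algebra rather than the elementary $\SL_2$ recursion. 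I would handle this by working component-by-component: reduce $\tr(\pi(f_{d_0}))$ to an integral of $|p_{d_0}|^2$ (or of $p_{d_0}$ against a positive Plancherel density) over the relevant parameter torus, so that positivity is automatic and the lower bound becomes a lower bound on an $L^2$-type norm of an explicit exponential-polynomial — which can be extracted from its top-degree monomial, exactly as the Convolution Lemma~\ref{lem:convolution lemma} handles the spherical case. The remaining bookkeeping — that finitely many components suffice to cover any precompact $A$, and that the constants are uniform in $d_0$ — is routine given the Bernstein decomposition.
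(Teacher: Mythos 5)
Your general direction is right (Bernstein's structure theory of the Hecke algebra, non-backtracking operators), and your worry in the ``main obstacle'' paragraph correctly identifies what makes the problem delicate — but the candidate you propose doesn't work, and the workaround you sketch to save it misses the clean construction that actually closes the gap.

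Concretely: your ``natural candidate'' $f_{d_0}=e_{K'}*\chi_{d_0/2}*\chi_{d_0/2}^{*}*e_{K'}$ is bi-$K$-invariant (since $\chi_{d_0/2}\in C_c^{\infty}(K\backslash G/K)$ and $K'\subset K$, the outer $e_{K'}$'s do nothing), so $\pi(f_{d_0})=0$ for every non-spherical $\pi$, including most of $\Pi(G)_{K'-\sph}$. You flag that this needs to be ``adjusted by the Bernstein basis,'' but that adjustment is precisely the hard part, and your proposed fix — running component-by-component and controlling a multivariable Laurent polynomial $p_{d_0}(t)$ against a Plancherel density — confuses a pointwise trace $\tr\pi(f_{d_0})$ with an integral over the parameter torus and never produces a definite $f_{d_0}$ for which positivity over \emph{all} of $\Pi(G)$ can be checked.

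The missing idea is that Bernstein's decomposition (Theorem~\ref{thm:Bernstein Decomposition}) hands you $r=\rank_k G$ commuting non-backtracking Hecke operators $h_{K',\beta_1},\dots,h_{K',\beta_r}$ in $C_c(K'\backslash G/K')$ through which every $h_{K',g}$ factors, and that this yields an exact spectral characterization of temperedness (Theorem~\ref{thm:Temperedness by Riemann}): $\pi$ is $p$-tempered iff every eigenvalue $\lambda$ of $\pi(h_{K',\beta_i})$ on $V^{K'}$ satisfies $|\lambda|\le q_{K',\beta_i}^{1-1/p}$. Given this, one simply takes $m_i$ with $q_{K',\beta_i}^{m_i}\asymp q^{d_0/2}$ and sets
\[
f_{d_0}=\sum_{i=1}^{r}\bigl(h_{K',\beta_i}^{m_i}\bigr)^{*}\,h_{K',\beta_i}^{m_i}.
\]
Positivity of $\tr\pi'(f_{d_0})$ for all $\pi'\in\Pi(G)$ is then automatic — it is a sum of $h^{*}h$'s, no component-by-component argument or Plancherel positivity is needed — while the lower bound follows directly: if $p(\pi)=p>2$, then some $\pi(h_{K',\beta_i})$ has an eigenvalue of modulus $\ge q_{K',\beta_i}^{1-1/p}$, hence $\|\pi(h_{K',\beta_i}^{m_i})\|^{2}\gg q^{d_0(1-1/p)}$, and $\tr\pi(f_{d_0})\ge\|\pi(f_{d_0})\|\ge\|\pi(h_{K',\beta_i}^{m_i})\|^{2}$. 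The pointwise bound $f_{d_0}(g)\ll_\epsilon q^{d_0\epsilon}\psi_{d_0}(g)$ comes from the Convolution Lemma~\ref{lem:convolution lemma}, since each $h_{K',\beta_i}^{m_i}$ is supported in a ball of radius $\approx d_0/2$ with $L^\infty$-norm $O_{K'}(1)$. In short, the paper's construction dissolves exactly the tension between positivity and the trace lower bound that your plan identifies as the main difficulty.
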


The proof of Theorem~\ref{thm:good family padic intro} is based on two sources. The first is the connection between the Ihara graph Zeta
function and expansion (see \citep{hashimoto1989zeta}), and the second
is Bernstein's description of the Hecke algebra $C_{c}(K'\backslash G/K')$
(see \citep{bernshtein1974all}). A precise connection for $(q+1)$-regular
graphs between $p(\pi)$ for spherical function and poles
of the Ihara zeta function may be found in \citep{kamber2019p}. In recent years, there were various generalizations of the graph zeta function to higher dimensional buildings (see, e.g., \citep{kang2010zeta} and the references within). In \citep{kamber2016lpcomplex} the second
named author generalized the connection between $p(\pi)$
for representations $\pi\in\Pi(G)$ with Iwahori-fixed
vector and the poles of some generalized zeta function. By a slight
variant of those ideas one may prove the special case of Theorem~\ref{thm:good family padic intro}
when $K'$ is the Iwahori subgroup. For more general $K'$ we follow
the same ideas, by using Bernstein's description of the Hecke algebra
$C_{c}(K'\backslash G/K')$. See Section~\ref{sec:Bernstein-Theory-of-NonBacktracking}
for details.

If we consider only the spherical case, it would be useful if the
functions $f_{d_0}$ in the definition of a good family will be
left and right $K$-invariant, i.e., $f_{d_0}\in C_{c}^{\infty}(K\backslash G/K)$.
Recently, Matz and Templier proved a similar theorem for $G=\PGL_{n}$
using the Satake isomorphism (\citep{matz2021sato}). However, their
results are less precise than we desire -- they find a spherical function $f_{d_0}\in C_{c}^{\infty}(K\backslash G/K)$ which satisfies $f_{d_0}(g)\ll_{\epsilon}q^{d_{0}\epsilon}\psi_{d_0}(g)$,
with a lower bound $q^{\beta d_{0}(1-\nicefrac{1}{p(\pi)})}\ll\tr (\pi(f_{d_0}))$
for some $\beta<1$, instead of the optimal bound $q^{d_{0}(1-\nicefrac{1}{p(\pi)})}\ll\tr (\pi(f_{d_0}))$.

Let us finish this discussion with the following conjecture, which concerns only spherical functions. For $g\in G$, let $S(g)$ be the $K$-bi-invariant function such that $\intop_{G}f(g)dg=\intop_{K}\intop_{K}\intop_{A_{+}}f(kak')S(a)dadkdk'$
. It holds that for $g$ ``far from the walls'' $S(g)\approx q^{l(g)}$
(see Subsection~\ref{subsec:Distnaces and Length of Elements}).
\begin{conjecture}
\label{conj:mat-coeff-lower-bound}There exist $D>0$, $L>0$ such
that for every $\epsilon>0$ and every $(\pi,V)\in\Pi(G)_{\sph}$
(i.e. a unitary irreducible spherical representation) with $p(\pi)>2$,
if $v\in V$, $\n v=1$ is $K$-fixed, then:
\begin{enumerate}
\item In the non-Archimedean case, for $d_{0}>D$
\[
\intop_{l(g)\le d_{0}}S(g)\left|\left\langle v,\pi(g)v\right\rangle \right|^{2}dg\gg_{\epsilon}q^{2d_{0}(1-1/p(\pi)-\epsilon)}.
\]
\item In the Archimedean case, for $d_{0}>D$,
\[
\intop_{l(g)\le d_{0}}S(g)\left|\left\langle v,\pi(g)v\right\rangle \right|^{2}dg\gg_{\epsilon}(\lambda(\pi)+1)^{-L}q^{2d_{0}(1-1/p(\pi)-\epsilon)}.
\]
\end{enumerate}
\end{conjecture}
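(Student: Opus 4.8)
\emph{Proof proposal.} The plan is to reduce the conjectured inequality to a pointwise lower bound on the spherical function $\phi_\pi(g)=\langle v,\pi(g)v\rangle$ on a well-chosen region, and then to extract that bound from the expansion of $\phi_\pi$ in leading exponents: Macdonald's exact formula in the non-Archimedean case, and the Harish-Chandra expansion together with the Gindikin--Karpelevich product for the $c$-function in the Archimedean case.

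First, since the integrand $S(g)\,|\phi_\pi(g)|^{2}$ is non-negative, it suffices to bound the integral from below over any convenient subset of $\{l(g)\le d_0\}$. I would fix once and for all a small compact cone $\mathcal C\subset A_{+}$, bounded away from all the walls and pointing along the direction in which $\phi_\pi$ decays the slowest relative to the volume growth of $G$ (the direction that ``witnesses'' $p(\pi)$), and set $\Omega=K\{a\in\mathcal C:d_0-1\le l(a)\le d_0\}K$. On $\Omega$ one has $S(g)\asymp q^{l(g)}\asymp q^{d_0}$ with implied constants depending only on $G$ and $\mathcal C$, so
\[
\intop_{l(g)\le d_0}S(g)\,|\phi_\pi(g)|^{2}\,dg\;\gg\;q^{2d_0}\,\inf_{g\in\Omega}\,|\phi_\pi(g)|^{2},
\]
and it remains to prove $\inf_{g\in\Omega}|\phi_\pi(g)|\gg_{\epsilon}q^{-d_0/p(\pi)-\epsilon d_0}$, with an additional factor $(\lambda(\pi)+1)^{-L/2}$ in the Archimedean case.

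Second, I would bring in the asymptotic expansion of the spherical function along $\mathcal C$,
\[
\phi_\pi(a)=\sum_{w\in W}c(w\nu_\pi)\,a^{\,w\nu_\pi-\rho}+(\text{terms of strictly smaller real exponent}),
\]
where $\nu_\pi$ is the dominant representative of the spectral parameter of $\pi$ and $c$ is Harish-Chandra's $c$-function (the expansion being Macdonald's exact finite sum in the non-Archimedean case). Because $\pi$ is non-tempered, $\re\nu_\pi$ is a non-zero dominant element, so the exponent of largest real part is $\nu_\pi-\rho$, attained by $w=e$, and by the choice of $\mathcal C$ that leading term has size $\asymp|c(\nu_\pi)|\,q^{-l(a)/p(\pi)}$ on $\Omega$ (possibly with a polynomial-in-$\log a$ factor when exponents collide, which is harmless). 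Two ingredients then remain: (i) that for $l(a)$ above a suitable threshold the $w=e$ term dominates the remaining terms on $\mathcal C$ --- uniformly in $\pi$ in the non-Archimedean case, and up to a threshold of size $O(\log(\lambda(\pi)+1))$ in the Archimedean case, with the residual ranges (near-tempered $\pi$, or $d_0$ below the threshold) disposed of by the crude continuity estimate $|\phi_\pi(a)|\asymp1$ on a ball of radius $\asymp(\lambda(\pi)+1)^{-1/2}$ together with the freedom to enlarge the implied constant $C_{\epsilon}$; and (ii) a uniform lower bound for $|c(\nu_\pi)|$.

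Ingredient (ii) in the Archimedean case is the main obstacle. In the non-Archimedean case it is elementary: writing $c(\nu)=\prod_{\alpha>0}\bigl(1-q^{-1}q^{-\langle\nu,\alpha^{\vee}\rangle}\bigr)\bigl(1-q^{-\langle\nu,\alpha^{\vee}\rangle}\bigr)^{-1}$, dominance of $\re\nu_\pi$ forces $|q^{-\langle\nu_\pi,\alpha^{\vee}\rangle}|\le1$, so every numerator has modulus $\ge1-q^{-1}$ and every denominator modulus $\le2$, giving $|c(\nu_\pi)|\gg1$ with a constant depending only on $G$ (a pole of $c$ only helps, and is absorbed by the $\log a$ correction above). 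In the Archimedean case one must instead control Harish-Chandra's $c$-function --- a product of ratios of Gamma factors via Gindikin--Karpelevich --- over the bounded dominant region occupied by the spherical unitary dual: there it has no zero, but it decays near the walls, and the estimate $|c(\nu_\pi)|\gg(1+\|\nu_\pi\|)^{-L}$ has to be extracted from the Stirling asymptotics of the Gamma function, as in the analysis of the Plancherel density; in addition one must bound polynomially in $\|\nu_\pi\|$ both the polynomial-in-$\log a$ coefficients and the remainder of the Harish-Chandra expansion, and show that the $w=e$ term overtakes the rest already for $l(a)\gg\log(\lambda(\pi)+1)$. This polynomial control of the $c$-function and of the expansion, uniform over the non-tempered spherical unitary spectrum, is the delicate point --- it is the same phenomenon responsible for the sub-optimal exponent $\beta<1$ in the Matz--Templier approach. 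Granting (i) and (ii), the conjecture follows by combining them with the reduction of the first step.
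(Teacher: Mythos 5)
You should note first that what you are attempting to prove is stated in the paper as Conjecture~\ref{conj:mat-coeff-lower-bound}; the paper itself supplies no proof, and indeed the surrounding discussion makes clear that the authors regard the uniform lower bound over the whole non-tempered spherical spectrum as open (it is one of the technical obstructions to proving Conjecture~\ref{conj:injective radius implies density archimedean} in the Archimedean high-rank case). So the right comparison is not to a proof in the paper but to the single-representation analysis in Section~\ref{sec:pointwise lower bound}, which your sketch parallels.

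With that said, there are two genuine gaps. The first is in your geometric reduction. You lower-bound the integral by $q^{2d_0}\inf_{\Omega}|\phi_\pi|^2$ over the annulus $\Omega$ of a fixed compact cone $\mathcal C$ bounded away from the walls. But the direction along which $|\phi_\pi(a)|$ decays at the rate $q^{-l(a)/p(\pi)}$ is the fundamental coweight $\omega_{i_0}$ realising $p(\pi)$, and for $\rank G\ge 2$ that direction lies \emph{on} a wall. Any fixed cone bounded away from the walls has, inside the annulus $d_0-1\le l\le d_0$, points whose $\omega_j$-coordinates for $j\ne i_0$ are of size $\asymp\delta d_0$ ($\delta$ the aperture of the cone); at such points the exponent $(\re\nu_\pi-\rho)$ costs an extra $q^{-c\delta d_0}$, so the infimum over $\Omega$ is $q^{-l/p(\pi)-c\delta d_0}$, which kills the claimed $\gg_\epsilon q^{-d_0/p(\pi)-\epsilon d_0}$ for $\epsilon<c\delta$. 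What the argument needs is a \emph{tube} — bounded in the transverse coordinates, long in the $\omega_{i_0}$ direction — exactly as in the proof of Theorem~\ref{thm: good family pointwise intro}, not a conical annulus. Relatedly, along the tube the terms $w\nu_\pi-\rho$ with $w$ in the stabiliser of $\omega_{i_0}$ all share the leading real exponent; you call such collisions ``harmless,'' but ruling out cancellation among them, uniformly in $\pi$, is precisely where Lemmas~\ref{lem:Terrible technical lemma}--\ref{lem:terrible technical lemma2} work hard, and their constants there are \emph{not} uniform in the representation — they depend on the coefficients $c_i$, hence on $\nu_\pi$.

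The second gap you identify yourself: in the Archimedean case one needs $|c(\nu_\pi)|\gg(1+\|\nu_\pi\|)^{-L}$ together with a polynomial-in-$\lambda(\pi)$ control of the tail of the Harish-Chandra expansion and of the threshold from which the leading block dominates, uniformly over the non-tempered spherical unitary dual. You correctly flag this as the delicate point and do not supply it; this is exactly the open content of the conjecture, and is also what limits the Matz--Templier bound to a sub-optimal exponent. So your proposal is a reasonable outline of \emph{where} a proof would have to go, and your non-Archimedean $c$-function bound is correct, but neither the geometric reduction as written nor the uniform expansion control is established, and the conjecture remains open.
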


The exponents in the conjecture are tight, as the corresponding upper
bounds can be deduced from Theorem~\ref{thm:Upper bounds intro}.

\section{\label{sec:Preliminaries}Preliminaries}

\subsection{\label{subsec:Distnaces and Length of Elements}Distances and Length
of Elements}

Besides our definition of length, the following is standard, see e.g.,
\citep[Section 3]{ghosh2013diophantine}. We mainly follow \citep{knapp2016representation}
when $G$ is Archimedean and \citep{cartier1979representations} when $G$ is non-Archimedean.

Let $k$ be $\R$ or a $p$-adic field, and $\left|\cdot\right|_{k}\colon k\to\R_{+}$
its standard non-trivial valuation. Let $G$ be a semisimple non-compact
algebraic group over $k$, of $k$-rank $r$. Let $T\cong G_{m}^{r}\subset G$
be a maximal $k$-split torus. The choice of $T$ determines the set
of weights $X^{\ast}(T)$, i.e., of rational characters
of $T$. Let $\Phi(T,G)\subset X^{\ast}(T)$
be the set of roots of $G$ with respect to $T$.

In the Archimedean case, if $T_{0}\cong\left\{ \pm1\right\} ^{r}$ is the
maximal compact subgroup of $T$, the connected component $A\cong T/T_{0}$ of the identity of $T$ is the Lie group of a real Cartan subalgebra
$\a$ of $\mathfrak{g}$, and we define $\nu\colon T\to A\to\a\cong\R^{r}$
by the logarithm map.

In the non-Archimedean case, let $T_{0}$ be a maximal compact subgroup of $T$. Then $T/T_{0}\cong\Z^{r}$, this identification defines $\nu\colon T\to\Z^{r}\subset\R^{r}$,
and we identify $\R^{r}$ with $\a$.

Let $X^{\ast}(T)_{\R}\cong X^{\ast}(T)\otimes\R$
be the weight space. For an element $\alpha\in X^{\ast}(T)$,
we let $\chi_{\alpha}\in\a^{\ast}$ be the linear functional
defined such that for $t\in T$ $\left|\alpha(t)\right|_{k}=q^{\chi_{\alpha}(\nu(t))}$,
where $q=e$ in the Archimedean case and otherwise the size of the quotient field of $k$. For $\alpha\in X^{\ast}(T)_{\R}$ we define $\chi_{\alpha}\in\a^{\ast}$ by extension of the action above. This isomorphism (as linear spaces) between $X^{\ast}(T)_{\R}$
and $\a^{*}$ defines an isomorphism between the coweight
space $(X^{*}(T)_{\R})^{*}$ and $\a$.

Choose an ordering on the root system which defines the positive roots
$\Phi_{+}\subset\Phi(T,G)$ and let $\Delta=\left\{ \alpha_1,...,\alpha_{r}\right\} \subset X^{*}(T)_{\R}\cong\a^{*}$
be the simple roots of $\Phi$ with respect to this ordering. Let $\left\{ \omega_1,...,\omega_{r}\right\} \subset\a$
be the set of simple coweights, i.e. the dual basis
to $\Delta$. The set $\left\{ \sum_{i=1}^{r}x_{i}\omega_{i}:x_{i}\ge0\right\} \subset\a$
is called the dominant sector, or the positive Weyl chamber. It is
isomorphic to $\a/W$, where $W$ is the Weyl group of the
root system.

Let $\Phi_{+}^{\vee}\subset\a$ be the corresponding coroot
system, let $\Delta^{\vee}=\left\{ \alpha_{1}^{\vee},...,\alpha_{r}^{\vee}\right\} $
be the dual basis of simple coroots and let $\left\{ \omega_{1}^{\vee},...,\omega_{r}^{\vee}\right\} \subset\a^{*}$
be the set of simple weights. We define a partial ordering
on $\a$ by $\alpha\ge_{\a}\alpha'$ if and only
if $\omega_{i}^{\vee}(\alpha)\ge\omega_{i}^{\vee}(\alpha)$
for every simple weight, or alternatively $\alpha-\alpha'$
is a non-negative sum of elements of $\Delta^{\vee}$.

Let $P$ be the Borel subgroup with respect to the set of positive
roots. It holds that $P=MN$, where $M$ is the centralizer of $T$
in $G$ and $N$ is the unipotent radical of $P$ (\citep[p. 134]{cartier1979representations}).
Let $K$ be a maximal special compact open subgroup (i.e., in the
$p$-adic case we choose it to be ``good'' in the sense of Bruhat
and Tits \citep{bruhat1972groupes}). The Iwasawa decomposition $G=KP$
holds (\citep[Proposition 1.2]{knapp2016representation},\citep[p. 140]{cartier1979representations}).
It holds that $M=(M\cap K)\cdot T$, and we extend $\nu\colon M\to\R^{r}$
by $\nu(k)=1$ for $k\in M\cap K$.

Let us recall the Cartan decomposition $G=KA_{+}K$:
\begin{itemize}
\item In the real case, following \citep[Theorem 5.20]{knapp2016representation},
$A\cong T/T_{0}$ is the Lie group of the Cartan subalgebra $\a$
of $\mathfrak{g}$. $A_{+}$ is the exponent of the closure of the
dominant sector in $\a$, i.e. the set of elements $A_{+}=\left\{ t\in A:\forall\alpha\in\Phi_{+},\alpha(t)\ge1\right\} $.
It is isomorphic (as a set, using the exponential map) to the dominant
sector in $\a$.
\item In the $p$-adic case, following \citep[p. 140]{cartier1979representations},
let $\Lambda\cong M/M^{0}$, where $M^{0}$ is the maximal compact subgroup of $M$. We identify elements of $\Lambda$ with elements of $M\subset P$. Elements of the weight space are unramified characters of $M$, and therefore are characters of $\Lambda$ as well. We let $A_{+}=\left\{ \lambda\in\Lambda:\forall\alpha\in\Phi_{+},\alpha(\lambda)\ge1\right\}$.
The action $\nu\colon T/T_{0}\to\a$ extends to $\nu\colon M/M^{0}\to\a$.
Then $A_{+}$ is isomorphic as a set with the intersection of $\nu(\Lambda)$
with the dominant sector in $\a$ . It is also isomorphic to a subset of the special vertices in the dominant sector in an apartment in the Bruhat-Tits building of $G$.
\end{itemize}
In both cases, there exists a map $\nu\colon A_{+}\to\a$. It
is also useful to extend it to a map $H\colon G\to\a$, using the Iwasawa decomposition $G=KMN$, by $H(kmn)=\nu(m)$.

We will need the following fundamental technical lemma:
\begin{lem}
\label{lem:HFunctionTechnicalLemma}For $a\in A_{+}$ and $k\in K$
\[
H(ak)\le_{\a}H(a).
\]
\end{lem}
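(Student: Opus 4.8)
The plan is to reduce the statement to a comparison of Iwasawa projections via the behavior of the function $H$ on the Cartan side, exploiting the fact that the dominant sector is convex and that $K$ acts by "contractions" in the relevant sense. Concretely, fix $a\in A_+$ and $k\in K$, and write the Iwasawa decomposition $ak = k'm'n'$ with $k'\in K$, $m'\in M$, $n'\in N$, so that $H(ak)=\nu(m')$. We must show $\nu(a)-\nu(m')$ is a nonnegative sum of simple coroots, equivalently $\omega_i^\vee(H(a))\ge\omega_i^\vee(H(ak))$ for every simple weight $\omega_i^\vee$, $i=1,\dots,r$.

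First I would reduce to the rank-one case for each fundamental weight. For a fixed $i$, let $\varpi_i$ be the fundamental weight $\omega_i^\vee\in\a^*$ (a dominant character, up to a positive multiple making it a genuine character of $M$ in the $p$-adic case, or a genuine highest weight of a representation of $G$ in the Archimedean case). The key is that there is an irreducible representation $(\rho_i, W_i)$ of $G$ with highest weight proportional to $\varpi_i$ and a highest-weight vector $w_i$ with the property that $\|\rho_i(g) w_i\|$ (for a $K$-invariant norm on $W_i$) computes $q^{c\,\varpi_i(H(g))}$ — this is the standard dictionary between Iwasawa projections and matrix coefficients of finite-dimensional representations, and in the $p$-adic case it is exactly the construction used in the Satake picture. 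Then
\[
q^{c\,\varpi_i(H(ak))} = \|\rho_i(ak) w_i\| = \|\rho_i(a)\rho_i(k) w_i\|.
\]
Now decompose $\rho_i(k)w_i$ into weight vectors: since $K$ is a good maximal compact and the norm is $K$-invariant, each weight component has norm at most $\|w_i\|$, and $\rho_i(a)$ scales the $\mu$-weight space by $q^{c\,\mu(\nu(a))}$. Because $a\in A_+$, the highest weight $\varpi_i$ gives the \emph{largest} such factor among all weights $\mu$ occurring in $W_i$ (here one uses $\varpi_i(\nu(a))\ge \mu(\nu(a))$ for $a$ dominant and $\mu\le\varpi_i$). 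Hence $\|\rho_i(a)\rho_i(k)w_i\|\le q^{c\,\varpi_i(\nu(a))}\|w_i\| = q^{c\,\varpi_i(H(a))}$, which is exactly $\omega_i^\vee(H(ak))\le\omega_i^\vee(H(a))$. Running this over all $i$ gives $H(ak)\le_{\a}H(a)$.

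The main obstacle I anticipate is the bookkeeping in making "highest weight factor dominates" precise and uniform across the Archimedean and non-Archimedean cases simultaneously — in particular, arranging a single $K$-invariant norm so that the weight-space decomposition is \emph{orthogonal} (so that $\|\rho_i(k)w_i\|$ controls each component), and checking that the relevant fundamental representation actually exists over $k$ with the stated highest weight (one may need to pass to a multiple of $\varpi_i$, or to the adjoint or a tensor construction, but this only rescales the inequality and is harmless). An alternative, perhaps cleaner route avoiding representation theory is purely combinatorial: work in a fixed apartment of the building (resp. the symmetric space), use that $H(ak)$ is the projection of $a\cdot(k\cdot o)$ to the sector and that $K$ fixes the base point $o$, together with convexity of Busemann-type functions / the standard fact that the sector projection is $1$-Lipschitz and order-preserving along the $\le_\a$ order; this is essentially the content of the building-theoretic "$H(ak)$ lies in the convex hull of $W\cdot\nu(a)$" statement, of which the displayed inequality is the dominant-chamber shadow. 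Either way the proof is short once the correct framework is fixed; I would present the representation-theoretic version as it is the most self-contained given the preliminaries already set up.
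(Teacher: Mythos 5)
Your proposal is correct in strategy; note, though, that the paper does not actually prove this lemma --- it simply cites Bruhat--Tits (Proposition 4.4.4(i)) for the non-Archimedean case and Gangolli--Varadarajan (Corollary 3.5.3) for the Archimedean one --- so your argument supplies more content than the text does. Those references proceed, respectively, by building-theoretic retractions and by Iwasawa-projection convexity (Kostant's theorem), whereas your route --- realizing $\omega_i^\vee(H(g))$ as $\log_q\|\rho_i(g)w_i\|$ for a suitable representation $\rho_i$ with restricted highest weight proportional to $\omega_i^\vee$, an $N$-fixed highest weight vector $w_i$, and a $K$-invariant norm, then exploiting that $a\in A_+$ makes the highest weight eigenvalue of $\rho_i(a)$ the largest --- is a classical third approach which has the merit of treating the Archimedean and non-Archimedean cases uniformly. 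The one place your write-up is informal is the step $\|\rho_i(a)\rho_i(k)w_i\|\le q^{c\,\varpi_i(\nu(a))}\|w_i\|$: the phrase ``each weight component has norm at most $\|w_i\|$'' combined with the triangle inequality would only give this up to a multiplicative constant (the number of weights), which would \emph{not} yield the exact inequality $H(ak)\le_{\a}H(a)$. You correctly identify orthogonality of the weight decomposition as the required refinement; concretely, in the Archimedean case one takes a Cartan-compatible inner product, so $\rho_i(K)$ is unitary and $\rho_i(A)$ is self-adjoint (hence distinct weight spaces are orthogonal), and in the $p$-adic case one takes the sup-norm of a $K$-stable weight-adapted lattice, so that $\|\sum_\mu u_\mu\|=\max_\mu\|u_\mu\|$. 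With that in place the chain
\[
q^{\chi_{\varpi_i}(H(ak))}\|w_i\|=\|\rho_i(ak)w_i\|=\|\rho_i(a)\rho_i(k)w_i\|\le q^{\chi_{\varpi_i}(\nu(a))}\|w_i\|
\]
is exact, and running over all fundamental weights gives $H(ak)\le_{\a}H(a)$. The remaining issues you flag (existence of the representation with the stated restricted highest weight, possibly after passing to a positive multiple) are real but routine, and harmless since the inequality to prove is homogeneous in the weight.
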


\begin{proof}
For the non-Archimedean case see \citep[Proposition 4.4.4(i)]{bruhat1972groupes}.
For the Archimedean case see \citep[Corollary 3.5.3]{gangolli1988harmonic}.
\end{proof}
\begin{cor}
\label{Cor:KaKaK technical Lemma}Let $a,a',a''\in A_{+}$. If $KaKa'K\cap Ka''K\ne\phi$
then $\nu(a'')\le_{\a}\nu(a)+\nu(a')$.
\end{cor}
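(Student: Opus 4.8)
The plan is to reduce the statement about the product of three $K$-double cosets to an iterated application of Lemma \ref{lem:HFunctionTechnicalLemma}, using the definition of $H$ via the Iwasawa decomposition and the compatibility between $H$ and $\nu$ on $A_+$. First I would note that since $\nu(a) = H(a)$ for $a \in A_+$ (because $a \in A_+ \subset M$ and $H(m) = \nu(m)$ for $m \in M$), the inequality $\nu(a'') \le_\a \nu(a) + \nu(a')$ is equivalent to $H(a'') \le_\a H(a) + H(a')$. So it suffices to control $H$ on the set $Ka''K$, given that $Ka''K$ meets $KaKa'K$.

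The key step: suppose $g \in KaKa'K \cap Ka''K$. Write $g = k_1 a k_2 a' k_3$ with $k_i \in K$. I want to bound $H(g)$ from above in the $\le_\a$ order. Using the Iwasawa decomposition repeatedly, I would first observe that $H$ is left-$K$-invariant by definition ($H(kmn) = \nu(m)$ and $K$ acts on the left), so $H(g) = H(a k_2 a' k_3)$. Then I would peel off the pieces from the right: write $k_2 a' k_3 = \kappa \, m \, n$ in Iwasawa form, so that $H(k_2 a' k_3) = \nu(m)$; applying Lemma \ref{lem:HFunctionTechnicalLemma} to $a' k_3$ (after noting $k_2$ on the left can be absorbed or handled) I'd get $H(a' k_3) \le_\a H(a') = \nu(a')$, hence $\nu(m) \le_\a \nu(a')$. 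Now $a k_2 a' k_3 = a k_2 \kappa m n$, and I would apply the lemma again to the element $a (k_2 \kappa)$, getting $H(a k_2 \kappa) \le_\a H(a) = \nu(a)$; combining with $\nu(m) \le_\a \nu(a')$ and the fact that $H$ behaves additively along the $M N$ part (i.e. $H(a k_2 \kappa \cdot m n)$ is bounded by $H(a k_2 \kappa) + \nu(m)$ since $H$ of a product $g_1 g_2$ with $g_2 \in P$ adds the $\nu$-contribution of $g_2$), I conclude $H(g) \le_\a \nu(a) + \nu(a')$. Since $g \in Ka''K$ we have $H(g) = \nu(a'')$ by the left-$K$-invariance of $H$ together with the fact that on $A_+$, $H$ recovers $\nu$; this gives $\nu(a'') \le_\a \nu(a) + \nu(a')$.

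The main obstacle I anticipate is making the ``cocycle'' bookkeeping for $H$ precise: unlike $\nu$ on $A_+$, the map $H$ on all of $G$ is only a quasi-cocycle, and the naive identity $H(g_1 g_2) = H(g_1) + H(g_2)$ fails in general. The careful version that is actually needed is: for $g \in G$ and $p \in P$ with Iwasawa component $\nu(p)$ in its $M$-part, $H(gp) = H(g) + \nu(p)$ exactly (this does hold, since right multiplication by $P$ only changes the $MN$-factor), whereas for right multiplication by $K$ one only has the inequality of Lemma \ref{lem:HFunctionTechnicalLemma}. So the argument must be arranged so that at each stage we either multiply on the right by an element of $P$ (exact additivity) or apply the lemma to an element of the form $a \cdot k$ with $a \in A_+$, $k \in K$ (the inequality), and never needs additivity past a $K$-factor. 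I would also need to double-check that the partial order $\le_\a$ is preserved under addition (it is, being defined by a family of linear inequalities $\omega_i^\vee(\cdot) \ge 0$) so that the chain of inequalities can be summed. Once this is set up cleanly, the corollary follows in a few lines.
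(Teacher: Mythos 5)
Your argument uses exactly the same ingredients as the paper's proof (the $H$-function, the cocycle identity $H(kgmn)=H(g)+H(m)$, and Lemma~\ref{lem:HFunctionTechnicalLemma}), so the approach is the right one, but there is a genuine gap in the very last step. You pick a generic $g\in KaKa'K\cap Ka''K$ and bound $H(g)\le_{\a}\nu(a)+\nu(a')$, and then assert ``Since $g\in Ka''K$ we have $H(g)=\nu(a'')$.'' That identity is false in general: writing $g=k'a''k''$, left $K$-invariance gives $H(g)=H(a''k'')$, and Lemma~\ref{lem:HFunctionTechnicalLemma} only gives the one-sided estimate $H(a''k'')\le_{\a}H(a'')=\nu(a'')$. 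That inequality points in the same direction as the one you already established, so you end up with $H(g)\le_{\a}\nu(a'')$ and $H(g)\le_{\a}\nu(a)+\nu(a')$, from which the desired $\nu(a'')\le_{\a}\nu(a)+\nu(a')$ does not follow.

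The fix, which is what the paper implicitly does, is to apply the upper bound to the element $a''$ itself rather than to a generic $g$ in the intersection. Since $KaKa'K=\bigcup_{k\in K}K(aka')K$ is a union of $K$-double cosets, $Ka''K\cap KaKa'K\ne\emptyset$ forces $Ka''K\subset KaKa'K$, hence $a''\in KaKa'K$ and one may write $a''=k_0 a k_1 a' k_2$ directly. Then your chain of estimates gives $\nu(a'')=H(a'')=H(k_0 a k_1 a' k_2)\le_{\a}\nu(a)+\nu(a')$ exactly as desired, with no loose inequality in the wrong direction at the end. Everything else in your write-up (the cocycle bookkeeping, the remark that $\le_{\a}$ is compatible with addition, the reduction from $\nu$ to $H$) is correct.
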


\begin{proof}
We first notice that the following property of the $H$-function:
for $k\in K$, $g\in G$, $m\in M$, $n\in N$ 
\begin{align*}
H(kgmn) & =H(g)+H(m)
\end{align*}

Now, if $KaKa'K\cap Ka''K\ne\phi$ then $a''=k_{0}ak_{1}a'k_{2}$.
Applying the Iwasawa decomposition to $a'k_{2}$ we have $a'k_{2}=k_{2}^{\prime}mn$
with $H(m)\le_{\a}H(a')$ by Lemma~\ref{lem:HFunctionTechnicalLemma}.
Applying Lemma~\ref{lem:HFunctionTechnicalLemma} again, we have 
\begin{align*}
\nu(a'') & =H(k_{0}ak_{1}k_{2}^{\prime}mn)=H(k_{0}ak_{1}k_{2}^{\prime})+H(m)\\
 & \le_{\a}H(a)+H(a')=\nu(a)+\nu(a').
\end{align*}
\end{proof}
Corollary~\ref{Cor:KaKaK technical Lemma} in the non-Archimedean
case is \citep[Proposition 4.4.4(iii)]{bruhat1972groupes}, and is
deduced from Lemma~\ref{lem:HFunctionTechnicalLemma} in the same
way.

Let $\delta(\tilde{p})$ be the left modular character of $P$, i.e.
if $d\tilde{p}$ is a left Haar measure on $P$ then $\delta(\tilde{p})d\tilde{p}$
is a right Haar measure. Normalize the measures so that for $f\in C_c(G)$,
$\int_{G}f(g)dg=\int_{K}\int_{P}f(k\tilde{p})\delta(\tilde{p})d\tilde{p}dk=\int_{K}\int_{P}f(\tilde{p}k)d\tilde{p}dk$,
$\int_{K}dk=1$.

It can also be defined as follows: $M$ acts by conjugation on the
Lie algebra $\mathfrak{n}$ of $N$. Then for $m\in M$, $\delta(m)=\left|\Det\Ad_{\mathfrak{n}}(m)\right|_{k}$
(\citep[p. 135]{cartier1979representations}, \citep[Proposition 5.25]{knapp2016representation}).
Unwinding the definitions, $\delta(m)=q^{2\rho(\nu(m))}$,
where $\rho=\frac{1}{2}\sum_{\alpha\in\Phi_{+}}(\dim\mathfrak{g}_{\alpha})\chi_{\alpha}$.
Here $\mathfrak{g}_{\alpha}$ is the root space of $\alpha$ in the
Lie algebra $\mathfrak{g}$ of $G$. Also recall that $q=e$ for $k=\R$
and otherwise is the size of the quotient field of $k$. As an example,
for $G=\SL_{n}(\R)$, and the matrix $a=\diag(a_{0},\dots,a_{n-1})$,
$\delta(a)=\prod a_{i}^{n-1-2i}$.

We associate with each element $a\in A_{+}\subset G$ a length $l\colon A_{+}\to\R_{\ge0}$
by $l(a)=\log_{q}\delta(a)=2\rho(a)$. We extend $l\colon G\to\R_{\ge0}$
by $l(kak')=l(a)$. By definition, $l$ is left and right
$K$-invariant.

For $a\in T$, we may identify $l(a)$ by the entropy (taken
with logarithm in base $q$) of the dynamical system of translation
of $\Gamma\backslash G$ by $a$, with respect to the Haar measure
($\Gamma$ here is an arbitrary lattice, see \citep[Theorem 7.9]{einsiedler2010diagonal}).
Using this fact, we have for $a\in A$, $l(a)=l(a^{-1})$
and therefore for every $g\in G$, $l(g)=l(g^{-1})$.
The same fact can be proven directly.
\begin{prop}
For $g_1,g_{2}\in G$, it holds that $l(g_{1}g_{2})\le l(g_{1})+l(g_{2})$.
\end{prop}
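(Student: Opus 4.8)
The plan is to reduce the statement, via the Cartan decomposition, to the subadditivity of $\nu$ already contained in Corollary~\ref{Cor:KaKaK technical Lemma}, and then to use that $l=2\rho\circ\nu$ together with the fact that $\rho$ is $W$-dominant. First I would write $g_{1}=k_{1}ak_{2}$ and $g_{2}=k_{3}a'k_{4}$ with $k_{i}\in K$, $a,a'\in A_{+}$, and let $a''\in A_{+}$ be the Cartan part of $g_{1}g_{2}$, so that $g_{1}g_{2}=k_{5}a''k_{6}$ for suitable $k_{5},k_{6}\in K$. Then
\[
Ka''K=Kg_{1}g_{2}K=Ka(k_{2}k_{3})a'K\subseteq KaKa'K ,
\]
so $KaKa'K\cap Ka''K\neq\emptyset$ and Corollary~\ref{Cor:KaKaK technical Lemma} gives $\nu(a'')\le_{\a}\nu(a)+\nu(a')$; that is, $\nu(a)+\nu(a')-\nu(a'')$ is a non-negative sum of the simple coroots $\alpha_{1}^{\vee},\dots,\alpha_{r}^{\vee}\in\Delta^{\vee}$.

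Next I would invoke the formula $l(g)=2\rho(\nu(a_{g}))$ from the previous subsection, where $\rho=\tfrac12\sum_{\alpha\in\Phi_{+}}(\dim\mathfrak{g}_{\alpha})\chi_{\alpha}\in\a^{*}$. Since the root multiplicities are $W$-invariant, one has $s_{i}\rho=\rho-(\dim\mathfrak{g}_{\alpha_{i}})\alpha_{i}$ for each simple reflection, whence $\rho(\alpha_{i}^{\vee})=\dim\mathfrak{g}_{\alpha_{i}}>0$; in particular $\rho$ is non-negative on the cone $\sum_{i}\R_{\ge0}\,\alpha_{i}^{\vee}$, so $x\le_{\a}y$ implies $\rho(x)\le\rho(y)$. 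Applying this with $x=\nu(a'')$, $y=\nu(a)+\nu(a')$ and using linearity of $\rho$,
\[
l(g_{1}g_{2})=2\rho(\nu(a''))\le 2\rho(\nu(a))+2\rho(\nu(a'))=l(g_{1})+l(g_{2}),
\]
which is the claim.

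The only substantive input here is Corollary~\ref{Cor:KaKaK technical Lemma} (ultimately the technical Lemma~\ref{lem:HFunctionTechnicalLemma} on the $H$-function), which is already established; the remaining steps — unwinding the Cartan decomposition and checking that $2\rho$ is order-preserving for $\le_{\a}$ — are formal. The one point to be careful about is to read the conclusion $\nu(a'')\le_{\a}\nu(a)+\nu(a')$ in the ``difference is a non-negative combination of elements of $\Delta^{\vee}$'' form of the partial order, since that is exactly the form for which the positivity $\rho(\alpha_{i}^{\vee})>0$ yields monotonicity; no dominance or regularity of $\nu(a)+\nu(a')$ itself is needed. (As a sanity check, the same argument specializes, for $g_{1},g_{2}\in A$, to the entropy subadditivity remarked upon just before the statement.)
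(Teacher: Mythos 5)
Your proof is correct and follows exactly the paper's strategy: reduce to the observation that $KaKa'K\cap Ka''K\ne\emptyset$, apply Corollary~\ref{Cor:KaKaK technical Lemma} to get $\nu(a'')\le_{\a}\nu(a)+\nu(a')$, and conclude via $l=2\rho\circ\nu$ using that $\rho(\alpha_i^\vee)>0$ makes $\rho$ monotone for $\le_{\a}$. You usefully make explicit the positivity step that the paper's one-line proof leaves implicit (the precise value of $\rho(\alpha_i^\vee)$ you give is for the reduced case — in a non-reduced system one also picks up a $2\dim\mathfrak{g}_{2\alpha_i}$ term — but all that matters is positivity, which holds regardless).
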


\begin{proof}
The proposition actually states that if $KaKa'K\cap Ka''K\ne\phi$
for $a,a',a''\in A_{+}$, then $l(a'')\le l(a)+l(a')=l(aa')$.
Since $l(a)=2\rho(a)$ it follows from Corollary~\ref{Cor:KaKaK technical Lemma}.
\end{proof}
For $a\in A_{+}$, define $S(a)=\int\delta(ak)dk$. For $f\in C_c(G)$,
we have 
\[
\int f(g)dg=\int_{K}\int_{K}\int_{A_{+}}f(kgk')S(a)dkdk'da.
\]
We interpret $S(a)$ as the measure of the ``circle'' $KaK$. In
the non-Archimedean case $S(a)\asymp\delta(a)$ (\citep[p. 141]{cartier1979representations}).
In the Archimedean case, by \citep[Proposition 5.28]{knapp2016representation},
\[
S(a)=\prod_{\alpha\in\Phi_{+}}(\sinh(\chi_{\alpha}(a)))^{\dim\mathfrak{g}_{\alpha}}.
\]
Since $\sinh(x)\asymp_{\beta}e^{x}$ for $x>\beta$, for $a\in A_{+}$
``far from the walls'', i.e. with $\chi_{\alpha}(a)>\beta$
for every $\alpha\in\Delta$ (and therefore $\chi_{\alpha}(a)>\beta$
for every $\alpha\in\Phi_{+}$), we have $S(a)\asymp_\beta\delta(a)=e^{l(a)}$.
Near the walls where $\sinh x\approx x$ this approximation fails, but we still have $S(a)\ll\delta(a)$. In any case, if we choose some norm $\n{\cdot}_{\a}$ on $A_{+}$ then for every $\tau>0$, $a\in A_{+}$, $\mu(\left\{ g:\n{a_{g}-a}_{\a}\le\tau\right\} )\asymp_{\tau}q^{l(a)}$,
since the set $\left\{ a'\in A_{+}:\n{a'-a}_{\a}\le\tau\right\} $
contains elements which are far enough (with respect to $\tau$) from the walls.

We deduce that the size of balls for $l\gg1$, 
\begin{equation}
q^{l}\ll\mu(\cup_{a':l(a')\le l}Ka'K)=\intop_{a\in A_{+}:l(a)\le l}S(a)da\ll p(l)q^{l}\ll_{\epsilon}q^{l(1+\epsilon)},\label{eq:size of ball-1}
\end{equation}
 for some polynomial $p$.
\begin{rem}
The literature has two popular choices of ``distance'' or ``length''
on $G/K$ or $G$.
\begin{enumerate}
\item For $G\subset \SL_{n}$, where $K=G\cap K'$ for $K'$ maximal compact
in $\SL_{n}$, one defines $\tilde{l}(g)=\log\n g$, where
$\n{\cdot}$ is some matrix norm on $\GL_{n}$. Recall that we are
mainly interested in distances as $g\to\infty$, so the specific choice
of matrix norm does not matter. Such choice (without calling it a
distance) is studied in \citep{sarnak1991bounds,duke1993density}.
\item For $G$ Archimedean, let $\mathfrak{g},\mathfrak{t}$ be the Lie
algebras of $G$ and $K$, and let $B\colon \mathfrak{g}\times\mathfrak{g}\to\C$
the Killing form of $G$. Let $\mathfrak{p}=\left\{ X\in\mathfrak{g}:B(X,Y)=0\,\forall Y\in\mathfrak{t}\right\} $.
Then $B|_{\mathfrak{p}\mathfrak{\times}\mathfrak{p}}$ is positive
definite. $\mathfrak{p}$ can be identified with the tangent space of $G/K$ at the identity, and it defines a natural Riemannian structure on $G/K$, with length $\hat{l}(g)=\hat{d}(g,1)$. See, e.g., \citep[Section 2]{degeorge1978limit}, \citep{sarnak1991bounds}.
A similar distance is used in the $p$-adic case in \citep[2.3]{tits1979reductive}.
\end{enumerate}
Since we mostly care about far distances in the group, and since $K$ is compact, by the Cartan decomposition it suffices to compare $l$ to other distances on $A_{+}$. In general, $l$ and $\tilde{l}$ look like an $L^{1}$-norm on $A_{+}$, and $\hat{l}$ looks like an $L^{2}$-norm.

Let us concentrate on $G=\SL_{n}(\R)$ and $\tilde{l}(g)=\ln(\n g_{2})$,
$\n g_{2}^{2}=\tr (gg^{t})$. For $G=\SL_{2}(\R)$,
its symmetric space is the hyperbolic plane with the standard metric
of curvature $-1$, and $l$ we defined above coincides with the hyperbolic
metric. For example, consider the matrix $g=\left(\begin{array}{cc}
e^{\frac{t}{2}} & 0\\
0 & e^{-\frac{t}{2}}
\end{array}\right)$. Then $l(g)=t$, and $\tilde{l}(g)=\frac{1}{2}\ln(e^{t}+e^{-t})\approx\frac{t}{2}$
. In fact, for every $g\in \SL_{2}(\R)$, $l(g)-2\tilde{l}(g)=O(1)$,
and $l$ and $\tilde{l}$ are equal up to an additive constant.

For $G=\SL_{3}(\R)$ it is no longer true. For \[
g_{1}=\left(\begin{array}{ccc}
e^{t/3}\\
 & e^{t/3}\\
 &  & e^{-2t/3}
\end{array}\right), g_{2}=\left(\begin{array}{ccc}
e^{2t/3}\\
 & e^{-t/3}\\
 &  & e^{-t/3}
\end{array}\right),
\]
it holds that $l(g_{1})=l(g_{2})=2t$, while $\tilde{l}(g_{1})\approx t/3$,
$\tilde{l}(g_{2})\approx2t/3$. So the two distances $l,\tilde{l}$
are not equal up to an additive constant, but are only Lipschitz-equivalent, with $\frac{3}{2}\tilde{l}\le l+O(1)\le3\tilde{l}$.
However, if we chose $\tilde{\tilde{l}}(g)=2(\tilde{l}(g)+\tilde{l}(g^{-1}))$, then $l(g)-2\tilde{\tilde{l}}(g)=O(1)$.
This solution no longer works for $\SL_{4}(\R)$.
\end{rem}

\subsection{\label{subsec:Harish-Chandra bounds}Growth of Matrix Coefficients
and Harish-Chandra's Bounds}

For $2\le p\le\infty$, let $\Xi_{p}(g)=\int\delta(gk)^{-\nicefrac{1}{p}}dk$ be the $p$-th version of Harish-Chandra's function. Let $\Xi(g)=\Xi_{2}(g)$ be the standard Harish-Chandra's function. Note that since $\Xi(g)$
is left and right $K$-invariant it only depends on $a\in A_{+}$ from the Cartan decomposition of $g$.

An explicit upper bound on Harish-Chandra's function is given by the following Theorem:
\begin{thm}
\label{thm: Explicit Bounds}There is a constant $C$ such that for every $g\in G$ and $\epsilon>0$,
\[
\Xi_{p}(g)\le\Xi^{\nicefrac{2}{p}}(g)\le q^{(\nicefrac{1}{2}-\nicefrac{1}{p})l(g)}\Xi(g)\ll (l(g)+1)^{C}q^{-\nicefrac{l(g)}{p}}\ll_{\epsilon}q^{-l(g)(\nicefrac{1}{p}-\epsilon)}.
\]
\end{thm}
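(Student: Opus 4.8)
The plan is to establish the chain of inequalities from right to left, reducing everything to the single classical fact that Harish-Chandra's $\Xi$-function admits a bound of the form $\Xi(g) \ll (1+l(g))^C q^{-l(g)/2}$, which is the well-known polynomial-times-exponential estimate (\citep{gangolli1988harmonic} in the Archimedean case, \citep{cartier1979representations} in the non-Archimedean case, using that $\delta(a) = q^{l(a)}$ by our normalization). Granting that, the last two $\ll$-steps are immediate: $(1+l(g))^C q^{-l(g)/p}$ is bounded by $(1+l(g))^C q^{-l(g)(1/p-\epsilon)} q^{-l(g)\epsilon}$, and since $(1+t)^C q^{-t\epsilon}$ is bounded uniformly in $t\ge 0$ (for fixed $\epsilon>0$) by a constant $C_\epsilon$, we get the final $\ll_\epsilon q^{-l(g)(1/p-\epsilon)}$. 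Similarly $q^{(1/2-1/p)l(g)}\Xi(g) \ll q^{(1/2-1/p)l(g)}(1+l(g))^C q^{-l(g)/2} = (1+l(g))^C q^{-l(g)/p}$, giving the third inequality.

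The two genuinely structural steps are the first two inequalities. For $\Xi_p(g) \le \Xi^{2/p}(g)$: write $\Xi_p(g) = \int_K \delta(gk)^{-1/p}\,dk$ and apply Jensen's inequality (or Hölder with exponent $p/2 \ge 1$) to the probability measure $dk$ on $K$, against the convex-in-the-right-sense functional. Precisely, $\int_K \delta(gk)^{-1/p}\,dk = \int_K \big(\delta(gk)^{-1/2}\big)^{2/p}\,dk \le \big(\int_K \delta(gk)^{-1/2}\,dk\big)^{2/p} = \Xi(g)^{2/p}$, where the inequality is Jensen applied to the concave function $t \mapsto t^{2/p}$ on $[0,\infty)$ since $2/p \le 1$. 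For the inequality $\Xi^{2/p}(g) \le q^{(1/2-1/p)l(g)}\Xi(g)$: since $\Xi$ is $K$-bi-invariant it suffices to check this for $g = a \in A_+$, and one uses the standard normalization $\Xi(a) \ge q^{-l(a)/2}$ (equivalently $\Xi(a) = q^{-\rho(\nu(a))}\varphi(a)$ with $\varphi \ge 1$, or more elementarily Jensen again: $\Xi(a) = \int_K \delta(ak)^{-1/2}\,dk \ge \big(\int_K \delta(ak)\,dk\big)^{-1/2}$ combined with Lemma~\ref{lem:HFunctionTechnicalLemma} giving $H(ak) \le_\a H(a)$, hence $\delta(ak) = q^{2\rho(H(ak))} \le q^{2\rho(H(a))} = \delta(a)$, so actually $\Xi(a) \ge \delta(a)^{-1/2} = q^{-l(a)/2}$ directly). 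Then $\Xi^{2/p}(a) = \Xi(a)\cdot \Xi(a)^{2/p-1} = \Xi(a)\cdot \Xi(a)^{-(1/2 - 1/p)\cdot 2} \le \Xi(a)\cdot \big(q^{-l(a)/2}\big)^{-(1-2/p)} = q^{(1/2-1/p)l(a)}\Xi(a)$, using $2/p - 1 < 0$ and $\Xi(a) \ge q^{-l(a)/2}$.

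The main obstacle is purely a matter of citing and normalizing correctly the polynomial bound $\Xi(g) \ll (1+l(g))^C q^{-l(g)/2}$ in a form uniform across the Archimedean and non-Archimedean cases; in the $p$-adic case this is sharper (one can even take the polynomial to have degree the number of positive roots, or absorb it into $q^{\epsilon l(g)}$ trivially), while in the real case it is Harish-Chandra's classical estimate. Everything else is elementary: Jensen's inequality, the submultiplicativity already recorded, and the trivial estimate $\sup_{t\ge 0}(1+t)^C q^{-\epsilon t} = C_\epsilon < \infty$. I would organize the write-up as: (i) reduce to $a\in A_+$ by bi-invariance; (ii) prove $\Xi_p(a)\le \Xi(a)^{2/p}$ by Jensen; (iii) prove $\Xi(a)\ge q^{-l(a)/2}$ from Lemma~\ref{lem:HFunctionTechnicalLemma}, and deduce $\Xi(a)^{2/p}\le q^{(1/2-1/p)l(a)}\Xi(a)$; (iv) invoke the classical polynomial bound on $\Xi$; (v) absorb the polynomial into $q^{\epsilon l(a)}$.
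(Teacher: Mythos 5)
Your proposal is correct and follows essentially the same route as the paper: reduce to $a\in A_+$ by bi-invariance, get $\Xi_p\le\Xi^{2/p}$ by Jensen/H\"older (the paper calls this ``convexity''), use the lower bound $\Xi(a)\ge q^{-l(a)/2}$ coming from Lemma~\ref{lem:HFunctionTechnicalLemma} (the paper cites Equation~\eqref{eq:lower bound spherical}, you inline its derivation) for the second inequality, invoke Harish-Chandra's polynomial-times-exponential estimate for the third, and absorb the polynomial for the fourth. The only differences are cosmetic: you spell out the Jensen and lower-bound steps that the paper leaves terse, and you cite Gangolli--Varadarajan / Cartier where the paper cites Harish-Chandra, Knapp and Silberger for the same fact.
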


\begin{proof}
By the Cartan decomposition, it suffices to verify the theorem for $a\in A_{+}$, where $q^{l(a)}=\delta(a)$. The first inequality follows from convexity, the second inequality follows from $\Xi(g)\ge q^{-l(g)/2}$ (see Equation~\eqref{eq:lower bound spherical}), and the fourth inequality is trivial. We are left with the third inequality.

For the Archimedean case, see \citep[Theorem 3]{harish1958spherical}
or \citep[Proposition 7.15]{knapp2016representation}. For the non-Archimedean case the standard reference is \citep[4.2.1]{silberger2015introduction}, (where there is an assumption that $\charr k=0$), but the theorem is well known to experts. In any case, the general non-Archimedean
result can be deduced from the results of \citep{kamber2016lpcomplex} for arbitrary affine buildings.
\end{proof}

A representation $(\pi,V)$ of $G$ is called tempered if $p(\pi)\le2$. The following theorem is the standard reference for upper bounds on matrix coefficients:
\begin{thm}[\citep{cowling1988almost}]
\label{thm:CHH-Thm2}Let $(\pi,V)$ be a unitary irreducible tempered representation of $G$ and let $v_1,v_{2}\in V$ be two $K$-finite vectors, such that $\dim \spann Kv_{1}=d_{1}$, $\dim\spann Kv_{2}=d_{2}$.
Then $\left|\left\langle v_1,gv_{2}\right\rangle \right|\le\sqrt{d_{1}d_{2}}\n{v_{1}}\n{v_{2}}\Xi(g)$.
\end{thm}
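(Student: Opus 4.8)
The plan is to deduce the pointwise bound from a concrete realization of $\pi$ as a subrepresentation of a parabolically induced representation, combined with the elementary majorization $\Xi_Q\le\Xi$ for the Harish--Chandra $\Xi$-function attached to an arbitrary parabolic $Q$.

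First I would use the classification of tempered representations (Knapp--Zuckerman in the Archimedean case, Harish--Chandra in the non-Archimedean one) to realize $\pi$ as a direct summand, hence as a closed $G$-subrepresentation, of a unitarily (normalized) induced representation $\Sigma=\operatorname{Ind}_Q^G(\tau)$, where $Q=LN_Q$ is a cuspidal parabolic with Levi $L$ and $\tau$ is a discrete series of $L$ (extended trivially across $N_Q$); in particular $\tau$ is \emph{unitary}. If $\pi$ is itself a discrete series this degenerates to $Q=G$, $\tau=\pi$, and in that base case one invokes instead Harish--Chandra's estimate that $K$-finite matrix coefficients of the discrete series lie in the Harish--Chandra Schwartz space, hence are $O(\Xi)$ --- see the remark on obstacles below.

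Working in the compact model of $\Sigma$ --- vectors are $H_\tau$-valued functions $f$ on $G$ with $f(qg)=\delta_Q(q)^{1/2}\tau(q)f(g)$ for $q\in Q$ and $\n f_\Sigma^{2}=\int_K\n{f(k)}^{2}\,dk$, with $G$ acting by right translation --- the key elementary lemma is that a $K$-finite vector $f$ with $\dim\spann\Sigma(K)f=d$ satisfies $\sup_{k\in K}\n{f(k)}\le\sqrt d\,\n f_\Sigma$. Indeed, for an orthonormal basis $F_1,\dots,F_d$ of $\spann\Sigma(K)f$, the quantity $\sum_{j}\n{F_j(k)}^{2}$ is the squared Hilbert--Schmidt norm of the evaluation functional at $k$, which is independent of $k$ (evaluating it on the two orthonormal bases $\{F_j\}$ and $\{\Sigma(k)F_j\}$ gives the same Hilbert--Schmidt norm), so its constant value equals $\int_K\sum_j\n{F_j(k)}^{2}\,dk=d$; hence the evaluation at $e$ has operator norm $\le\sqrt d$, and by $K$-equivariance $\n{f(k)}=\n{(\Sigma(k)f)(e)}\le\sqrt d\,\n f_\Sigma$. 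Now take $v_1,v_2\in V_\pi\subseteq V_\Sigma$ with $d_i=\dim\spann Kv_i$ (computed with $\pi(K)=\Sigma(K)|_{V_\pi}$, so identical to the value in $\Sigma$). For $g\in G$, $k\in K$ write $kg=q(k,g)\kappa(k,g)$ with $q(k,g)\in Q$, $\kappa(k,g)\in K$; since $\tau$ is unitary, $\n{v_2(kg)}=\delta_Q(q(k,g))^{1/2}\n{v_2(\kappa(k,g))}$. Then the matrix-coefficient formula $\v{v_1,\pi(g)v_2}=\int_K\v{v_1(k),v_2(kg)}\,dk$ together with the lemma gives
\[
\left|\v{v_1,gv_2}\right|\le\Bigl(\sup_{k}\n{v_1(k)}\Bigr)\Bigl(\sup_{k}\n{v_2(k)}\Bigr)\int_K\delta_Q(q(k,g))^{1/2}\,dk\le\sqrt{d_1d_2}\,\n{v_1}\,\n{v_2}\,\Xi_Q(g),
\]
where $\Xi_Q(g):=\int_K\delta_Q(q(k,g))^{1/2}\,dk$ is the spherical matrix coefficient of $\operatorname{Ind}_Q^G(\mathbf 1)$ (with the normalization making $\Sigma$ unitary), and $\Xi_Q=\Xi$ when $Q$ is minimal. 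To close the argument I would prove $\Xi_Q\le\Xi$: the representation $\operatorname{Ind}_Q^G(\mathbf 1)$ is unitarily induced from a unitary representation of $L$, hence tempered, and it is spherical, so the spherical Plancherel decomposition (Harish--Chandra, respectively Macdonald in the non-Archimedean case) writes $\Xi_Q=\int_{i\a^*/W}\phi_\lambda\,d\mu(\lambda)$ for a positive measure $\mu$ of total mass $\Xi_Q(e)=1$; Harish--Chandra's bound $|\phi_\lambda(g)|\le\phi_0(g)=\Xi(g)$ for $\lambda\in i\a^*$ (immediate from $\phi_\lambda(g)=\int_K e^{(i\lambda-\rho)(H(gk))}\,dk$) then yields $\Xi_Q\le\Xi$, and combining with the displayed inequality finishes the proof.

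The main obstacle is the discrete-series base case of the first step: one genuinely needs that $K$-finite matrix coefficients of the discrete series of $G$ are $O(\Xi)$, which rests on Harish--Chandra's fundamental estimate for $\mathfrak Z(\mathfrak g)$-finite square-integrable functions (equivalently, the inclusion of discrete-series matrix coefficients in the Harish--Chandra Schwartz space), the one genuinely deep input. Beyond that, the only care needed is to quote the tempered classification of Step~1 and the spherical Plancherel input of Step~4 in forms valid uniformly over the Archimedean and non-Archimedean cases --- both of which are classical in the $p$-adic setting (Harish--Chandra, Silberger, Macdonald). The tracking of the constant is painless here: each $K$-finite vector contributes a clean factor $\sqrt{d_i}$ from the evaluation-functional lemma, and the decay is produced entirely by $\Xi_Q\le\Xi$, so one obtains exactly $\sqrt{d_1d_2}$ rather than a worse power.
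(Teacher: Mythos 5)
There is a genuine gap at the last step, where you assert $\Xi_Q\le\Xi$ on the grounds that $\operatorname{Ind}_Q^G(\mathbf 1)$ is ``unitarily induced from a unitary representation of $L$, hence tempered.'' Unitary parabolic induction preserves unitarity but \emph{not} temperedness: for a non-compact Levi $L$, the trivial representation $\mathbf 1_L$ is unitary but not tempered (its constant matrix coefficients are not in $L^{2+\epsilon}(L)$), and $\operatorname{Ind}_Q^G(\mathbf 1_L)$ is a non-tempered degenerate principal series whenever $Q$ strictly contains the minimal parabolic $P$. Your $\Xi_Q(g)=\int_K\delta_Q(q(k,g))^{1/2}\,dk$ is precisely the spherical matrix coefficient of that representation, hence equals $\phi_\nu$ for a \emph{real, nonzero} parameter $\nu$ (determined by $\rho-\rho_Q$ on $\a$), and spherical functions with real nonzero parameter decay strictly slower than $\phi_0=\Xi$. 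So $\Xi_Q\ge\Xi$, with equality only at $e$ --- the reverse of the inequality your argument requires. The case $Q=G$ that you flag as needing separate treatment is only the extreme instance ($\Xi_G\equiv 1$); the same failure occurs for every $Q$ with $P\subsetneq Q\subsetneq G$, so the spherical-Plancherel step does not apply (there is no positive measure on $i\a^*/W$ representing $\Xi_Q$, since $\operatorname{Ind}_Q^G(\mathbf 1)$ is not weakly contained in the regular representation). There is also a secondary issue in the declared base case: Harish-Chandra's Schwartz-space estimate for discrete-series coefficients gives $O_{v_1,v_2}\bigl(\Xi(g)(1+l(g))^N\bigr)$, not the sharp constant $\sqrt{d_1d_2}\,\n{v_1}\,\n{v_2}$ nor the clean $\Xi(g)$ without a polynomial factor, so even $Q=G$ requires the CHH mechanism rather than a citation.

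The root of the problem is that your norm bound $\n{v_2(kg)}=\delta_Q(q(k,g))^{1/2}\n{v_2(\kappa(k,g))}$ uses only the unitarity of $\tau$, so it cannot distinguish a discrete series of $L$ from $\mathbf 1_L$; inevitably it outputs the $\tau=\mathbf 1$ bound $\Xi_Q$, which is too large. To repair the induced-model route one would have to keep the full $\tau$-matrix coefficient $\v{v_1(k),\tau(q(k,g))v_2(\kappa(k,g))}$ inside the $K$-integral and run an honest induction on rank, invoking the $\Xi^L$-decay of the discrete series $\tau$ on $L$ --- substantially more delicate and not what \citep{cowling1988almost} do. Their proof goes the other way around: by weak containment, one reduces to the left regular representation on $L^2(G)$, where the bound is a Herz-type majorization obtained by combining exactly your Hilbert--Schmidt evaluation-functional trick (applied fibrewise over $K\backslash G$, yielding $\sup_{k}|\eta(kx)|\le\sqrt d\,(\int_K|\eta(kx)|^2dk)^{1/2}$) with the $K$-invariant Iwasawa--H\"older estimate that appears in this paper as Lemma~\ref{lem:Bound on L^p}. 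That route never introduces $\Xi_Q$ and avoids the Langlands--Knapp--Zuckerman classification entirely. Your step 2 is the right key lemma; it just needs to be applied inside $L^2(G)$ rather than inside a compact induced model over a non-minimal $Q$.
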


The work \citep{cowling1988almost} also provides a bound when $p(\pi)>2$: 
\begin{thm}[\citep{cowling1988almost}]
\label{thm:CHH-corollary}Let $(\pi,V)$ be a unitary
irreducible representation of $G$ with $p(\pi)\le2k$,
$k\in\N$ and let $v_1,v_{2}\in V$ be two $K$-finite vectors,
such that $\dim\spann Kv_{1}=d_{1}$, $\dim\spann K v_{2}=d_{2}$.
Then $\left|\left\langle v_1,gv_{2}\right\rangle \right|\le\sqrt{d_{1}d_{2}}\n{v_{1}}\n{v_{2}}\Xi^{\nicefrac{1}{k}}(g)$.
\end{thm}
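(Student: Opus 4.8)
The plan is to reduce to the tempered case of the theorem (Theorem~\ref{thm:CHH-Thm2}, i.e. the case $p(\pi)\le 2$) by the tensor power trick. Given $(\pi,V)$ with $p(\pi)\le 2k$, I would work with the $k$-fold Hilbert space tensor power $(\pi^{\otimes k},V^{\otimes k})$, with $G$ acting diagonally; this is again a unitary representation of $G$, although no longer irreducible in general. For decomposable vectors one has
\[
\langle w_1\otimes\cdots\otimes w_k,\ \pi^{\otimes k}(g)(w_1'\otimes\cdots\otimes w_k')\rangle=\prod_{i=1}^{k}\langle w_i,\pi(g)w_i'\rangle,
\]
so if the matrix coefficients of $\pi$ are ``almost $L^{2k}$'' then those of $\pi^{\otimes k}$ are ``almost $L^2$''. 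The strategy is: first show $\pi^{\otimes k}$ is tempered, then apply the tempered bound to the vectors $v_1^{\otimes k}$ and $v_2^{\otimes k}$, and finally extract a $k$-th root.

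\textbf{Step 1 (temperedness of $\pi^{\otimes k}$).} Matrix coefficients of a unitary representation are bounded, so from $p(\pi)\le 2k$ together with the fact that $p(\pi)$ is independent of the chosen $K$-finite vector, one gets that for every $K$-finite $w\in V$ and every $\epsilon>0$ the diagonal coefficient $g\mapsto\langle w,\pi(g)w\rangle$ lies in $L^{2k+\epsilon}(G)$. By H\"older's inequality, for $K$-finite $w_1,\dots,w_k$ the diagonal coefficient of $w_1\otimes\cdots\otimes w_k$ in $\pi^{\otimes k}$ lies in $L^{(2k+\epsilon)/k}(G)=L^{2+\epsilon/k}(G)$. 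A positive-definite function lying in $L^{2+\epsilon}(G)$ for every $\epsilon>0$ generates a subrepresentation weakly contained in the regular representation $\lambda_G$; this is the analytic heart of~\citep{cowling1988almost}. Since the vectors $w_1\otimes\cdots\otimes w_k$ span a dense subspace of $V^{\otimes k}$, the (closure of the) sum of these cyclic subrepresentations is everything, and hence $\pi^{\otimes k}$ itself is tempered.

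\textbf{Step 2 (applying the tempered bound and taking roots).} Theorem~\ref{thm:CHH-Thm2} applies to $\pi^{\otimes k}$ --- either because it holds, with the same proof, for any (not necessarily irreducible) tempered representation, or by applying it fiberwise in a direct integral decomposition of $\pi^{\otimes k}$ into irreducibles and combining the fibers with Cauchy--Schwarz. It gives
\[
\bigl|\langle v_1^{\otimes k},\ \pi^{\otimes k}(g)v_2^{\otimes k}\rangle\bigr|\le\sqrt{D_1D_2}\,\n{v_1^{\otimes k}}\,\n{v_2^{\otimes k}}\,\Xi(g),\qquad D_i=\dim\spann K(v_i^{\otimes k}).
\]
Because $K$ acts diagonally, $\spann K(v_i^{\otimes k})\subseteq(\spann K v_i)^{\otimes k}$, so $D_i\le d_i^{k}$; moreover $\n{v_i^{\otimes k}}=\n{v_i}^{k}$, and the left-hand side equals $|\langle v_1,\pi(g)v_2\rangle|^{k}$. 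Hence $|\langle v_1,\pi(g)v_2\rangle|^{k}\le (d_1d_2)^{k/2}\n{v_1}^{k}\n{v_2}^{k}\,\Xi(g)$, and taking $k$-th roots yields the claimed bound $|\langle v_1,gv_2\rangle|\le\sqrt{d_1d_2}\,\n{v_1}\,\n{v_2}\,\Xi^{1/k}(g)$.

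The main obstacle is Step 1: showing that a representation whose diagonal matrix coefficients, on a set of vectors with dense span, are almost square-integrable is weakly contained in $\lambda_G$, and then transferring Theorem~\ref{thm:CHH-Thm2} to the reducible representation $\pi^{\otimes k}$ while preserving the dimension bookkeeping $D_i\le d_i^{k}$. Both of these are genuinely the content of~\citep{cowling1988almost}; once they are granted, the remainder is H\"older's inequality and extraction of roots.
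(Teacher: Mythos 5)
Your proof is correct and is exactly the tensor-power argument of Cowling--Haagerup--Howe, which is the source the paper cites for this statement without supplying its own proof. The reduction to the $k$-fold tensor power $\pi^{\otimes k}$, the H\"older step showing that decomposable diagonal coefficients of $\pi^{\otimes k}$ lie in $L^{2+\epsilon}(G)$ for every $\epsilon>0$, the appeal to the CHH fact that almost-$L^2$ positive-definite functions generate representations weakly contained in $\lambda_G$, the extension of Theorem~\ref{thm:CHH-Thm2} to the non-irreducible $\pi^{\otimes k}$ (CHH state their Theorem~2 for arbitrary tempered representations, so this is not actually a gap), and the bookkeeping $\dim\spann K(v_i^{\otimes k})\le d_i^{\,k}$, $\n{v_i^{\otimes k}}=\n{v_i}^{k}$ followed by a $k$-th root are all as in \citep{cowling1988almost}.

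Since I cannot compare with a proof the paper does not contain, let me instead contrast your argument with the paper's approach to the sharper Theorem~\ref{thm:CHH Lp genralization}, which subsumes the present statement. The tensor-power method only catches $p(\pi)\le 2k$ for integer $k$, and yields the weaker envelope $\Xi^{2/p}(g)$; this is exactly why the paper moves on to $\Xi_p(g)\le\Xi^{2/p}(g)$ and all real $p\ge 2$. The paper's partial proof of that refinement (for $K$-fixed vectors; Lemma~\ref{lem:Bound on L^p}, Corollary~\ref{cor:L^p norm}, Corollary~\ref{cor:pi(Aa) bound}) uses a direct H\"older estimate against the factor $\delta(\tilde p)^{1/p}\delta(\tilde p)^{1/p'}$ in Iwasawa coordinates and then an eigenvalue argument for the averaging operator $A_a$, which interpolates continuously in $p$ and avoids tensor powers entirely. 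So your route is faithful to the cited source and perfectly valid for the statement as given, but the paper's later technique is the one that generalizes.
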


This theorem is not sufficient for this work, since we need more precise bounds when $p(\pi)\notin2\N$. The following theorem contains a general upper bounds that is good enough for all the applications of this paper.
\begin{thm}
\label{thm:CHH Lp genralization}Let $(\pi,V)$ be a unitary
irreducible representation of $G$ with $p(\pi)\le p$,
$p\ge2$, and let $v_1,v_{2}\in V$ be two $K$-finite vectors,
such that $\dim\spann Kv_{1}=d_{1}$, $\dim\spann Kv_{2}=d_{2}$.
Then
\[
\left|\left\langle v_1,gv_{2}\right\rangle \right|\le\sqrt{d_{1}d_{2}}\n{v_{1}}\n{v_{2}}\Xi_{p}(g).
\]
\end{thm}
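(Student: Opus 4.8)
The plan is to interpolate between the tempered bound (Theorem~\ref{thm:CHH-Thm2}, the case $p=2$) and the integer bounds of Cowling--Haagerup--Howe (Theorem~\ref{thm:CHH-corollary}, the case $p=2k$) using a complex-analytic/analytic-families argument. The key point is that the statement has the shape of a logarithmic-convexity inequality in the parameter $1/p$: on the right-hand side one has $\Xi_p(g) = \int \delta(gk)^{-1/p}\,dk$, which by H\"older's inequality is already log-convex in $1/p$, and on the left one wants to produce a matrix coefficient bounded by a fixed power of $\Xi$. First I would reduce, exactly as in the proof of Theorem~\ref{thm: Explicit Bounds}, to the case $g = a \in A_+$ by the Cartan decomposition, and reduce to unit vectors $v_1, v_2$ that are $K$-finite, absorbing the factors $\sqrt{d_1 d_2}\,\n{v_1}\n{v_2}$.

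The main step is to embed $(\pi,V)$ into an analytic family of representations. In the $p$-adic case (and the rank-one Archimedean case) the representations with $p(\pi) > 2$ are, up to the classification, complementary-series-type representations sitting inside the principal series $\mathrm{Ind}_P^G(\chi_s)$, and one can vary the inducing character $\chi_s$ holomorphically in a complex parameter $s$ so that $s$ real recovers $\pi$, $s$ on the unitary axis gives tempered representations, and the endpoints $p(\pi) \in 2\N$ are covered by Theorem~\ref{thm:CHH-corollary}. The matrix coefficient $\langle v_1, \pi_s(a) v_2\rangle$ is then holomorphic in $s$ on a strip, and Theorem~\ref{thm:CHH-corollary} applied at the lattice of points where the integrability exponent is an even integer gives the bound $\Xi^{1/k}(a)$ there; the tempered bound gives $\Xi(a)$ on the boundary. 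I would then apply the Phragm\'en--Lindel\"of / three-lines principle (in the form of a Hadamard three-lines theorem for the function $s \mapsto \langle v_1, \pi_s(a)v_2\rangle / \Xi_{p(s)}(a)$, after checking it is bounded on the strip) to conclude the bound $\Xi_p(a)$ for all intermediate $p$. Alternatively, and perhaps more robustly, one can use the fact that $\Xi_p$ is exactly the spherical-function-type majorant: write $\delta(gk)^{-1/p}$ as a real interpolation of $\delta(gk)^{-1/2}$ and $\delta(gk)^{-1/(2k)}$ with $1/p = \theta/2 + (1-\theta)/(2k)$, and correspondingly factor the matrix coefficient bound; the submultiplicativity of matrix coefficients under tensoring (which is the engine behind Theorem~\ref{thm:CHH-corollary}) should let one multiply the $\theta$-power of the tempered estimate with the $(1-\theta)$-power of the $2k$-estimate.

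The hard part will be making the analytic family genuinely carry the norm information: one needs the vectors $v_1, v_2$ and their $K$-spans to extend across the family with uniformly controlled norms, and one needs the integrability exponent $p(\pi_s)$ to depend on $s$ in the expected (piecewise-linear in the real part) way so that the interpolation lands on $\Xi_p$ and not on something weaker. In the higher-rank Archimedean case there is the additional subtlety that "$p(\pi) \le p$" is a single real constraint but the representation may be non-tempered in several root directions asymmetrically, so one must be careful that $\Xi_p$ — which is isotropic, coming from a single exponent $-1/p$ against $\delta$ — really does dominate; this is where the convexity bound $\Xi_p \le \Xi^{2/p}$ from Theorem~\ref{thm: Explicit Bounds} does the work, reducing everything to controlling $\langle v_1, g v_2\rangle$ by $\Xi^{2/p}(g)$, and the latter follows by interpolating the $\Xi^{1/k}$ bounds. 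I expect the cleanest write-up to go through Theorem~\ref{thm:CHH-corollary} plus a Riesz--Thorin-style interpolation on the operator $\pi(f)$ for $f$ ranging over an appropriate family, rather than through an explicit holomorphic family of representations, but either route should close the gap.
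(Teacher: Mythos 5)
Your proposal takes a genuinely different route from the paper, but there is a directional error in the interpolation scheme that prevents it from reaching the stated estimate. The paper's argument (given for $K$-fixed vectors, with a pointer to \citep{cowling1988almost} for the full $K$-finite case) is entirely direct and does not interpolate between integer exponents at all: Lemma~\ref{lem:Bound on L^p} applies H\"older \emph{pointwise in $k\in K$} inside the Iwasawa integral, with exponents $p$ and $p'=p/(p-1)$ only on the $P$-integral, which is what makes the factor $\int_K \delta(g^{-1}k)^{-1/p}\,dk=\Xi_p(g)$ appear \emph{exactly} rather than as a power of $\Xi$. Corollary~\ref{cor:L^p norm} then gives $\|A_a\|_{L^p(K\backslash G)}\le\Xi_p(a)$, and Corollary~\ref{cor:pi(Aa) bound} observes that, because $V^K$ is one-dimensional for irreducible $\pi$, the matrix coefficient $c_{v_1,v_2}\in L^{p+\epsilon}(K\backslash G)$ is an eigenfunction of $A_a$ whose eigenvalue is $\left\langle v_1,\pi(g)v_1\right\rangle$; the operator-norm bound then gives $\left|\left\langle v_1,\pi(g)v_2\right\rangle\right|\le\Xi_{p+\epsilon}(g)$, and one lets $\epsilon\to 0$. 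The parameter $p$ never needs to be an even integer and no analytic family of representations is ever constructed.

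The concrete gap in your proposal is the sentence where you invoke ``the convexity bound $\Xi_p\le\Xi^{2/p}$'' to reduce the target to controlling $\left\langle v_1,gv_2\right\rangle$ by $\Xi^{2/p}(g)$. That inequality goes the wrong way for this purpose: since $\Xi_p\le\Xi^{2/p}$, a bound $\left|\left\langle v_1,gv_2\right\rangle\right|\le\Xi^{2/p}(g)$ is strictly \emph{weaker} than the claimed $\le\Xi_p(g)$, so establishing it does not prove the theorem. And indeed, every interpolation you propose starts from the Cowling--Haagerup--Howe endpoint $\Xi^{1/k}$ at $p=2k$, which is itself weaker than the sharp function $\Xi_{2k}$; three-lines, Riesz--Thorin on $\pi(f)$, or taking $\theta$- and $(1-\theta)$-powers of the two endpoint estimates all output $\Xi^{2/p}$, never $\Xi_p$. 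The holomorphic-family variant could in principle land on $\Xi_p$ (which is exactly the spherical function at parameter $(1-2/p)\rho$), but that route requires embedding an arbitrary $\pi$ into an analytic family with uniform control of $K$-types and norms, which you flag as ``the hard part'' and do not carry out; the paper avoids all of this with the single H\"older step of Lemma~\ref{lem:Bound on L^p}, which produces the exact $\Xi_p$ majorant in one line. For the paper's downstream applications (which only use $\Xi_p\ll_\epsilon q^{-l(g)(1/p-\epsilon)}$) your weaker $\Xi^{2/p}$ bound would actually suffice, but it does not prove the theorem as stated.
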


Let us discuss older similar results, slightly weaker than Theorem~\ref{thm:CHH Lp genralization}.

We first consider bounds on matrix coefficients of a single representation $\pi$. In the Archimedean case, we may consider the theory of leading exponents (\citep[Chapter VIII]{knapp2016representation}),
which we describe in Subsection~\ref{subsec:Leading-Exponents}. In the non-Archimedean case, analogous results hold (\citep[Section 4]{casselman1974introduction}). 
\begin{thm}[See Theorems~\ref{thm:lead_coeff_upper_bound},\ref{thm:lead_coeff_Lp}]
\label{thm:CHH Lp leading coeff}Let $(\pi,V)$ be a unitary irreducible representation of $G$ with $p(\pi)\le p$,
$p\ge2$, and let $v_1,v_{2}\in V$ be two $K$-finite vectors.
Then for every $\epsilon>0$,
\[
\left|\left\langle v_1,gv_{2}\right\rangle \right|\ll_{v_1,v_{2},\pi,\epsilon}q^{-l(g)(\nicefrac{1}{p}-\epsilon)}.
\]
\end{thm}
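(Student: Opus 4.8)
The plan is to run the theory of leading exponents, exactly as in \citep[Chapter VIII]{knapp2016representation} in the Archimedean case and \citep[Section 4]{casselman1974introduction} in the non-Archimedean case, and then to extract the $L^p$-decay from the hypothesis $p(\pi)\le p$. Recall that for $K$-finite vectors $v_1,v_2$, the matrix coefficient $\beta(g)=\langle v_1,\pi(g)v_2\rangle$, restricted to the positive chamber $A_+$ via the Cartan decomposition, admits an asymptotic expansion: there is a finite set $E(\pi)\subset\a^*_{\C}$ of \emph{exponents} such that, for $a\in A_+$ going to infinity away from the walls,
\[
\beta(a)=\sum_{\mu\in E(\pi)}q^{\mu(\nu(a))}P_\mu(\nu(a))+(\text{lower order}),
\]
where each $P_\mu$ is a polynomial. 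The ``leading exponents'' are those $\mu$ maximal in the appropriate partial order, and the real parts $\re\mu$ control the growth. The first step is to record this expansion and to note that it reduces the problem to a statement purely about the finite set $\{\re\mu:\mu\in E(\pi)\}\subset\a^*$: up to the polynomial factors $P_\mu$, which are absorbed into the $q^{l(g)\epsilon}$ slack, one has $|\beta(a)|\ll_\epsilon q^{l(g)\epsilon}\cdot\max_{\mu}q^{\re\mu(\nu(a))}$.

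The second step is to relate the exponents to $p(\pi)$. The key input is that $\beta\in L^{p+\epsilon}(G)$ for every $\epsilon>0$, by definition of $p(\pi)$ together with the fact (standard, and used implicitly throughout the paper) that the infimal exponent is not attained on the nose but membership in $L^{p'}$ holds for all $p'>p(\pi)$. Integrating $|\beta(a)|^{p'}$ against the density $S(a)\asymp\delta(a)=q^{l(a)}=q^{2\rho(\nu(a))}$ over the chamber (using the integration formula from Subsection~\ref{subsec:Distnaces and Length of Elements}), convergence forces, for each exponent $\mu$,
\[
p'\,\re\mu(\nu(a))+2\rho(\nu(a))<0
\]
to fail to be bounded below is impossible — i.e.\ $p'\re\mu+2\rho$ must be strictly negative on the open chamber, hence $\re\mu\le_{\a^*}-\tfrac{2}{p'}\rho$ for all $p'>p$, and letting $p'\downarrow p$ gives $\re\mu\le_{\a^*}-\tfrac{2}{p}\rho$ (in the ordering dual to $\le_\a$). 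Pairing with $\nu(a)$ in the positive chamber and using $l(a)=2\rho(\nu(a))$, this yields $\re\mu(\nu(a))\le -\tfrac{1}{p}l(a)$ for every exponent, which is precisely the bound needed.

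Combining the two steps: for $a\in A_+$ far from the walls, $|\beta(a)|\ll_\epsilon q^{l(a)\epsilon}q^{-l(a)/p}$, and since $K$ is compact and $\beta$ is continuous, the bound extends from the chamber-interior to all of $G$ (the walls contribute only polynomially, again absorbed in $\epsilon$; alternatively one uses $\Xi_p$-domination near the walls via Theorem~\ref{thm:CHH Lp genralization}). The implied constant depends on $v_1,v_2,\pi$ through the coefficients of the $P_\mu$ and the size of $E(\pi)$, which is exactly the dependence allowed in the statement.

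The main obstacle is the passage from ``$\beta\in L^{p'}$'' to the inequality $\re\mu\le_{\a^*}-\tfrac{2}{p'}\rho$ on \emph{every} exponent, not just the leading ones: one must be careful that cancellation among the terms $q^{\mu(\nu(a))}P_\mu(\nu(a))$ cannot hide an offending exponent. This is handled by the standard device of choosing a one-parameter ray $\nu(a)=t\omega$ for $\omega$ in the interior of the chamber generic enough that the values $\mu(\omega)$ are pairwise distinct (so no cancellation), and applying the integral test along rays; this is exactly the argument in \citep[Chapter VIII]{knapp2016representation}, and in the non-Archimedean case the analogous finite-difference argument of \citep{casselman1974introduction} replaces the differential one. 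Everything else is bookkeeping with the partial order $\le_\a$ and the slack $q^{l(g)\epsilon}$.
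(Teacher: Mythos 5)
Your proposal is correct and follows the paper's approach, which cites Knapp's Theorems~\ref{thm:lead_coeff_upper_bound} and~\ref{thm:lead_coeff_Lp}: $L^{p'}$-integrability for all $p'>p$ forces $\re\nu(\omega_i)\le(1-2/p)\rho(\omega_i)$ for every leading exponent $\nu\in\mathcal{F}$, and Theorem~\ref{thm:lead_coeff_upper_bound} converts this into the decay bound $\ll q^{-l(g)(1/p-\epsilon)}$. One minor imprecision worth flagging: your justification that ``the walls contribute only polynomially'' is not why the bound extends uniformly to the closed chamber $A_+$ — the generic off-wall expansion (Theorem~\ref{thm:Leading exponents expansion}) has coefficients that are not controlled as $x_i\to 0$, and it is the more refined boundary asymptotic expansion underlying Knapp's Theorem~8.47 that gives the uniform bound on all of $A_+$, as the paper itself points out at the end of Section~\ref{sec:pointwise lower bound}.
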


Next, we consider bounds on matrix coefficients when $v_1,v_{2}$ are $K$-fixed. In such case (if $v_1,v_{2}\ne0$) the representation
is spherical, and well understood, at least in the Archimedean case (see Subsection~\ref{subsec:Lower-Bounds-rank1}
below). From those bounds, we have:
\begin{thm}[{See \citep[Section 3]{ghosh2013diophantine}}]
\label{thm:CHH-Spherical Lp}Let $G$ be Archimedean, and let $(\pi,V)$ be a unitary irreducible representation of $G$ with $p(\pi)\le p$,
and let $v_1,v_{2}\in V$ be two $K$-fixed vectors. Then for every
$\epsilon>0$, 
\[
\left|\left\langle v_1,\pi(g)v_{2}\right\rangle \right|\le\n{v_{1}}\n{v_{2}}\Xi(g)\delta^{\nicefrac{1}{2}-\nicefrac{1}{p}}\ll_{\epsilon}\n{v_{1}}\n{v_{2}}q^{-l(g)(\nicefrac{1}{p}-\epsilon)}.
\]
\end{thm}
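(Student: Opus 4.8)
The plan is to reduce the matrix coefficient to the elementary spherical function of $\pi$ and then to control the latter by combining Harish-Chandra's integral representation with the convexity Lemma~\ref{lem:HFunctionTechnicalLemma}; the spherical unitary dual will enter only through the $L^{p}$-integrability criterion for spherical functions. Since $(G,K)$ is a Gelfand pair, every irreducible unitary representation of $G$ has a $K$-fixed subspace of dimension at most one. If $v_{1}$ or $v_{2}$ is $0$ there is nothing to prove; otherwise $\pi$ is spherical, $v_{1}$ and $v_{2}$ are scalar multiples of a fixed unit $K$-fixed vector $v_{0}$, and $\left\langle v_{1},\pi(g)v_{2}\right\rangle =\zeta\,\n{v_{1}}\n{v_{2}}\,\varphi_{\pi}(g)$ with $\left|\zeta\right|=1$, where $\varphi_{\pi}(g)=\left\langle v_{0},\pi(g)v_{0}\right\rangle $ is the normalized spherical function of $\pi$. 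So it suffices to prove $\left|\varphi_{\pi}(g)\right|\le\Xi(g)\,\delta(g)^{\nicefrac{1}{2}-\nicefrac{1}{p}}$; the second inequality in the statement is then immediate from Theorem~\ref{thm: Explicit Bounds}, since $\Xi(g)\ll_{\epsilon}q^{-l(g)(\nicefrac{1}{2}-\epsilon)}$ and $\delta(g)^{\nicefrac{1}{2}-\nicefrac{1}{p}}=q^{l(g)(\nicefrac{1}{2}-\nicefrac{1}{p})}$.

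Let $\sigma=\sigma(\pi)\in\a^{*}$ be the dominant leading exponent of $\varphi_{\pi}$, i.e.\ the dominant element with $\varphi_{\pi}(a)\asymp\delta(a)^{-\nicefrac{1}{2}}e^{\sigma(\nu(a))}$ for $a\in A_{+}$ bounded away from the walls (so $\sigma=0$ precisely when $\pi$ is tempered). Harish-Chandra's integral formula for the spherical function, with the Weyl-orbit representative of the parameter chosen so that $\sigma$ is the dominant exponent, together with the triangle inequality (which discards a unimodular oscillatory factor), gives for every $g$
\[
\left|\varphi_{\pi}(g)\right|\le\intop_{K}\delta(gk)^{-\nicefrac{1}{2}}e^{\sigma(H(gk))}\,dk=:\Psi_{\sigma}(g).
\]
The key point is that, for spherical $\pi$, the inequality $p(\pi)\le p$ is equivalent to $(1-\nicefrac{2}{p})\rho-\sigma$ being a non-negative combination of the simple roots; this is the $L^{p}$-integrability criterion for spherical functions, resting on Harish-Chandra's asymptotic expansion of $\varphi_{\pi}$ along $A_{+}$ and on the volume growth $S(a)\asymp\delta(a)$ away from the walls. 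I expect this to be the main obstacle, although in the Archimedean spherical setting it is classical and is exactly what is assembled in \citep[Section 3]{ghosh2013diophantine}.

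The remainder is routine. As $\Psi_{\sigma}$ is $K$-bi-invariant, we may take $g=a\in A_{+}$. By Lemma~\ref{lem:HFunctionTechnicalLemma}, $\nu(a)-H(ak)$ is a non-negative combination of simple coroots, so that, as $\sigma$ is dominant, $\sigma(H(ak))\le\sigma(\nu(a))$ for every $k\in K$, and therefore
\[
\Psi_{\sigma}(a)\le e^{\sigma(\nu(a))}\intop_{K}\delta(ak)^{-\nicefrac{1}{2}}\,dk=e^{\sigma(\nu(a))}\,\Xi(a).
\]
Since $a\in A_{+}$, the element $\nu(a)$ lies in the closed positive Weyl chamber, hence is a non-negative combination of the simple coweights; pairing it with the non-negative combination of simple roots $(1-\nicefrac{2}{p})\rho-\sigma$ gives $\sigma(\nu(a))\le(1-\nicefrac{2}{p})\rho(\nu(a))=(\nicefrac{1}{2}-\nicefrac{1}{p})\,l(a)$, so that $e^{\sigma(\nu(a))}\le q^{(\nicefrac{1}{2}-\nicefrac{1}{p})l(a)}=\delta(a)^{\nicefrac{1}{2}-\nicefrac{1}{p}}$. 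Combining the last two displays and undoing the reduction to $A_{+}$ yields $\left|\varphi_{\pi}(g)\right|\le\Xi(g)\,\delta(g)^{\nicefrac{1}{2}-\nicefrac{1}{p}}$ for all $g$, as required. (Alternatively, the first two steps may be packaged as the known pointwise bound $\left|\varphi_{\pi}(g)\right|\le\Xi(g)^{\nicefrac{2}{p(\pi)}}$ for spherical $\pi$, which via $\Xi(g)\ge q^{-l(g)/2}$ gives the same conclusion; in either route the one substantial ingredient is the bound on $\sigma$.)
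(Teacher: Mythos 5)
Your proof is correct and follows exactly the route the paper indicates by citation in Subsection~\ref{subsec:Lower-Bounds-rank1}: pass to the spherical function, apply the Harish-Chandra integral representation together with the convexity Lemma~\ref{lem:HFunctionTechnicalLemma} to get $|\varphi_\lambda(a)|\le e^{\re\lambda(\nu(a))}\Xi(a)$ for a dominant representative, and control $\re\lambda(\nu(a))$ via the $L^{p}$-integrability criterion (Theorem~\ref{thm:lead_coeff_Lp} / the converse of Equation~\eqref{eq:criteria for boundness}). The only difference is that you spell out the derivation of the first bound, which the paper simply cites to Ghosh--Gorodnik--Nevo; as a small bonus your integral argument yields the clean constant $1$ appearing in the theorem, which the cited $\ll$-bound would give only up to an implied constant.
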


Let us provide a proof of Theorem~\ref{thm:CHH Lp genralization} when $v_1,v_{2}$ are $K$-fixed. A bit more work can give bounds for $K$-finite vectors as well, using the ideas of \citep{cowling1988almost}.
Let us first prove the following lemma, which is based on the proof of \citep[Theorem 2]{cowling1988almost}. For
$f\in L^{p}(G)$ and $g\in G$ we let $gf\in L^{p}(G)$
be $gf(g')=f(g^{-1}g')$.
\begin{lem}
\label{lem:Bound on L^p}If $f_{1}\in L^{\nicefrac{p}{(p-1)}}(K\backslash G)$,
$f_{2}\in L^{p}(K\backslash G)$ are two $K$-fixed functions, then $\left|\left\langle f_1,gf_{2}\right\rangle \right|\le\n{f_{1}}_{\nicefrac{p}{(p-1)}}\n{f_{2}}_{p}\Xi_{p}(g)$.
\end{lem}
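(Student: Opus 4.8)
The plan is to reduce the statement to the defining integral formula for $\Xi_p$ together with H\"older's inequality on the homogeneous space $K\backslash G$, using the Iwasawa decomposition $G=KP$ to coordinatize things and the modular character $\delta$ to keep track of how the $G$-action distorts measures. First I would write out the matrix coefficient explicitly. Since $f_1,f_2$ are $K$-fixed functions on $K\backslash G$ and we want $\langle f_1, gf_2\rangle = \int_{K\backslash G} f_1(x)\overline{f_2(g^{-1}x)}\,dx$, the natural move is to pass to the Iwasawa decomposition and realize $L^2(K\backslash G)$ via restriction to (a model of) $P$ or, more symmetrically, to think of $f_i$ as functions on $K\backslash G \cong$ the appropriate boundary-type space. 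The key analytic input is that for a $K$-fixed function, the action of $g\in G$ followed by projecting back to the $K$-fixed subspace is governed by the operator whose kernel, after unwinding, produces exactly $\int_K \delta(gk)^{-1/p}\,dk = \Xi_p(g)$ when one inserts the right powers of $\delta$ to make the pairing $L^{p'}$--$L^p$ rather than $L^2$--$L^2$.

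Concretely, the main steps would be: (1) Use the formula $\langle f_1, \pi(g) f_2\rangle$ and insert the Iwasawa decomposition $g' = k'p'$, writing $dg' = dk'\,dp'$ with the normalization fixed in Subsection~\ref{subsec:Distnaces and Length of Elements}. (2) Reduce to the case $f_1, f_2 \ge 0$ (replace by $|f_1|, |f_2|$; this only increases the left side). (3) Apply H\"older's inequality with exponents $p/(p-1)$ and $p$ to the integral, splitting the natural weight — coming from how $g$ acts on the measure, i.e. a factor built from $\delta(gk)$ — as a product of a $\delta(gk)^{-1/p}$-type piece absorbed into the $\Xi_p$ and the two remaining pieces absorbed into $\|f_1\|_{p/(p-1)}$ and $\|f_2\|_p$ respectively. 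The bookkeeping is exactly the computation that a $K$-fixed vector paired against its $G$-translate, after integrating out the compact directions, yields $\int_K \delta(gk)^{-1/p}dk$; this is the same mechanism that makes $\Xi = \Xi_2$ the spherical function, now done asymmetrically in $L^{p'}$--$L^p$ duality following the argument of \citep[Theorem~2]{cowling1988almost}.

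The step I expect to be the main obstacle is getting the H\"older split to land \emph{exactly} on $\Xi_p(g)$ with no leftover factor: one must choose how to distribute the Jacobian/modular weights between the two H\"older factors so that the "cross term" is precisely $\delta(gk)^{-1/p}$ integrated over $k\in K$, and not some larger quantity. This is where the specific normalizations of Haar measure on $K$, $P$, and $G$ (and the identity $\int_G f = \int_K\int_P f(k\tilde p)\delta(\tilde p)\,d\tilde p\,dk = \int_K\int_P f(\tilde p k)\,d\tilde p\,dk$) have to be used carefully, together with Lemma~\ref{lem:HFunctionTechnicalLemma}-type control if one needs to pass between $H(gk)$ and $H(g)$. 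Once the identity $\langle f_1, gf_2\rangle \le \|f_1\|_{p/(p-1)}\|f_2\|_p \int_K \delta(gk)^{-1/p}\,dk$ is in hand, one invokes the definition $\Xi_p(g)=\int_K\delta(gk)^{-1/p}\,dk$ and the proof is complete. A subtlety worth flagging is the asymmetry: we do not need $f_1,f_2$ in the same $L^p$-space, which is exactly what allows Theorem~\ref{thm:CHH Lp genralization} to follow by decomposing an arbitrary matrix coefficient of a representation with $p(\pi)\le p$ into such a pairing.
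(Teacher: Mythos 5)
Your proposal is correct and follows the same route as the paper's proof: write the pairing as an integral over $K\times P$ via the measure identity $\int_G f(x)\,dx=\int_K\int_P f(k\tilde p)\,\delta(\tilde p)\,d\tilde p\,dk$, split the weight $\delta(\tilde p)=\delta(\tilde p)^{1/p'}\delta(\tilde p)^{1/p}$ with $p'=p/(p-1)$, apply H\"older, and use left $K$-invariance of both $f_1$ and $f_2$. One small correction to the mechanism you describe: the factor $\delta(g^{-1}k)^{-1/p}$ does not appear by splitting off a ``$\delta(gk)$'' weight before H\"older; rather, after H\"older the inner integral $\bigl(\int_P|f_2(g^{-1}k\tilde p)|^p\delta(\tilde p)\,d\tilde p\bigr)^{1/p}$ is evaluated by writing $g^{-1}k=k_0\tilde p_0$ and changing variables $\tilde p\mapsto\tilde p_0\tilde p$, which produces $\|f_2\|_p\,\delta(\tilde p_0)^{-1/p}=\|f_2\|_p\,\delta(g^{-1}k)^{-1/p}$ exactly, and Lemma~\ref{lem:HFunctionTechnicalLemma} is not needed at all since no inequality on $H$ is involved.
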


\begin{proof}
To avoid integrability questions, we assume that $f_1,f_2\in C_c(G)$
and deduce the theorem by density. Denote $p'=\frac{p}{p-1}$.
For $f\in C_c(G)$ it holds that
\[
\intop_{G}f(x)dx=\intop_{K}\intop_{P}f(k\tilde{p})\delta(\tilde{p})d\tilde{p}dk,
\]
so 
\begin{align*}
\left|\left\langle f_1,gf_{2}\right\rangle \right| & \le\intop_{K}\intop_{P}\left|f_{1}(k\tilde{p})\right|\left|f_{2}(g^{-1}k\tilde{p})\right|\delta(\tilde{p})d\tilde{p}dk=\\
 & =\intop_{K}\intop_{P}\left|f_{1}(k\tilde{p})\right|\delta(\tilde{p})^{\nicefrac{1}{p'}}\left|f_{2}(g^{-1}k\tilde{p})\right|\delta(\tilde{p})^{\nicefrac{1}{p}}d\tilde{p}dk\\
 & \le\intop_{K}\left(\intop_{P}\left|f_{1}(k\tilde{p})\right|^{p'}\delta(\tilde{p})d\tilde{p}\right)^{\nicefrac{1}{p'}}\left(\intop_{P}\left|f_{2}(g^{-1}k\tilde{p})\right|^{p}\delta(\tilde{p})d\tilde{p}\right)^{\nicefrac{1}{p}}dk.
\end{align*}
Since $f_{1}$ is $K$-fixed, we have for every
$k\in K$,
\[
\left(\intop_{P}\left|f_{1}(k\tilde{p})\right|^{p'}\delta(\tilde{p})d\tilde{p}\right)^{\nicefrac{1}{p'}}=\n{f_{1}}_{p'}.
\]
write $g^{-1}k=k_{0}\tilde{p}_{0}$. Then 
\begin{align*}
\left(\intop_{P}\left|f_{2}(g^{-1}k\tilde{p})\right|^{p}\delta(\tilde{p})d\tilde{p}\right)^{\nicefrac{1}{p}} & =\left(\intop_{P}\left|f_{2}(k_{0}\tilde{p}_{0}\tilde{p})\right|^{p}\delta(\tilde{p})d\tilde{p}\right)^{\nicefrac{1}{p}}\\
 & =\left(\intop_{P}\left|f_{2}(\tilde{p}_{0}\tilde{p})\right|^{p}\delta(\tilde{p}_{0}\tilde{p})d\tilde{p}\right)^{\nicefrac{1}{p}}\delta(\tilde{p}_{0})^{-\nicefrac{1}{p}}\\
 & =\left(\intop_{P}\left|f_{2}(\tilde{p})\right|^{p}\delta(\tilde{p})d\tilde{p}\right)^{\nicefrac{1}{p}}\delta(\tilde{p}_{0})^{-\nicefrac{1}{p}}\\
 & =\n{f_{2}}_{p}\delta(\tilde{p}_{0})^{-\nicefrac{1}{p}}.
\end{align*}
Since $\delta$ is left $K$-fixed and $\tilde{p}_{0}=k_{0}^{-1}g^{-1}k$
\begin{align*}
\intop_{K_{0}}\left(\intop_{P}\left|f_{2}(g^{-1}k\tilde{p})\right|^{p}\delta(\tilde{p})d\tilde{p}\right)^{1/p}dk & =\n{f_{2}}_{p}\intop\delta(g^{-1}k)^{-\nicefrac{1}{p}}dk=\\
 & =\n{f_{2}}_{p}\Xi_{p}(g).
\end{align*}
\end{proof}
The lemma has a nice corollary: for $a\in A_{+}$ let $A_{a}$ be
the operator $A_{a}\colon C(K\backslash G)\to C(K\backslash G)$
defined by $A_{a}f(g)=\intop_{K}f(a^{-1}k^{-1}g)dk=\intop_{K}\intop_{K}f(k^{\prime-1}a^{-1}k^{-1}g)dkdk'$.
We may define $A_{g}$ for $g\in G$, but it only depends on the $A_{+}$ component of $g$ from the Cartan decomposition.
Since $A_{a}$ is a translation followed by an average, its $L^{p}$-norm
is bounded by $1$ on $L^{p}(G)\cap C(K\backslash G)$,
and therefore it defines an operator $A_{a}\colon L^{p}(K\backslash G)\to L^{p}(K\backslash G)$.
\begin{cor}
\label{cor:L^p norm}The norm of $A_{a}\colon L^{p}(K\backslash G)\to L^{p}(K\backslash G)$
is bounded by $\Xi_{p}(a)$.
\end{cor}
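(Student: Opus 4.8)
The plan is to deduce the corollary from Lemma~\ref{lem:Bound on L^p} by a straightforward duality argument, the one new ingredient being the left $K$-invariance of $\Xi_p$.

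First I would record that $\Xi_p$ is left $K$-invariant: since $\delta\colon G\to\R_{>0}$ is read off the $P$-component of the Iwasawa decomposition and hence is left $K$-invariant, one has $\Xi_p(k_0 g)=\intop_K\delta(k_0 g k)^{-1/p}\,dk=\intop_K\delta(gk)^{-1/p}\,dk=\Xi_p(g)$ for all $k_0\in K$, and in particular $\Xi_p(ka)=\Xi_p(a)$ for every $k\in K$. Next I would set up the duality. Write $p'=p/(p-1)$. If $F\in L^p(G)$ is left $K$-invariant, then $\n F_p=\sup\{|\langle h,F\rangle|:h\in L^{p'}(G),\ \n h_{p'}\le 1\}$, and since the $K$-averaging operator $h\mapsto\intop_K h(k^{-1}\,\cdot\,)\,dk$ is norm-nonincreasing on $L^{p'}(G)$ and leaves the pairing against a left $K$-invariant $F$ unchanged, the supremum may be restricted to $h\in L^{p'}(K\backslash G)$ with $\n h_{p'}\le 1$. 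One checks directly from the defining formula that $A_a f$ is left $K$-invariant whenever $f$ is, so this description of the norm applies to $F=A_a f$.

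Then comes the estimate. Fix $f\in L^p(K\backslash G)$ and $h\in L^{p'}(K\backslash G)$. Writing $(ka)f(g')=f((ka)^{-1}g')$, the definition gives $A_a f(g')=\intop_K f(a^{-1}k^{-1}g')\,dk=\intop_K\big((ka)f\big)(g')\,dk$, so by Fubini's theorem
\[
\langle h,A_a f\rangle=\intop_K\langle h,(ka)f\rangle\,dk ,
\]
the interchange being justified because $\intop_K\n h_{p'}\n{(ka)f}_p\,dk=\n h_{p'}\n f_p<\infty$ by Hölder's inequality and the left-translation invariance of the $L^p$-norm. Applying Lemma~\ref{lem:Bound on L^p} to the $K$-fixed functions $h$ and $f$ and the group element $ka$, and using $\Xi_p(ka)=\Xi_p(a)$, bounds each inner pairing by $\n h_{p'}\n f_p\,\Xi_p(a)$; integrating over $K$, which has total mass $1$, yields $|\langle h,A_a f\rangle|\le\n h_{p'}\n f_p\,\Xi_p(a)$. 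Taking the supremum over $\n h_{p'}\le 1$ gives $\n{A_a f}_p\le\Xi_p(a)\,\n f_p$, which is exactly the assertion (and also shows $A_a f\in L^p(K\backslash G)$, so no separate well-definedness check is needed).

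I do not expect a genuine obstacle here; the argument is short. The two points that require a moment's care are the reduction of the $L^p$-duality to left $K$-invariant test functionals and the fact that Lemma~\ref{lem:Bound on L^p} is being used directly for $L^{p'}$ and $L^p$ functions rather than for the $C_c$ functions appearing in its proof — but the lemma is already stated in that generality, so no further approximation argument is required.
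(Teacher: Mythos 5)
Your argument is correct and follows essentially the same route as the paper: both are duality arguments that reduce the operator-norm bound to Lemma~\ref{lem:Bound on L^p}. The paper collapses the $K$-average inside $A_a$ first (using left $K$-invariance of the extremal test functional) and then invokes the lemma once with the group element $a$, whereas you keep the $K$-average, invoke the lemma with $ka$ for each $k$, and then use left $K$-invariance of $\Xi_p$; this is a minor bookkeeping difference, and your version is, if anything, a bit more explicit about the well-definedness and duality setup.
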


\begin{proof}
Let $p'=\nicefrac{p}{(p-1)}$. Let $f\in L^{p}(K\backslash G)$.
Let $f_{1}\in L^{p'}(K\backslash G)$ with $\n{f_{1}}_{p'}=1$
be such that $\left\langle f_1,A_{a}f\right\rangle =\n{A_{a}f}_{p}$.
Then since $f_{1}$ is left $K$-invariant $\left\langle f_1,A_{a}f\right\rangle =\left\langle f_1,af\right\rangle $.
Applying Lemma~\ref{lem:Bound on L^p}, we have 
\[
\n{A_{a}f}_{p}\le\Xi_{p}(a)\n f_{p},
\]
as needed.
\end{proof}
Note that if $(\pi,V)$ is a unitary representation of
$G$, we may define $\pi(A_{a})\colon V^{K}\to V^{K}$ by the
same arguments as above, as 
\begin{equation}
\pi(A_{a})v=\intop_{K}\pi(ka)v\,dk=\intop_{K}\intop_{K}\pi(kak')v\,dkdk',\label{eq:Aa definition}
\end{equation}
and $\n{A_{a}}_{V}\le1$. By a standard argument, this operation commutes with taking matrix coefficients -- denote for $v_1,v_{2}\in V^{K}$,
$\varphi_{v_1,v_{2}}(g)=\left\langle v_1,\pi(g)v_{2}\right\rangle $,
$\varphi_{v_1,v_{2}}(g)\in L^{\infty}(G)$,
and then $A_{a}\varphi_{v_1,v_{2}}(g)=\varphi_{A_{a}v_1,v_{2}}(g)$.

\begin{cor}
\label{cor:pi(Aa) bound}Let $(\pi,V)$ be a unitary irreducible
representation of $G$ with $p(\pi)\le p$, and let $v_1,v_{2}\in V$
be two $K$-fixed vectors. Then 
\[
\left|\left\langle \pi(g)v_1,v_{2}\right\rangle \right|\le\n{v_{1}}\n{v_{2}}\Xi_{p}(g).
\]
\end{cor}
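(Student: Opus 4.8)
The plan is to exploit that the Hecke algebra $C_{c}(K\backslash G/K)$ is commutative, so $V^{K}$ is at most one-dimensional, and then to transport the operator bound of Corollary~\ref{cor:L^p norm} to $\pi$ through the eigenfunction property of the spherical matrix coefficient. First I would discard the trivial case $v_{1}v_{2}=0$ and assume $\pi$ is spherical. Fix a unit vector $u$ spanning $V^{K}$ and set $\varphi(g)=\langle u,\pi(g)u\rangle=\varphi_{u,u}(g)$, a bi-$K$-invariant function with $\varphi(e)=1$. Writing $v_{i}=\langle u,v_{i}\rangle u$, the left-hand side of the corollary equals $\n{v_1}\n{v_2}\,|\varphi(g)|$, and since $\varphi$ and $\Xi_{p}$ are bi-$K$-invariant it suffices, by the Cartan decomposition, to prove $|\varphi(a)|\le\Xi_{p}(a)$ for $a\in A_{+}$.

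The key observation is that $\varphi$ is an eigenfunction of every operator $A_{a}$. Since $V^{K}=\C u$ is one-dimensional, $\pi(A_{a})u$ is a scalar multiple of $u$, and pairing with $u$ shows, using \eqref{eq:Aa definition}, that this scalar has modulus $|\varphi(a)|$; hence, by the identity $A_{a}\varphi_{v_1,v_2}=\varphi_{A_{a}v_1,v_2}$ recorded just before the statement, $A_{a}\varphi$ is a scalar multiple of $\varphi$ of modulus $|\varphi(a)|$. Equivalently, this is the functional equation of spherical functions. Now pick an auxiliary exponent $p'$: take $p'=p$ if $p(\pi)<p$, and take $p'>p$ arbitrary if $p(\pi)=p$; in both cases $p'>p(\pi)$, so by the very definition of $p(\pi)$ the $K$-finite matrix coefficient $\varphi$ lies in $L^{p'}(G)$, hence in $L^{p'}(K\backslash G)$, with $0<\n{\varphi}_{p'}<\infty$. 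Since $A_{a}\varphi$ is a scalar multiple of $\varphi$ of modulus $|\varphi(a)|$, Corollary~\ref{cor:L^p norm} gives
\[
|\varphi(a)|\,\n{\varphi}_{p'}=\n{A_{a}\varphi}_{p'}\le\Xi_{p'}(a)\,\n{\varphi}_{p'},
\]
so $|\varphi(a)|\le\Xi_{p'}(a)$.

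Finally, if $p(\pi)<p$ this already reads $|\varphi(a)|\le\Xi_{p}(a)$, and if $p(\pi)=p$ I would let $p'\downarrow p$ and invoke dominated convergence in $\Xi_{p'}(a)=\intop_{K}\delta(ak)^{-1/p'}\,dk$, the integrand being dominated on the compact group $K$ by $\delta(ak)^{-1/p}+\delta(ak)^{-1/(p+1)}$, to get $\Xi_{p'}(a)\to\Xi_{p}(a)$, whence $|\varphi(a)|\le\Xi_{p}(a)$. Re-inserting the normalization constants yields $|\langle\pi(g)v_1,v_2\rangle|\le\n{v_1}\n{v_2}\,\Xi_{p}(g)$. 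I do not foresee a serious obstacle: the two points that need care are the eigenfunction identity $A_{a}\varphi=\overline{\varphi(a)}\varphi$ (the bookkeeping of left/right $K$-invariance together with $\dim V^{K}\le 1$) and the endpoint passage $p'\downarrow p(\pi)$, both of which are routine. The real limitation of this soft argument is that it is confined to $K$-fixed vectors: for merely $K$-finite $v_{i}$ the space $\spann Kv_{i}$ is no longer one-dimensional and $\varphi$ is no longer an $A_{a}$-eigenfunction, so one must instead run the Cowling--Haagerup--Howe argument underlying Theorem~\ref{thm:CHH Lp genralization}.
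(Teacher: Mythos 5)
Your proof is correct and takes essentially the same route as the paper: reduce to the one-dimensional space $V^{K}$, observe that the matrix coefficient is an eigenfunction of $A_{a}$ with eigenvalue of modulus $|\varphi(a)|$, bound that eigenvalue by the operator norm $\Xi_{p'}(a)$ from Corollary~\ref{cor:L^p norm} for any $p'>p(\pi)$, and let $p'\downarrow p$. The only cosmetic difference is that you justify the limit $\Xi_{p'}(a)\to\Xi_{p}(a)$ explicitly by dominated convergence, whereas the paper simply writes ``by taking $\epsilon\to0$.''
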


\begin{proof}
We may assume that $v_1,v_{2}$ are of norm $1$. Since both of
them are left $K$-invariant $\left\langle \pi(g)v_1,v_{2}\right\rangle =\left\langle \pi(A_{a_{g}})v_1,v_{2}\right\rangle $,
where $a_{g}\in A_{+}$ is the $A_{+}$ component of the Cartan decomposition
of $g$. If $\pi$ is irreducible, it is well known that the subspace
$V^{K}$ of $K$-fixed vectors is one dimensional. Therefore, $v_{1}$ is an eigenvector of $\pi(A_{a_{g}})$ on $V^{K}$. Therefore,
$c_{v_1,v_{2}}(g)$ is an eigenvector of $A_{a_{g}}$
on $L^{p+\epsilon}(g)$ for every $\epsilon>0$, with eigenvalue
$\left\langle \pi(g)v_1,v_{2}\right\rangle $. As the
norm of $A_{a_{g}}$ on $L^{p+\epsilon}(K\backslash G)$
is bounded by $\Xi_{p+\epsilon}(g)$, each of its eigenvalues is bounded
by $\Xi_{p+\epsilon}(g)$ as well. Therefore, $\left|\left\langle \pi(g)v_1,v_{2}\right\rangle \right|\le\Xi_{p+\epsilon}(g)$
for every $\epsilon>0$. By taking $\epsilon\to0$ we deduce $\left|\left\langle v_1,\pi(A_{g})v_{2}\right\rangle \right|\le\Xi_{p}(g)$,
as required.
\end{proof}

\subsection{Upper Bounds on Operators}

Let $(\pi,V)$ be a unitary representation of $G$. For
$a\in A_{+}$ let $\pi(A_{a})\colon V^{K}\to V^{K}$ be
from Equation~\eqref{eq:Aa definition}. Similarly, for $d_{0}\in\R_{\ge0}$,
let $\chi_{d_0},\psi_{d_0}\in C_{c}^{\infty}(G)$ be
as in Definition~\ref{def:chi and psi}. Recall that
$\chi_{d_0}$ is a smooth approximation for the characteristic
function of $\left\{ g:l(g)\le d_{0}\right\} $ and $\psi_{d_0}$
is a smooth approximation for $q^{(d_{0}-l(g))/2}\chi_{d_0}$.

For $h\in C_c(G)$ we define as usual $\pi(h)v=\intop_{G}h(g)\pi(g)v\,dg$.
\begin{cor}
\label{cor:Hecke Bounds}Let $(\pi,V)$ be a unitary irreducible
representation of $G$ with $p(\pi)\le p$, and let $v\in V$
be a $K$-fixed vector. Then 
\begin{align*}
\n{\pi(A_{a})v} & \ll_{\epsilon}q^{-l(a)(\nicefrac{1}{p}-\epsilon)}\n v\\
\n{\pi(\chi_{d_0})v} & \ll_{\epsilon}q^{d_{0}(1-\nicefrac{1}{p}+\epsilon)}\n v\\
\n{\pi(\psi_{d_0})v} & \ll_{\epsilon}q^{d_{0}(1-\nicefrac{1}{p}+\epsilon)}\n v.
\end{align*}
\end{cor}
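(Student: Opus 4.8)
The plan is to reduce all three estimates to a single averaging inequality, and then to combine it with Theorem~\ref{thm: Explicit Bounds} and the volume bound \eqref{eq:size of ball-1}. The averaging inequality is the following: for a $K$-fixed vector $v\in V$ one has $\n{\pi(A_{a})v}\le\Xi_{p}(a)\n v$, and for any nonnegative $h\in C_{c}(K\backslash G/K)$ one has $\n{\pi(h)v}\le\n v\int_{G}h(g)\Xi_{p}(g)\,dg$. To see this, let $T$ denote either $\pi(A_{a})$ or $\pi(h)$; then $Tv$ is again $K$-fixed, and expanding $\n{Tv}^{2}=\langle Tv,Tv\rangle$ and moving the averaging operator onto the first argument produces $\langle\pi(a)v,Tv\rangle$ in the first case (by \eqref{eq:Aa definition}, using that $Tv$ is $K$-fixed) and $\int_{G}h(g)\langle\pi(g)v,Tv\rangle\,dg$ in the second. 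Since $v$ and $Tv$ are $K$-fixed and $p(\pi)\le p$, Corollary~\ref{cor:pi(Aa) bound} gives $|\langle\pi(g)v,Tv\rangle|\le\n v\,\n{Tv}\,\Xi_{p}(g)$ for all $g$, and dividing through by $\n{Tv}$ yields the two inequalities.

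The first bound of the corollary is then immediate: $\n{\pi(A_{a})v}\le\Xi_{p}(a)\n v\ll_{\epsilon}q^{-l(a)(1/p-\epsilon)}\n v$ by Theorem~\ref{thm: Explicit Bounds}.

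For the bounds on $\pi(\chi_{d_{0}})$ and $\pi(\psi_{d_{0}})$, apply the averaging inequality with $h=\chi_{d_{0}}$ and $h=\psi_{d_{0}}$ (both nonnegative and $K$-bi-invariant), and pass to the Cartan decomposition: since $\Xi_{p}$ is $K$-bi-invariant, $\int_{G}h(g)\Xi_{p}(g)\,dg=\int_{A_{+}}h(a)\Xi_{p}(a)S(a)\,da$. Now $0\le\chi_{d_{0}}\le1$ is supported on $\{l(a)\le d_{0}+1\}$, while on that same set $\psi_{d_{0}}(a)\ll q^{(d_{0}-l(a))/2}$. Using Theorem~\ref{thm: Explicit Bounds} to replace $\Xi_{p}(a)$ by $q^{-l(a)(1/p-\epsilon)}$, and the volume estimate $\int_{l(a)\le l}S(a)\,da\ll_{\epsilon}q^{l(1+\epsilon)}$ coming from \eqref{eq:size of ball-1}, both cases reduce to estimating $\int_{l(a)\le d_{0}+1}q^{-l(a)s}S(a)\,da$ for a suitable exponent $0<s<1$. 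Splitting the range of $l(a)$ into unit-length shells and applying the volume bound to each, this integral is $\ll_{\epsilon}\sum_{0\le j\le d_{0}+1}q^{-js}q^{j(1+\epsilon)}\ll_{\epsilon}q^{d_{0}(1-s+\epsilon)}$, a geometric sum dominated by its last term because $s<1$ (when $1-s$ is comparable to $\epsilon$ one bounds the sum instead by the number of shells times the largest term). For $\chi_{d_{0}}$ one takes $s=1/p-\epsilon$ and gets $\n{\pi(\chi_{d_{0}})v}\ll_{\epsilon}q^{d_{0}(1-1/p+\epsilon)}\n v$; for $\psi_{d_{0}}$ the extra weight $q^{(d_{0}-l(a))/2}=q^{d_{0}/2}q^{-l(a)/2}$ forces $s=1/2+1/p-\epsilon$, still $<1$ since $p\ge2$, and the factor $q^{d_{0}/2}$ together with $q^{d_{0}(1/2-1/p+\epsilon)}$ again produces $q^{d_{0}(1-1/p+\epsilon)}\n v$.

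The argument presents no real obstacle. The two things worth watching are the routine $\epsilon$-bookkeeping (one should carry out the intermediate steps with $\epsilon/2$), and the point that for $\psi_{d_{0}}$ one genuinely needs its built-in decay factor $q^{-l(g)/2}$: the naive estimate $\psi_{d_{0}}\le q^{d_{0}/2}\chi_{d_{0}}$ would cost a spurious factor of $q^{d_{0}/2}$ and miss the asserted exponent. That decay is exactly what cancels the extra volume growth, so that $\pi(\psi_{d_{0}})$ satisfies the same bound as $\pi(\chi_{d_{0}})$.
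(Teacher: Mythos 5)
Your argument is correct and follows essentially the same route as the paper: all three bounds are reduced to the matrix-coefficient estimate of Corollary~\ref{cor:pi(Aa) bound}, then combined with Theorem~\ref{thm: Explicit Bounds} and the ball-volume bound via the Cartan decomposition. Your explicit derivation of the intermediate inequality $\n{\pi(h)v}\le\n v\int_G h(g)\,\Xi_p(g)\,dg$ by expanding $\n{Tv}^2=\int h(g)\langle\pi(g)v,Tv\rangle\,dg$ is a clean way to make precise the paper's abbreviated opening step (which writes $\n{\pi(g)v}$ where the matrix-coefficient bound after $K$-averaging is really meant), and your shell decomposition of the integral is a cosmetic variant of the paper's direct use of $S(a)\ll q^{l(a)}$ followed by the measure bound $\int_{l(a)\le d_0+1}da\ll_\epsilon q^{d_0\epsilon}$.
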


\begin{proof}
The bound on $\pi(A_{a})$ follows from Corollary~\ref{cor:pi(Aa) bound},
using the explicit bounds of Theorem~\ref{thm: Explicit Bounds}.

We will only prove the estimate for $\psi_{d_0}$, since the proof
for $\chi_{d_0}$ is similar and a little easier.
\begin{align*}
\n{\pi(\psi_{d_0})v} & \le\intop_{l(g)\le d_{0}+1}q^{\nicefrac{(d_{0}-l(g))}{2}}\n{\pi(g)v}dg\\
 & \ll_{\epsilon}\intop_{l(g)\le d_{0}+1}q^{\nicefrac{d_0}{2}-l(g)(\nicefrac{1}{2}+\nicefrac{1}{p}-\epsilon)}\n vdg\\
 & =\intop_{K}\intop_{K}\intop_{l(a)\le d_{0}+1}S(a)q^{\nicefrac{d_0}{2}-l(k_{1}ak_{2})(\nicefrac{1}{2}+\nicefrac{1}{p}-\epsilon)}\n vdk_{1}\,da\,dk_{1}\\
 & \ll\intop_{l(a)\le d_{0}+1}q^{l(a)}q^{\nicefrac{d_0}{2}-l(a)(\nicefrac{1}{2}+\nicefrac{1}{p}-\epsilon)}\n vda\\
 & \ll_{\epsilon}\intop_{l(a)\le d_{0}+1}q^{\nicefrac{d_0}{2}+l(a)(\nicefrac{1}{2}-\nicefrac{1}{p}+\epsilon)}\n vda\\
 & \le\intop_{l(a)\le d_{0}+1}q^{\nicefrac{d_0}{2}+d_{0}(\nicefrac{1}{2}-\nicefrac{1}{p}+\epsilon)}\n vda\\
 & \ll_{\epsilon}q^{d_{0}(1-\nicefrac{1}{p}+\epsilon)}\n v.
\end{align*}
In the last line we used the fact that $\intop_{l(a)\le d_{0}+1}da\ll_{\epsilon}q^{d_{0}\epsilon}$.
\end{proof}

\subsection{\label{subsec:Lower-Bounds-rank1}Spherical Functions}

Let us recall the definition of spherical functions. For $\lambda\in \a_{\C}^{\ast}=\a^{\ast}\otimes\C$
dominant, we let $\lambda_{P}\colon P\to\C^{\times}$ be $\lambda_{P}(mn)=q^{\lambda(\nu(m))}\delta^{-1/2}(m)=q^{(\lambda-\rho)(\nu(m))}$.
Extend it to $\tilde{\lambda}\colon G\to\C$ by $\tilde{\lambda}(kp)=\lambda_{P}(p)$.
Finally, define the spherical function 
\[
\varphi_{\lambda}(g)=\intop_{K}\tilde{\lambda}(gk)dk.
\]

Note that $\varphi_{0}=\Xi$ and $\varphi_{(1-2/p)\rho}=\Xi_{p}$
for $2\le p\le\infty$.

The theory of spherical functions was developed by Harish-Chandra
in the Archimedean case and by Satake in the non-Archimedean case
(see \citep[Subsection 3.2]{ghosh2013diophantine} and the reference
therein). We will need some very basic properties of spherical functions.

Let $(\pi,V)\in\Pi(G)_{\sph}$ be
a unitary spherical representation, i.e. a unitary irreducible representation
with a non-trivial $K$-fixed vector. If $v_1,v_{2}$ are $K$-fixed,
then there exists a dominant $\lambda\in \a_{\C}^{\ast}$,
such that
\[
\left\langle v_1,\pi(g)v_{2}\right\rangle =\left\langle v_1,v_{2}\right\rangle \varphi_{\lambda}(g).
\]

Upper bounds on spherical functions are given as follows. By \citep[Lemma 3.3]{ghosh2013diophantine},
it holds that for $a\in A_{+}$, $k,k'\in K$ and $\epsilon>0$,
\[
\varphi_{\lambda}(kak')=\varphi_{\lambda}(a)\ll q^{\re\lambda(\nu(a))}\Xi(a)\ll_{\epsilon}q^{\re\lambda(\nu(a))}q^{-l(a)(1/2-\epsilon)}.
\]

Let $\omega_1,...,\omega_{r}\in\a$ be the fundamental coweights. Since $\lambda$ is dominant $\re\lambda(\omega_{i})\ge0$
for $1\le i\le r$. Using the above bounds, we deduce that
\begin{equation}
\re\lambda(\omega_{i})\le(1-\nicefrac{2}{p})\rho(\omega_{i}) \ \ \forall 1\le i\le r \implies p(\pi)\le p.
\label{eq:criteria for boundness} 
\end{equation}
See also \citep[Lemma 3.2]{ghosh2013diophantine}, which has a small misprint.
 
In the Archimedean case the other direction of the implication of Equation~\eqref{eq:criteria for boundness} is also true (\cite[Theorem 8.48]{knapp2016representation}), which gives a proof of Theorem~\ref{thm:CHH-Spherical Lp} in this case.  

However, the other direction is no longer true for the general non-Archimedean case. The simplest example is when the Bruhat-Tits building of $G$ is a bipartite $(d_0,d_1)$-regular tree, $d_0\ne d_1$, where for one of the choices of a maximal compact subgroup, there is a spherical function that is square integrable but $\re\lambda(\omega)>0$ (this result was first observed by \cite{matsumoto69}). In general, the other direction is true in the standard case according to MacDonald (\cite[Chapter V]{macdonald1971spherical}), which excludes the case above. In particular, every group has a standard maximal compact subgroup for which Equation~\eqref{eq:criteria for boundness} is an equivalence.

Lower bounds on Spherical functions can be given in general if $\re\lambda=\lambda$.
By Lemma~\ref{lem:HFunctionTechnicalLemma}, if $\re\lambda=\lambda$
and $\re\lambda(\omega_{i})\le\rho(\omega_{i})$
then for $a\in A_{+}$, $k,k'\in K$
\begin{equation}
\varphi_{\lambda}(kak')=\varphi_{\lambda}(a)=\intop_{K}\tilde{\lambda}(ak)dk\ge\intop_{K}\tilde{\lambda}(a)dk=\lambda_{P}(a).\label{eq:lower bound spherical}
\end{equation}

In the case when $G$ is Archimedean, it is well known that the dominant
$\lambda\in \a_{\C}^{\ast}$ which occur this way for unitary spherical representations must satisfy $-\bar{\lambda}=w\lambda$
for some $w\in W$ (\citep{kostant1969existence}).

If we moreover assume that $G$ is of rank $1$, then $\a_{\C}^{*}\cong\C$
and $W=\left\{ 1,s\right\} $ acts by $s\lambda=-\lambda$. Therefore, the only dominant $\lambda\in \a_{\C}^{*}$ which may occur satisfy
either $\re\lambda=0$ or $\re\lambda=\lambda$. In the
case $\re\lambda=0$ the corresponding unitary representation
satisfies $p(\pi)=2$. If $\re\lambda=\lambda$ then
$\lambda=\alpha\rho$ with $\alpha\ge0$. By Equation~\eqref{eq:criteria for boundness},
$\alpha\le1$. Write $\alpha=1-1/p$,
$2\le p\le\infty$. Using Equation~\eqref{eq:criteria for boundness} and the
definition of $\Sigma_{p}(g)$ we conclude that for $p>2$,
there is at most a single unitary irreducible representation $(\pi,V)$
with a non-trivial $K$-fixed vector and $p(\pi)=p$. The
corresponding spherical function is $\Xi_{p}(g)$. We
conclude:
\begin{prop}
\label{prop:Rank1 lower bound- Archimedean}Let $G$ be Archimedean
of rank $1$, $p>2$ and $(\pi,V)$ a unitary irreducible
representation of $G$ with $p(\pi)=p$, having a non-trivial
$K$-fixed vector $v$. Then $\pi(A_{a})v=\lambda_{1}v$,
$\pi(\chi_{d_0})v=\lambda_{2}v$, with
\begin{align*}
e^{-l(a)/p} & \le\lambda_{1}\\
e^{d_{0}(1-\nicefrac{1}{p})} & \ll\lambda_{2}\,\,\,\text{for }d_{0}\ge1.
\end{align*}

Moreover, if $h\in C_{c}(K\backslash G/K)$ satisfies $h(g)\ge\chi_{d_0}(g)$
for every $g\in G$, then $\pi(h)v=\lambda_{h}v$, with
$\lambda_{h}\ge\lambda_{2}$.
\end{prop}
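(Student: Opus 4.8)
The plan is to reduce everything to properties of the spherical function attached to $\pi$. Since $\pi$ is irreducible and spherical, the space $V^{K}$ of $K$-fixed vectors is one-dimensional, spanned by $v$. The operator $\pi(A_{a})$ maps $V^{K}$ into itself (by the invariance argument following Equation~\eqref{eq:Aa definition}), and $\pi(\chi_{d_0})$, $\pi(h)$ do as well since $\chi_{d_0},h\in C_{c}(K\backslash G/K)$; hence each acts on $v$ by a scalar $\lambda_{1},\lambda_{2},\lambda_{h}$ respectively. By the rank-one classification recalled before the statement, $\langle v,\pi(g)v\rangle=\n v^{2}\varphi_{\lambda}(g)$ with $\varphi_{\lambda}=\Xi_{p}$, i.e.\ $\lambda=(1-2/p)\rho$, which is real, dominant, and satisfies $\re\lambda(\omega_{i})\le\rho(\omega_{i})$ for all $i$. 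Using that $v$ is $K$-fixed to collapse the outer $K$-averages, one identifies $\lambda_{1}=\varphi_{\lambda}(a)$, $\lambda_{2}=\intop_{G}\chi_{d_0}(g)\varphi_{\lambda}(g)\,dg$ and $\lambda_{h}=\intop_{G}h(g)\varphi_{\lambda}(g)\,dg$.

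Next I would feed in the lower bound Equation~\eqref{eq:lower bound spherical}: since $\re\lambda=\lambda$ and $\re\lambda(\omega_{i})\le\rho(\omega_{i})$, we get $\varphi_{\lambda}(g)=\varphi_{\lambda}(a_{g})\ge\lambda_{P}(a_{g})$ for every $g$, and $\lambda_{P}(a)=q^{(\lambda-\rho)(\nu(a))}=q^{-\frac{2}{p}\rho(\nu(a))}=e^{-l(a)/p}$ because $l(a)=2\rho(\nu(a))$ and $q=e$. In particular $\varphi_{\lambda}\ge 0$ everywhere, and $\lambda_{1}=\varphi_{\lambda}(a)\ge e^{-l(a)/p}$, which is the first assertion.

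For $\lambda_{2}$ I would use that $\chi_{d_0}\ge 0$ and $\chi_{d_0}\equiv 1$ on $\{l(g)\le d_{0}\}$, so by positivity of $\varphi_{\lambda}$
\[
\lambda_{2}\ \ge\ \intop_{l(g)\le d_{0}}\varphi_{\lambda}(g)\,dg\ \ge\ \intop_{l(g)\le d_{0}}e^{-l(g)/p}\,dg\ =\ \intop_{a\in A_{+},\,l(a)\le d_{0}}S(a)\,e^{-l(a)/p}\,da,
\]
via the integration formula $\intop_{G}f=\intop_{K}\intop_{K}\intop_{A_{+}}f(kak')S(a)\,da\,dk\,dk'$. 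In the rank-one case $A_{+}$ is a ray on which $l$ is a linear coordinate, so $da$ is a constant multiple of $dl$, and $S(a)\asymp e^{l(a)}$ once $l(a)\ge c_{0}$ for a fixed constant $c_{0}$ (the region far from the single wall). Restricting the last integral to $c_{0}\le l(a)\le d_{0}$ and evaluating the resulting one-dimensional integral of $e^{l(a)(1-1/p)}$ yields $\lambda_{2}\gg e^{d_{0}(1-1/p)}$ for $d_{0}\ge c_{0}+1$; for $1\le d_{0}\le c_{0}+1$ the claim is trivial since then $\lambda_{2}\ge\intop_{l(g)\le 1}e^{-l(g)/p}\,dg$ is a fixed positive constant while $e^{d_{0}(1-1/p)}$ is bounded. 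Finally, for the last claim, $h(g)\ge\chi_{d_0}(g)$ pointwise together with $\varphi_{\lambda}\ge 0$ gives $\lambda_{h}=\intop_{G}h\varphi_{\lambda}\ge\intop_{G}\chi_{d_0}\varphi_{\lambda}=\lambda_{2}$.

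The identifications $\lambda_{1}=\varphi_{\lambda}(a)$, $\lambda_{2}=\intop_{G}\chi_{d_0}\varphi_{\lambda}$, etc., and the inequality $\lambda_{h}\ge\lambda_{2}$ are immediate once one knows $\dim V^{K}=1$ and $\varphi_{\lambda}\ge 0$; the only step that requires a genuine computation is the lower bound $\intop_{l(a)\le d_{0}}S(a)e^{-l(a)/p}\,da\gg e^{d_{0}(1-1/p)}$, and there the only mild subtlety is the behaviour of $S(a)$ near the wall of $A_{+}$ — harmless in rank one, where this is a single bounded region and $S(a)\asymp e^{l(a)}$ elsewhere, reducing the estimate to an elementary exponential integral.
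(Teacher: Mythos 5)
Your proof is correct and follows essentially the same route as the paper: one-dimensionality of $V^{K}$, identification of the eigenvalues with integrals against $\Xi_{p}=\varphi_{(1-2/p)\rho}$, the pointwise lower bound from Equation~\eqref{eq:lower bound spherical}, and an elementary exponential integral using the growth of $S(a)$. If anything, your treatment of the integral $\intop_{l(a)\le d_{0}}S(a)e^{-l(a)/p}\,da$ via the general estimate $S(a)\asymp e^{l(a)}$ away from the wall is slightly cleaner than the paper's display $\intop_{0}^{d_{0}}\sinh(t)e^{-t/p}\,dt$, which is really written for $\SL_{2}(\R)$ and is somewhat loose notation for the general rank-one case.
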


\begin{proof}
It is well known that in this case the set of all $K$-fixed vectors
in $V$ is one dimensional and equals $\spann\left\{ v\right\} $.
Since $\pi(A_{a})v$, $\pi(\chi_{d_0})v$,$\pi(h)v$
are $K$-fixed we get that $\pi(A_{a})v=\lambda_{1}v$, $\pi(\chi_{d_0})v=\lambda_{2}v$
and $\pi(h)v=\lambda_{h}v$. Applying matrix coefficients,
we see that 
\begin{align*}
\lambda_{1} & =\Xi_{p}(a)\\
\lambda_{2} & =\intop_{G}\chi_{d_0}(g)\Xi_{p}(g)dg\\
\lambda_{h} & =\intop_{G}h(g)\Xi_{p}(g)dg\ge\intop_{G}\chi_{d_0}(g)\Xi_{p}(g)dg=\lambda_{2}.
\end{align*}
Using Equation~\eqref{eq:lower bound spherical} we get 
\[
\Xi_{p}(g)\ge e^{-\frac{2}{p}\rho(\nu(a))}=e^{-l(a)/p}.
\]
And therefore $\lambda_{1}\ge e^{-l(a)/p}$ and for $d_{0}\ge1$,
\begin{align*}
\lambda_{2} & =\intop_{G}\chi_{d_0}(g)\Xi_{p}(g)dg\ge\\
 & \ge\intop_{0}^{d_0}\sinh(t)e^{-t/p}dt\gg e^{(1-1/p)d_{0}}.
\end{align*}
\end{proof}

A similar analysis can be done for the non-Archimedean rank $1$ case, whenever the Bruhat-Tits tree of $G$ is regular. However, when the Bruhat-Tits Tree of $G$ is not regular, the description of the spherical unitary dual is more complicated, and in particular it is different for the two non-conjugate maximal compact subgroups. 
The analysis of this can be done by analyzing the slightly more general case of regular and bi-regular trees. See \citep{hashimoto1989zeta}
or \citep{kamber2019p} for an analysis of this case.

Since the calculations are a bit long and are not the main focus of this work, we skip them and state the final result. 
Recall that we have a map $\nu\colon M/M^{0}\to\a$ with a discrete image. We identify its image with $\Z$.

\begin{prop}
\label{prop:Rank1 lower bound- padic}Let $G$ be non-Archimedean
of rank $1$, $p>2$ and $(\pi,V)$ a unitary irreducible representation of $G$ with $p(\pi)=p$, having a non-trivial
$K$-fixed vector $v$. Then for $a\in A_{+}$ with $\nu(a)\in\Z$
even, we have $\pi(A_{a})v=\lambda_{1}v$, with
\begin{align*}
q^{-l(a)/p} & \ll\lambda_{1}.
\end{align*}
\end{prop}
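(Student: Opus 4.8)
I would first perform the same reduction as in the proof of Proposition~\ref{prop:Rank1 lower bound- Archimedean}. Since $\pi$ is irreducible and spherical the space $V^{K}$ of $K$-fixed vectors is one-dimensional, spanned by $v$, so $\pi(A_{a})v=\lambda_{1}v$ for a scalar $\lambda_{1}$; pairing with $v$ and using that $v$ is $K$-fixed gives $\lambda_{1}=\langle v,\pi(A_{a})v\rangle/\langle v,v\rangle=\langle v,\pi(a)v\rangle/\langle v,v\rangle=\varphi_{\lambda}(a)$, the value of the spherical function of $\pi$ at $a$. Thus the statement is equivalent to the lower bound $\varphi_{\lambda}(a)\gg q^{-l(a)/p}$ for $a\in A_{+}$ with $\nu(a)\in\Z$ even. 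In the case where the Bruhat--Tits tree of $G$ is regular (e.g.\ $\PGL_{2}$ or $\SL_{2}$), this is immediate and needs no parity assumption: there Equation~\eqref{eq:criteria for boundness} is an equivalence, so $\pi$ is the unique unitary spherical representation with $p(\pi)=p$, which has parameter $\re\lambda=\lambda=(1-\nicefrac{2}{p})\rho$, and then Equation~\eqref{eq:lower bound spherical} yields $\varphi_{\lambda}(a)\ge\lambda_{P}(a)=q^{(\lambda-\rho)(\nu(a))}=q^{-\nicefrac{2}{p}\rho(\nu(a))}=q^{-l(a)/p}$.

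The remaining, genuinely non-trivial, case is when the tree of $G$ is a $(q_{0}+1,q_{1}+1)$-biregular tree with $q_{0}\ne q_{1}$, where, as noted after Equation~\eqref{eq:criteria for boundness}, a non-tempered spherical $\pi$ need not be a ``real'' complementary series for the given $K$, and its radial spherical function can fail to be positive on odd distances --- this is exactly why the proposition restricts to $\nu(a)$ even. Here the plan is to pass to the even-distance sublattice: if $K$ stabilizes a vertex of type $0$ and $\nu(a)=2m$, then $a$ moves the base vertex to a type-$0$ vertex at distance $2m$, the sphere $KaK$ has volume $\delta(a)\asymp(q_{0}q_{1})^{m}$ (cf.\ Subsection~\ref{subsec:Distnaces and Length of Elements}), so $q^{l(a)}\asymp(q_{0}q_{1})^{m}$, and the even radial part $\phi(m):=\varphi_{\lambda}(a)$ satisfies a three-term linear recurrence with constant coefficients --- a Macdonald-type recurrence for an auxiliary regular tree of parameter $q_{0}q_{1}$ --- normalized by $\phi(0)=1$; see \citep{macdonald1971spherical,hashimoto1989zeta,kamber2019p}. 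Solving it, $\phi(m)=c_{+}z_{+}^{m}+c_{-}z_{-}^{m}$ where $z_{\pm}$ are the roots of the associated quadratic, with $z_{+}z_{-}=(q_{0}q_{1})^{-1}$ and $z_{\pm}$ real, positive and distinct in the complementary (non-tempered) range, say $z_{+}>z_{-}>0$. Integrating $|\varphi_{\lambda}|^{p'}$ over $A_{+}$ against $S(a)\asymp\delta(a)$ shows that $\varphi_{\lambda}\in L^{p'+\epsilon}(G)\setminus L^{p'-\epsilon}(G)$ precisely when $(q_{0}q_{1})z_{+}^{p'}=1$; since $p(\pi)=p$ this forces $z_{+}=(q_{0}q_{1})^{-1/p}$, hence $z_{+}^{m}\asymp(q_{0}q_{1})^{-m/p}\asymp q^{-l(a)/p}$. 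Provided $c_{+}>0$ and $c_{-}\ge 0$, we get $\phi(m)\ge c_{+}z_{+}^{m}\gg q^{-l(a)/p}$, which is the claim.

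The main obstacle is the last input: that for $p(\pi)>2$ the distance-$2$ Hecke eigenvalue on $v$ really lands in the real complementary range (so that $z_{+}>z_{-}>0$), and that the leading coefficients $c_{+},c_{-}$ in the asymptotic expansion are non-negative --- equivalently, positivity of the relevant Harish-Chandra $c$-function (Plancherel density) continued into the complementary strip of the auxiliary regular tree. This is precisely the point where the explicit classification of the spherical unitary dual of a rank-one $p$-adic group in terms of regular and biregular trees enters, and it is the ``long calculation'' that I would import from \citep{macdonald1971spherical,hashimoto1989zeta,kamber2019p} rather than reproduce. Once it is granted, combining it with the volume computation $q^{l(a)}\asymp(q_{0}q_{1})^{m}$ for $\nu(a)=2m$ gives $\lambda_{1}=\varphi_{\lambda}(a)\gg q^{-l(a)/p}$, completing the proof.
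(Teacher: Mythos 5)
The paper does not in fact prove Proposition~\ref{prop:Rank1 lower bound- padic}: just before stating it, the authors write that the calculations ``are a bit long and are not the main focus of this work, we skip them and state the final result,'' deferring to \citep{hashimoto1989zeta,kamber2019p}. Your reconstruction has the right architecture and matches the intent of that citation: pass to the even radial part, solve the Macdonald-type recurrence to get $\phi(m)=c_+z_+^m+c_-z_-^m$ with $z_+z_-=(q_0q_1)^{-1}$, identify $z_+=(q_0q_1)^{-1/p}$ by integrating $|\varphi_\lambda|^{p'}$ against $S(a)\asymp(q_0q_1)^m$, and finish with positivity of $c_+$ (and $c_-$); the deferred positivity of the $c$-coefficients is exactly the computation the paper chooses not to reproduce, so deferring it is fair.

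There is, however, one concrete misstep. You assert that the regular-tree case ``is immediate and needs no parity assumption'' because ``$\pi$ is the unique unitary spherical representation with $p(\pi)=p$.'' That is false for, e.g., $G=\PGL_2(\Q_p)$: the nontrivial unramified quadratic character $\chi$ of $G$ is trivial on $K$, so for a complementary series $\pi_s$ the twist $\pi_s\otimes\chi$ is also $K$-spherical, unitary, and has the same $p(\pi)$, but its spherical function is $(-1)^{d(x_0,gx_0)}\varphi_s(g)$, negative at odd distances. Its parameter has $\re\lambda=(1-\nicefrac{2}{p})\rho$ but $\lambda\ne\re\lambda$ (the imaginary part encodes $\chi$), so it is not covered by your appeal to Equation~\eqref{eq:lower bound spherical}, and Equation~\eqref{eq:criteria for boundness} being an equivalence gives no uniqueness. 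The hypothesis $\nu(a)\in\Z$ even is exactly what handles this --- on even distances the twisted and untwisted spherical functions coincide --- and it is needed for the regular tree as well, which is why the proposition imposes it without restricting to biregular trees. Since the proposition does assume the parity, your proof of the stated result survives once you invoke it throughout; but the claim that the regular case dispenses with it should be withdrawn.
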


We can finally conclude:
\begin{thm}
\label{thm:rank 1 has good family}Let $G$ be of rank $1$. Then
the set $\Pi(G)_{\sph,\nt}$ of spherical non-tempered
unitary representations has a good family of functions.
\end{thm}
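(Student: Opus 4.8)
The plan is to produce a family of the form $f_{d_0}=h_{d_0}\ast h_{d_0}$, where $h_{d_0}\in C_{c}^{\infty}(K\backslash G/K)$ is non-negative, supported in $\{g:l(g)\le d_{0}/2\}$, satisfies $h_{d_0}\le\chi_{d_{0}/2}$ pointwise, and is symmetric, $h_{d_0}(g)=h_{d_0}(g^{-1})$ (so that $h_{d_0}^{\ast}=h_{d_0}$, using that $h_{d_0}$ is real and $l(g)=l(g^{-1})$). With such a choice, Conditions~(2) and~(3) of Definition~\ref{def:Good family} become formal: $f_{d_0}\in C_{c}^{\infty}(G)$ as a convolution of two such functions; $\pi'(f_{d_0})=\pi'(h_{d_0})^{\ast}\pi'(h_{d_0})\ge0$ for every unitary $\pi'$, which is Condition~(3); and $h_{d_0}\le\chi_{d_{0}/2}$ forces $f_{d_0}\le\chi_{d_{0}/2}\ast\chi_{d_{0}/2}$ pointwise, so the Convolution Lemma~\ref{lem:convolution lemma intro} gives $f_{d_0}(g)\ll_{\epsilon}q^{(d_{0}/2)\epsilon}\psi_{d_0}(g)\ll_{\epsilon}q^{d_{0}\epsilon}\psi_{d_0}(g)$, which is Condition~(2). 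So the entire content is Condition~(1).

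For Condition~(1), fix $(\pi,V)\in\Pi(G)_{\sph,\nt}$, let $p=p(\pi)>2$, let $v\in V^{K}$ be a unit vector, and let $\varphi(g)=\langle v,\pi(g)v\rangle$ be the associated spherical function. Since $h_{d_0}$ is $K$-bi-invariant and $\pi$ is irreducible, $\pi(h_{d_0})$ has rank at most $\dim V^{K}=1$, vanishes on $(V^{K})^{\perp}$, and acts on $V^{K}$ by the scalar $\langle v,\pi(h_{d_0})v\rangle=\int_{G}h_{d_0}(g)\varphi(g)\,dg$; hence
\[
\tr\pi(f_{d_0})=\tr\pi(h_{d_0})^{2}=\Bigl(\int_{G}h_{d_0}(g)\varphi(g)\,dg\Bigr)^{2}.
\]
So it is enough to show $\int_{G}h_{d_0}\varphi\gg q^{(d_{0}/2)(1-1/p)}$ with implied constant uniform over $\Pi(G)_{\sph,\nt}$; then $\tr\pi(f_{d_0})\gg q^{d_{0}(1-1/p)}\ge q^{d_{0}(1-1/p-\epsilon)}$, in fact with room to spare since $1-1/p\le1$.

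In the Archimedean case I would take $h_{d_0}=\chi_{d_{0}/2}$, so $\varphi=\Xi_{p}$ and $\int_{G}\chi_{d_{0}/2}\,\Xi_{p}$ is the eigenvalue $\lambda_{2}$ of Proposition~\ref{prop:Rank1 lower bound- Archimedean} (applied with $d_{0}/2$ in place of $d_{0}$), which is $\gg e^{(d_{0}/2)(1-1/p)}$ for $d_{0}\ge2$; from the proof there (via $\Xi_{p}(g)\ge q^{-l(g)/p}$, Equation~\eqref{eq:lower bound spherical}, and $S(a)\asymp e^{l(a)}$) one checks the constant is uniform over $p\in(2,\infty)$. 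In the non-Archimedean case the (bi)regular-tree parity phenomenon prevents using $\chi_{d_{0}/2}$ directly, so I would take $h_{d_0}=\mathbf{1}_{Ka(d_0)K}$, where $a(d_0)\in A_{+}$ has $\nu(a(d_0))$ even and $l(a(d_0))$ the largest such value $\le d_{0}/2$, so $l(a(d_0))=d_{0}/2+O(1)$. Then $\int_{G}h_{d_0}\varphi=\mu(Ka(d_0)K)\,\varphi(a(d_0))=S(a(d_0))\,\varphi(a(d_0))$ with $S(a(d_0))\asymp q^{l(a(d_0))}$, while Proposition~\ref{prop:Rank1 lower bound- padic} gives $\varphi(a(d_0))=\langle v,\pi(A_{a(d_0)})v\rangle\gg q^{-l(a(d_0))/p}$; multiplying, $\int_{G}h_{d_0}\varphi\gg q^{l(a(d_0))(1-1/p)}\asymp q^{(d_{0}/2)(1-1/p)}$. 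Choosing $D$ large enough in each case, and recalling that $\Pi(G)_{\sph,\nt}$ is precompact in rank one, completes the verification.

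The step I expect to be the real obstacle is the non-Archimedean lower bound on the spherical eigenvalue: when the Bruhat--Tits tree of $G$ is biregular and $K$ is the ``non-standard'' maximal compact, the clean identity $\varphi=\Xi_{p}$ and the bound of Equation~\eqref{eq:lower bound spherical} can fail, so one must fall back on the explicit (bi)regular-tree computation behind Proposition~\ref{prop:Rank1 lower bound- padic} (see \citep{hashimoto1989zeta,kamber2019p}), and the restriction to even $\nu(a)$ in the choice of $h_{d_0}$ is precisely what keeps us in the range where that computation yields the clean bound $q^{-l(a)/p}$. Everything else --- the positivity trick for Condition~(3) and the reduction of Condition~(2) to the Convolution Lemma --- is formal.
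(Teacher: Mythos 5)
Your proposal is correct and follows essentially the same route as the paper: in both the Archimedean and non-Archimedean cases, build $f_{d_0}$ as a self-adjoint, non-negative convolution square supported in a ball of radius roughly $d_{0}/2$, dispose of Conditions~(2) and~(3) of Definition~\ref{def:Good family} via the Convolution Lemma~\ref{lem:convolution lemma} and positivity, and reduce Condition~(1) to the spherical-eigenvalue lower bounds of Propositions~\ref{prop:Rank1 lower bound- Archimedean} and~\ref{prop:Rank1 lower bound- padic}. The observation that in rank one $l$ determines the Cartan coordinate, so that $Ka^{-1}K=KaK$ and hence $h_{d_0}$ is automatically symmetric, is correct and is exactly what makes the positivity of $\pi(f_{d_0})=\pi(h_{d_0})^{*}\pi(h_{d_0})$ immediate.

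The one place where you are more careful than the paper's own text is the non-Archimedean normalization. The paper writes $f_{d_0}=A_{a}*A_{a}^{*}$, where $A_{a}$ is the $L^{2}$-normalized averaging operator over $KaK$; read literally, this gives $\n{\pi(A_a)}\le 1$ and hence $\tr\pi(f_{d_0})\le \dim V^{K}=1$ for every $\pi$, which cannot satisfy Condition~(1). Your choice $h_{d_0}=\mathbf{1}_{Ka(d_0)K}$ carries the missing factor of $S(a(d_0))\asymp q^{l(a(d_0))}$, so that $\tr\pi(f_{d_0})=\bigl(S(a(d_0))\,\varphi_{\pi}(a(d_0))\bigr)^{2}\gg q^{d_{0}(1-1/p(\pi))}$, which is what is actually needed. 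This is surely what the paper intends by the slightly abusive shorthand $A_{a}*A_{a}^{*}$, and your version spells it out. You also explicitly verify that the implied constant in Proposition~\ref{prop:Rank1 lower bound- Archimedean} is uniform over $p\in(2,\infty)$, which is needed because Condition~(1) must be uniform over $A=\Pi(G)_{\sph,\nt}$; the paper leaves this implicit.

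Two small remarks. First, the Convolution Lemma as stated in Section~4 gives the upper bound in terms of $\psi_{2d_{0}+2}$ rather than $\psi_{2d_{0}}$, so there is a constant offset in $d_{0}$ to absorb; this is harmless (take $h_{d_0}$ supported in $l(g)\le (d_{0}-C)/2$ for a fixed $C$ and note $\psi_{d_{0}-1}\le\psi_{d_{0}}$), and the paper itself treats such shifts as immaterial, but it is worth being aware of. Second, you remark that $\Pi(G)_{\sph,\nt}$ is precompact in rank one; while true and relevant for why this set is an admissible $A$ in the Density Hypothesis, it is not actually invoked in your verification of Condition~(1), since the uniformity you establish is directly in $p(\pi)$.
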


\begin{proof}
For $G$ Archimedean, we choose $f_{d_0}=\chi_{d_{0}/2}*\chi_{d_{0}/2}$.
For $G$ non-Archimedean, we choose $a\in A_{+}$ with $\nu(a)\in\Z$
even, and with $l(a)=d_{0}/2\pm O(1)$, and
choose $f_{d_0}=A_{a}*A_{a}^{*}$.

The first property of a good family follows from Proposition~\ref{prop:Rank1 lower bound- Archimedean},
Proposition~\ref{prop:Rank1 lower bound- padic} and the Convolution
Lemma~\ref{lem:convolution lemma} below.

The second and third follows from simple properties of convolutions.
\end{proof}

\subsection{\label{subsec:Convolution-of-Operators}Convolution of Operators}

In this section, we analyze the function $\chi_{d_0}\ast\chi_{d_0}(g)$.
Similar analysis can be found for rank $1$ in \citep[Lemma 3.1]{sarnak1991bounds}.
Our analysis is less accurate, but is more abstract and works for
every rank.
\begin{lem}[Convolution Lemma]
\label{lem:convolution lemma}It holds that $c_{d_0}=\chi_{d_0}\ast\chi_{d_0}\in C_{c}^\infty(K\backslash G/K)$,
and satisfies the inequality 
\[
c_{d_0}(g)\ll_{\epsilon}q^{d_{0}\epsilon}\psi_{2d_{0}+2}(g).
\]
The same bound holds for the convolution of every $f_1,f_{2}\in C_{c}(K\backslash G/K)$
such that $f_{1}(g),f_{2}(g)\ll_{\epsilon}q^{d_{0}\epsilon}\chi_{d_0}(g)$.
\end{lem}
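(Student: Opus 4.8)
The plan is to dominate $c_{d_0}$ by a single spherical matrix coefficient and then feed in the known upper bound on Harish-Chandra's function together with the volume bound \eqref{eq:size of ball-1}. First I would dispatch the structural claims: convolution preserves $C_{c}^{\infty}$, and $c_{d_0}=\chi_{d_0}\ast\chi_{d_0}$ is left $K$-invariant because $\chi_{d_0}$ is (first factor) and right $K$-invariant because $\chi_{d_0}$ is (second factor), so $c_{d_0}\in C_{c}^{\infty}(K\backslash G/K)$. Since $\chi_{d_0}$ vanishes for $l(g)\ge d_{0}+1$, its support lies in $\{l\le d_{0}+1\}$, so the triangle inequality $l(g_{1}g_{2})\le l(g_{1})+l(g_{2})$ forces the support of $c_{d_0}$ into $\{l\le 2d_{0}+2\}$; this will be used at the end.

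The core step is the identity $c_{d_0}(a)=\langle\chi_{d_0},A_{a}\chi_{d_0}\rangle_{L^{2}(K\backslash G)}$ for $a\in A_{+}$, where $A_{a}$ is the averaging operator of Corollary~\ref{cor:L^p norm}. To see it, unfolding $A_{a}$ and using the left $K$-invariance of $\chi_{d_0}$ reduces $\langle\chi_{d_0},A_{a}\chi_{d_0}\rangle$ to $\int_{G}\chi_{d_0}(x)\chi_{d_0}(a^{-1}x)\,dx$, and then the substitution $x\mapsto x^{-1}$ together with $\chi_{d_0}(g)=\chi_{d_0}(g^{-1})$ (valid since $l(g)=l(g^{-1})$) turns this into $(\chi_{d_0}\ast\chi_{d_0})(a)$. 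Since $c_{d_0}$ is $K$-bi-invariant, $c_{d_0}(g)=c_{d_0}(a_{g})$ for $a_{g}\in A_{+}$ the Cartan component of $g$, so by Cauchy--Schwarz and Corollary~\ref{cor:L^p norm} with $p=2$ (where $\Xi_{2}=\Xi$ and $\Xi(a_{g})=\Xi(g)$),
\[
c_{d_0}(g)=\langle\chi_{d_0},A_{a_{g}}\chi_{d_0}\rangle\le\n{A_{a_{g}}}_{L^{2}(K\backslash G)\to L^{2}(K\backslash G)}\,\n{\chi_{d_0}}_{2}^{2}\le\Xi(g)\,\n{\chi_{d_0}}_{2}^{2}.
\]
As $0\le\chi_{d_0}\le1$ is supported in $\{l\le d_{0}+1\}$, we have $\n{\chi_{d_0}}_{2}^{2}\le\mu(\{l\le d_{0}+1\})\ll_{\epsilon}q^{(d_{0}+1)(1+\epsilon)}$ by \eqref{eq:size of ball-1}, while $\Xi(g)\ll_{\epsilon}q^{-l(g)(1/2-\epsilon)}$ by Theorem~\ref{thm: Explicit Bounds}; hence for every $g\in G$ and $\epsilon>0$,
\[
c_{d_0}(g)\ll_{\epsilon}q^{-l(g)(1/2-\epsilon)}\,q^{(d_{0}+1)(1+\epsilon)}.
\]

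It remains to compare with $\psi_{2d_0+2}$. If $l(g)>2d_{0}+2$ then $c_{d_0}(g)=0$ by the support statement, so assume $l(g)\le2d_{0}+2$; there $\chi_{2d_0+2}(g)=1$, and from the definition of $\psi$ one has $\psi_{2d_0+2}(g)\ge q^{-1/2}q^{(2d_{0}+2-l(g))/2}$ (an equality when $l(g)\ge1$, and for $0\le l(g)\le1$ using that $\psi_{2d_0+2}(g)\ge q^{(2d_{0}+1)/2}$). Since $q$ is a fixed constant and $l(g)\le2d_{0}+2$, the previous display gives $c_{d_0}(g)\ll_{\epsilon}q^{(2d_{0}+2-l(g))/2}q^{(l(g)+d_{0}+1)\epsilon}\ll_{\epsilon}q^{d_{0}\epsilon}q^{(2d_{0}+2-l(g))/2}\ll_{\epsilon}q^{d_{0}\epsilon}\psi_{2d_0+2}(g)$ after renaming $\epsilon$. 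For the final assertion of the lemma, if $f_{1}(g),f_{2}(g)\ll_{\epsilon}q^{d_{0}\epsilon}\chi_{d_0}(g)$ with $f_{i}\in C_{c}(K\backslash G/K)$, then $|f_{1}\ast f_{2}|(g)\le(|f_{1}|\ast|f_{2}|)(g)\ll_{\epsilon}q^{2d_{0}\epsilon}(\chi_{d_0}\ast\chi_{d_0})(g)=q^{2d_{0}\epsilon}c_{d_0}(g)$, and one concludes as before.

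I do not expect a serious obstacle here once one notices that a self-convolution of a $K$-bi-invariant function is controlled by $\Xi$ times its $L^{2}$-mass — which is exactly why $\Xi$ was built into the definition of $l$. The two places that need care are: (i) verifying the identity $c_{d_0}(a)=\langle\chi_{d_0},A_{a}\chi_{d_0}\rangle$ cleanly enough to invoke Corollary~\ref{cor:L^p norm}; and (ii) the bookkeeping at the two ends of the range of $l(g)$ — near $l(g)=0$, where $\psi_{2d_0+2}$ is pinned down only up to a bounded factor, and near $l(g)=2d_{0}+2$, where one must use that $c_{d_0}$ genuinely vanishes rather than relying on the spectral estimate.
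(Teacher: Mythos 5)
Your proof is correct, and it takes a genuinely different route from the paper's. The paper bounds the $L^{2}(G)$ convolution operator norm of $c_{d_0}$ by $\ll_{\epsilon}q^{d_{0}(1+\epsilon)}$ and then recovers a pointwise bound by probing with a bump function supported near the identity ($\boldsymbol{1}_{K}$ in the non-Archimedean case, $\boldsymbol{1}_{B_{1}}$ with a shift $d_{0}\mapsto d_{0}+2$ in the Archimedean case) and using that the probe must spread its $L^{2}$ mass over the whole sphere $KgK$ of volume $\asymp q^{l(g)}$. You instead rewrite $c_{d_0}(a)$ directly as the ``matrix-coefficient'' $\langle\chi_{d_0},A_{a}\chi_{d_0}\rangle_{L^{2}(K\backslash G)}$, apply Cauchy--Schwarz and the operator-norm bound of Corollary~\ref{cor:L^p norm} with $p=2$, and conclude from $\Xi(g)\ll_{\epsilon}q^{-l(g)(1/2-\epsilon)}$ together with $\n{\chi_{d_0}}_{2}^{2}\ll_{\epsilon}q^{(d_{0}+1)(1+\epsilon)}$. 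Both arguments rest on the same kernel bound (Lemma~\ref{lem:Bound on L^p}), but yours avoids the testing-function argument entirely and treats the Archimedean and non-Archimedean cases uniformly, so it is arguably cleaner. Two small remarks: the parenthetical ``an equality when $l(g)\ge1$'' for the bound $\psi_{2d_0+2}(g)\ge q^{-1/2}q^{(2d_0+2-l(g))/2}$ is not quite right (for $l(g)\ge1$ one has $\psi_{2d_0+2}(g)=q^{(2d_0+2-l(g))/2}$, strictly larger by a factor $q^{1/2}$), but the inequality you actually use is true; and you should note that absorbing $q^{(l(g)+d_0+1)\epsilon}$ into $q^{d_0\epsilon'}$ uses that $l(g)\le 2d_0+2$ on the support (so the exponent is $\le 3d_0+3$) together with a trivial $O(1)$ bound for $d_0$ bounded — both of which you implicitly invoke, and both of which are fine.
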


\begin{proof}
The second statement is a consequence of the first, so we only need
to prove the first statement.

The idea is to look at the action (by right convolution) of $\chi_{d_0}$
on $L^{2}(G)$. By Lemma~\ref{lem:Bound on L^p}, and
the same arguments as in Corollary~\ref{cor:L^p norm}, the norm
of $\chi_{d_0}$ on $L^{2}(G)$ is bounded by $\ll_{\epsilon}q^{d_{0}(\nicefrac{1}{2}+\epsilon)}$.
Therefore, the norm of $c_{d_0}$ on $L^{2}(G)$ is bounded
by $\ll_{\epsilon}q^{d_{0}(1+\epsilon)}$. Now we need
to use some continuity arguments to deduce pointwise bounds.

Notice that since $\chi_{d_0}\in C_{c}^\infty(K\backslash G/K)$,
the same is true for $c_{d_0}$. In the non-Archimedean case the
arguments are simpler: we look at the action of $c_{d_0}$ on the
characteristic function $\boldsymbol{1}_{K}$ of $K$. Then 
\[
\n{\boldsymbol{1}_{K}\ast c_{d_0}}_{L^{2}(G)}^{2}\ll_{\epsilon}q^{2d_{0}(1+\epsilon)}\n{\boldsymbol{1}_{K}}_{L^{2}(G)}=q^{2d_{0}(1+\epsilon)}.
\]
But if $c_{d_0}(g)=R$ then $\boldsymbol{1}_{K}\ast c_{d_0}(g)=R$,
so 
\[
\n{\boldsymbol{1}_{K}\ast c_{d_0}}_{L^{2}(G)}^{2}\ge\mu(KgK)R^{2}\gg q^{l(g)}R^{2}.
\]
Therefore, $c_{d_0}(g)=R\ll_{\epsilon}q^{d_{0}(1+\epsilon)-\nicefrac{l(g)}{2}}$
as needed.

In the Archimedean case, assume that $c_{d_0}(g)=R$.
Then $c_{d_{0}+2}(g')\ge R$ for every $g'\in G$ with
$\left|l(g)-l(g^{\prime})\right|\le1$. We consider
$\boldsymbol{1}_{B_{1}}$ where $B_{1}$ is the ball of radius $1$
around the identity. It holds that 
\[
\n{\boldsymbol{1}_{B_{1}}\ast c_{d_{0}+2}}_{L^{2}(G)}^{2}\ll_{\epsilon}q^{2d_{0}(1+\epsilon)}\n{\boldsymbol{1}_{B_{1}}}_{L^{2}(G)}\ll q^{2d_{0}(1+\epsilon)}.
\]
It also holds that $\boldsymbol{1}_{B_{1}}\ast c_{d_{0}+2}(g)\gg R$,
for $g'$ satisfying $\left|l(g)-l(g^{\prime})\right|\le1$,
so 
\[
\n{\boldsymbol{1}_{K}\ast c_{d_{0}+2}}_{L^{2}(G)}^{2}\gg\mu(KB_{1}gK)R^{2}\gg q^{l(g)}R^{2},
\]
and $c_{d_0}(g)=R\ll_{\epsilon}q^{d_{0}(1+\epsilon)-\nicefrac{l(g)}{2}}$
as needed.
\end{proof}

\subsection{\label{subsec:Traces-of-Operators}Traces of Operators on Irreducible
Unitary Representations}

Our goal in this section is to relate the lower and upper bounds
of the previous sections to lower and upper bounds on traces.

Let us recall how to define traces of an operator $h\in C_c(G)$
on a unitary irreducible representation $(\pi,V)$. We
will only consider the case when $h$ is left and right $K$-finite,
which simplifies the theory. In such case, there is an orthogonal projection $e_{h}\colon C(K)\to C(K)$ such that $h=e_{h}*h*e_{h}$
(see e.g. \citep[proof of Theorem 2]{cowling1988almost}). The orthogonal
projection has a finite dimensional image, so it is supported on a
finite number of $K$-types.

Since $\pi$ is admissible, the image of $\pi(h)$ is finite
dimensional, and therefore have a trace (\citep[Chapter X]{knapp2016representation}),
defined by
\[
\tr \pi(h)=\sum_{i}\left\langle u_{i},\pi(h)u_{i}\right\rangle ,
\]
where $\left\{ u_{i}\right\} \subset V$ is an orthonormal basis.

By uniform admissibility (\citep[Theorem 10.2]{knapp2016representation},
\citep{bernshtein1974all}) and the fact the image of $\pi(h)$
is supported on a finite number of $K$-types, the image of $\pi(h)$
is of bounded dimension, depending only on the projection $e_{h}$.

Finally, recall that the norm of a finite dimensional operator is
larger than the largest absolute value of an eigenvalue. We conclude:
\begin{prop}
Assume that $h\in C_c(G)$ is left and right $K$-finite
and $(\pi,V)\in\Pi(G)$. Then 
\[
\left|\tr \pi(h)\right|\ll_{e_{h}}\n{\pi(h)},
\]
the bound depending only on the projection $e_{h}$ such that $e_{h}*h*e_{h}$.
\end{prop}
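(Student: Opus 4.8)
The plan is to deduce this from the elementary linear-algebra fact that the trace of a finite-rank operator is bounded by its rank times its operator norm, with the rank controlled uniformly in $\pi$ via uniform admissibility.

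First I would use the $K$-finiteness of $h$: as recalled just above, there is an orthogonal projection $e_h\colon C(K)\to C(K)$ with finite-dimensional image (supported on finitely many $K$-types) such that $h=e_h*h*e_h$. Applying $\pi$ gives $\pi(h)=\pi(e_h)\,\pi(h)\,\pi(e_h)$, so both the image and the orthogonal complement of the kernel of $\pi(h)$ are contained in the subspace $\pi(e_h)V\subseteq V$ of vectors transforming under $K$ in the $K$-types appearing in $e_h$. Next I would invoke uniform admissibility (\citep[Theorem 10.2]{knapp2016representation} in the Archimedean case, \citep{bernshtein1974all} in the non-Archimedean case): there is a constant $d(e_h)$ depending only on $e_h$ (and on $G,K$) such that $\dim \pi(e_h)V\le d(e_h)$ for \emph{every} $\pi\in\Pi(G)$. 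Hence $\pi(h)$ has rank at most $d(e_h)$, uniformly in $\pi$.

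Finally, choosing the orthonormal basis $\{u_i\}$ so that finitely many of its members span $\pi(e_h)V$, the sum $\tr\pi(h)=\sum_i\langle u_i,\pi(h)u_i\rangle$ reduces to the trace of the restriction $\pi(h)|_{\pi(e_h)V}$ of $\pi(h)$ to this finite-dimensional space, which equals the sum of its at most $d(e_h)$ eigenvalues counted with algebraic multiplicity. Since the operator norm dominates the spectral radius, each such eigenvalue has absolute value at most $\n{\pi(h)}$, and therefore
\[
\left|\tr\pi(h)\right|\le d(e_h)\,\n{\pi(h)}\ll_{e_h}\n{\pi(h)},
\]
as claimed.

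The only non-elementary ingredient here is the uniform bound $\dim\pi(e_h)V\le d(e_h)$, i.e. uniform admissibility; everything else is routine linear algebra (and the trace is already well-defined by the discussion preceding the statement). So in a fully self-contained treatment the main obstacle would be citing or establishing uniform admissibility, but since the excerpt permits us to use it, the argument is short.
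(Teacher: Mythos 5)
Your argument is correct and follows exactly the same route as the paper: factor $\pi(h)=\pi(e_h)\pi(h)\pi(e_h)$, bound $\dim\pi(e_h)V$ uniformly over $\pi$ by uniform admissibility, and then bound the trace by rank times operator norm via the spectral radius. The paper states precisely these three steps in the paragraphs leading up to the proposition, so there is nothing to add or subtract.
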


As an example, if $h\in C_{c}(K\backslash G/K)$ is left
and right $K$-invariant, the image of $\pi(h)$ is of
dimension $1$ or $0$. If the dimension is 0, obviously $\tr \pi(h)=\pi(h)=0$.
If the dimension is $1$, $V$ has a $K$-invariant vector $v\in V$,
$\n v=1$, and 
\[
\left|\tr \pi(h)\right|=\n{\pi(h)}=\left|\left\langle v,\pi(h)v\right\rangle \right|.
\]

One may also deduce lower bounds on traces of non-negative self-adjoint
operators. It follows from the same considerations as in the finite
dimensional case.
\begin{prop}
Assume that $h\in C_c(G)$ is left and right $K$-finite
and $(\pi,V)\in\Pi(G)$. Moreover, assume that
$\pi(h)$ is self-adjoint and non-negative. Then 
\[
\n{\pi(h)}=\sup_{v:\n v=1}\left\langle v,\pi(h)v\right\rangle \le\tr \pi(h).
\]
\end{prop}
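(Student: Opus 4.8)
The plan is to reduce the statement to the spectral theorem for a finite-rank self-adjoint operator, the only real subtlety being the bookkeeping of the trace on the possibly infinite-dimensional space $V$. Since $\pi$ is unitary and $h\in C_c(G)$, the operator $\pi(h)$ is bounded. First I would record that its image is finite-dimensional: because $h=e_{h}\ast h\ast e_{h}$ with $e_{h}$ a finite-rank projection and $\pi$ is admissible, the image of $\pi(h)$ is contained in the finite-dimensional image of $\pi(e_{h})$.

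Next, because $\pi(h)$ is self-adjoint one has the orthogonal decomposition $V=\pi(h)V\oplus\ker\pi(h)$ — indeed $\pi(h)w=0$ if and only if $\v{w,\pi(h)v}=\v{\pi(h)w,v}=0$ for every $v$. On the finite-dimensional space $W:=\pi(h)V$ the restriction $\pi(h)|_{W}$ is self-adjoint, invertible, and non-negative, so its eigenvalues are strictly positive, say $\lambda_{1}\ge\cdots\ge\lambda_{n}>0$. Choosing an orthonormal eigenbasis $u_{1},\dots,u_{n}$ of $W$ and completing it to an orthonormal basis of $V$ by any orthonormal basis of $\ker\pi(h)$, the defining sum for the trace collapses to $\tr\pi(h)=\sum_{i=1}^{n}\lambda_{i}$; in particular it is a finite sum of non-negative numbers, and this computation also makes clear that the value is independent of the chosen basis.

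Finally I would invoke the standard fact that for a bounded non-negative self-adjoint operator $\n{\pi(h)}=\sup_{\n v=1}\v{v,\pi(h)v}$, with no absolute value needed since the quadratic form is non-negative. Expanding an arbitrary unit vector as $v=\sum_{i=1}^{n}c_{i}u_{i}+v'$ with $v'\in\ker\pi(h)$ gives $\v{v,\pi(h)v}=\sum_{i=1}^{n}|c_{i}|^{2}\lambda_{i}\le\lambda_{1}$, with equality at $v=u_{1}$, so the supremum equals $\lambda_{1}$. Since every $\lambda_{i}\ge 0$ we conclude $\n{\pi(h)}=\lambda_{1}\le\sum_{i=1}^{n}\lambda_{i}=\tr\pi(h)$, as desired. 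There is no serious obstacle here; the only point requiring a little care is justifying that the trace, defined as $\sum_{i}\v{u_{i},\pi(h)u_{i}}$ over an orthonormal basis of the possibly infinite-dimensional $V$, is well defined and finite — which is exactly what the image-plus-kernel splitting provides.
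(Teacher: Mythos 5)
Your argument is correct and is precisely the reduction the paper has in mind: the paper's ``proof'' consists solely of the remark that the claim ``follows from the same considerations as in the finite dimensional case,'' while you fill in those considerations — finite rank via $h = e_h * h * e_h$ and admissibility, the orthogonal splitting $V = \pi(h)V \oplus \ker\pi(h)$, diagonalization on $W = \pi(h)V$, and the standard identity $\n{\pi(h)} = \sup_{\n v = 1}\langle v, \pi(h)v\rangle$ for a non-negative self-adjoint operator. No gap: in particular you correctly note that the trace sum collapses to a finite sum of the positive eigenvalues once the basis is adapted to $W \oplus \ker\pi(h)$, which is exactly what makes the comparison with the top eigenvalue work.
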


\subsection{\label{subsec:The trace formula}The Pre-Trace formula}

Let us recall the pre-trace formula (\citep[Chapter 1]{gelfand1968representation}). Let $h\in C_c(G)$
and let $\Gamma\subset G$ be a cocompact lattice. Denote by $\hat{h}\in C_c(G)$ the function $\hat{h}(g)=h(g^{-1})$. Then we
have an operator $h\colon L^{2}(\Gamma\backslash G)\to L^{2}(\Gamma\backslash G)$,
acting on $f\in L^{2}(\Gamma\backslash G)$ by
\begin{align*}
(hf)(x)=f\ast\hat{h}(x) & =\intop_{G}f(xg)h(g)dg=\intop_{G}f(y)h(x^{-1}y)dy\\
 & =\intop_{\Gamma\backslash G}f(y)(\sum_{\gamma\in\Gamma}h(x^{-1}\gamma y))dy=\intop_{\Gamma\backslash G}K(x,y)f(y)dy,
\end{align*}
for $K(x,y)=\sum_{\gamma\in\Gamma}h(x^{-1}\gamma y)$.
By \citep[Chapter 1]{gelfand1968representation}, if $h$ is also
self-adjoint then it has a trace on $L^{2}(\Gamma\backslash G)$
and

\[
\tr h|_{L^{2}(\Gamma\backslash G)}=\intop_{\Gamma\backslash G}\sum_{\gamma\in\Gamma}h(x^{-1}\gamma x)dx.
\]

Moreover, if $h\in C_{c}^{\infty}(G)$ and $L^{2}(\Gamma\backslash G)\cong\oplus_{\pi\in\Pi(G)}m(\pi,\Gamma)$
is the decomposition into irreducible representations, then we have
the pre-trace formula: 
\begin{align*}
\tr h|_{L^{2}(\Gamma\backslash G)} & =\intop_{\Gamma\backslash G}\sum_{\gamma\in\Gamma}h(x^{-1}\gamma x)dx\\
 & =\sum_{\pi\in\Pi(G)}m(\pi,\Gamma)\tr \pi(h),
\end{align*}
where $\tr (\pi(h))$ is the usual trace
on the representation space $(\pi,V)$.

The following lemma is immediate from the pre-trace formula but essential
for our work.
\begin{lem}
\label{lem:positivity of trace}If $h_1,h_{2}\in C_c(G)$
satisfy that $h_{1}(g)\ge h_{2}(g)$ for every
$g\in G$ then $\tr h_{1}|_{L^{2}(\Gamma\backslash G)}\ge\tr h_{2}|_{L^{2}(\Gamma\backslash G)}$.
In particular, if $h_{1}(g)\ge0$ then $\tr h_{1}|_{L^{2}(\Gamma\backslash G)}\ge0$.
\end{lem}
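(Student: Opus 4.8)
The plan is to read the inequality straight off the geometric side of the pre-trace formula, so essentially no work is needed beyond unwinding definitions. By the pre-trace formula recalled just above, for $i=1,2$ one has
\[
\tr h_i|_{L^{2}(\Gamma\backslash G)}=\intop_{\Gamma\backslash G}\sum_{\gamma\in\Gamma}h_i(x^{-1}\gamma x)\,dx .
\]
For each fixed $x$ the inner sum is finite: if $C$ denotes the compact support of $h_i$, then $h_i(x^{-1}\gamma x)\ne 0$ forces $\gamma\in xCx^{-1}$, and $\Gamma\cap xCx^{-1}$ is finite because $\Gamma$ is discrete and $xCx^{-1}$ is compact. Hence $x\mapsto\sum_{\gamma\in\Gamma}h_i(x^{-1}\gamma x)$ is a genuine (continuous) function on the compact space $\Gamma\backslash G$.

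Next I would use the hypothesis $h_1(g)\ge h_2(g)$ for all $g\in G$, which gives $h_1(x^{-1}\gamma x)\ge h_2(x^{-1}\gamma x)$ for every $x\in G$ and $\gamma\in\Gamma$; summing over the finitely many contributing $\gamma$ yields the pointwise inequality $\sum_{\gamma}h_1(x^{-1}\gamma x)\ge\sum_{\gamma}h_2(x^{-1}\gamma x)$ on $\Gamma\backslash G$. Integrating this over the finite-measure space $\Gamma\backslash G$ and invoking the displayed identity gives $\tr h_1|_{L^{2}(\Gamma\backslash G)}\ge\tr h_2|_{L^{2}(\Gamma\backslash G)}$. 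The ``in particular'' clause is then the case $h_2\equiv 0$ — a nonnegative integrand integrates to a nonnegative number.

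The only point requiring a word of care, and the nearest thing to an obstacle, is that the diagonal-integral formula $\tr h|_{L^{2}(\Gamma\backslash G)}=\intop_{\Gamma\backslash G}\sum_{\gamma}h(x^{-1}\gamma x)\,dx$ was stated for self-adjoint $h$, whereas $h_1,h_2$ are arbitrary in $C_c(G)$. This is harmless: for $h\in C_c^{\infty}(G)$ the associated operator on $L^{2}(\Gamma\backslash G)$ has the continuous kernel $K(x,y)=\sum_{\gamma}h(x^{-1}\gamma y)$ on the compact quotient, hence is of trace class with $\tr=\intop_{\Gamma\backslash G}K(x,x)\,dx$ regardless of self-adjointness; alternatively, since $h\mapsto\tr h|_{L^{2}(\Gamma\backslash G)}$ is linear it suffices to prove only the second assertion, for which the same identity is available. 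Either way, the whole content of the lemma is the monotonicity of the geometric side, which is immediate.
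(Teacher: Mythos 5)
Your proof is correct and follows the same route the paper intends (the paper simply declares the lemma ``immediate from the pre-trace formula'' and gives no argument): read the inequality off the geometric side $\intop_{\Gamma\backslash G}\sum_{\gamma\in\Gamma}h(x^{-1}\gamma x)\,dx$, noting the inner sum is finite by discreteness and compact support. Your remark that the displayed trace identity was stated only for self-adjoint $h$, together with the linearity/continuous-kernel workaround, is a reasonable extra precaution that the paper silently elides.
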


\subsection{\label{subsec:Spectral-Decomposition-of-small-ball}Spectral Decomposition
of a Characteristic Function of a Small Ball}

For $x\in\Gamma_{N}\backslash\Gamma_1\subset X_{N}=\Gamma_{N}\backslash G/K$,
let $b_{x,\delta}\in L^{2}(X_N)$ be defined as follows:
\begin{itemize}
\item In the non-Archimedean case choose $h_{\delta}\in C_{c}(K\backslash G/K)$
to be the characteristic function of $K$.
\item In the Archimedean case, choose $0<\delta< 1/4$ such that $l(\gamma)>\delta$
for every $\gamma\in\Gamma_1$ with $l(\gamma)>0$.
Choose a function $h_{\delta}\in C_{c}^{\infty}(K\backslash G/K)$
such that:
\begin{itemize}
\item $0\le h_{\delta}(g)\le\frac{2}{\mu(B_{\delta}(e))}$
for all $g\in G$, where $B_{\delta}(e)$ is the ball of radius $\delta$ around the identity $e\in G$.
\item $h_{\delta}(g)$ is supported on $\{g\in G:l(g)\le\delta\}$.
\item $\intop_{G}h_{\delta}(g)dg=1$
\item $h(g)=h(g^{-1})$, i.e., $h=\hat{h}$.
\end{itemize}
\end{itemize}
Finally, let $b_{x,\delta}\in L^{2}(X_N)$ be 
\[
b_{x,\delta}(y)=\sum_{\gamma\in\Gamma_{N}}h_{\delta}(x^{-1}\gamma y).
\]

We fix $\delta>0$ once and for all (depending on $\Gamma_1$) and suppress the dependence on it from now on. 
We notice that 
\[
\intop_{X_N}b_{x,\delta}(y)dy=1.
\]

By the properties of $\delta$, the sum defining it is over at most $\left|\Gamma_1\cap K\right|$ elements, and therefore (recall that we suppress the dependence on $\Gamma_1$ from our notations), 
\[
\n{b_{x,\delta}}_{\infty}\ll 1, \ \  \n{b_{x,\delta}}_{2}\ll 1.
\]

Let us remark that for our uses in the Archimedean rank $1$ case
one can simply choose instead
\[
h_{\delta}(g)=\begin{cases}
\frac{1}{\mu(B_{\delta}(e))} & l(g)\le\delta\\
0 & \text{else}
\end{cases},
\]
and for higher rank we make this choice so that one can apply the
Paley-Wiener theorem for spherical functions due to Harish-Chandra,
which is used as follows:
\begin{lem}
\label{lem:Covering Lemma 2}Let $f\in L^{2}(X_N)$.
Then, 
\[
\sum_{x\in\Gamma_{N}\backslash\Gamma_{1}}\left|\left\langle b_{x,\delta},f\right\rangle \right|^{2}\ll\n f_{2}^{2}.
\]

Moreover, in the Archimedean case, if $f\in L^{2}(X_N)$
is the $K$-fixed function of some irreducible representation $\pi\subset L^{2}(\Gamma_{N}\backslash G)$
with $\lambda(\pi)=\lambda$, then for every $L'>0$, 
\[
\sum_{x\in\Gamma_{N}\backslash\Gamma_{1}}\left|\left\langle b_{x,\delta},f\right\rangle \right|^{2}\ll_{L'}(1+\lambda)^{-L'}\n f_{2}^{2}.
\]

(The last result will only be used in the Archimedean rank $\ge2$
case).
\end{lem}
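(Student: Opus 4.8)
The plan is to handle the two assertions separately; both follow by unfolding $\langle b_{x,\delta},f\rangle$ and exploiting that $h_\delta$ is a $K$-bi-invariant bump supported in a ball so small that the $\Gamma_1$-translates $\{\gamma B_\delta\}_{\gamma\in\Gamma_1}$ cover each point of $G$ with multiplicity $O_{\Gamma_1}(1)$ --- which is exactly what the choice of $\delta$ (below the systole of $\Gamma_1$) provides.

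\textbf{The first inequality.} Fix coset representatives of $\Gamma_N\backslash\Gamma_1$ inside $\Gamma_1\subset G$, lift $f$ to a right-$K$-invariant function on $\Gamma_N\backslash G$, and unfold:
\[
\langle b_{x,\delta},f\rangle=\intop_{\Gamma_N\backslash G}\Bigl(\sum_{\gamma\in\Gamma_N}h_\delta(x^{-1}\gamma y)\Bigr)\overline{f(y)}\,dy=\intop_{G}h_\delta(g)\,\overline{f(xg)}\,dg .
\]
Since $h_\delta\ge 0$ and $\intop_G h_\delta=1$, Cauchy--Schwarz gives $|\langle b_{x,\delta},f\rangle|^2\le\intop_G h_\delta(g)|f(xg)|^2\,dg$. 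Now sum over $x\in\Gamma_N\backslash\Gamma_1$, substitute $u=xg$, and refold over $\Gamma_N$ (legitimate because $|f|^2$ is $\Gamma_N$-invariant): the sum over $x$ together with $\Gamma_N$ collapses to a sum over $\Gamma_1$, and the right-hand side becomes $\intop_{\Gamma_N\backslash G}\bigl(\sum_{\gamma\in\Gamma_1}h_\delta(\gamma^{-1}u)\bigr)|f(u)|^2\,du$. Because $h_\delta$ is supported on the compact set $\{l(g)\le\delta\}$ and $\Gamma_1$ is discrete, the inner sum is $\ll_{\Gamma_1}1$ uniformly in $u$, so $\sum_x|\langle b_{x,\delta},f\rangle|^2\ll\|f\|_2^2$.

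\textbf{The rapidly decaying bound.} Here the point is that $h_\delta$ acts on the fixed irreducible $\pi\subset L^2(\Gamma_N\backslash G)$ by a scalar given by its spherical transform. The computation above shows $\langle b_{x,\delta},f\rangle=\overline{(h_\delta f)(x)}$, where $h_\delta f$ denotes the action of $h_\delta$ on $L^2(\Gamma_N\backslash G)$ as in Subsection~\ref{subsec:The trace formula}; restricted to $\pi$ this is $\pi(h_\delta)$, and since $h_\delta\in C_c^\infty(K\backslash G/K)$ and $f$ spans the one-dimensional space of $K$-fixed vectors of $\pi$, we get $h_\delta f=c_\pi f$ with $|c_\pi|=|\widehat{h_\delta}(\mu_\pi)|$, where $\widehat{h_\delta}$ is Harish--Chandra's spherical transform and $\mu_\pi\in\a_{\C}^{*}$ the spherical parameter of $\pi$. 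Hence
\[
\sum_{x\in\Gamma_N\backslash\Gamma_1}|\langle b_{x,\delta},f\rangle|^2=|\widehat{h_\delta}(\mu_\pi)|^2\sum_{x\in\Gamma_N\backslash\Gamma_1}|f(x)|^2 .
\]
Two estimates close the argument. First, by the choice of $\delta$ the covering $X_N\to X_1$ is injective on the $\delta$-ball $B_\delta(\bar x)$ about each $\bar x\in\Gamma_N\backslash\Gamma_1\subset X_N$, so applying interior elliptic estimates to $f$ --- an eigenfunction of the Casimir (equivalently Laplace--Beltrami) operator with eigenvalue $\asymp\lambda(\pi)$ --- on that ball yields $|f(\bar x)|^2\ll(1+\lambda(\pi))^{C_0}\|f\|_{L^2(B_\delta(\bar x))}^2$ with $C_0$ depending only on $\dim G$; summing and using that these balls have overlap multiplicity $O_{\Gamma_1}(1)$ gives $\sum_x|f(x)|^2\ll(1+\lambda(\pi))^{C_0}\|f\|_2^2$. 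Second, by Harish--Chandra's Paley--Wiener theorem for the spherical transform, $\widehat{h_\delta}$ is rapidly decreasing in $\|\mu\|$ on the bounded tube of $\a_{\C}^{*}$ containing all unitary spherical parameters, and $\|\mu_\pi\|^2\asymp 1+\lambda(\pi)$; hence $|\widehat{h_\delta}(\mu_\pi)|^2\ll_M(1+\lambda(\pi))^{-M}$ for every $M$. Combining and taking $M=L'+C_0$ gives $\sum_x|\langle b_{x,\delta},f\rangle|^2\ll_{L'}(1+\lambda(\pi))^{-L'}\|f\|_2^2$.

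\textbf{Main obstacle.} Everything is soft except the uniformity in $N$ of the local elliptic estimate: one needs $|f(\bar x)|^2$ controlled by the $L^2$-mass of $f$ on a ball whose radius \emph{and} overlap multiplicity do not deteriorate as $[\Gamma_1:\Gamma_N]\to\infty$. This is precisely what taking $\delta$ smaller than the systole of $\Gamma_1$ buys us, since then every $X_N$ has injectivity radius $\ge\delta$ at the points of $\Gamma_N\backslash\Gamma_1$; note that the cruder global bound $\|f\|_\infty^2\ll(1+\lambda)^{C_0}\|f\|_2^2$ would not do, as it loses a factor $[\Gamma_1:\Gamma_N]$ when summed over $x$. (In the non-Archimedean case the second assertion is not claimed, and for Archimedean rank one the first inequality already suffices for the applications.)
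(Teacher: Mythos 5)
Your proof is correct and follows essentially the same route as the paper's: for the first inequality both arguments reduce to Cauchy--Schwarz plus the fact that the $\delta$-balls around the points of $\Gamma_N\backslash\Gamma_1$ overlap with multiplicity $O_{\Gamma_1}(1)$, and for the rapid-decay bound both combine the identity $\langle b_{x,\delta},f\rangle=f(x)\,\tr\pi(h_\delta)$ with a local sup-norm estimate for Casimir eigenfunctions (Seeger--Sogge) and the Paley--Wiener decay of the spherical transform of $h_\delta$. Your unfold-and-refold presentation of the first part is a mild cosmetic variation of the paper's direct Cauchy--Schwarz, and your $|f(\bar x)|^2\ll(1+\lambda)^{C_0}\|f\|_{L^2(B_\delta(\bar x))}^2$ cleanly fixes a minor typo in the paper where the exponent on the left-hand side is dropped.
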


\begin{proof}
By our assumption on $\delta$, the balls $B_{\delta}(x)$
for $x\in\Gamma_{N}\backslash\Gamma_1$ are all either equal (with
multiplicity at most $\left|\Gamma_1\cap K\right|$) or distinct,
so by Cauchy-Schwartz
\begin{align*}
\sum_{x\in\Gamma_{N}\backslash\Gamma_{1}}\left|\left\langle f,b_{x,\delta}\right\rangle \right|^{2} & \le\sum_{x\in\Gamma_{N}\backslash\Gamma_{1}}\left\Vert f|_{B_{\delta}(x)}\right\Vert _{2}^{2}\left\Vert b_{x,\delta}\right\Vert _{2}^{2}\le\left|\Gamma_1\cap K\right|\n f_{2}^{2}\max_{x\in\Gamma_{N}\backslash\Gamma_{1}}\n{b_{x}}_{2}^{2}\\
 & \ll_{\delta}\n f_{2}^{2}.
\end{align*}

For the moreover part, note that 
\[
\left\langle f,b_{x,\delta}\right\rangle =f(x)\tr \pi(h_{\delta}).
\]
It is well known (see \citep{seeger1989bounds} for an exact statement)
that there is a constant $M>0$ such that 
\[
\left|f(x)\right|\ll(1+\lambda)^{M}\left\Vert f|_{B_{\delta}(x)}\right\Vert _{2}^{2}.
\]
By the Paley-Wiener theorem for spherical functions (\citep[Subsection 3.4]{duistermaat1979spectra}),
\[
\tr \pi(h_{\delta})\ll_{L'}(1+\lambda)^{-L'-M}.
\]
Combining both estimates we get the required inequality.
\end{proof}

\section{\label{sec:Weak Injective Radius implies density}The Weak Injective
Radius Property}

In this section, we will study the weak injective radius property, and deduce spectral results from it when the lattices are cocompact.

Consider $\chi_{d_0}\in C_{c}^{\infty}(G)$ from Section~\ref{sec:Main-Ideas}. It is self-adjoint since $l(g)=l(g^{-1})$. Since $\chi_{d_0}$
is left and right $K$-invariant, it acts on $L^{2}(\Gamma\backslash G/K)$.

Note that by the definition of $\boldsymbol{N}(\Gamma,d_{0},y)$,
it holds that for every $x\in \Gamma_n\backslash \Gamma_1$, 
\begin{equation}
\boldsymbol{N}(\Gamma,d_{0},x)\le\sum_{\gamma\in\Gamma}\chi_{d_0}(x^{-1}\gamma x)\le\boldsymbol{N}(\Gamma,d_{0}+1,x).\label{eq:relation between hd and N}
\end{equation}
If $\chi_d$ had been the characteristic function of $\{g\in G: l(g)\le d\}$, then the first inequality in Equation~\ref{eq:relation between hd and N} would simply be an equality.

Our initial observation is:
\begin{lem}\label{lem: counting from operator}
For every $x_0\in \Gamma_N\backslash \Gamma$
\[
\left\langle b_{x_0,\delta},\chi_{d_0-2}b_{x_0,\delta}\right\rangle \ll\boldsymbol{N}(\Gamma_{N},d_{0},x_0)\ll\left\langle b_{x,\delta},\chi_{d_{0}+2}b_{x_0,\delta}\right\rangle.
\]
\end{lem}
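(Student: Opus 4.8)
The plan is to unfold both copies of $b_{x_0,\delta}$ against the lattice sum $b_{x_0,\delta}(y)=\sum_{\gamma\in\Gamma_N}h_{\delta}(x_0^{-1}\gamma y)$, reducing the matrix coefficient to an explicit sum over the conjugated lattice $\Gamma'=x_0^{-1}\Gamma_N x_0$. Write $\chi_{d}$ for $\chi_{d_0\pm2}$. Since $\chi_{d}$ is $K$-bi-invariant and symmetric, $\chi_{d}=\hat\chi_{d}$, so the operator acts by $(\chi_{d}f)(x)=\intop_{G}f(xg)\chi_{d}(g)\,dg$. As the function $x\mapsto\intop_{G}b_{x_0,\delta}(xg)\chi_{d}(g)\,dg$ is left $\Gamma_N$-invariant and $\Gamma_N$ acts freely on $G$, the folding identity $\intop_{\Gamma_N\backslash G}\sum_{\gamma_1\in\Gamma_N}(\cdots)=\intop_{G}(\cdots)$ applied to the outer variable gives
\[
\langle b_{x_0,\delta},\chi_{d}b_{x_0,\delta}\rangle=\intop_{G}h_{\delta}(x_0^{-1}x)\intop_{G}b_{x_0,\delta}(xg)\chi_{d}(g)\,dg\,dx .
\]
Expanding $b_{x_0,\delta}(xg)=\sum_{\gamma_2\in\Gamma_N}h_{\delta}(x_0^{-1}\gamma_2 xg)$, substituting $x=x_0u$ and $w=(x_0^{-1}\gamma_2 x_0)ug$, and setting $\eta=x_0^{-1}\gamma_2 x_0\in\Gamma'$, one arrives at
\[
\langle b_{x_0,\delta},\chi_{d}b_{x_0,\delta}\rangle=\sum_{\eta\in\Gamma'}\intop_{G}\intop_{G}h_{\delta}(u)\,h_{\delta}(w)\,\chi_{d}(u^{-1}\eta^{-1}w)\,du\,dw ,
\]
and every summand is nonnegative because $h_{\delta}\ge0$ and $\chi_{d}\ge0$.

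For the first inequality take $d=d_0-2$. Using $0\le\chi_{d}\le1$ and $\intop_{G}h_{\delta}=1$, each summand is $\le1$; and a summand vanishes unless there are $u,w$ in the support of $h_{\delta}$ (so $l(u),l(w)\le\delta<\tfrac14$) with $\chi_{d_0-2}(u^{-1}\eta^{-1}w)\ne0$, hence $l(u^{-1}\eta^{-1}w)<d_0-1$. Since $l$ is subadditive and left- and right-$K$-invariant, this forces $l(\eta)=l(\eta^{-1})\le l(u)+l(u^{-1}\eta^{-1}w)+l(w)<d_0$. Because $l(\eta)=l(x_0^{-1}\gamma_2 x_0)$, the number of nonvanishing terms is at most $\boldsymbol{N}(\Gamma_N,d_0,x_0)$, which gives $\langle b_{x_0,\delta},\chi_{d_0-2}b_{x_0,\delta}\rangle\le\boldsymbol{N}(\Gamma_N,d_0,x_0)$.

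For the second inequality take $d=d_0+2$. For each of the $\boldsymbol{N}(\Gamma_N,d_0,x_0)$ elements $\eta\in\Gamma'$ with $l(\eta)\le d_0$ and any $u,w$ in the support of $h_{\delta}$, subadditivity gives $l(u^{-1}\eta^{-1}w)\le d_0+2\delta<d_0+2$, so $\chi_{d_0+2}$ is identically equal to $1$ on the region contributing to that $\eta$, and the corresponding summand equals $\intop_{G}\intop_{G}h_{\delta}(u)h_{\delta}(w)\,du\,dw=1$. Summing over these $\eta$ and discarding the remaining nonnegative terms yields $\boldsymbol{N}(\Gamma_N,d_0,x_0)\le\langle b_{x_0,\delta},\chi_{d_0+2}b_{x_0,\delta}\rangle$.

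The argument is essentially bookkeeping, and I do not expect a genuine obstacle; the single point that must be handled with care is the slack in the triangle inequality, namely that the length-$1$ buffer between the regions where $\chi_{d}$ equals $1$ and $0$ exactly absorbs the error $2\delta<\tfrac12$ incurred when $x_0^{-1}\gamma x_0$ is replaced by $u^{-1}(x_0^{-1}\gamma x_0)^{-1}w$ for $u,w$ near the identity — this is why the shifts $d_0\mp2$ (rather than $d_0\mp1$) appear in the statement. In the non-Archimedean case $h_{\delta}=\boldsymbol{1}_{K}$ and $l$ is genuinely $K$-bi-invariant, so $\chi_{d}(u^{-1}\eta^{-1}w)=\chi_{d}(\eta)$ and the double integral collapses to a bare count of lattice points; I would simply remark on this rather than treat it as a separate case.
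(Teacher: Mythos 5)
Your argument is correct and is essentially the paper's proof: both unfold the inner product to $\sum_{\gamma\in\Gamma_N}(h_\delta*\chi_d*h_\delta)(x_0^{-1}\gamma x_0)$ and then compare term by term. Where the paper simply quotes the pointwise estimates $\chi_{d-2}(g)\ll h_\delta*\chi_d*h_\delta(g)\ll\chi_{d+2}(g)$ and Equation~\eqref{eq:relation between hd and N}, you verify these directly at the lattice points via the subadditivity of $l$ and the $\delta<\tfrac14$ constraint on the support of $h_\delta$, which is the same computation carried out in full.
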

\begin{proof}
By unfolding, we get that
\begin{align*}
\langle b_{x,\delta},\chi_{d_0}b_{x_0,\delta} \rangle &= 
\intop_{\Gamma_N\backslash G} \intop_{\Gamma_N \backslash G} \sum_{\gamma\in \Gamma_N} b_{x_0,\delta}(x)\chi_d(x^{-1}\gamma y)  b_{x_0,\delta}(y) dxdy \\
& = \intop_{\Gamma_N \backslash G} \intop_{\Gamma_N \backslash G} \sum_{\gamma_1\in \Gamma_N}h_{\delta}(x_0^{-1}\gamma_1 x)\sum_{\gamma\in \Gamma_N}\chi_d(x^{-1}\gamma y)  \sum_{\gamma_2\in \Gamma_N}h_{\delta}(x_0^{-1}\gamma_2 y) dxdy \\
& = \intop_{\Gamma_N \backslash G} \intop_{\Gamma_N \backslash G} \sum_{\gamma_1\in \Gamma_N}h_{\delta}(x_0^{-1}\gamma_1 x)\sum_{\gamma\in \Gamma_N}\chi_d(x^{-1}\gamma y)  \sum_{\gamma_2\in \Gamma_N}h_{\delta}(y^{-1}\gamma_2 x_0) dxdy \\
&= \intop_{G} \intop_{ G} h_{\delta}(x_0^{-1} x)\sum_{\gamma\in \Gamma_N}\chi_d(x^{-1}\gamma y)  h_{\delta}(y^{-1} x_0) dxdy \\
&= \sum_{\gamma\in \Gamma_N} (h_\delta*\chi_d*h_\delta)(x_0^{-1}\gamma x_0).
\end{align*}
The lemma follows from Equation~\eqref{eq:relation between hd and N} and the simple pointwise estimates
\[
\chi_{d-2}(g) \ll h_\delta*\chi_d*h_\delta(g) \ll \chi_{d+2}(g).
\]
\end{proof}

We can now prove that some bounds on $\boldsymbol{N}(\Gamma_N,d_0,x)$ imply bounds on $\boldsymbol{N}(\Gamma_N,d_0+d,x)$.
\begin{lem}\label{lem:Varrying d}
Assume that for some $0\le d_{0}$, $x\in \Gamma_N\backslash \Gamma_1$ it holds for some $M$ that for every $d\le d_0$
\[
\boldsymbol{N}(\Gamma_N,d,x)\ll M q^{d/2}.
\]
Then for every $d_1\ge0,\epsilon>0$ it holds that 
\[
\boldsymbol{N}(\Gamma_N,d_{0}+d_1,x)\ll_{\epsilon}M q^{(d_0+d_1)\epsilon} q^{d_{0}/2+d_1}.
\]
\end{lem}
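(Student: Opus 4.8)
The plan is to convert the lattice-point count into a spectral quantity via Lemma~\ref{lem: counting from operator}, squeeze out of the hypothesis an $L^{2}$-bound on a truncated average built from $b_{x,\delta}$, and then play the hypothesis off against the Harish--Chandra upper bounds on the ``large'' scales. Write $\chi:=\chi_{d_{0}+d_{1}+2}$, $b:=b_{x,\delta}$, and set $c_{0}:=d_{0}/2-O(1)$. By Lemma~\ref{lem: counting from operator} it is enough to bound $\langle b,\chi b\rangle$. First I would record
\[
\|\chi_{c_{0}}b\|_{2}^{2}=\langle b,(\chi_{c_{0}}\ast\chi_{c_{0}})b\rangle\ll_{\epsilon}q^{\epsilon d_{0}}\langle b,\psi_{d_{0}}b\rangle\ll_{\epsilon}Mq^{d_{0}/2+\epsilon d_{0}} ,
\]
where the first inequality is the Convolution Lemma~\ref{lem:convolution lemma} together with the monotonicity of $h\mapsto\langle b,hb\rangle$ in $h\ge 0$ (which, by the unfolding in the proof of Lemma~\ref{lem: counting from operator}, equals $\sum_{\gamma}(h_{\delta}\ast h\ast h_{\delta})(x^{-1}\gamma x)$), and the second is Abel summation applied to $\langle b,\psi_{d_{0}}b\rangle\ll\sum_{\gamma:\,l(x^{-1}\gamma x)\le d_{0}}q^{(d_{0}-l(x^{-1}\gamma x))/2}$ using $\boldsymbol{N}(\Gamma_{N},d,x)\ll Mq^{d/2}$ for $d\le d_{0}$. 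Choosing $c_{0}$ slightly below $d_{0}/2$ keeps the support of $h_{\delta}\ast(\chi_{c_{0}}\ast\chi_{c_{0}})\ast h_{\delta}$ inside $\{l\le d_{0}\}$, so only the assumed range of the hypothesis is used.

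Next I would decompose $L^{2}(\Gamma_{N}\backslash G/K)$ into its spherical isotypic components, so that $\langle b,\chi b\rangle=\sum_{\pi}\mu_{\pi}(\chi)\,\|P_{\pi}b\|^{2}$, where $\mu_{\pi}(h)$ is the eigenvalue of $h\in C_{c}(K\backslash G/K)$ on the $K$-fixed line of $\pi$, with $\sum_{\pi}\|P_{\pi}b\|^{2}=\|b\|_{2}^{2}\ll 1$ and $\sum_{\pi}\mu_{\pi}(\chi_{c_{0}})^{2}\|P_{\pi}b\|^{2}=\|\chi_{c_{0}}b\|_{2}^{2}\ll_{\epsilon}Mq^{d_{0}/2+\epsilon d_{0}}$. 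The crux is then the eigenvalue comparison: for every spherical $\pi$ occurring in $L^{2}(\Gamma_{N}\backslash G)$,
\[
\mu_{\pi}(\chi)\ll_{\epsilon}q^{\epsilon(d_{0}+d_{1})}\bigl(q^{d_{1}}\mu_{\pi}(\chi_{c_{0}})^{2}+q^{(d_{0}+d_{1})/2}\bigr).
\]
Granting this, summing against $\|P_{\pi}b\|^{2}$ and inserting the two estimates above yields $\langle b,\chi b\rangle\ll_{\epsilon}q^{\epsilon(d_{0}+d_{1})}\bigl(q^{d_{1}}Mq^{d_{0}/2}+q^{(d_{0}+d_{1})/2}\bigr)\ll_{\epsilon}Mq^{d_{0}/2+d_{1}+\epsilon(d_{0}+d_{1})}$, using $M\gg 1$ and $(d_{0}+d_{1})/2\le d_{0}/2+d_{1}$; renaming $\epsilon$ finishes the proof.

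The eigenvalue comparison is the main obstacle and is handled representation by representation. The upper bound $\mu_{\pi}(\chi)\ll_{\epsilon}q^{(d_{0}+d_{1})(1-1/p(\pi)+\epsilon)}$ is Corollary~\ref{cor:Hecke Bounds}. If $p(\pi)=2$ this is $\ll_{\epsilon}q^{(d_{0}+d_{1})(1/2+\epsilon)}$, landing in the second term. If $p(\pi)>2$ one needs the matching lower bound $\mu_{\pi}(\chi_{c_{0}})\gg_{\epsilon}q^{c_{0}(1-1/p(\pi)-\epsilon)}$, whence $q^{d_{1}}\mu_{\pi}(\chi_{c_{0}})^{2}\gg_{\epsilon}q^{d_{1}}q^{d_{0}(1-1/p(\pi))-\epsilon d_{0}}\gg_{\epsilon}q^{-\epsilon(d_{0}+d_{1})}\mu_{\pi}(\chi)$, landing in the first term. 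For representations with real spherical parameter (the ``standard'' complementary series, the trivial representation included) this lower bound follows from \eqref{eq:lower bound spherical} together with the ball-volume estimate \eqref{eq:size of ball-1}, exactly as in the proof of Proposition~\ref{prop:Rank1 lower bound- Archimedean}. The delicate point is uniformity over the whole spectrum: in higher rank there may be unitary spherical representations with non-real parameter and oscillating spherical function, and for those I would invoke the asymptotics of leading exponents (as in Section~\ref{sec:pointwise lower bound}) to show that either the lower bound on $\mu_{\pi}(\chi_{c_{0}})$ still holds or else $\mu_{\pi}(\chi)\ll_{\epsilon}q^{(d_{0}+d_{1})(1/2+\epsilon)}$ directly — again the second term. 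I expect this dichotomy, i.e. making the spherical lower bound uniform over the entire spectrum, to be the step that requires the most care.
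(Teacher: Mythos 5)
Your first step — bounding $\|\chi_{c_0}b\|_2^2\ll_\epsilon Mq^{d_0/2+\epsilon d_0}$ via the Convolution Lemma and the counting hypothesis — agrees with the paper's, up to cosmetic choices of constants. But the route you take from there has a genuine gap, and the paper's proof is constructed precisely to avoid it.

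Your ``eigenvalue comparison''
\[
\mu_{\pi}(\chi)\ll_{\epsilon}q^{\epsilon(d_{0}+d_{1})}\bigl(q^{d_{1}}\mu_{\pi}(\chi_{c_{0}})^{2}+q^{(d_{0}+d_{1})/2}\bigr)
\]
requires, for every non-tempered spherical $\pi$ occurring in $L^2(\Gamma_N\backslash G)$, either the lower bound $\mu_{\pi}(\chi_{c_{0}})\gg_{\epsilon}q^{c_{0}(1-1/p(\pi)-\epsilon)}$ or the degraded upper bound $\mu_\pi(\chi)\ll_\epsilon q^{(d_0+d_1)(1/2+\epsilon)}$. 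The second branch only applies when $p(\pi)\le 2$, so for $p(\pi)>2$ you need the first branch. A \emph{uniform} lower bound of this type on spherical functions — uniform over the whole non-tempered spectrum — is exactly Conjecture~\ref{conj:mat-coeff-lower-bound}, which the paper records as open in the Archimedean higher-rank case. The tools you cite do not supply it: Equation~\eqref{eq:lower bound spherical} only applies when the parameter $\lambda$ is real, and the leading-exponent lower bounds from Section~\ref{sec:pointwise lower bound} (Theorem~\ref{thm: good family pointwise intro}) produce constants depending on the individual representation $\pi$, i.e.\ a good family for $A=\{\pi\}$, not a bound uniform over $\Pi(G)_{\sph}$. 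In higher rank there are unitary spherical representations with a non-real parameter whose real part is still $(1-2/p)\rho$ with $p>2$; their spherical functions can oscillate, and neither branch of your dichotomy is then known to hold. So the step you flagged as needing ``the most care'' is in fact where the argument fails: it silently smuggles in Conjecture~\ref{conj:injective radius implies density archimedean}.

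The paper's proof sidesteps lower bounds on spherical functions entirely. It uses the \emph{pointwise} inequality
\[
\chi_{d_0+d_1+2}(g)\ll\bigl(\chi_{\tilde d}\ast\chi_{d_1+4}\ast\chi_{\tilde d}\bigr)(g)
\]
together with the positivity of the quadratic form $h\mapsto\langle hb_{x,\delta},b_{x,\delta}\rangle=\sum_{\gamma\in\Gamma_N}(h_\delta\ast h\ast h_\delta)(x^{-1}\gamma x)$ (not the spectral decomposition), to reduce to $\langle \chi_{d_1+4}\chi_{\tilde d}b,\chi_{\tilde d}b\rangle\le\|\pi(\chi_{d_1+4})\|\,\|\chi_{\tilde d}b\|_2^2$, and then applies only the trivial $L^2$-operator-norm bound $\|\pi(\chi_{d_1+4})\|\ll_\epsilon q^{d_1(1+\epsilon)}$. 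No matching lower bound is needed. The take-away is that the middle ball $\chi_{d_1+4}$ should be controlled by a holistic operator-norm estimate (which only uses upper bounds), not representation by representation; the latter route is tied to the open uniform lower-bound problem.

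Incidentally, your spectral decomposition $\langle b,\chi b\rangle=\sum_\pi\mu_\pi(\chi)\|P_\pi b\|^2$ implicitly assumes a discrete decomposition of $L^2(\Gamma_N\backslash G/K)$, hence cocompactness. The lemma and Proposition~\ref{prop:Counting for large} are stated for arbitrary lattices, and the paper's proof does not need cocompactness because it never decomposes into irreducibles.
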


\begin{proof}
We choose $\tilde{d}=(d_0-4)/2$ and calculate $\n{\chi_{\tilde{d}}b_{x,\delta}}_{2}^{2}$ using Lemma~\ref{lem:convolution lemma} and Lemma~\ref{lem: counting from operator}:

\begin{align*}
\n{\chi_{\tilde{d}}b_{x,\delta}}_{2}^{2} 
& =\left\langle \chi_{\tilde{d}}b_{x,\delta},\chi_{\tilde{d}}b_{x,\delta}
\right\rangle \\
 & =\left\langle \chi_{\tilde{d}}*\chi_{\tilde{d}}b_{x,\delta},b_{x,\delta}\right\rangle \\
 & \ll_{\epsilon}q^{d_0\epsilon} \left\langle \psi_{2\tilde{d}+2}b_{x,\delta},b_{x,\delta}
 \right\rangle \\
 & \ll q^{d_0\epsilon} \intop_{0}^{d_0-4}
 q^{(d_0-d)/2}\left\langle \chi_{d+2}b_{x,\delta},b_{x,\delta}\right\rangle dd\\
 & \ll q^{d_0\epsilon} \intop_{0}^{d_0-4} q^{(d_0-d)/2}\boldsymbol{N}(\Gamma_N,d+4,x)dd\\
 & \ll q^{d_0\epsilon}M\intop_{0}^{d_0-4}q^{(d_0-d)/2}q^{d/2}dd\\
 & \ll M q^{d_0\epsilon}q^{\tilde{d}} \le M q^{d_0\epsilon}q^{d_0/2}.
\end{align*}

Now, for $d_1\ge0$,
we use the inequality 
\[
\chi_{d_0+d_1+2}(g)\ll(\chi_{\tilde{d}}*\chi_{d_1+4}*\chi_{\tilde{d}})(g),
\]
to deduce
\begin{align*}
\boldsymbol{N}(\Gamma,d_{0}+d_1,x) & \ll \left\langle \chi_{d_0+d_1+2}b_{x,\delta},b_{x,\delta}\right\rangle \\
 & \ll \left\langle \chi_{\tilde{d}}*\chi_{d_1+4}*\chi_{\tilde{d}}b_{x,\delta},b_{x,\delta}\right\rangle \\
 & =\left\langle \chi_{d_1+4}*\chi_{\tilde{d}}b_{x,\delta},\chi_{\tilde{d}}b_{x,\delta}\right\rangle \\
 & \le \n{\pi(\chi_{d_{1}+4})}\n{\chi_{\tilde{d}}b_{x,\delta}}_{2}^{2}\\
 & \ll_{\epsilon}q^{d_1(1+\epsilon)}\n{\chi_{\tilde{d}}b_{x,\delta}}_{2}^{2}\\
 & \ll q^{(d_{0}+d_1)\epsilon}M q^{d_0/2+d_1}.
\end{align*}
\end{proof}

Lemma~\ref{lem:Varrying d} allows us to slightly modify the definition of the Weak Injective Radius Property. In particular, the following two claims are equivalent to each other and the Weak Injective Radius Property with parameter $\alpha$, for $C$ some fixed constant (say $C=100$):
\begin{itemize} 
    \item For
every $d_{0}\le2\alpha\log_{q}([\Gamma_1:\Gamma_N])-C$,
$\epsilon>0$,
\[
\frac{1}{[\Gamma_1:\Gamma_N]}\sum_{x\in \Gamma_N \backslash \Gamma}\boldsymbol{N}(\Gamma_N,d_{0},x)dx\ll_{\epsilon}[\Gamma_1:\Gamma_N]^{\epsilon}q^{d_{0}/2}.
\]
\item For
every $d_{0}\le2\alpha\log_{q}([\Gamma_1:\Gamma_N])+C$,
$\epsilon>0$,
\[
\frac{1}{[\Gamma_1:\Gamma_N]}\sum_{x\in \Gamma_N \backslash \Gamma}\boldsymbol{N}(\Gamma_N,d_{0},x)dx\ll_{\epsilon}[\Gamma_1:\Gamma_N]^{\epsilon}q^{d_{0}/2}.
\]
\end{itemize}

It also implies the following proposition, which should be compared with \citep[Conjecture 2]{sarnak1991bounds}:
\begin{prop}
\label{prop:Counting for large}Let $(\Gamma_N) $
be a sequence of lattices. Assume that the Weak Injective Radius Property
holds with parameter $\alpha=1$, i.e. for every $0\le d_{0}\le2\log_{q}([\Gamma_1:\Gamma_N])$,
$\epsilon>0$,
\[
\frac{1}{[\Gamma_1:\Gamma_N]}\sum_{y\in\Gamma_{N}\backslash\Gamma_{1}}\boldsymbol{N}(\Gamma_N,d_{0},y)\ll_{\epsilon}[\Gamma_1:\Gamma_N]^{\epsilon}q^{d_{0}(\nicefrac{1}{2}+\epsilon)}.
\]
Then for every $d_{0}\ge0,\epsilon>0$ it holds that 
\[
\frac{1}{[\Gamma_1:\Gamma_N]}\sum_{y\in\Gamma_{N}\backslash\Gamma_{1}}\boldsymbol{N}(\Gamma_N,d_{0},y)\ll_{\epsilon}[\Gamma_1:\Gamma_N]^{\epsilon}q^{d_{0}\epsilon}(\frac{q^{d_0}}{[\Gamma_1:\Gamma_N]}+q^{d_{0}/2}).
\]
\end{prop}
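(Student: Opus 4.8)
The asserted estimate is immediate when $0\le d_0\le D_0:=2\log_q[\Gamma_1:\Gamma_N]$, since there $q^{d_0}/[\Gamma_1:\Gamma_N]\le q^{d_0/2}$ and the estimate reduces to the Weak Injective Radius Property with parameter $\alpha=1$. So the plan is to treat $d_0>D_0$ by running the bootstrap of Lemma~\ref{lem:Varrying d}, but keeping the average over $x\in\Gamma_N\backslash\Gamma_1$ present throughout rather than fixing a single $x$; write $N_0=[\Gamma_1:\Gamma_N]$ and $d_1=d_0-D_0>0$.

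First I would record that the operator norm of $\chi_d$ acting by convolution on $L^2(\Gamma_N\backslash G)$ satisfies $\n{\chi_d}_{\mathrm{op}}\le\n{\chi_d}_{L^1(G)}\ll_\epsilon q^{d(1+\epsilon)}$, by Young's inequality together with the volume bound \eqref{eq:size of ball-1}. Next, with $\tilde d=\tfrac{1}{2} D_0-O(1)$, I would combine a pointwise convolution inequality of the type used in the proof of Lemma~\ref{lem:Varrying d}, namely $\chi_{d_0+2}(g)\ll(\chi_{\tilde d}\ast\chi_{d_1+O(1)}\ast\chi_{\tilde d})(g)$, with Lemma~\ref{lem: counting from operator}. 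Since the $\chi$'s are self-adjoint ($l(g)=l(g^{-1})$) and $b_{x,\delta}\ge 0$, this gives, for every $x$,
\[
\boldsymbol{N}(\Gamma_N,d_0,x)\ll\langle b_{x,\delta},\chi_{d_0+2}b_{x,\delta}\rangle\ll\langle\chi_{\tilde d}b_{x,\delta},\chi_{d_1+O(1)}\chi_{\tilde d}b_{x,\delta}\rangle\le\n{\chi_{d_1+O(1)}}_{\mathrm{op}}\,\n{\chi_{\tilde d}b_{x,\delta}}_2^2 .
\]
Summing over $x\in\Gamma_N\backslash\Gamma_1$, the task is reduced to the single estimate $\sum_{x}\n{\chi_{\tilde d}b_{x,\delta}}_2^2\ll_\epsilon N_0^{2+\epsilon}$.

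For that estimate I would again mimic Lemma~\ref{lem:Varrying d}: write $\n{\chi_{\tilde d}b_{x,\delta}}_2^2=\langle b_{x,\delta},(\chi_{\tilde d}\ast\chi_{\tilde d})b_{x,\delta}\rangle$, use the Convolution Lemma~\ref{lem:convolution lemma} to pass to $\ll_\epsilon N_0^\epsilon\,\langle b_{x,\delta},\psi_{2\tilde d+2}b_{x,\delta}\rangle$, expand $\psi_{2\tilde d+2}(g)\ll\int_0^{2\tilde d+2}q^{(2\tilde d+2-d)/2}\chi_{d+2}(g)\,dd$ by the usual layer-cake decomposition, and invoke Lemma~\ref{lem: counting from operator} again to bound $\langle b_{x,\delta},\chi_{d+2}b_{x,\delta}\rangle\ll\boldsymbol{N}(\Gamma_N,d+4,x)$. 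The buffer in $\tilde d$ is chosen so that $d+4\le D_0$ for every $d$ in the range of integration, so the Weak Injective Radius Property applies: $\sum_x\boldsymbol{N}(\Gamma_N,d+4,x)\ll_\epsilon N_0^{1+\epsilon}q^{d/2}$. Integrating against $q^{(2\tilde d+2-d)/2}$ then contributes a factor $q^{\tilde d}\asymp N_0$ (up to a logarithmic, hence $N_0^\epsilon$, loss), giving $\sum_x\n{\chi_{\tilde d}b_{x,\delta}}_2^2\ll_\epsilon N_0^{2+\epsilon}$. Feeding this and $\n{\chi_{d_1+O(1)}}_{\mathrm{op}}\ll_\epsilon q^{d_1(1+\epsilon)}$ into the display above yields $\frac1{N_0}\sum_x\boldsymbol{N}(\Gamma_N,d_0,x)\ll_\epsilon N_0^{1+\epsilon}q^{d_1(1+\epsilon)}$, and since $q^{d_1}=q^{d_0}/q^{D_0}=q^{d_0}/N_0^{2}$ the right-hand side is $\ll_\epsilon N_0^{\epsilon}q^{d_0\epsilon}\,q^{d_0}/N_0$, which is dominated by the claimed bound. (In the cocompact case one may instead bound $\sum_x\n{\chi_{\tilde d}b_{x,\delta}}_2^2$ by $\tr(\chi_{\tilde d}\ast\chi_{\tilde d})|_{L^2(\Gamma_N\backslash G)}$ using the Bessel-type inequality of Lemma~\ref{lem:Covering Lemma 2} applied to a spectral eigenbasis, and then estimate the trace via its monotonicity, Lemma~\ref{lem:positivity of trace}, and condition (2) of Proposition~\ref{prop:Trace equivalent to Lattice point Counting}.)

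I do not expect a real obstacle: this is essentially Lemma~\ref{lem:Varrying d} averaged over $x$, and no new idea is needed. The only points requiring care are (i) keeping $\chi_{\tilde d}$ and $\psi_{2\tilde d+2}$ nonnegative and exploiting $b_{x,\delta}\ge 0$, so that the pointwise inequalities survive pairing against $b_{x,\delta}$; and (ii) fixing the $O(1)$ buffers in $\tilde d$ and in the exponents $d_1+O(1)$ and $d+4$ so that every radius at which the Weak Injective Radius Property is applied lies in the admissible interval $[0,D_0]$ — which is exactly what the flexibility of the property (its equivalence with the $\pm C$-shifted versions recorded just above the statement) supplies.
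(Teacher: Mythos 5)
Your proof is correct and is essentially the paper's argument: the paper's proof simply invokes Lemma~\ref{lem:Varrying d}, whose proof is exactly the bootstrap you spell out (split $\chi_{d_0+2}\ll\chi_{\tilde d}*\chi_{d_1+O(1)}*\chi_{\tilde d}$, bound $\n{\chi_{\tilde d}b_{x,\delta}}_2^2$ via the Convolution Lemma and the hypothesis, then estimate the middle factor by its operator norm). You are right that the lemma is stated pointwise in $x$ while the hypothesis is only an average, so that it is really the linearity of the argument in $\boldsymbol{N}(\Gamma_N,d,x)$ that makes it applicable here — this is implicit in the paper's one-line proof and you have correctly supplied it.
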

\begin{proof}
It is sufficient to prove that for $d_1\ge 0$, $d_0=2\log_{q}([\Gamma_1:\Gamma_N])$, it holds that 
\[
\frac{1}{[\Gamma_1:\Gamma_N]}\sum_{y\in\Gamma_{N}\backslash\Gamma_{1}}\boldsymbol{N}(\Gamma_N,d_{0}+d_1,y)\ll_{\epsilon}q^{(d_{0}+d)\epsilon}q^{d_0/2+d_1}.
\]
This follows from Lemma~\ref{lem:Varrying d}.
\end{proof}

We can now present the proof of Proposition~\ref{prop:Trace equivalent to Lattice point Counting}, in the following slightly more general claim.
\begin{cor}
\label{cor:trace equivalence}Let $(\Gamma_N)$ be a sequence of cocompact lattices, and $0<\alpha\le1$. The following are equivalent:
\begin{enumerate}
\item For every $d_{0}\le2\alpha\log_{q}([\Gamma_1:\Gamma_N])$,
$\epsilon>0$,
\[
\tr \chi_{d_0}|_{L^{2}(X_N)}=\intop_{\Gamma_{N}\backslash G}\sum_{\gamma\in\Gamma_N}\chi_{d_0}(x^{-1}\gamma x)dx\ll_{\epsilon}[\Gamma_1:\Gamma_N]^{1+\epsilon}q^{d_{0}(\nicefrac{1}{2}+\epsilon)}.
\]
\item The Weak Injective Radius Property with parameter $\alpha$ -- for
every $d_{0}\le2\alpha\log_{q}([\Gamma_1:\Gamma_N])$,
$\epsilon>0$,
\[
\frac{1}{[\Gamma_1:\Gamma_N]}\sum_{x\in \Gamma_N \backslash \Gamma}\boldsymbol{N}(\Gamma_N,d_{0},x)dx\ll_{\epsilon}[\Gamma_1:\Gamma_N]^{\epsilon}q^{d_{0}(\nicefrac{1}{2}+\epsilon)}.
\]
\item For every $h\in C_c(G)$ self-adjoint and satisfying
$h(g)\ll_{\epsilon}[\Gamma_1:\Gamma_N]^{\epsilon}\psi_{\tilde{d}}(g)$
for $\tilde{d}=2\alpha\log_{q}([\Gamma_1:\Gamma_N])$,
it holds that 
\begin{equation}
\tr h|_{L^{2}(X_N)}\ll_{\epsilon}[\Gamma_1:\Gamma_N]^{1+\epsilon}q^{\tilde{d}(\nicefrac{1}{2}+\epsilon)}\asymp[\Gamma_1:\Gamma_N]^{1+\alpha+\epsilon}.\label{eq:Trace equivalence3}
\end{equation}
\item For $\tilde{d}=2\alpha\log_{q}([\Gamma_1:\Gamma_N])$,
it holds that 
\[
\tr \psi_{\tilde{d}}|_{L^{2}(X_N)}\ll_{\epsilon}[\Gamma_1:\Gamma_N]^{1+\epsilon}q^{\tilde{d}(\nicefrac{1}{2}+\epsilon)}\asymp[\Gamma_1:\Gamma_N]^{1+\alpha+\epsilon}.
\]
\end{enumerate}
\end{cor}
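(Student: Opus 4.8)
The plan is to close the logical circle via $(2)\Rightarrow(1)\Rightarrow(4)\Rightarrow(2)$ together with $(3)\Leftrightarrow(4)$. Write $\tilde d=2\alpha\log_{q}([\Gamma_{1}:\Gamma_{N}])$, so that $q^{\tilde d/2}=[\Gamma_{1}:\Gamma_{N}]^{\alpha}$. By Lemma~\ref{lem:Varrying d} the Weak Injective Radius Property (statement (2)) is unchanged if the range $d_{0}\le\tilde d$ is replaced by $d_{0}\le\tilde d\pm O(1)$ (this is the reformulation recorded just before Proposition~\ref{prop:Counting for large}); via the equivalences proved below the same then holds for (1), (3), (4). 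Consequently the additive $O(1)$ shifts in the length parameter that appear below are harmless, and I will not track them further.

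The core geometric step is $(1)\Leftrightarrow(2)$. The equality asserted in (1) is the pre-trace formula (Subsection~\ref{subsec:The trace formula}), $\tr\chi_{d_{0}}|_{L^{2}(X_{N})}=\int_{\Gamma_{N}\backslash G}\sum_{\gamma\in\Gamma_{N}}\chi_{d_{0}}(x^{-1}\gamma x)\,dx$. I would fix a bounded fundamental domain $\mathcal F$ for $\Gamma_{1}$ in $G$ (bounded since $\Gamma_{1}$ is cocompact) and representatives $x_{0}\in\Gamma_{1}$ for $\Gamma_{N}\backslash\Gamma_{1}$, so that $\Gamma_{N}\backslash G=\bigsqcup_{x_{0}}x_{0}\mathcal F$. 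Substituting $x=x_{0}x'$ with $x'\in\mathcal F$, and using that $l$ is bi-$K$-invariant and that conjugation by an element of the bounded set $\mathcal F$ changes $l$ by $O(1)$, together with the identity $\boldsymbol N(\Gamma_{N},d,x_{0})=\#\{\eta\in x_{0}^{-1}\Gamma_{N}x_{0}:l(\eta)\le d\}$ and Equation~\eqref{eq:relation between hd and N}, one obtains $\tr\chi_{d_{0}}|_{L^{2}(X_{N})}\asymp\sum_{x_{0}}\boldsymbol N(\Gamma_{N},d_{0}+O(1),x_{0})$ — the upper bound crude, and for the lower bound one restricts the integral over each $x_{0}\mathcal F$ to a fixed small ball around $x_{0}$, on which $\boldsymbol N(\Gamma_{N},d_{0},x)\ge\boldsymbol N(\Gamma_{N},d_{0}-O(1),x_{0})$. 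In view of the $O(1)$-insensitivity this is precisely the equivalence of (1) and (2).

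For $(1)\Rightarrow(4)$: a short computation from Definition~\ref{def:chi and psi} (summing a geometric series) gives the pointwise bound $\psi_{\tilde d}(g)\ll\sum_{j=0}^{\lceil\tilde d\rceil+1}q^{(\tilde d-j)/2}\chi_{j}(g)$; all functions here are self-adjoint, so Lemma~\ref{lem:positivity of trace} yields $\tr\psi_{\tilde d}|_{L^{2}(X_{N})}\ll\sum_{j}q^{(\tilde d-j)/2}\tr\chi_{j}|_{L^{2}(X_{N})}$, and feeding in (1) termwise and collapsing the geometric series gives $\tr\psi_{\tilde d}|_{L^{2}(X_{N})}\ll_{\epsilon}[\Gamma_{1}:\Gamma_{N}]^{1+\epsilon}q^{\tilde d/2}q^{\tilde d\epsilon}\log([\Gamma_{1}:\Gamma_{N}])\ll_{\epsilon}[\Gamma_{1}:\Gamma_{N}]^{1+\alpha+\epsilon}$, which is (4). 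The equivalence $(3)\Leftrightarrow(4)$ is then immediate: $(3)\Rightarrow(4)$ by applying (3) to $h=\psi_{\tilde d}$, which is self-adjoint since $l(g)=l(g^{-1})$ and trivially satisfies $\psi_{\tilde d}(g)\ll[\Gamma_{1}:\Gamma_{N}]^{\epsilon}\psi_{\tilde d}(g)$; conversely, if $h$ is self-adjoint with $h(g)\ll_{\epsilon}[\Gamma_{1}:\Gamma_{N}]^{\epsilon}\psi_{\tilde d}(g)$, then the difference $C_{\epsilon}[\Gamma_{1}:\Gamma_{N}]^{\epsilon}\psi_{\tilde d}-h\ge0$ pointwise, so Lemma~\ref{lem:positivity of trace} and (4) give $\tr h|_{L^{2}(X_{N})}\le C_{\epsilon}[\Gamma_{1}:\Gamma_{N}]^{\epsilon}\tr\psi_{\tilde d}|_{L^{2}(X_{N})}\ll_{\epsilon}[\Gamma_{1}:\Gamma_{N}]^{1+\epsilon}q^{\tilde d(1/2+\epsilon)}$, which is (3).

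Finally $(4)\Rightarrow(2)$ closes the circle. By the pre-trace formula $\tr\psi_{\tilde d}|_{L^{2}(X_{N})}=\int_{\Gamma_{N}\backslash G}\sum_{\gamma\in\Gamma_{N}}\psi_{\tilde d}(x^{-1}\gamma x)\,dx$; fixing $d_{0}\le\tilde d$, dropping all terms except those with $l(x^{-1}\gamma x)\le d_{0}$ — on which $\psi_{\tilde d}(x^{-1}\gamma x)\gg q^{(\tilde d-d_{0})/2}$ — and then restricting, as in the $(1)\Leftrightarrow(2)$ argument, to a fixed small ball inside each $x_{0}\mathcal F$, one gets $\tr\psi_{\tilde d}|_{L^{2}(X_{N})}\gg q^{(\tilde d-d_{0})/2}\sum_{x_{0}}\boldsymbol N(\Gamma_{N},d_{0}-O(1),x_{0})$; combining with (4) and $q^{\tilde d/2}=[\Gamma_{1}:\Gamma_{N}]^{\alpha}$ gives $\sum_{x_{0}}\boldsymbol N(\Gamma_{N},d_{0}-O(1),x_{0})\ll_{\epsilon}[\Gamma_{1}:\Gamma_{N}]^{1+\epsilon}q^{d_{0}/2}$, i.e., (2). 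I expect the main obstacle to be the geometric bookkeeping in the two fundamental-domain arguments — passing between the integral over $\Gamma_{N}\backslash G$ (equivalently $X_{N}$) and the finite sum over the discrete set $\Gamma_{N}\backslash\Gamma_{1}$ — where one must keep track of the fact that conjugation by a representative $x_{0}\in\Gamma_{1}$ replaces $\Gamma_{N}$ by the distinct, but equal-index, subgroup $x_{0}^{-1}\Gamma_{N}x_{0}$, and that the bounded diameter of the $\Gamma_{1}$-fundamental domain costs only $O(1)$ in the length parameter (absorbed by the reformulation lemma). The remaining ingredients — the dyadic decomposition of $\psi_{\tilde d}$ into the $\chi_{j}$'s and the two positivity arguments via Lemma~\ref{lem:positivity of trace} — are routine.
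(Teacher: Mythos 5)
Your proof is correct and follows essentially the same route as the paper. The paper proves $(1)\Leftrightarrow(2)$ via the bounded-diameter/fundamental-domain argument, $(3)\Leftrightarrow(4)$ via positivity of trace, and $(1)\Leftrightarrow(4)$ via the pointwise inequalities $\chi_{d_0}\ll q^{(d_0-\tilde d)/2}\psi_{\tilde d}$ and $\psi_{\tilde d}(g)\ll\int_0^{\tilde d}q^{(\tilde d-d_0)/2}\chi_{d_0}(g)\,dd_0$; you close the circle $(2)\Rightarrow(1)\Rightarrow(4)\Rightarrow(2)$ using a discretized version of the latter inequality and, for $(4)\Rightarrow(2)$, redo the fundamental-domain restriction (lower-bounding $\psi_{\tilde d}$ on $\{l\le d_0\}$ by $q^{(\tilde d-d_0)/2}$) instead of passing through $(1)$ --- same ingredients and the same $O(1)$-shift bookkeeping, just a slightly different organization of the implications.
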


\begin{proof}
$(3)$ obviously implies $(4)$, and the fact that $(4)$ implies $(3)$ a result of Lemma~\ref{lem:positivity of trace}.

Since $\Gamma_1$ is cocompact, it has a finite diameter $D$. It implies that the balls of radius $D$ around points in $\Gamma_N\backslash \Gamma_1$ cover the entire space $\Gamma_N \backslash G$. Moreover, if two points $x,y$ are of distance $d$ apart, then 
\[
\sum_{\gamma\in\Gamma_N}\chi_{d_0-2d-1}(x^{-1}\gamma x)
\le \sum_{\gamma\in\Gamma_N}\chi_{d_0}(y^{-1}\gamma y)
\le \sum_{\gamma\in\Gamma_N}\chi_{d_0+2d+1}(x^{-1}\gamma x).
\]

Using Equation~\eqref{eq:relation between hd and N}, we deduce that if $B_{x_0,D}$ is the ball of radius $D$ around $x_0\in \Gamma_N \backslash \Gamma_1$, then for every $d_0$, 
\[
\intop_{B_{x_0,D}} \sum_{\gamma\in\Gamma_N}\chi_{d_0-2D-2}(x^{-1}\gamma x)dx \le \boldsymbol{N}(\Gamma_N,d_{0},x) \le 
\intop_{B_{x_0,D}} \sum_{\gamma\in\Gamma_N}\chi_{d_0+2D+2}(x^{-1}\gamma x)dx.
\]

Summing over $x_0 \in \Gamma_N\backslash \Gamma_1$ and using the discussion after Lemma~\ref{lem:Varrying d} to change $d_0$ by a constant allows us to deduce the equivalence between $(1)$ and $(2)$.

To show that $(4)$ implies $(1)$, note that
for $d_{0}\le\tilde{d}=2\alpha\log_{q}([\Gamma_1:\Gamma_N])$,
it holds that $\chi_{d_0}\ll q^{(d_{0}-\tilde{d})/2}\psi_{\tilde{d}}(g)$.
Then if $(4)$ holds then 
\begin{align*}
\tr \chi_{d_0}|_{L^{2}(X_N)} & \ll q^{(d_{0}-\tilde{d})/2}\tr \psi_{\tilde{d}}|_{L^{2}(X_N)}\\
 & \ll q^{(d_{0}-\tilde{d})/2}[\Gamma_1:\Gamma_N]^{1+\epsilon}q^{\tilde{d}(\nicefrac{1}{2}+\epsilon)}\\
 & \ll[\Gamma_1:\Gamma_N]^{1+\epsilon}q^{d_{0}(\nicefrac{1}{2}+\epsilon)}.
\end{align*}

Finally, we prove that $(1)$ implies $(4)$.
Note that for every $g\in G$,
\[
\psi_{\tilde{d}}(g)\ll\intop_{0}^{\tilde{d}}q^{(\tilde{d}-d_{0})/2}\chi_{d_0}(g)dd_{0}.
\]
Then if $(1)$ holds for every $d_{0}\le\tilde{d}$ we have
\begin{align*}
\tr \psi_{\tilde{d}} & \ll\intop_{0}^{\tilde{d}}q^{(\tilde{d}-d_{0})/2}\tr \chi_{d_0}|_{L^{2}(X_N)}dd_{0}\\
 & \ll_{\epsilon}[\Gamma_1:\Gamma_N]^{1+\epsilon}\intop_{0}^{\tilde{d}}q^{(\tilde{d}-d_{0})/2}q^{d_{0}(\nicefrac{1}{2}+\epsilon)}dd_{0}\\
 & =[\Gamma_1:\Gamma_N]^{1+\epsilon}\tilde{d}q^{\tilde{d}(\nicefrac{1}{2}+\epsilon)}\\
 & \ll_{\epsilon}[\Gamma_1:\Gamma_N]^{1+\epsilon}q^{\tilde{d}(\nicefrac{1}{2}+\epsilon)}.
\end{align*}
\end{proof}

Another simple property of the Weak Injective Radius is:
\begin{prop}
\label{prop:A le 1 for injective radius}Let $(\Gamma_N) $
be a sequence of lattices. If $\alpha$ is the Weak Injective Radius
parameter of the sequence, then $\alpha\le1$.
\end{prop}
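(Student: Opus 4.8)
The plan is to reduce the statement to the trace reformulation of Corollary~\ref{cor:trace equivalence} and then to pit a lower bound, coming from the trivial representation, against an upper bound, coming from the Convolution Lemma. First I would note that, although Corollary~\ref{cor:trace equivalence} and the parameterized properties are nominally stated for $0<\alpha\le1$, their proofs only ever use the admissible range $0\le d_{0}\le 2\alpha\log_{q}([\Gamma_1:\Gamma_N])$ of the parameter $d_0$; hence all the equivalences there are valid for every $\alpha>0$, and it makes sense to contemplate a sequence satisfying the Weak Injective Radius Property with a parameter $\alpha$ which a priori could exceed $1$. Assuming it does so, part~$(4)$ of Corollary~\ref{cor:trace equivalence} gives, with $\tilde d:=2\alpha\log_{q}([\Gamma_1:\Gamma_N])$,
\[
\tr\psi_{\tilde d}|_{L^{2}(X_N)}\ll_{\epsilon}[\Gamma_1:\Gamma_N]^{1+\epsilon}q^{\tilde d(\nicefrac{1}{2}+\epsilon)}.
\]

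Next I would choose $d_{0}:=\tilde d/2-1=\alpha\log_{q}([\Gamma_1:\Gamma_N])-1$, so that $2d_{0}+2=\tilde d$, and work with the self-adjoint operator $c_{d_{0}}:=\chi_{d_0}\ast\chi_{d_0}\in C_{c}^{\infty}(K\backslash G/K)$. Two bounds on its trace on $L^{2}(X_N)$ are needed. Upwards: the Convolution Lemma~\ref{lem:convolution lemma} gives $c_{d_{0}}(g)\ll_{\epsilon}q^{d_{0}\epsilon}\psi_{2d_{0}+2}(g)=q^{d_{0}\epsilon}\psi_{\tilde d}(g)$ pointwise, so by the positivity Lemma~\ref{lem:positivity of trace} together with the displayed estimate,
\[
\tr c_{d_{0}}|_{L^{2}(X_N)}\ll_{\epsilon}q^{d_{0}\epsilon}[\Gamma_1:\Gamma_N]^{1+\epsilon}q^{\tilde d(\nicefrac{1}{2}+\epsilon)}.
\]
Downwards: for every $\pi\in\Pi(G)$ the operator $\pi(c_{d_{0}})=\pi(\chi_{d_0})^{2}$ is positive semidefinite (since $\chi_{d_0}=\hat\chi_{d_0}$ is real and $K$-bi-invariant), hence of nonnegative trace, so the pre-trace formula together with the occurrence of the trivial representation in $L^{2}(\Gamma_N\backslash G)$ with multiplicity $1$ yields
\[
\tr c_{d_{0}}|_{L^{2}(X_N)}\ge\tr\bigl(\triv(c_{d_{0}})\bigr)=\left(\intop_{G}\chi_{d_0}(g)\,dg\right)^{2}\gg q^{2d_{0}},
\]
the last step being the trivial-eigenvalue bound $\intop_{G}\chi_{d_0}(g)\,dg\gg q^{d_{0}}$ recorded after Definition~\ref{def:chi and psi}.

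Combining the two estimates gives $q^{2d_{0}}\ll_{\epsilon}q^{d_{0}\epsilon}[\Gamma_1:\Gamma_N]^{1+\epsilon}q^{\tilde d(\nicefrac{1}{2}+\epsilon)}$. Since $q^{\tilde d}=[\Gamma_1:\Gamma_N]^{2\alpha}$, one has $q^{2d_{0}}\asymp[\Gamma_1:\Gamma_N]^{2\alpha}$ and $q^{\tilde d/2}=[\Gamma_1:\Gamma_N]^{\alpha}$, while $q^{d_{0}\epsilon}q^{\tilde d\epsilon}\le[\Gamma_1:\Gamma_N]^{3\alpha\epsilon}$; substituting, the inequality becomes $[\Gamma_1:\Gamma_N]^{\alpha-1}\ll_{\epsilon}[\Gamma_1:\Gamma_N]^{(1+3\alpha)\epsilon}$, valid for every $\epsilon>0$ and every sufficiently large $N$. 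As $[\Gamma_1:\Gamma_N]\to\infty$, letting $\epsilon\to0$ forces $\alpha-1\le0$, i.e. $\alpha\le1$.

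I expect the only point requiring care — rather than difficulty — to be the choice of the test function $\chi_{d_0}\ast\chi_{d_0}$. It is selected precisely because it is \emph{at once} self-adjoint and non-negative-definite, so that the trivial representation alone already forces the sharp lower bound $q^{2d_{0}}\asymp[\Gamma_1:\Gamma_N]^{2\alpha}$, \emph{and} pointwise controlled by $\psi_{\tilde d}$ (up to $q^{d_{0}\epsilon}$), whose trace is only of size $q^{\tilde d/2}\asymp[\Gamma_1:\Gamma_N]^{\alpha}$ under the Weak Injective Radius Property. These two exponents, $2\alpha$ versus $\alpha+1$, meet exactly at $\alpha=1$. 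Using $\chi_{\tilde d}$ in place of $\psi_{\tilde d}$ would cost a factor $q^{d_{0}}$ and the two sides would fail to meet, so the $\psi$-normalization built into the Convolution Lemma~\ref{lem:convolution lemma} and into Corollary~\ref{cor:trace equivalence} is essential here.
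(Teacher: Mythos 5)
Your proof is correct and follows essentially the same route as the paper's: both pit the lower bound coming from the trivial representation, $\tr\bigl(\triv(c_{d_0})\bigr)=\bigl(\intop_G\chi_{d_0}\bigr)^2\gg q^{2d_0}\asymp[\Gamma_1:\Gamma_N]^{2\alpha}$, against the upper bound on $\tr c_{d_0}|_{L^2(X_N)}$ supplied by the Convolution Lemma and the trace reformulation, with positivity of $\pi(c_{d_0})$ ensuring no cancellation. The only cosmetic difference is that you invoke part~(4) of Corollary~\ref{cor:trace equivalence} and then apply the pointwise positivity Lemma~\ref{lem:positivity of trace}, whereas the paper goes through part~(3) directly; the two are interchangeable given the equivalences in that corollary.
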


\begin{proof}
For simplicity, we assume that the lattices $(\Gamma_N) $
are cocompact, so we can use Equation~\eqref{eq:Trace equivalence3}.
The arguments can change to deal with the non-uniform case as well.

Note that Corollary~\ref{cor:trace equivalence} did not assume that
$\alpha\le1$. It is therefore enough to prove that for every $\alpha>1$
Equation~\eqref{eq:Trace equivalence3} from Corollary~\ref{cor:trace equivalence}
does not hold for $\alpha$.

Let $d_{0}=2\alpha\log_{q}([\Gamma_1:\Gamma_N])$,
$d_{1}=\nicefrac{d_0}{2}-1$, and $c_{d_{1}}=\chi_{d_{1}}\ast\chi_{d_{1}}$.
By Lemma~\ref{lem:convolution lemma}, $c_{d_{1}}\ll_{\epsilon}[\Gamma_1:\Gamma_N]^{\epsilon}\psi_{d_0}(g)$,
so the condition before Equation~\eqref{eq:Trace equivalence3} holds,
and 
\[
\tr c_{d_{1}}|_{L^{2}(X_N)}\ll_{\epsilon}[\Gamma_1:\Gamma_N]^{1+\alpha+\epsilon}.
\]

Since $\chi_{d_{1}}$ is self-adjoint, $\tr \pi(c_{d_{1}})\ge0$
for every $\pi\in\Pi(G)$, so 
\[
\tr c_{d_{1}}|_{L^{2}(X_N)}\ge\tr (\pi_{\triv}(c_{d_0})),
\]

where $\pi_{\triv}$ is the trivial representation. On the other
hand, it holds that 
\[
\tr (\pi_{\triv}(c_{d_0}))=\intop_{G}c_{d_{1}}(g)dg=
\left(\intop_{G}\chi_{d_{1}}dg\right)^{2}\gg q^{2d_{1}}\gg[\Gamma_1:\Gamma_N]^{2\alpha}.
\]
As $\alpha>1$, we get a contradiction for $[\Gamma_1:\Gamma_N]$ big enough.
\end{proof}

We can now prove Proposition~\ref{prop:good family proposition}: 
\begin{proof}
Recall that we assume that the sequence $(\Gamma_N) $
of cocompact lattices satisfies the Weak Injective Radius Property
with parameter $\alpha$, let $d_{0}=\alpha\log_{q}([\Gamma_1:\Gamma_N])$
and let $f_{d_0}$ from the definition of a good family. 

By Corollary~\ref{cor:trace equivalence},
the trace of $f_{d_0}$ on $L^{2}(\Gamma_{N}\backslash G)$
satisfies
\[
\tr f_{d_0}|_{L^{2}(\Gamma_{N}\backslash G)}\ll_{\epsilon}[\Gamma_1:\Gamma_N]^{1+\alpha+\epsilon}.
\]

Let us calculate the spectral side of the trace. From the second and
first properties of a good family,
\begin{align*}
\tr f_{d_0}|_{L^{2}(\Gamma_{N}\backslash G)} & \ge\sum_{\pi\in A}m(\pi,\Gamma_N)\tr \pi(f_{d_0})\\
 & \gg_{\epsilon,A}\sum_{\pi\in A}m(\pi,\Gamma_N)q^{d_0(1-\nicefrac{1}{p(\pi)}-\epsilon)}\\
 & =\sum_{\pi\in A}m(\pi,\Gamma_N)[\Gamma_1:\Gamma_N]^{2\alpha(1-\nicefrac{1}{p(\pi)}-\epsilon)}\\
 & \ge M(A,\Gamma_N,p)[\Gamma_1:\Gamma_N]^{2\alpha(1-\nicefrac{1}{p}-\epsilon)}.
\end{align*}

We deduce that for every $N$, $p>2$, $\epsilon>0$, 
\[
M(A,\Gamma_N,p)\ll_{A,\epsilon}[\Gamma_1:\Gamma_N]^{1-\alpha(1-\nicefrac{2}{p})+\epsilon},
\]
as needed.
\end{proof}

\section{\label{sec:Injective Radius implies Lifitng}The Weak Injective Radius Property implies the Optimal Lifting Property}

In this section, we prove Theorem~\ref{thm:injective radius implies optimal lifting}.

\subsection{Reduction to a Spectral Argument}

Recall that assuming the Weak Injective Radius Property and Spectral
Gap, we should prove that for every $\epsilon>0$, for every $a\in A_{+}$
with $l(a)\ge(1+\epsilon)\log_{q}(\mu(X_N))$,
\begin{equation}
\#\left\{ (x,y)\in(\Gamma_1/\Gamma_N)^{2}:\exists\gamma\in\Gamma_1\text{ s.t. }\pi_{N}(\gamma)x=y,\n{a_{\gamma}-a}_{\a}<\epsilon\n a_{\a}\right\} =(1-o_{\epsilon}(1))[\Gamma_1:\Gamma_N]^{2}.\label{eq:Need to prove}
\end{equation}

For $(x,y)\in(\Gamma_1/\Gamma_N)^{2}$,
$a\in A_{+}$ and $\epsilon>0$, we say that $\gamma\in\Gamma_1$ is
\emph{good} for $(x,y,a,\epsilon)$ if $\pi_{N}(\gamma)x=y$
and $\n{a_{\gamma}-a}_{\a}<\epsilon\n a_{\a}$.
\begin{lem}
\label{lem: Optimal lifting to zeros}Let $(x,y)\in(\Gamma_1/\Gamma_N)^{2}$,
and assume that there is no good $\gamma\in\Gamma_1$ for $(x,y,a,\epsilon)$.
Identify $x,y$ with elements $x,y\in X_{N}=\Gamma_{N}\backslash G/K$.
Let $f_{a}\in L^{1}(G)$ be a function supported on the
set $\left\{ g\in G:\n{a_{g}-a}_{\a}<\epsilon/2\n a_{\a}\right\} $,
and for $\delta$ small enough with respect to $\epsilon\n{a}_\a$ let $b_{x,\delta}$
as in Subsection~\ref{subsec:Spectral-Decomposition-of-small-ball}.
Then 
\[
f_{a}b_{x,\delta}(y)=0.
\]
Moreover, for every $y'\in B_{\delta}(y)$ it holds that 
\[
f_{a}b_{x,\delta}(y')=0.
\]
\end{lem}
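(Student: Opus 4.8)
The plan is to establish the ``moreover'' statement, of which the first assertion is the case $y'=y$ (note $y\in B_\delta(y)$). Write $x=\Gamma_N x_0$ and $y=\Gamma_N\gamma_y$ with $x_0,\gamma_y\in\Gamma_1$, fix a lift $\tilde y'\in G$ of $y'$, and expand using the convention of Subsection~\ref{subsec:The trace formula} and the definition of $b_{x,\delta}$ from Subsection~\ref{subsec:Spectral-Decomposition-of-small-ball}:
\[
(f_a b_{x,\delta})(y')=\intop_G b_{x,\delta}(\tilde y' g)\,f_a(g)\,dg=\sum_{\gamma_1\in\Gamma_N}\intop_G h_\delta\bigl(x_0^{-1}\gamma_1\tilde y' g\bigr)\,f_a(g)\,dg .
\]
It is enough to show that each $\gamma_1$-summand vanishes. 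Fix $\gamma_1\in\Gamma_N$; since $h_\delta\ge0$ is supported on $\{g:l(g)\le\delta\}$, a nonvanishing $\gamma_1$-summand would produce some $g$ in the support of $f_a$ with $l(x_0^{-1}\gamma_1\tilde y'g)\le\delta$.

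Next I would manufacture from such a $g$ an element of $\Gamma_1$ that is \emph{good} for $(x,y,a,\epsilon)$, contradicting the hypothesis. As $y'\in B_\delta(y)$, after replacing $\gamma_1$ by a suitable $\Gamma_N$-translate we may take the lift in the form $\tilde y'=\gamma_y u_1$ with $l(u_1)\le\delta$. Writing $x_0^{-1}\gamma_1\gamma_y u_1 g=u_0$ with $l(u_0)\le\delta$ yields $g=u_1^{-1}\gamma u_0$ with $\gamma:=\gamma_y^{-1}\gamma_1^{-1}x_0\in\Gamma_1$. Because $\gamma_1\in\Gamma_N$ we get $\pi_N(\gamma)x=\Gamma_N x_0\gamma^{-1}=\Gamma_N\gamma_1\gamma_y=\Gamma_N\gamma_y=y$. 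On the other hand $g$ lies in the support of $f_a$, so $\n{a_g-a}_\a<\tfrac\epsilon2\n a_\a$, while $g$ differs from $\gamma$ by multiplication on the left and right by elements of length $\le\delta$. The Cartan projection $g\mapsto a_g$ is Lipschitz under multiplication by elements of a fixed compact set, with modulus tending to $0$ as the set shrinks to $\{e\}$ (this is standard, cf.\ \citep{duke1993density}, and also follows from the subadditivity $\nu(a'')\le_\a\nu(a)+\nu(a')$ of Corollary~\ref{Cor:KaKaK technical Lemma} together with the pointedness of the positive coroot cone); applied to the compact set $\{g:l(g)\le\delta\}$, on which $\n{a_g}_\a=O(\delta)$, this gives $\n{a_\gamma-a_g}_\a=O(\delta)$ with an absolute implied constant. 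Hence $\n{a_\gamma-a}_\a<\tfrac\epsilon2\n a_\a+O(\delta)<\epsilon\n a_\a$ once $\delta$ is small enough relative to $\epsilon\n a_\a$ --- exactly the smallness already imposed on $\delta$. So $\gamma$ is good for $(x,y,a,\epsilon)$, a contradiction, and therefore $(f_a b_{x,\delta})(y')=0$.

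The one genuine ingredient, and the step to treat carefully, is the \emph{uniform} Lipschitz estimate $\n{a_{u_1^{-1}\gamma u_0}-a_\gamma}_\a=O(\delta)$, valid for all $\gamma$ at once: the implied constant must not depend on $\gamma$ or on $a$ (which may be large, since $l(a)$ is of the order of $\log_q[\Gamma_1:\Gamma_N]$). This is precisely why one works with the length $l$ and the coroot-space norm $\n\cdot_\a$ instead of a matrix norm: it makes $\{g:l(g)\le\delta\}$ compact with $\n{a_g}_\a=O(\delta)$ there, and the Cartan projection then satisfies $\n{a_{g_1 g g_2}-a_g}_\a\le C(\n{a_{g_1}}_\a+\n{a_{g_2}}_\a)$ with $C$ absolute. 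The remainder is routine bookkeeping with lifts and the cosets $\Gamma_N\subset\Gamma_1$; in particular the $K$-factors hidden inside $u_0,u_1$ are harmless because $a_\gamma$ depends only on the double coset $K\gamma K$.
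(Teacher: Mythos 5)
Your proof is correct and follows essentially the same route as the paper's: both argue by contraposition that if $(f_a b_{x,\delta})(y')\ne 0$ then, because $h_\delta$ and $f_a$ have small support, one can read off an element $\gamma'\in\Gamma_1$ with $\pi_N(\gamma')x=y$ whose Cartan component is within $\epsilon\n a_\a$ of $a$, contradicting the hypothesis. Your write-up just carries out the $\Gamma_N$-coset bookkeeping explicitly (rather than speaking of the support of $b_{x,\delta}*\hat f_a$ as a $\Gamma_N$-invariant function on $G$) and spells out the uniform Lipschitz estimate $\n{a_{u_1^{-1}\gamma u_0}-a_\gamma}_\a=O(\delta)$ that the paper invokes implicitly; both of these are presentational rather than mathematical differences.
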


\begin{proof}
We think of $f_{a}b_{x,\delta}$ as a left $\Gamma_{N}$-invariant function
on $G$, and identify $x,y$ with some lifts of them in $\Gamma_1 \subset G$. The support of $f_{a}b_{x,\delta}= b_{x,\delta}*\hat{f}_a$ is contained in the set 
\[
\left\{ y'\in G:\exists\gamma\in\Gamma_N,x'\in G,\,f_{a}(y^{\prime-1}\gamma x')>0,\,d(x,x')\le\delta\right\}.
\]
Assume by contradiction that $y'\in B_{\delta}(y)$ is
in the support of $f_{a}b_{x,\delta}$, and let $\gamma\in\Gamma_{N}$,
$x'\in G$ be such that $f_{a}(y^{\prime-1}\gamma x')>0,\,d(x,x')\le\delta$.
By the assumption on the support of $f_{a}$, 
\[
\n{a_{y^{\prime-1}\gamma x'}-a}_{\a}<\epsilon/2\n{a}_{\a}.
\]

Look at
\[
\gamma'=y^{-1}\gamma x\in\Gamma_1.
\]

Then $a_{\gamma'}=a_{y^{-1}\gamma x}$, and if $\delta$ is small
enough with respect to $\epsilon\n{a}_\a$, then $\n{a_{y^{-1}\gamma x}-a_{y^{\prime-1}\gamma x'}}_{\a}<\epsilon/2\n a_{\a}$.
Therefore, 
\[
\n{a_{\gamma'}-a}_{\a}<\epsilon\n a_{\a},
\]
and 
\[
\pi_N(\gamma')x = x\gamma^{\prime^-1} = \gamma y.
\]
Since $\gamma\in \Gamma_N$, it says that $\gamma'$ sends $x\in\Gamma_{N}\backslash\Gamma_1$
to $y\in\Gamma_{N}\backslash\Gamma_1$, as needed.
\end{proof}

Let $\pi\in L^{2}(X_N)$, be the uniform probability
distribution, i.e., $\pi(x)=\frac{1}{\mu(x_N)}$.
\begin{lem}
\label{lem:L^2 bound-1}Equation~\eqref{eq:Need to prove} holds if for every $\epsilon>0$, for every $a\in A_{+}$ with $l(a)>(1+\epsilon)\log_{q}([\Gamma_1:\Gamma_N])$,
there exists a probability function $f_{a}\in C_{c}^{\infty}(G)$
supported on $\left\{ g\in G:\n{a_{g}-a}_{\a}<\epsilon\n a_{\a}\right\} $,
such that 
\begin{equation}
\sum_{x\in\Gamma_1/\Gamma_{N}}\n{f_{a}b_{x,\delta}-\pi}_{2}^{2}=o_{\epsilon}(1).\label{eq:L^2 bound-1}
\end{equation}
\end{lem}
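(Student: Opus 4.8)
The plan is to deduce the lower bound on the number of ``good'' pairs from \eqref{eq:L^2 bound-1} by a Bessel-type argument: I will show that the \emph{bad} pairs $(x,y)\in(\Gamma_1/\Gamma_N)^2$ — those admitting no good $\gamma\in\Gamma_1$ for $(x,y,a,\epsilon)$ — number $o_\epsilon(1)[\Gamma_1:\Gamma_N]^2$, which is exactly \eqref{eq:Need to prove}. Fix $\epsilon>0$ and $a\in A_+$ with $l(a)\ge(1+\epsilon)\log_q(\mu(X_N))$. Since $\mu(X_N)\asymp[\Gamma_1:\Gamma_N]$ (as $\mu(K)=1$ and $\mu(\Gamma_N\backslash G)=[\Gamma_1:\Gamma_N]\,\mu(\Gamma_1\backslash G)$), for $N$ large this forces $l(a)>(1+\epsilon/2)\log_q([\Gamma_1:\Gamma_N])$, so I may invoke the hypothesis \eqref{eq:L^2 bound-1} with $\epsilon/2$ in place of $\epsilon$. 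This yields a probability function $f_a\in C_c^\infty(G)$ supported on $\{g:\n{a_g-a}_{\a}<(\epsilon/2)\n{a}_{\a}\}$ with $\sum_{x}\n{f_a b_{x,\delta}-\pi}_2^2=o_\epsilon(1)$, and $f_a b_{x,\delta}-\pi\in L^2(X_N)$ for each $x$ since $f_a$ acts boundedly on $L^2(X_N)$ and $b_{x,\delta},\pi\in L^\infty(X_N)$.

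Next I will extract orthogonality from Lemma~\ref{lem: Optimal lifting to zeros}. Its support hypothesis on $f_a$ is met, so for every bad pair $(x,y)$ the function $f_a b_{x,\delta}$ vanishes on $B_\delta(y)$; since $b_{y,\delta}$ is supported on $\Gamma_N\cdot B_\delta(y)$ and $f_a b_{x,\delta}$ is left $\Gamma_N$-invariant, this gives $\langle f_a b_{x,\delta},b_{y,\delta}\rangle=0$. On the other hand $\langle\pi,b_{y,\delta}\rangle=\mu(X_N)^{-1}\int_{X_N}b_{y,\delta}=\mu(X_N)^{-1}$, so for every bad pair
\[
\bigl|\langle f_a b_{x,\delta}-\pi,\;b_{y,\delta}\rangle\bigr|=\frac{1}{\mu(X_N)}.
\]
Subtracting $\pi$ is essential here: $\langle f_a b_{x,\delta},b_{y,\delta}\rangle$ is of order $\mu(X_N)^{-1}$ for typical pairs too, so it is the equidistribution defect $f_a b_{x,\delta}-\pi$, controlled by \eqref{eq:L^2 bound-1}, that must be small.

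Finally I will combine this with the Bessel inequality of Lemma~\ref{lem:Covering Lemma 2}. For each fixed $x$, that lemma applied to $f=f_a b_{x,\delta}-\pi$ gives $\sum_{y}|\langle f_a b_{x,\delta}-\pi,b_{y,\delta}\rangle|^2\ll\n{f_a b_{x,\delta}-\pi}_2^2$ with implied constant depending only on $\Gamma_1,\delta$, hence uniform in $N$. Summing over $x$ and using \eqref{eq:L^2 bound-1},
\[
\sum_{(x,y)}\bigl|\langle f_a b_{x,\delta}-\pi,b_{y,\delta}\rangle\bigr|^2\ll\sum_x\n{f_a b_{x,\delta}-\pi}_2^2=o_\epsilon(1).
\]
Restricting the left side to bad pairs and inserting the identity above shows $\#\{\text{bad pairs}\}\cdot\mu(X_N)^{-2}=o_\epsilon(1)$, i.e.\ $\#\{\text{bad pairs}\}=o_\epsilon(1)\,\mu(X_N)^2=o_\epsilon(1)[\Gamma_1:\Gamma_N]^2$; subtracting from the total number $[\Gamma_1:\Gamma_N]^2$ of pairs yields \eqref{eq:Need to prove}. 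The deduction itself is soft — the only points demanding care are matching the factor $\epsilon/2$ between the support of $f_a$ and the hypothesis of Lemma~\ref{lem: Optimal lifting to zeros}, keeping the Bessel constant uniform in $N$, and passing between $\mu(X_N)$ and $[\Gamma_1:\Gamma_N]$ — so the genuine obstacle is not in this lemma but in establishing the $L^2$-concentration bound \eqref{eq:L^2 bound-1}, which is where the spectral gap and Weak Injective Radius inputs will be needed.
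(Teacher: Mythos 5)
Your proof is correct and takes essentially the same route as the paper's: you use Lemma~\ref{lem: Optimal lifting to zeros} to show that each bad pair $(x,y)$ forces $f_a b_{x,\delta}-\pi$ to coincide with $-\pi$ near $y$, turn this into a $\gg\mu(X_N)^{-2}$ contribution to $\n{f_a b_{x,\delta}-\pi}_2^2$, sum, and invoke~\eqref{eq:L^2 bound-1}. The only cosmetic difference is in the packaging of the quasi-orthogonality step: the paper lower-bounds $\n{f_a b_{x,\delta}-\pi}_2^2$ directly by the measure of the bad $\delta$-balls (using that these balls have bounded overlap), whereas you pair against $b_{y,\delta}$ and cite the Bessel-type Lemma~\ref{lem:Covering Lemma 2}; since that lemma's proof is itself the bounded-overlap estimate, the two formulations are the same argument. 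Your bookkeeping of the various $\epsilon$'s and of the $\mu(X_N)\asymp[\Gamma_1:\Gamma_N]$ identification is, if anything, a bit more careful than the paper's.
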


\begin{proof}
We assume that Equation~\eqref{eq:L^2 bound-1} holds and want to prove that Equation~\eqref{eq:Need to prove} holds.

Let $\epsilon>0$. For $x,y\in(\Gamma_1/\Gamma_N)^{2}$,
by Lemma~\ref{lem: Optimal lifting to zeros}, if there is no good
$\gamma$ for $(x,y,a,\epsilon/2)$, then for $y'$ in
the $\delta$-neighborhood of $y$ it holds that $\left|(f_{a}b_{x,\delta}-\pi)(y')\right|=\pi(y')=\frac{1}{\mu(X_N)}\asymp[\Gamma_1:\Gamma_N]^{-1}$.
Therefore, for a fixed $x\in\Gamma_1/\Gamma_{N}$ each $y$ without
good $\gamma$ contributes $\gg_{\delta}[\Gamma_1:\Gamma_N]^{-2}$
to $\n{f_{a}b_{x,\delta}-\pi}_{2}^{2}$. Moreover, the contributions are distinct for $y,y'$ whose image in $X_N =\Gamma_N \backslash G / K$ is different.

Therefore,
\begin{align*}
 & \#\left\{ (x,y)\in(\Gamma_1/\Gamma_N)^{2}:\text{There is no good }\gamma\text{ for }(x,y,a,\epsilon/2)\right\} \\
 & \ll\sum_{x\in\Gamma_1/\Gamma_{N}}\n{f_{a}b_{x,\delta}-\pi}_{2}^{2}[\Gamma_1:\Gamma_N]^{2}\\
 & =[\Gamma_1:\Gamma_N]^{2}\sum_{x\in\Gamma_1/\Gamma_{N}}\n{f_{a}b_{x,\delta}-\pi}_{2}^{2}\\
 & =o([\Gamma_1:\Gamma_N]^{2}),
\end{align*}
where we used Equation~\eqref{eq:L^2 bound-1} in the last step. This implies Equation~\eqref{eq:Need to prove} for $\epsilon/2$.
\end{proof}
The following lemma explains where Spectral Gap is used. 
\begin{lem}
\label{lem:L^2 bound-2}Equation~\eqref{eq:Need to prove} follows
from the following two conditions:
\begin{enumerate}
\item Spectral Gap holds for $(\Gamma_N) $.
\item For every $\epsilon>0$, for some $\delta>0$, for every $a\in A_{+}$ with $l(a)\ge(1+\epsilon)\log_{q}([\Gamma_1:\Gamma_N])$,
there exists a probability function $f_{a}\in C_{c}^{\infty}(G)$
supported on $\left\{ g\in G:\n{a_{g}-a}_{\a}<\epsilon\n a_{\a}\right\} $,
such that  for every $\epsilon_{1}>0$,
\begin{equation}
\sum_{x\in\Gamma_1/\Gamma_{N}}\n{f_{a}b_{x,\delta}}_{2}^{2}\ll_{\epsilon,\epsilon_{1}}q^{\epsilon_{1}l(a)}.\label{eq:L^2 bound-1-1}
\end{equation}
\end{enumerate}
\end{lem}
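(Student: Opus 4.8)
I would deduce Equation~\eqref{eq:Need to prove} by verifying the hypothesis of Lemma~\ref{lem:L^2 bound-1}. Fix $\epsilon>0$ and $a\in A_{+}$ with $l(a)\ge(1+\epsilon)\log_{q}([\Gamma_{1}:\Gamma_N])$, and take for $f_{a}$ the probability function supplied by condition~(2), which I may and will assume is $K$-bi-invariant and supported so near the $A_{+}$-part $a$ that $l(g)=l(a)(1+O(\epsilon'))$ and $\nu(g)=\nu(a)+O(\epsilon')\n a_{\a}$ on its support, with $\epsilon'>0$ a small parameter of my choosing (such a support still lies inside $\{g:\n{a_{g}-a}_{\a}<\epsilon\n a_{\a}\}$, since $\n a_{\a}\asymp l(a)$ on the part of that ball bounded away from the walls). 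As $f_{a}$ is a probability function, the operator it induces on $L^{2}(\Gamma_N\backslash G)$ fixes the constant function $\pi$, so $f_{a}b_{x,\delta}-\pi=f_{a}(b_{x,\delta}-\pi)$, and $b_{x,\delta}-\pi$ is orthogonal to the constants inside $L^{2}(X_N)=L^{2}(\Gamma_N\backslash G)^{K}$. Expanding $b_{x,\delta}-\pi=\sum_{j}c_{x,j}v_{j}$ over the $K$-fixed unit vectors $v_{j}$ of the nontrivial spherical subrepresentations $(\pi_{j},V_{j})\subset L^{2}(\Gamma_N\backslash G)$, with $c_{x,j}=\v{b_{x,\delta},v_{j}}$, and using that the operator $f_{a}$ respects the $G$-isotypic decomposition and (being $K$-bi-invariant) sends each $v_{j}$ to $\sigma_{j}v_{j}$ with $\sigma_{j}=\intop_{G}f_{a}(g)\varphi_{\lambda_{j}}(g)\,dg$, where $\varphi_{\lambda_{j}}$ is the spherical function of $\pi_{j}$, I obtain
\[
\sum_{x\in\Gamma_{1}/\Gamma_N}\n{f_{a}b_{x,\delta}-\pi}_{2}^{2}=\sum_{j}|\sigma_{j}|^{2}\,w_{j},\qquad w_{j}:=\sum_{x}|c_{x,j}|^{2}.
\]

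I would then combine three inputs. First, Lemma~\ref{lem:Covering Lemma 2} gives $w_{j}\ll1$ for every $j$ and $\sum_{j}w_{j}=\sum_{x}\n{b_{x,\delta}-\pi}_{2}^{2}\le\sum_{x}\n{b_{x,\delta}}_{2}^{2}\ll[\Gamma_{1}:\Gamma_N]$. Second, the spherical-function estimates of Subsection~\ref{subsec:Lower-Bounds-rank1} together with the tight control of $\nu(g)$ and $l(g)$ on the support of $f_{a}$ give $|\sigma_{j}|\ll_{\epsilon,\epsilon'}q^{\re\lambda_{j}(\nu(a))-l(a)/2+O(\epsilon')l(a)}$, and, via Spectral Gap ($p(\pi_{j})\le p_{0}$) and the criterion $\re\lambda_{j}(\omega_{i})\le(1-2/p(\pi_{j}))\rho(\omega_{i})$, one has $\re\lambda_{j}(\nu(a))-l(a)/2\le-l(a)/p(\pi_{j})\le-l(a)/p_{0}$; moreover for nontempered $\pi_{j}$ the parameter $\lambda_{j}$ is real, so Equation~\eqref{eq:lower bound spherical} supplies the matching lower bound $|\sigma_{j}|\gg q^{\lambda_{j}(\nu(a))-l(a)/2-O(\epsilon')l(a)}$, while for tempered $\pi_{j}$ we have $|\sigma_{j}|\le\intop_G f_{a}\Xi\ll_{\epsilon'}q^{-l(a)(1/2-\epsilon')}$. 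Third, applying condition~(2) with a smaller parameter $\epsilon_{0}\in(0,\epsilon)$ to the collinear element $a'$ with $\nu(a')=\beta\nu(a)$ and $\beta:=(1+\epsilon_{0})/(1+\epsilon)<1$ (so $l(a')=\beta l(a)\ge(1+\epsilon_{0})\log_{q}([\Gamma_{1}:\Gamma_N])$ is admissible), and rewriting it through the same expansion, $\sum_{j}|\sigma_{j}'|^{2}w_{j}=\sum_{x}\n{f_{a'}b_{x,\delta}}_{2}^{2}-[\Gamma_{1}:\Gamma_N]\n\pi_{2}^{2}\ll_{\epsilon_{0},\epsilon_{1}}q^{\epsilon_{1}l(a')}$ for every $\epsilon_{1}>0$, where $\sigma_{j}'$ is the eigenvalue of $f_{a'}$ on $v_{j}$.

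Now I split $\sum_{j}|\sigma_{j}|^{2}w_{j}$ into the tempered and nontempered terms. For tempered $\pi_{j}$, $|\sigma_{j}|^{2}\ll q^{-l(a)(1-O(\epsilon'))}$, so $\sum_{j\text{ temp}}|\sigma_{j}|^{2}w_{j}\le q^{-l(a)(1-O(\epsilon'))}\sum_{j}w_{j}\ll[\Gamma_{1}:\Gamma_N]^{1-(1+\epsilon)(1-O(\epsilon'))}$, which is $o_{\epsilon}(1)$ once $\epsilon'$ is small (this uses $\epsilon>0$, not merely $l(a)\ge\log_{q}[\Gamma_{1}:\Gamma_N]$). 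For nontempered $\pi_{j}$ I compare $\sigma_{j}$ with $\sigma_{j}'$: from the upper bound for $|\sigma_{j}|$, the lower bound for $|\sigma_{j}'|$, and $\lambda_{j}(\nu(a'))=\beta\lambda_{j}(\nu(a))$,
\[
\frac{|\sigma_{j}|^{2}}{|\sigma_{j}'|^{2}}\ll q^{2(1-\beta)(\lambda_{j}(\nu(a))-l(a)/2)+O(\epsilon')l(a)}\ll q^{-\frac{2(1-\beta)}{p_{0}}l(a)+O(\epsilon')l(a)},
\]
so, using the third input,
\[
\sum_{j\text{ nt}}|\sigma_{j}|^{2}w_{j}\ll q^{-\frac{2(1-\beta)}{p_{0}}l(a)+O(\epsilon')l(a)}\sum_{j}|\sigma_{j}'|^{2}w_{j}\ll q^{\left(-\frac{2(1-\beta)}{p_{0}}+O(\epsilon')+\epsilon_{1}\beta\right)l(a)}.
\]
Since $1-\beta=(\epsilon-\epsilon_{0})/(1+\epsilon)$ is a fixed positive number, choosing $\epsilon'$ and $\epsilon_{1}$ small makes the exponent negative, so this block is $o_{\epsilon}(1)$ as $l(a)\to\infty$ as well. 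Adding the two blocks gives $\sum_{x}\n{f_{a}b_{x,\delta}-\pi}_{2}^{2}=o_{\epsilon}(1)$, and Lemma~\ref{lem:L^2 bound-1} yields Equation~\eqref{eq:Need to prove}, hence the Optimal Lifting Property.

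The step I expect to be the main obstacle is the nontempered block, specifically the two-sided control $|\sigma_{j}|\asymp q^{\lambda_{j}(\nu(a))-l(a)/2}$ for spherical $\pi_{j}$ with $p(\pi_{j})$ bounded but possibly near $2$: the upper half is Harish-Chandra's estimate, but the lower half depends on the positivity input $\varphi_{\lambda_{j}}\ge\lambda_{j,P}$ of Equation~\eqref{eq:lower bound spherical}, which is exactly the type of uniform lower bound on matrix coefficients discussed around Conjecture~\ref{conj:mat-coeff-lower-bound}; it is available because $\lambda_{j}$ is real for nontempered spherical unitary representations, and in the rank one and $p$-adic settings (where the spherical dual is explicit) this makes the scheme unconditional. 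The remaining bookkeeping — choosing $f_{a}$ to be $K$-bi-invariant with small support away from the walls and keeping condition~(2) applicable to it and to the shortened collinear element $a'$ — should be routine, as condition~(2) is only ever invoked through the explicit families built in its proof.
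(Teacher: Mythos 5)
Your proposal takes a genuinely different route from the paper, and along the way you impose extra hypotheses and run into the very obstacle the paper's proof is designed to circumvent. The paper never opens the spectral decomposition at all: it simply replaces $f_a$ by $f_a' = A_{a'} * f_a$ with $l(a') = \epsilon' l(a)$ for a small fixed $\epsilon'$. Since $A_{a'}\pi = \pi$ and $f_a b_{x,\delta} - \pi$ is orthogonal to constants, the Spectral Gap together with the operator-norm bound of Corollary~\ref{cor:Hecke Bounds} gives $\n{A_{a'}(f_a b_{x,\delta} - \pi)}_2 \ll q^{-\epsilon' l(a)/p'}\n{f_a b_{x,\delta}-\pi}_2$ with $p' > p_0$ fixed. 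The condition~(2) bound $\sum_x \n{f_a b_{x,\delta}-\pi}_2^2 \ll q^{\epsilon_1 l(a)}$ then yields $\sum_x \n{f_a' b_{x,\delta}-\pi}_2^2 \ll q^{(\epsilon_1 - 2\epsilon'/p')l(a)} = o(1)$ for $\epsilon_1$ small, and $f_a'$ still has the right support. Only \emph{upper} bounds on matrix coefficients are used, and there is no dependence on the structure of the spherical dual.

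Your approach instead splits $\sum_j |\sigma_j|^2 w_j$ into tempered and nontempered blocks and, for the latter, compares the eigenvalue $\sigma_j$ of $f_a$ with the eigenvalue $\sigma_j'$ of $f_{a'}$ for a shortened collinear $a'$, which requires two-sided control $|\sigma_j|\asymp q^{\lambda_j(\nu(a))-l(a)/2}$. This lower bound is exactly the difficult uniform lower bound on spherical matrix coefficients discussed around Conjecture~\ref{conj:mat-coeff-lower-bound}, which the paper explicitly leaves open in the higher-rank Archimedean case. Moreover, your justification — that $\lambda_j$ is real for nontempered spherical unitary representations — is not correct in general: the constraint for a unitary spherical representation in the Archimedean case is only $-\bar\lambda = w\lambda$ for some Weyl element $w$, which allows non-real $\lambda$ in rank $\ge 2$. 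You also silently strengthen the hypothesis: condition~(2) only asserts the existence of \emph{some} probability function $f_a$ with the given support, not a $K$-bi-invariant one with tight control on $l(g)$ and $\nu(g)$, so you would need to first replace $f_a$ by a $K$-bi-invariant approximate substitute and re-verify condition~(2) for it. Finally, applying condition~(2) to $a'$ with a different $\epsilon_0$ changes $\delta$, hence changes $b_{x,\delta}$ and the coefficients $w_j$ — this needs to be addressed before the two sums can be compared coefficient-by-coefficient.

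In short, the paper's proof is strictly simpler and unconditional: one convolution with a short $A_{a'}$ and one norm bound. Your scheme would require the unproven lower-bound input and only closes in the rank-one and standard $p$-adic spherical settings.
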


\begin{proof}
First, we note that $\n{\pi}_{2}^{2}=\intop_{X_{N}}\mu(X_N)^{-2}dx=\mu(X_N)^{-1}\asymp[\Gamma_1:\Gamma_N]^{-1}$.
Therefore, if Equation~\eqref{eq:L^2 bound-1-1} holds then also 
\[
\sum_{x\in\Gamma_1/\Gamma_{N}}\n{f_{a}b_{x,\delta}-\pi}_{2}^{2}\le\sum_{x\in\Gamma_1/\Gamma_{N}}\n{f_{a}b_{x,\delta}}_{2}^{2}+\sum_{x\in\Gamma_1/\Gamma_{N}}\n{\pi}_{2}^{2}\ll_{\epsilon,\epsilon_{1}}q^{\epsilon_{1}l(a)},
\]
which is similar to Equation~\eqref{eq:L^2 bound-1}, but $o(1)$ is
replaced with $O_{\epsilon,\epsilon_{1}}(q^{\epsilon_{1}l(a)})$.

Let $\epsilon'>0$ and let $a\in A_+$ be such that $l(a')=\epsilon'l(a)$. Let $f_{a}^{\prime}=A_{a'}*f_{a}$.
Assuming $\epsilon'$ is small enough, $f_{a}^{\prime}$ is supported
on 
\[
\left\{ g\in G:g\in G:\n{a_{g}-a}_{\a}<2\epsilon\n a_{\a}\right\} .
\]
We will show that Equation~\eqref{eq:L^2 bound-1} holds for $f_{a}^{\prime}$.

Notice that $A_{a'}\pi=\pi$
and $f_{a}b_{x,\delta}-\pi\perp\pi$. By the Spectral Gap assumption
and Corollary~\ref{cor:Hecke Bounds}, for some $p'<\infty$,
\begin{align*}
\n{f_{a}^{\prime}b_{x,\delta}-\pi}_{2} & =\n{A_{a'}(f_{a}b_{x,\delta}-\pi)}_{2}\\
 & \ll q^{-\epsilon'l(a)/p'}\n{f_{a}b_{x,\delta}-\pi}_{2}.
\end{align*}
Therefore, 
\begin{align*}
\sum_{x\in\Gamma_1/\Gamma_{N}}\n{f_{a'}b_{x,\delta}-\pi}_{2}^{2} & \ll q^{-\epsilon'l(a)/p'}
\sum_{x\in\Gamma_1/\Gamma_{N}}\n{f_{a}b_{x,\delta}-\pi}_{2}^{2}\\
 & \ll_{\epsilon,\epsilon_{1}}q^{-\epsilon'l(a)/p'}q^{\epsilon_1l(a)}.
\end{align*}
If we choose $\epsilon_{1}$ small enough, this is $o(1)$ and Equation~\eqref{eq:L^2 bound-1} holds. Applying Lemma~\ref{lem:L^2 bound-1}, we get that Equation~\eqref{eq:Need to prove} holds as well.
\end{proof}

\subsection{Completing the Proof of Theorem~\ref{thm:injective radius implies optimal lifting}}

Recall that Theorem~\ref{thm:injective radius implies optimal lifting}
states that Spectral Gap and the Weak Injective Radius Property imply
the Optimal Lifting Property. In Lemma~\ref{lem:L^2 bound-2} we reduced it to some spectral statement, Equation~\eqref{eq:L^2 bound-1-1}.
We now claim:
\begin{lem}
\label{lem:Weak-Injective-Radius-Implies-Spectral}Assume that the Weak Injective Radius Property holds for a sequence $(\Gamma_n)$. Then for every $\epsilon>0$ sufficiently small, for some $\delta>0$,
for every $a\in A_{+}$ with $l(a)\ge\log_{q}([\Gamma_1:\Gamma_N])$,
there is a probability function $f_{a}\in C_{c}^{\infty}(G)$
supported on $\left\{ g\in G:\n{a_{g}-a}_{\a}<\epsilon\right\}$, such that 
\[
\sum_{x\in\Gamma_1/\Gamma_{N}}\n{f_{a}b_{x,\delta}}_{2}^{2}\ll_{\epsilon,\epsilon_1}q^{\epsilon_{1}l(a)}.
\]
\end{lem}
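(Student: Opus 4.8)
The plan is to choose $f_a$ to be (a smooth approximation of) the normalized characteristic function of a small tube around $KaK$, so that $f_a \ast \hat f_a$ is pointwise dominated by a constant multiple of $q^{-l(a)}\psi_{2l(a)+O(1)}$ up to an $\epsilon$-loss, via the Convolution Lemma~\ref{lem:convolution lemma}. Then I would expand the $L^2$-mass spectrally. Concretely, write $\sum_{x}\n{f_a b_{x,\delta}}_2^2 = \sum_x \langle b_{x,\delta}, (f_a\ast\hat f_a) b_{x,\delta}\rangle$, and bound this by passing to $\psi$. The key computation: since $f_a$ is a probability density supported where $\n{a_g-a}_\a < \epsilon\n a_\a$, and $l(g)\le l(a)+O(\epsilon\n a_\a)$ on that set, one has $f_a(g) \ll q^{-l(a)+O(\epsilon l(a))}\chi_{l(a)+1}(g)$ after absorbing $S(a)\asymp q^{l(a)}$ (far from the walls) into the normalization; then by the last sentence of Lemma~\ref{lem:convolution lemma}, $f_a\ast\hat f_a(g) \ll_{\epsilon} q^{-2l(a)+O(\epsilon l(a))} q^{l(a)\epsilon}\psi_{2l(a)+2}(g) \ll_\epsilon q^{-2l(a)} q^{O(\epsilon l(a))}\psi_{2l(a)+2}(g)$.

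The next step is to feed this into the covering estimate. I would use $\langle b_{x,\delta}, \psi_{\tilde d} b_{x,\delta}\rangle \ll \intop_0^{\tilde d} q^{(\tilde d - d)/2}\langle b_{x,\delta},\chi_{d+2}b_{x,\delta}\rangle\, dd$, combined with Lemma~\ref{lem: counting from operator} which gives $\langle b_{x,\delta},\chi_{d+2}b_{x,\delta}\rangle \ll \boldsymbol N(\Gamma_N, d+4, x)$, and then sum over $x\in\Gamma_N\backslash\Gamma_1$ and apply the Weak Injective Radius Property. Setting $\tilde d = 2l(a)+2$, and using that $l(a)\ge\log_q([\Gamma_1:\Gamma_N])$ so that $\tilde d \le 2\log_q([\Gamma_1:\Gamma_N])$ is \emph{not} automatic — here is the subtlety — I would instead invoke Proposition~\ref{prop:Counting for large} (the extension of Weak Injective Radius to all $d_0\ge 0$, which holds with $\alpha=1$), giving
\[
\frac{1}{[\Gamma_1:\Gamma_N]}\sum_{x}\boldsymbol N(\Gamma_N, d, x) \ll_\epsilon [\Gamma_1:\Gamma_N]^\epsilon q^{d\epsilon}\Big(\frac{q^d}{[\Gamma_1:\Gamma_N]} + q^{d/2}\Big).
\]
Summing the $\psi$ integral then yields $\sum_x \langle b_{x,\delta},\psi_{\tilde d} b_{x,\delta}\rangle \ll_\epsilon q^{O(\epsilon l(a))}\big( q^{2l(a)}/[\Gamma_1:\Gamma_N] \cdot [\Gamma_1:\Gamma_N] + q^{l(a)}\cdot [\Gamma_1:\Gamma_N]\big)$; wait — one must track the $[\Gamma_1:\Gamma_N]$ factors carefully, but the dominant term is $q^{2l(a)}$ (from the $q^d/[\Gamma_1:\Gamma_N]$ piece, which is $\asymp q^{2l(a)}$ since $q^{2l(a)}\ge [\Gamma_1:\Gamma_N]^2 \ge [\Gamma_1:\Gamma_N]\cdot q^{l(a)}$ exactly when $l(a)\ge\log_q[\Gamma_1:\Gamma_N]$). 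Multiplying by the $q^{-2l(a)}$ prefactor from $f_a\ast\hat f_a$, everything cancels up to $q^{O(\epsilon l(a))}$, giving the claimed bound $\ll_{\epsilon,\epsilon_1} q^{\epsilon_1 l(a)}$ once $\epsilon$ is chosen small relative to $\epsilon_1$.

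The main obstacle I anticipate is bookkeeping the interplay between the support width $\epsilon\n a_\a$ of $f_a$, the corresponding length spread, and the normalization constant $\mu(\{\n{a_g-a}_\a<\epsilon\n a_\a\})\asymp_\epsilon q^{l(a)}$ — in particular making sure the $\epsilon$-losses from the Convolution Lemma, from $\intop_{l(a')\le d_0+1}da' \ll_\epsilon q^{d_0\epsilon}$, and from the Weak Injective Radius $[\Gamma_1:\Gamma_N]^\epsilon$ all compound into a single $q^{O(\epsilon l(a))}$ that can be made smaller than $q^{\epsilon_1 l(a)}$. A secondary point requiring care is that $f_a$ must be genuinely smooth and compactly supported (for $b_{x,\delta}$-type manipulations and trace-class arguments), which is routine: take $f_a$ to be a fixed smooth bump profile scaled to the tube. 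I do not expect the spectral/geometric input itself to be difficult — it is essentially the same mechanism as in the proof of Corollary~\ref{cor:trace equivalence} and Lemma~\ref{lem:Varrying d} — only the constant-chasing.
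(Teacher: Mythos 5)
Your proposal follows the paper's proof essentially step by step: choose $f_a$ pointwise dominated by $q^{-l(a)}\chi_{l(a)+O(1)}$, convert $\sum_x\n{f_a b_{x,\delta}}_2^2$ to a $\psi$-type trace via the Convolution Lemma, reduce to $\boldsymbol N(\Gamma_N,d_0,x)$ via Lemma~\ref{lem: counting from operator}, sum over $x$, and invoke Proposition~\ref{prop:Counting for large} because $2l(a)$ typically exceeds the direct range of the Weak Injective Radius Property; the final cancellation is exactly as you describe. The one small bookkeeping point: the lemma as stated asks for a \emph{fixed}-width tube $\left\{ \n{a_g-a}_\a<\epsilon\right\} $ rather than the relative one $\left\{ \n{a_g-a}_\a<\epsilon\n a_\a\right\} $, precisely so that $f_a(g)\ll_\epsilon q^{-l(a)}\chi_{l(a)+1}(g)$ holds with no extra $q^{O(\epsilon l(a))}$ loss and the conclusion holds for every $\epsilon_1>0$; your relative-width version only gives $\epsilon_1\gg\epsilon$, which is enough for Lemma~\ref{lem:L^2 bound-2} but is weaker than the lemma's statement.
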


Notice the function $f_a$ in Lemma~\ref{lem:Weak-Injective-Radius-Implies-Spectral} satisfies slightly stronger conditions than required by Lemma~\ref{lem:L^2 bound-2}, so together they imply Theorem~\ref{thm:injective radius implies optimal lifting}.
\begin{proof}
Let $\epsilon>0$ be sufficiently small. 
By a standard argument, there is a smooth probability function $f_{a}\in C_{c}^{\infty}(G)$ supported on $\left\{ g\in G:\n{a_{g}-a}_{\a}<\epsilon\right\}$, and satisfies $f_{a}(x)\ll q^{-l(a)}\chi_{l(a)+1}(x)$.

Therefore, by the Convolution Lemma~\ref{lem:convolution lemma} and Lemma~\ref{lem: counting from operator},
\begin{align*}
\n{f_{a}b_{x,\delta}}_{2}^{2} & \ll_{\epsilon}q^{-2l(a)}\n{\chi_{l(a)+1}b_{x,\delta}}_{2}^{2}=q^{-2l(a)}\left\langle \chi_{l(a) +1 }b_{x,\delta},\chi_{l(a)+1}b_{x,\delta}\right\rangle \\
 & =q^{-2l(a)}\left\langle \chi_{l(a)+1}*\chi_{l(a)+1}b_{x,\delta},b_{x,\delta}\right\rangle \\
 & \ll_{\epsilon_{1}}q^{-2(1-\epsilon_{1})l(a)}\left\langle \psi_{2(l(a)+2)}b_{x,\delta},b_{x,\delta}\right\rangle \\
 & \ll q^{-2(1-\epsilon_{1})l(a)}\intop_{0}^{2(l(a)+2)}q^{(2l(a)-d_{0})/2}\left\langle \chi_{d_0}b_{x,\delta},b_{x,\delta}\right\rangle dd_{0}.\\
 & \ll q^{-2(1-\epsilon_{1})l(a)}\intop_{0}^{2(l(a)+2)}q^{(2l(a)-d_{0})/2}N(x,\Gamma_N,d_{0}+2)dd_{0}.
\end{align*}

Therefore, applying Proposition~\ref{prop:Counting for large}, we
get 
\begin{align*}
\sum_{x\in\Gamma_1/\Gamma_{N}}\n{f_{a}b_{x,\delta}}_{2}^{2} & \ll_{\epsilon_{1}}q^{-2(1-\epsilon_{1})l(a)}\intop_{0}^{2(l(a)+2)}q^{(2l(a)-d_{0})/2}(q^{d_0}+q^{d_{0}/2}[\Gamma_1:\Gamma_N])dd_{0}\\
 & \ll_{\epsilon_{1}}q^{l(a)\epsilon_{1}}(1+q^{-l(a)}[\Gamma_1:\Gamma_N]).
\end{align*}
Since $l(a)\ge\log_{q}([\Gamma_1:\Gamma_N])$, this is $O(q^{l(a)\epsilon_{1}})$ as needed.
\end{proof}

\section{\label{sec:Density implies Stuff}The Spectral to Geometric Direction}

\subsection{Some Technical Calculations\label{subsec:Sums involving density}}

For an ease of reference, we give here a couple of technical
bounds.
\begin{lem}
\label{lem:intimidating sums lemma}Let $G$ be non-Archimedean or
rank $1$. The following are equivalent for a sequence $(\Gamma_N)$:
\begin{enumerate}
\item The Spherical Density Hypothesis with parameter $\alpha$: for every
$\epsilon>0$ and $p>2$
\[
M(\Pi(G)_{\sph},\Gamma_N,p)\ll_{\epsilon}[\Gamma_1:\Gamma_N]^{1-\alpha(1-\nicefrac{2}{p})+\epsilon}.
\]
\item For every $\epsilon>0$
\[
\sum_{\pi\in\Pi(G)_{\sph},p(\pi)>2}[\Gamma_1:\Gamma_N]^{-1+\alpha(1-2/p(\pi))}m(\pi,\Gamma_N)\ll_{\epsilon}[\Gamma_1:\Gamma_N]^{\epsilon}.
\]
\end{enumerate}
\end{lem}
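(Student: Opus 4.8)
The plan is to show that the two statements are equivalent by a standard dyadic decomposition of the $p$-interval $(2,\infty]$ together with summation by parts in the parameter $p$. Throughout write $I = [\Gamma_1:\Gamma_N]$ for brevity, and for $2 < p \le \infty$ abbreviate $e(p) = 1-\alpha(1-2/p) = 1-\alpha + 2\alpha/p$, so that (1) reads $M(\Pi(G)_\sph,\Gamma_N,p) \ll_\epsilon I^{e(p)+\epsilon}$ and the summand in (2) is $I^{-e(p(\pi))+1}\,m(\pi,\Gamma_N) = I^{\alpha(1-2/p(\pi))}m(\pi,\Gamma_N)$. Recall that in the $p$-adic or rank one case the set $\{\pi \in \Pi(G)_\sph : p(\pi) > 2\}$ is precompact, so all the sums below are finite for each fixed $N$, and $M(\Pi(G)_\sph,\Gamma_N,p)$ is finite and non-increasing in $p$.

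\emph{(2) $\Rightarrow$ (1).} This is the easy direction. Fix $p > 2$ and $\epsilon > 0$. Every $\pi$ contributing to $M(\Pi(G)_\sph,\Gamma_N,p)$ has $p(\pi) \ge p$, hence $\alpha(1-2/p(\pi)) \ge \alpha(1-2/p)$ and therefore $I^{\alpha(1-2/p)} \le I^{\alpha(1-2/p(\pi))}$. Thus
\[
I^{-1+\alpha(1-2/p)} M(\Pi(G)_\sph,\Gamma_N,p) \le \sum_{\pi\in\Pi(G)_\sph,\,p(\pi)\ge p} I^{-1+\alpha(1-2/p(\pi))} m(\pi,\Gamma_N) \ll_\epsilon I^{\epsilon},
\]
which rearranges to $M(\Pi(G)_\sph,\Gamma_N,p) \ll_\epsilon I^{1-\alpha(1-2/p)+\epsilon}$, i.e.\ (1).

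\emph{(1) $\Rightarrow$ (2).} Here is the main work. Fix $\epsilon > 0$; I will bound the sum in (2) by $O_\epsilon(I^{\epsilon})$. First discard the representations with $p(\pi)$ close to $2$: for $2 < p(\pi) \le 2+\epsilon$, say, the exponent $\alpha(1-2/p(\pi))$ is $O(\epsilon)$, and by (1) applied with $p = 2+\epsilon$ there are at most $O_\epsilon(I^{1-\alpha(1-2/(2+\epsilon))+\epsilon}) = O_\epsilon(I^{O(\epsilon)})$ such representations (counted with multiplicity), each contributing a factor $I^{O(\epsilon)}$; so this part is $O_\epsilon(I^{O(\epsilon)})$, acceptable after renaming $\epsilon$. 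For the remaining range $p(\pi) > 2+\epsilon$ I decompose dyadically: let $p_j = 2(1+\epsilon)^j$ for $j \ge 1$ with $p_j \le p_{\max}$, where $p_{\max}$ is a uniform upper bound for $p(\pi)$ on the precompact set $\{p(\pi) > 2\}$ (this is exactly where precompactness in the $p$-adic/rank-one case is used). On the shell $p_j < p(\pi) \le p_{j+1}$ the exponent $\alpha(1-2/p(\pi))$ differs from $\alpha(1-2/p_{j+1})$ by $O(\epsilon)$, so the contribution of that shell is bounded by
\[
I^{O(\epsilon)}\, I^{-1+\alpha(1-2/p_{j+1})}\!\!\sum_{\pi:\,p(\pi) > p_j}\!\! m(\pi,\Gamma_N) = I^{O(\epsilon)}\, I^{-1+\alpha(1-2/p_{j+1})}\, M(\Pi(G)_\sph,\Gamma_N,p_j),
\]
and by (1) with $p = p_j$ this is $\ll_\epsilon I^{O(\epsilon)}\, I^{-1+\alpha(1-2/p_{j+1})}\, I^{1-\alpha(1-2/p_j)+\epsilon} = I^{O(\epsilon)}\, I^{\alpha(2/p_j - 2/p_{j+1})}$. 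Since $2/p_j - 2/p_{j+1} = (2/p_j)(1 - (1+\epsilon)^{-1}) = O(\epsilon)$ uniformly in $j$, each shell contributes $O_\epsilon(I^{O(\epsilon)})$. Finally the number of shells is $\ll \log(p_{\max})/\log(1+\epsilon) = O_\epsilon(1)$, so summing over $j$ multiplies by an $\epsilon$-dependent constant only. Collecting the three pieces and relabeling the total exponent of $\epsilon$ gives (2).

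\emph{Main obstacle.} The only genuinely delicate point is the interchange of the finite outer sum over $\pi$ with the dyadic grouping and the uniform control of the error exponents: one must make sure that the $O(\epsilon)$ slack accumulated on each of the $O_\epsilon(1)$ shells does not blow up, which is why I choose the dyadic ratio to be $1+\epsilon$ (so that $2/p_j - 2/p_{j+1} = O(\epsilon)$) rather than a fixed ratio like $2$. The appeal to precompactness to get the finite cutoff $p_{\max}$ is essential and is the reason the lemma is stated only in the $p$-adic or rank one case; in the general Archimedean setting one would need the extra $\lambda(\pi)$-dependent weight, and the bookkeeping would have to be redone accordingly.
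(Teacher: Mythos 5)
Your route---a dyadic decomposition of the $p$-interval---is genuinely different from the paper's, which performs partial summation (``integration by parts'') of the monotone step function $M(\Pi(G)_{\sph},\Gamma_N,p)$ against the kernel $[\Gamma_1:\Gamma_N]^{-1+\alpha(1-2/p)}$ and finishes by observing that $\int_2^\infty p^{-2}\,dp$ converges. Both strategies can work, and your (2)~$\Rightarrow$~(1) direction is fine. However, the (1)~$\Rightarrow$~(2) argument as written has two real errors.

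First, the near-$2$ range: applying (1) with $p=2+\epsilon$ bounds $M(\Pi(G)_{\sph},\Gamma_N,2+\epsilon)=\sum_{p(\pi)\ge2+\epsilon}m(\pi,\Gamma_N)$, which is \emph{disjoint} from the set $\{\pi:2<p(\pi)\le2+\epsilon\}$ you are trying to count. Even setting that aside, the claimed simplification $I^{1-\alpha(1-2/(2+\epsilon))+\epsilon}=I^{O(\epsilon)}$ is false: $1-\alpha(1-\tfrac{2}{2+\epsilon})+\epsilon = 1+\epsilon-\tfrac{\alpha\epsilon}{2+\epsilon}\approx 1$, not $O(\epsilon)$. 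The conclusion is still correct, but for a different reason: each $\pi$ with $2<p(\pi)\le2+\epsilon$ contributes a per-term factor $I^{-1+\alpha(1-2/p(\pi))}\le I^{-1+\epsilon}$, the total multiplicity over all $\pi$ with $p(\pi)>2$ is at most $\lim_{p\to2^+}M(\Pi(G)_{\sph},\Gamma_N,p)\ll_\epsilon I^{1+\epsilon}$ by (1) (with the same implied constant for all $p>2$), and the product is $\ll_\epsilon I^{2\epsilon}$. Second, precompactness of $\{\pi\in\Pi(G)_{\sph}:p(\pi)>2\}$ does \emph{not} give a finite $p_{\max}$---the trivial representation is spherical and has $p(\pi_{\triv})=\infty$---so your dyadic scale a priori has infinitely many shells and the ``$O_\epsilon(1)$ shells'' step is unjustified. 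The repair is to cap the decomposition at some $p^*\asymp 1/\epsilon$ and treat all $\pi$ with $p(\pi)>p^*$ as a single final shell: there the per-term factor is $\le I^{-1+\alpha}$, the count is $\le M(\Pi(G)_{\sph},\Gamma_N,p^*)\ll_\epsilon I^{1-\alpha+2\alpha/p^*+\epsilon}$, and the product is $I^{2\alpha/p^*+\epsilon}=I^{O(\epsilon)}$. With these two repairs your dyadic argument becomes a valid alternative to the paper's partial summation.
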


\begin{proof}
The fact that (2) implies (1) is simple and is left to the reader.

The fact that (1) implies (2) follows from a standard trick of integration
by parts (\citep[Theorem 421]{hardy1979introduction}): 
\begin{align*}
 & \sum_{\pi\in\Pi(G)_{\sph},p(\pi)>2}[\Gamma_1:\Gamma_N]^{-1+\alpha(1-2/p(\pi))}m(\pi,\Gamma_N)\\
 & =\lim_{p\to2,p>2}M(\Pi(G)_{\sph},\Gamma_N,p)[\Gamma_1:\Gamma_N]^{-1+\alpha(1-\nicefrac{2}{p})}\\
 & \,\,\,+\intop_{2}^{\infty}M(\Pi(G)_{\sph},\Gamma_N,p)\frac{\partial}{\partial p}([\Gamma_1:\Gamma_N]^{-1+\alpha(1-\nicefrac{2}{p})})dp\\
 & =\lim_{p\to2,p>2}M(\Pi(G)_{\sph},\Gamma_N,p)[\Gamma_1:\Gamma_N]^{-1}\\
 & \,\,\,+\intop_{2}^{\infty}M(\Pi(G)_{\sph},\Gamma_N,p)2p^{-2}\alpha\ln([\Gamma_1:\Gamma_N])[\Gamma_1:\Gamma_N]^{-1+\alpha(1-\nicefrac{2}{p})}dp\\
 & \ll_{\epsilon}[\Gamma_1:\Gamma_N]^{\epsilon}(1+\intop_{2}^{\infty}p^{-2}dp)\asymp[\Gamma_1:\Gamma_N]^{\epsilon}.
\end{align*}
\end{proof}
For the higher rank Archimedean case we have:
\begin{lem}
\label{lem:intimidating sums lemma high rank}Let $G$ be Archimedean.
The following are equivalent for a sequence $(\Gamma_N) $:
\begin{enumerate}
\item The Spherical Density Hypothesis with parameter $\alpha$: for some
$L>0$ large enough and for every $\lambda\ge0$, $N\ge1$, $p>2$,
$\epsilon>0$, 
\[
M(\Pi(G)_{\sph},\Gamma_N,p,\lambda)\ll_{\epsilon}(1+\lambda)^{L}[\Gamma_1:\Gamma_N]^{1-\alpha(1-\nicefrac{2}{p})+\epsilon}
\]
\item For some $L'>0$ large enough and every $\epsilon>0$,
\begin{equation}\label{eq:intimidating sum high rank}
\sum_{\pi\in\Pi(G)_{\sph},p(\pi)>2}[\Gamma_1:\Gamma_N]^{-1+\alpha(1-\nicefrac{2}{p(\pi)})}(1+\lambda(\pi))^{-L'}m(\pi,\Gamma_N)\ll_{\epsilon}[\Gamma_1:\Gamma_N]^{\epsilon}.    
\end{equation}
\end{enumerate}
\end{lem}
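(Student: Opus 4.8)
The plan is to follow the proof of Lemma~\ref{lem:intimidating sums lemma} almost verbatim, inserting a dyadic decomposition in the Casimir eigenvalue $\lambda(\pi)$ to absorb the extra factor $(1+\lambda(\pi))^{-L'}$ appearing in \eqref{eq:intimidating sum high rank}.

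\emph{From $(2)$ to $(1)$.} This is the easy direction. Fix $\lambda\ge0$ and $p>2$. Since $\alpha>0$, $[\Gamma_1:\Gamma_N]\ge1$, and $x\mapsto1-2/x$ is increasing, every $\pi$ with $p(\pi)\ge p$ satisfies $[\Gamma_1:\Gamma_N]^{-1+\alpha(1-2/p(\pi))}\ge[\Gamma_1:\Gamma_N]^{-1+\alpha(1-2/p)}$, and every $\pi$ with $\lambda(\pi)\le\lambda$ satisfies $(1+\lambda(\pi))^{-L'}\ge(1+\lambda)^{-L'}$; hence, enlarging the index set on the right to all $\pi\in\Pi(G)_{\sph}$ with $p(\pi)>2$,
\[
M(\Pi(G)_{\sph},\Gamma_N,p,\lambda)\le[\Gamma_1:\Gamma_N]^{1-\alpha(1-2/p)}(1+\lambda)^{L'}\sum_{\pi\in\Pi(G)_{\sph},\,p(\pi)>2}[\Gamma_1:\Gamma_N]^{-1+\alpha(1-2/p(\pi))}(1+\lambda(\pi))^{-L'}m(\pi,\Gamma_N),
\]
and $(2)$ gives $(1)$ with $L=L'$.

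\emph{From $(1)$ to $(2)$.} Split $\Pi(G)_{\sph}$ into the blocks $\Lambda_0=\{\pi:\lambda(\pi)<1\}$ and $\Lambda_k=\{\pi:2^{k-1}\le\lambda(\pi)<2^{k}\}$ for $k\ge1$. By $(1)$ applied in the limit $p\to2^{+}$, each $\Lambda_k$ contains only finitely many $\pi$ with $p(\pi)>2$ and $m(\pi,\Gamma_N)>0$, and on $\Lambda_k$ the weight is $(1+\lambda(\pi))^{-L'}\asymp2^{-kL'}$; so it suffices to show
\[
S_k:=\sum_{\pi\in\Lambda_k,\,p(\pi)>2}[\Gamma_1:\Gamma_N]^{-1+\alpha(1-2/p(\pi))}m(\pi,\Gamma_N)\ll_{\epsilon}2^{kL}[\Gamma_1:\Gamma_N]^{\epsilon}
\]
uniformly in $k$, for then the left-hand side of \eqref{eq:intimidating sum high rank} is $\ll_\epsilon[\Gamma_1:\Gamma_N]^{\epsilon}\sum_{k\ge0}2^{k(L-L')}$, a convergent geometric series once $L'>L$. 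To bound $S_k$ we use $M(\Lambda_k,\Gamma_N,p)\le M(\Pi(G)_{\sph},\Gamma_N,p,2^{k})\ll_{\epsilon}(1+2^{k})^{L}[\Gamma_1:\Gamma_N]^{1-\alpha(1-2/p)+\epsilon}$ from $(1)$, and feed it into the Abel summation / integration-by-parts argument (\citep[Theorem 421]{hardy1979introduction}) of Lemma~\ref{lem:intimidating sums lemma}, in place of the bound on $M(\Pi(G)_{\sph},\Gamma_N,p)$ used there. The boundary term as $p\to2^{+}$ contributes $\ll_\epsilon(1+2^{k})^{L}[\Gamma_1:\Gamma_N]^{\epsilon}$, and the integral term is
\[
\int_2^\infty M(\Lambda_k,\Gamma_N,p)\,\frac{\partial}{\partial p}\!\left([\Gamma_1:\Gamma_N]^{-1+\alpha(1-2/p)}\right)dp\ll_{\epsilon}(1+2^{k})^{L}[\Gamma_1:\Gamma_N]^{\epsilon}\ln[\Gamma_1:\Gamma_N]\int_2^\infty\frac{dp}{p^2}\ll_{\epsilon}(1+2^{k})^{L}[\Gamma_1:\Gamma_N]^{2\epsilon};
\]
relabelling $\epsilon$ gives the desired bound on $S_k$. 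Representations with $p(\pi)=\infty$ (in the spherical setting only the trivial one, lying in $\Lambda_0$) are split off exactly as in Lemma~\ref{lem:intimidating sums lemma}; they contribute $[\Gamma_1:\Gamma_N]^{-1+\alpha}\le1$.

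\emph{Main obstacle.} The only genuinely new point relative to Lemma~\ref{lem:intimidating sums lemma} is the convergence of the dyadic sum over $\lambda$, which is precisely what forces the inequality $L'>L$: the polynomial-in-$\lambda$ loss $(1+\lambda)^{L}$ built into the Spherical Density Hypothesis must be strictly beaten by the weight $(1+\lambda)^{-L'}$ in \eqref{eq:intimidating sum high rank}. Everything else is the same one-variable integration-by-parts bookkeeping as in the rank one or $p$-adic case.
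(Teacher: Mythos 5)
Your proof is correct, and the $(2)\Rightarrow(1)$ direction matches the paper's (which is left to the reader there). For $(1)\Rightarrow(2)$, however, you take a genuinely different route: the paper applies a two-variable Abel summation (integration by parts jointly in $p$ and $\lambda$), producing four terms — a double boundary, two mixed boundary-integral terms, and a double integral — each estimated separately. You instead dyadically decompose in $\lambda(\pi)$ into blocks $\Lambda_k$, on each of which the weight $(1+\lambda(\pi))^{-L'}\asymp 2^{-kL'}$, and then apply the one-variable Abel summation from the rank-one/non-Archimedean Lemma~\ref{lem:intimidating sums lemma} block by block, closing with a geometric series in $k$. The two approaches are equivalent in substance, but yours is arguably cleaner: it factors the problem into the already-established one-variable computation, avoids the explicit two-variable integration-by-parts identity (whose justification of absolute convergence is a little delicate), and makes the origin of the constraint $L'>L$ transparent (the paper takes $L'>L+10$, a looser safety margin). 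One small point worth making explicit: the finiteness of each $\Lambda_k$ (or more precisely the absolute convergence justifying the rearrangement) is most cleanly supplied by Weyl's law or uniform admissibility rather than by the limit $p\to2^+$ of hypothesis $(1)$ alone, which the paper also gestures at; this is a minor gloss in both versions.
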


\begin{proof}
The fact that (2) implies (1) is again simple and is left to the reader.

The fact that (1) implies (2) is again done by integration by parts,
with two variables. Let us state it formally. If $((p_{i},\lambda_{i}))_{i=1}^{\infty}\subset(2,\infty)\times(0,\infty)$
is a sequence of values without limit points, $f(p,\lambda)$
is a non-negative smooth function, and $M'(p,\lambda)=\#\left\{ i:p_{i}\ge p,\lambda_{i}\le\lambda\right\}$
then whenever everything absolutely converges,
\begin{align*}
\sum_{i}f(p_{i},\lambda_{i}) & =\lim_{p\to2}\lim_{\lambda\to\infty}M'(p,\lambda)f(p,\lambda)-\lim_{p\to2}\intop_{0}^{\infty}M'(p,\lambda)\frac{\partial}{\partial\lambda}f(p,\lambda)d\lambda\\
 & \,\,\,+\lim_{\lambda\to\infty}\intop_{2}^{\infty}M'(p,\lambda)\frac{\partial}{\partial p}f(p,\lambda)dp\\
 & \,\,\,-\intop_{0}^{\infty}\intop_{2}^{\infty}M'(p,\lambda)\frac{\partial}{\partial\lambda}\frac{\partial}{\partial p}f(p,\lambda)dpd\lambda.
\end{align*}

Applying the integration by parts formula to the left-hand side
of Equation~\eqref{eq:intimidating sum high rank}, with $M'(p,\lambda)=M(\Pi(G)_{\sph},\Gamma_N,p,\lambda)$,
$f(p,\lambda)=[\Gamma_1:\Gamma_N]^{-1+\alpha(1-\nicefrac{2}{p})}(1+\lambda)^{-L'}$,
we get 
\begin{align}
L.H.S & =\lim_{p\to2}\lim_{\lambda\to\infty}M(\Pi(G)_{\sph},\Gamma_N,p,\lambda)[\Gamma_1:\Gamma_N]^{-1+\alpha(1-\nicefrac{2}{p})}(1+\lambda)^{-L'}\nonumber \\
 & +\lim_{p\to2}\intop_{0}^{\infty}M(\Pi(G)_{\sph},\Gamma_N,p,\lambda)[\Gamma_1:\Gamma_N]^{-1+\alpha(1-\nicefrac{2}{p})}L'(1+\lambda)^{-L'-1}d\lambda\nonumber \\
 & +\lim_{\lambda\to\infty}\intop_{2}^{\infty}M(\Pi(G)_{\sph},\Gamma_N,p,\lambda)2\alpha p^{-2}\ln([\Gamma_1:\Gamma_N])[\Gamma_1:\Gamma_N]^{-1+\alpha(1-\nicefrac{2}{p})}(1+\lambda)^{-L'}dp\label{eq:IntegrationX2}\\
 & +\intop_{0}^{\infty}\intop_{2}^{\infty}M(\Pi(G)_{\sph},\Gamma_N,p,\lambda)2\alpha p^{-2}\ln([\Gamma_1:\Gamma_N])[\Gamma_1:\Gamma_N]^{-1+\alpha(1-\nicefrac{2}{p})}L'(1+\lambda)^{-L'-1}dpd\lambda\nonumber 
\end{align}

Note that by the Spherical Density Hypothesis, 
\[
\lim_{p\to 2}M(\Pi(G)_{\sph},\Gamma_N,p,\lambda)\ll_{\epsilon}[\Gamma_1:\Gamma_N]^{1+\epsilon}(1+\lambda)^{L}.
\]

A similar and more precise bound may be derived directly from Weyl's law (\citep{duistermaat1979spectra})). Note also that we may assume that $L'>L+10$.

Therefore, the first summand in Equation~\eqref{eq:IntegrationX2} is
bounded by
\[
\ll_{\epsilon}\lim_{\lambda\to\infty}[\Gamma_1:\Gamma_N]^{1+\epsilon}[\Gamma_1:\Gamma_N](1+\lambda)^{L}(1+\lambda)^{-L'}=0.
\]

The second summand is bounded similarly by 
\[
\ll_{\epsilon}\intop_{0}^{\infty}[\Gamma_1:\Gamma_N]^{1+\epsilon}[\Gamma_1:\Gamma_N]^{-1}(1+\lambda)^{L}L'(1+\lambda)^{-L'-1}d\lambda\ll[\Gamma_1:\Gamma_N]^{\epsilon},
\]

The third summand is bounded by 
\begin{align*}
\ll_{\epsilon} & \lim_{\lambda\to\infty}\intop_{2}^{\infty}(1+\lambda)^{L}[\Gamma_1:\Gamma_N]^{1-\alpha(1-\nicefrac{2}{p})+\epsilon}2\alpha p^{-2}\ln([\Gamma_1:\Gamma_N])[\Gamma_1:\Gamma_N]^{-1+\alpha(1-\nicefrac{2}{p})}(1+\lambda)^{-L'}dp\\
\ll & \lim_{\lambda\to\infty}(1+\lambda)^{L-L'}\intop_{2}^{\infty}[\Gamma_1:\Gamma_N]^{\epsilon}p^{-2}dp=0,
\end{align*}

The final summand is bounded by 
\begin{align*}
\ll_{\epsilon} & \intop_{0}^{\infty}\intop_{2}^{\infty}(1+\lambda)^{L}[\Gamma_1:\Gamma_N]^{1-\alpha(1-\nicefrac{2}{p})+\epsilon}2\alpha p^{-2}\ln([\Gamma_1:\Gamma_N])[\Gamma_1:\Gamma_N]^{-1+\alpha(1-\nicefrac{2}{p})}L'(1+\lambda)^{-L'-1}dpd\lambda\\
\ll & [\Gamma_1:\Gamma_N]^{\epsilon}\intop_{0}^{\infty}L'(1+\lambda)^{L-L'-1}d\lambda\intop_{2}^{\infty}p^{-2}dp\ll[\Gamma_1:\Gamma_N]^{\epsilon}.
\end{align*}
By combining all the bounds we get Equation~\eqref{eq:intimidating sum high rank}.
\end{proof}

\subsection{\label{subsec:Proof-of-density implies injective radius}Proof of
Theorem~\ref{thm:density implies counting}}

In this subsection we prove Theorem~\ref{thm:density implies counting}, or more explicitly we prove that the Spherical Density Hypothesis with parameter $\alpha$ implies the Weak Injective Radius Property with parameter $\alpha$. The most natural proof of the claim is to analyze the spectral side of the pre-trace formula for the function $\chi_{d_0}$. We will
instead discretize and prove directly the Weak Injective Radius Property.
\begin{proof}
Recall that we should prove that for every $d_{0}\le2\alpha\log_{q}([\Gamma_1:\Gamma_N])$,
$\epsilon>0$,
\[
\frac{1}{[\Gamma_1:\Gamma_N]}\sum_{x\in\Gamma_1/\Gamma_{N}}\boldsymbol{N}(\Gamma_N,d_{0},x)dx\ll_{\epsilon}[\Gamma_1:\Gamma_N]^{\epsilon}q^{d_{0}(\nicefrac{1}{2}+\epsilon)}.
\]

For $x\in\Gamma_{N}\backslash\Gamma_1$, let $b_{x,\delta}\in L^{2}(X_N)$
as in Subsection~\ref{subsec:Spectral-Decomposition-of-small-ball}.

Recall from Lemma~\ref{lem: counting from operator} that
\[
\boldsymbol{N}(X_{N},d_{0},x)\ll\left\langle b_{x,\delta},\chi_{d_{0}+2}b_{x,\delta}\right\rangle .
\]

Let $\{ \pi_{i}\} _{i=1}^{T}$ be an orthogonal basis of irreducible
subrepresentations of $L^{2}(\Gamma_{N}\backslash G)$ with
$K$-fixed vectors and $p(\pi_{i})>2$ ($T$ is finite in the $p$-adic or real rank $1$ case, otherwise $T$ may be $\infty$).
Recall that the set of $K$-invariant vectors of each irreducible representation $\pi_{i}$ is a one dimensional vector space. Let $u_{i}\in L^{2}(X_N)$ be a $K$-invariant vector in $\pi_{i}$ with $\n{u_{i}}=1$. Let
$p_{0}=2$ and let $V_{0}$ the orthogonal complement of $\spann\left\{ \pi\right\} \oplus(\oplus_{i}\spann\left\{ u_{i}\right\} )$
in $L^{2}(X_N)$ ($\pi$ here is the uniform probability
function on $X_{N}$). Note that the $G$-representation generated
by $V_{0}$ is ($2$-)tempered.

Decompose $b_{x,\delta}=\pi+v_{0,x}+v_{1,x}+...$, according to the
decomposition $L^{2}(X_N)=\spann\left\{ \pi\right\} \oplus V_{0}\oplus \spann{u_1}\oplus...$,
i.e., for $i=1,2,...,T$, $v_{i,x}=\langle u_{i},b_{x,\delta}\rangle u_{i}$.
Then,
\begin{align}
\sum_{x\in\Gamma_1/\Gamma_{N}}\left\langle b_{x,\delta},\chi_{d_{0}+1}b_{x,\delta}\right\rangle  & =\sum_{x\in\Gamma_1/\Gamma_{N}}\left\langle \pi,\chi_{d_{0}+1}\pi\right\rangle +\label{eq:density implies diophatine}\\
 & \sum_{x\in\Gamma_1/\Gamma_{N}}\left\langle v_{0,x},\chi_{d_{0}+1}v_{0,x}\right\rangle +\sum_{x\in\Gamma_1/\Gamma_{N}}\sum_{i=1}^{T}\left\langle v_{i,x},\chi_{d_{0}+1}v_{i,x}\right\rangle .\nonumber 
\end{align}

The first summand in Equation~\eqref{eq:density implies diophatine}
equals
\begin{align*}
\lambda_{\triv}(\chi_{d_{0}+C})[\Gamma_1:\Gamma_N]\n{\pi}_{2}^{2} & \ll_{\epsilon}q^{d_{0}(1+\epsilon)}[\Gamma_1:\Gamma_N]\mu^{-1}(X_N)\ll q^{d_{0}(1+\epsilon)}\\
 & \ll q^{d_{0}(\nicefrac{1}{2}+\epsilon)}[\Gamma_{1}:\Gamma_N],
\end{align*}
where $\lambda_{\triv}(\chi_{d_{0}+C})$ is the trivial
eigenvalue of $\chi_{d_{0}+C}$, and we used the fact that $d_{0}\le2\alpha\log_{q}([\Gamma_1:\Gamma_N])$.

Since $V_{0}$ spans a tempered representation, by Corollary~\ref{cor:Hecke Bounds} the second summand in Equation~\eqref{eq:density implies diophatine}
is bounded by
\[
\ll_{\epsilon}q^{d_{0}(\nicefrac{1}{2}+\epsilon)}\sum_{x\in\Gamma_1/\Gamma_N}\n{v_{0,x}}_{2}^{2}\ll q^{d_{0}(\nicefrac{1}{2}+\epsilon)}[\Gamma_{1}:\Gamma_N]\n{b_{x,\delta}}_{2}^{2}\ll q^{d_{0}(\nicefrac{1}{2}+\epsilon)}[\Gamma_{1}:\Gamma_N].
\]

To analyze the final summand in Equation~\eqref{eq:density implies diophatine},
we first assume that $G$ is $p$-adic or rank $1$. By Lemma~\ref{lem:Covering Lemma 2},
$\sum_{x\in\Gamma_1/\Gamma_N}\n{v_{i,x}}_{2}^{2}\ll_{\delta}1$. 
Write $d_{0}=2\alpha'\log_{q}([\Gamma_{1}:\Gamma_N])$,
for $\alpha'\le\alpha$. Then using Corollary~\ref{cor:Hecke Bounds},
\begin{align*}
\sum_{x\in\Gamma_1/\Gamma_N}\sum_{i=1}^{T}\left\langle v_{i,x},\chi_{d_{0}+1}v_{i,x}\right\rangle  & \ll_{\epsilon}\sum_{x\in\Gamma_1/\Gamma_N}\sum_{i=1}^{T}q^{d_{0}(1-1/p(\pi_{i})+\epsilon)}\n{v_{i,x}}_{2}^{2}\\
 & \ll \sum_{i=1}^{T}q^{d_{0}(1-1/p(\pi_{i})+\epsilon)}\\
 & \ll\sum_{\pi\in\Pi(G)_{\sph},p(\pi)>2} m(\pi,\Gamma_N)q^{d_{0}(1-1/p(\pi)+\epsilon)}\\
 & =\sum_{\pi\in\Pi(G)_{\sph},p(\pi)>2} m(\pi,\Gamma_N)[\Gamma_{1}:\Gamma_N]^{2\alpha'(1-1/p(\pi)+\epsilon)}.
\end{align*}
Applying Lemma~\ref{lem:intimidating sums lemma} (for $\alpha'\le\alpha$)
and arranging, we get
\[
\ll_{\epsilon}[\Gamma_{1}:\Gamma_N]^{1+\alpha'+\epsilon}=q^{d_{0}(\nicefrac{1}{2}+\epsilon)}[\Gamma_{1}:\Gamma_N].
\]
This finishes the proof of the non-Archimedean and the rank $1$ case.

For the Archimedean high-rank case, by Lemma~\ref{lem:Covering Lemma 2},
for $L$ large enough 
\[
\sum_{x\in\Gamma_1/\Gamma_N}\n{v_{i,x}}_{2}^{2}\ll_{L}(1+\lambda(\pi_{i}))^{-L}.
\]

The rest of the argument is as above, but using Lemma~\ref{lem:intimidating sums lemma high rank}
instead of Lemma~\ref{lem:intimidating sums lemma}.
\end{proof}
\begin{rem}
The proof of Theorem~\ref{thm:density implies counting} for hyperbolic
spaces actually works for non-compact quotients as well. The reason
is that the entire continuous spectrum is tempered, i.e., contained in $V_{0}$ (\citep{lax1982asymptotic}).
For more general results about hyperbolic surfaces, see \citep{golubev2019cutoff}.
\end{rem}

\subsection{\label{subsec:Density implies optimal lifting}A strong version of
Theorem~\ref{thm:injective radius implies optimal lifting}, assuming
the Spherical Density Hypothesis}

The goal of this Subsection is to prove the following theorem:
\begin{thm}
\label{thm:spherical density implies optimal lifting}Let $(\Gamma_N) $
be a sequence satisfying the Spherical Density Hypothesis (with parameter
$\alpha=1$) and Spectral Gap. Then for every $\epsilon>0$,
for every $a\in A_{+}$ with $l(a)\ge(1+\epsilon)\log_{q}(\mu(X_N))$,
\[
\#\left\{ (x,y)\in(\Gamma_1/\Gamma_N)^{2}:\exists\gamma\in\Gamma_1\text{ s.t. }\pi_{N}(\gamma)x=y,\n{a_{\gamma}-a}_{\a}<1 \right\} =(1-o_{\epsilon}(1))[\Gamma_{1}:\Gamma_N]^{2}.
\]
\end{thm}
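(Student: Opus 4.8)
Here is the plan.

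\medskip

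The plan is to run the same reduction that proves Theorem~\ref{thm:injective radius implies optimal lifting} in Section~\ref{sec:Injective Radius implies Lifitng}, but to replace the ``smoothing by $A_{a'}$'' step of Lemma~\ref{lem:L^2 bound-2} by a direct spectral estimate built from the Spherical Density Hypothesis together with Spectral Gap; this is what lets us shrink the admissible radius from $\epsilon\n a_\a$ down to the constant $1$. Write $Q=[\Gamma_1:\Gamma_N]$. Since $\log_q\mu(X_N)=\log_qQ+O(1)$ we may assume $l(a)\ge(1+\epsilon')\log_qQ$ with $\epsilon'=\epsilon/2$ and $Q$ large. I fix $\delta>0$ as in Subsection~\ref{subsec:Spectral-Decomposition-of-small-ball}, small enough that the Cartan–projection perturbation argument of Lemma~\ref{lem: Optimal lifting to zeros} applies, and I take $f_a\in C_c^\infty(K\backslash G/K)$ to be a probability function supported on $\{g:\n{a_g-a}_\a<1/2\}$. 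Repeating the proofs of Lemma~\ref{lem: Optimal lifting to zeros} and Lemma~\ref{lem:L^2 bound-1} essentially verbatim, with the radius $\epsilon\n a_\a$ replaced by the constant $1$ throughout (so that a point of $\operatorname{supp}f_a$ perturbed by $\delta$ still has Cartan projection within $1$ of $a$), reduces the theorem to the single inequality
\[
\sum_{x\in\Gamma_N\backslash\Gamma_1}\n{f_ab_{x,\delta}-\pi}_2^2=o_\epsilon(1),
\]
where $\pi$ is the constant function on $X_N$, for which $f_a\pi=\pi$.

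\medskip

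To prove this I would decompose $b_{x,\delta}=\pi+v_{0,x}+\sum_iv_{i,x}$ exactly as in the proof of Theorem~\ref{thm:density implies counting}: the $u_i$ form an orthonormal family of $K$-fixed vectors spanning the non-tempered spherical irreducible subrepresentations $\pi_i\subset L^2(\Gamma_N\backslash G)$ (so $p(\pi_i)>2$), $v_{i,x}=\v{u_i,b_{x,\delta}}u_i$, and $v_{0,x}$ is the projection of $b_{x,\delta}$ onto $V_0$, whose $G$-span is tempered (and non-trivial, as $\pi$ has been removed). Since $f_a$ is $K$-bi-invariant, the operator $f_a$ maps $L^2(X_N)$ into itself and maps every $G$-subrepresentation into itself, hence preserves $\C\pi$, $V_0$, and each line $\C u_i$; therefore $\n{f_ab_{x,\delta}-\pi}_2^2=\n{f_av_{0,x}}_2^2+\sum_i\n{f_av_{i,x}}_2^2$, and I treat the two pieces separately.

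\medskip

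For the tempered part: on $\operatorname{supp}f_a$ one has $l(g)\ge l(a)-O(1)$, so by Corollary~\ref{cor:pi(Aa) bound} and Theorem~\ref{thm: Explicit Bounds} (with exponent $\le 2$) the Hecke eigenvalue of $f_a$ on any tempered $K$-fixed vector is $\ll_{\epsilon_1}q^{-l(a)(1/2-\epsilon_1)}$; hence $\n{f_av_{0,x}}_2^2\ll_{\epsilon_1}q^{-l(a)(1-2\epsilon_1)}\n{b_{x,\delta}}_2^2$, and summing over the $Q$ values of $x$ with $\n{b_{x,\delta}}_2\ll 1$ gives $\ll_{\epsilon_1}q^{-l(a)(1-2\epsilon_1)}Q\le Q^{1-(1+\epsilon')(1-2\epsilon_1)}$, which is $o(1)$ once $\epsilon_1$ is small. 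For the non-tempered part: $\n{f_av_{i,x}}_2=|\v{u_i,b_{x,\delta}}|\cdot|\lambda_i|$ where $f_au_i=\lambda_iu_i$ and, by the same two results, $|\lambda_i|\ll_{\epsilon_1}q^{-l(a)(1/p(\pi_i)-\epsilon_1)}$. By Spectral Gap $p(\pi_i)\le p_0<\infty$, and from $l(a)\ge(1+\epsilon')\log_qQ$ a short computation gives $q^{-2l(a)(1/p(\pi_i)-\epsilon_1)}\le Q^{-2/p(\pi_i)}Q^{-\epsilon'/p_0}$ for $\epsilon_1$ small enough (in terms of $\epsilon',p_0$). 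Hence
\[
\sum_x\sum_i\n{f_av_{i,x}}_2^2=\sum_i|\lambda_i|^2\sum_x|\v{u_i,b_{x,\delta}}|^2\ll_{\epsilon_1}Q^{-\epsilon'/p_0}\sum_iQ^{-2/p(\pi_i)}\sum_x|\v{u_i,b_{x,\delta}}|^2 .
\]
In the non-Archimedean or rank-one case Lemma~\ref{lem:Covering Lemma 2} bounds the inner sum by $O(1)$ uniformly, so the right side is $\ll Q^{-\epsilon'/p_0}\sum_{\pi\in\Pi(G)_{\sph},\,p(\pi)>2}m(\pi,\Gamma_N)Q^{-2/p(\pi)}\ll_\eta Q^{-\epsilon'/p_0+\eta}=o(1)$ by Lemma~\ref{lem:intimidating sums lemma} with $\alpha=1$; in the higher-rank Archimedean case one instead uses $\sum_x|\v{u_i,b_{x,\delta}}|^2\ll_{L'}(1+\lambda(\pi_i))^{-L'}$ and Lemma~\ref{lem:intimidating sums lemma high rank}, to the same conclusion. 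Combining the two pieces yields $\sum_x\n{f_ab_{x,\delta}-\pi}_2^2=o(1)$, hence the theorem.

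\medskip

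The hard part will not be any single step but the bookkeeping: redoing Lemmas~\ref{lem: Optimal lifting to zeros} and~\ref{lem:L^2 bound-1} carefully with the fixed radius $1$, and choosing $\epsilon',\epsilon_1,\delta,p_0,L'$ so that the ``$Q^\eta$'' bounds coming from the Spherical Density Hypothesis (and the $q^{-l(a)(1-2\epsilon_1)}$ loss in the tempered estimate) become an honest $o(1)$. Conceptually, the point is that the extra factor $(1+\epsilon)$ of room in $l(a)$, combined with the uniform bound $p(\pi)\le p_0$ from Spectral Gap, is exactly what converts the density estimate into a power saving; without Spectral Gap the sum over the non-tempered spectrum would only be controlled up to a small power of $Q$.
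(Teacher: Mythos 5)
Your proposal is correct and follows essentially the same approach as the paper: the same reduction through the analogues of Lemmas~\ref{lem: Optimal lifting to zeros} and~\ref{lem:L^2 bound-1}, the same decomposition $b_{x,\delta}=\pi+v_{0,x}+\sum_iv_{i,x}$, and the same endgame combining Corollary~\ref{cor:Hecke Bounds}, Spectral Gap, and Lemma~\ref{lem:intimidating sums lemma} (or \ref{lem:intimidating sums lemma high rank}). The only cosmetic difference is that you average against a smooth $K$-bi-invariant probability density $f_a$ supported near $KaK$ rather than using the operator $A_a$ directly, which yields the same spectral bounds via Theorem~\ref{thm: Explicit Bounds}.
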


The result of Theorem~\ref{thm:spherical density implies optimal lifting}
is stronger than in Theorem~\ref{thm:injective radius implies optimal lifting}
-- here we determine the $A_{+}$-component of the Cartan decomposition
of $\gamma$ in a far greater precision. In the non-Archimedean case
it says that we may choose $a\in A_{+}$ precisely.

\begin{proof}
As before, let $\pi\in L^{2}(X_N)$, be the uniform probability
distribution, i.e., $\pi(x)=\frac{1}{\mu(x_N)}$.

Using the same arguments as in Lemma~\ref{lem:L^2 bound-1}, to prove Theorem~\ref{thm:spherical density implies optimal lifting} it suffices to prove under its assumptions that for $a\in A_{+}$
with $l(a)>(1+\epsilon)\log_{q}([\Gamma_{1}:\Gamma_N])$
\begin{equation}
\sum_{x\in\Gamma_1/\Gamma_N}\n{A_{a}b_{x,\delta}-\pi}_{2}^{2}=o_{\epsilon,\delta}(1).\label{eq:L^2 bound}
\end{equation}

Let us first show that Equation~\eqref{eq:L^2 bound} is immediate if
$M(\Pi(G)_{\sph},\Gamma_N,p)=0$ for $p>2$ (the \emph{Ramanujan} case). If $l(a)>(1+\epsilon)\log_{q}([\Gamma_{1}:\Gamma_N])$
and then $q^{-l(a)}\ll[\Gamma_{1}:\Gamma_N]^{-(1+\epsilon)}.$
Then if we apply Corollary~\ref{cor:Hecke Bounds}, for $\epsilon'$
sufficiently small, 
\begin{align}
\n{A_{a}b_{x_0,\delta}-\pi}_{2}^{2} & =\n{A_{a}(b_{x_0,\delta}-\pi)}_{2}^{2}\ll_{\epsilon'}q^{-l(a)(1+\epsilon')}\n{b_{x_0,\delta}}_{2}^{2}\label{eq:Ramanujan proof}\\
 & \ll[\Gamma_{1}:\Gamma_N]^{-(1+\epsilon)(1-\epsilon')}=o([\Gamma_{1}:\Gamma_N]^{-1}).\nonumber 
\end{align}

Summing over $x\in\rho_N^{-1}(x_{0})$ we get the required
bound. As a matter of fact, we proved the stronger result that if $M(X_N,p)=0$ for $p>2$ then for every $x\in\Gamma_1/\Gamma_N$
\[
\#\left\{ y\in\Gamma_1/\Gamma_N:\exists\gamma\in\Gamma_1\text{ s.t. }\pi_{N}(\gamma)x=y,\n{a_{\gamma}-a}_{\a}<\delta\right\} =(1-o_{\epsilon,\delta}(1))[\Gamma_{1}:\Gamma_N].
\]

The proof in the general case is basically the same as the proof of
Theorem~\ref{thm:density implies counting} in the previous subsection.
Let us quickly give the differences in the proofs, while using the same notations.

The decomposition of $b_{x,\delta}$ is the same, but instead of bounding
\[
\sum_{x\in\Gamma_1/\Gamma_N}\left\langle b_{x,\delta},\chi_{d_{0}+1}b_{x,\delta}\right\rangle ,
\]
we bound for $l(a)>(1+\epsilon)\log_{q}([\Gamma_{1}:\Gamma_N])$
\[
\sum_{x\in\Gamma_1/\Gamma_N}\n{A_{a}(b_{x_0,\delta}-\pi)}_{2}^{2}.
\]

We get in the same way 
\begin{equation}
\sum_{x\in\Gamma_1/\Gamma_N}\n{A_{a}(b_{x_0,\delta}-\pi)}_{2}^{2}\ll\sum_{x\in\Gamma_1/\Gamma_N}\n{A_{a}v_{0,x}}_{2}^{2}+\sum_{x\in\Gamma_1/\Gamma_N}\sum_{i=1}^{T}\n{A_{a}v_{i,x}}_{2}^{2}.\label{eq:proof of density implies lifting}
\end{equation}

Instead of using the bound on $\chi_{d_0}$, we use the bound on
$A_{a}$, which is very similar.

For $\epsilon'$ small enough, the first summand of Equation~\eqref{eq:proof of density implies lifting} is bounded by 
\begin{align*}
\sum_{x\in\Gamma_1/\Gamma_N}\n{A_{a}v_{0,x}}_{2}^{2} & \ll_{\epsilon'}\sum_{x\in\Gamma_1/\Gamma_N}q^{-l(a)(1-\epsilon')}\n{v_{0,x}}_{2}^{2}\\
 & \ll [\Gamma_{1}:\Gamma_N]^{-(1+\epsilon)(1-\epsilon')}[\Gamma_{1}:\Gamma_N]\\
 & =o(1).
\end{align*}

The second summand of Equation~\eqref{eq:proof of density implies lifting} is
bounded in the $p$-adic or rank $1$ case by 
\begin{align*}
\sum_{x\in\Gamma_1/\Gamma_N}\sum_{i=1}^{T}\n{A_{a}v_{i,x}}_{2}^{2} & \ll_{\epsilon'}\sum_{x\in\Gamma_1/\Gamma_N}\sum_{i=1}^{T}q^{-l(a)(2/p(\pi_{i})-\epsilon')}\n{v_{i,x}}_{2}^{2}\\
 & \ll\sum_{i=1}^{T}[\Gamma_{1}:\Gamma_N]^{-(1+\epsilon)(2/p(\pi_{i})-\epsilon')}[\Gamma_{1}:\Gamma_N]\\
 & \ll[\Gamma_{1}:\Gamma_N]^{-\epsilon(2/p'-\epsilon')}\sum_{\pi\in\Pi(G)_{\sph},p(\pi)>2} m(\pi,\Gamma_N)[\Gamma_{1}:\Gamma_N]^{(1-2/p(\pi)+\epsilon')},
\end{align*}
where $p'$ satisfies $p(\pi_{i})\le p'$ for every $i$, by the Spectral
Gap assumption. Using Lemma~\ref{lem:intimidating sums lemma} we
get for $\epsilon'>0$ small enough relative to $\epsilon$,
\begin{align*}
\sum_{x\in\Gamma_1/\Gamma_N}\sum_{i=1}^{T}\n{A_{a}v_{i,x}}_{2}^{2} & \ll_{\epsilon'}[\Gamma_{1}:\Gamma_N]^{-\epsilon(2/p'-\epsilon')}[\Gamma_{1}:\Gamma_N]^{\epsilon'}=o(1),
\end{align*}
as needed.

The proof of the Archimedean high rank case is also similar.
\end{proof}

\section{\label{sec:Bernstein-Theory-of-NonBacktracking}The Bernstein Theory of non-Backtracking Operators}

The results of this section are the main technical contribution of
this work. We restrict to the case of a \emph{non-Archimedean}
field, and we base our work on the results of \citep{bernshtein1974all}.

Let $K'\subset K$ be a compact open subgroup. For $g\in G$
it hold that $\mu(KgK)[K:K']^{-2}\le\mu(K'gK')\le\mu(KgK),$
and therefore $\mu(K'gK')\asymp_{K'}\mu(KgK)\asymp q^{l(g)}$.

Consider the Hecke algebra $H_{K'}=\C_{c}(K'\backslash G/K')$.
For $g\in K'\backslash G/K'$, denote $h_{K',g}=\frac{1}{\mu(K')}K'gK'$,
and let $q_{K',g}=\mu(K'gK')\mu^{-1}(K')$ be the number of right (or left) $K'$ cosets in $K'gK'$. It holds that $q_{K',g}\asymp_{K'}q^{l(g)}$. 
By the representation theory of $p$-adic groups (\citep{cartier1979representations}), given a smooth representation $V$ of $G$, the Hecke algebra $H_{K'}$ acts on the $K'$-fixed vectors $V^{K'}$ of $V$. 

We first discuss the Iwahori-Hecke algebra. Let $I\subset K$ be the Iwahori-Hecke subgroup, i.e. the pointwise stabilizer of a chamber in the Bruhat-Tits building of $G$. Let $W$ be the affine Weyl group of the root system of $G$ (relative to the maximal $k$-torus $T$) and $\hat{W}$ the extended affine Weyl group. By the Iwahori decomposition we have $G=I\tilde{W}I$, where $W\subset\tilde{W}\subset\hat{W}$ is some intermediate subgroup. For $w\in\tilde{W}$, denote $q_{w}=\mu(IwI)/\mu(I)$ which is a natural number. Let $H=C_{c}(I\backslash G/I)$ be the Iwahori-Hecke algebra of $G$ and $h_{w}\in H$ be the element $\frac{1}{\mu(I)}IwI$.

Let $\beta_1,...,\beta_{r}\in\tilde{W}$ ($r=\rank G$) be
some fixed multiples of the simple coweights of the root system of
$W$ (the simple coweight themselves belong to $\hat{W}$, so we cannot
us them directly). Then $h_{\beta_{i}}$ satisfies that $h_{\beta_{i}}^{m}=h_{\beta_{i}^{m}}$,
i.e. $h_{\beta_{i}}$ is a \emph{non-backtracking operator} (when acting
on the building $\mathcal{B}$ of $G$, it is indeed a non-backtracking
operator, or ``collision free'' in the notions of \citep{lubetzky2020random}).
Then it holds that:
\begin{thm}[{See \citep[Theorem 22.1]{kamber2016lpcomplex}}]
\label{thm:Bernstein decomposition for Iwahori-Hecke}There exist
two finite sets $A,B\subset\tilde{W}$ such that each $w\in\tilde{W}$
can be written \emph{uniquely} as $w=a\beta_{1}^{m_{1}}\cdot....\cdot\beta_{r}^{m_{r}}b$
with $a\in A$, $b\in B$, $m_{i}\ge0$, and moreover $l_{\tilde{W}}(w)=l_{\tilde{W}}(a)+\sum_{i=1}^{r}m_{i}l_{\tilde{W}}(\beta_{i})+l_{\tilde{W}}(b)$,
where $l_{\tilde{W}}\colon \tilde{W}\to\N$ is the length function of the group $\tilde{W}$ as (an extended) Coxeter group.

As a corollary, it holds that in the Iwahori-Hecke algebra,
\[
h_{w}=h_{a}h_{\beta_{1}}^{m_{1}}\cdot....\cdot h_{\beta_{r}}^{m_{r}}h_{b}.
\]
\end{thm}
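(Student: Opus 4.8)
The plan is to establish the combinatorial decomposition of the extended affine Weyl group $\tilde{W}$ first, and then transfer it to a relation in the Iwahori--Hecke algebra via the standard multiplicativity of the basis elements $h_w$ under length-additive products. The starting point is Bernstein's presentation of the Iwahori--Hecke algebra (\citep{bernshtein1974all}): the commutative subalgebra generated by the lattice part of $\tilde{W}$ — spanned by the ``translation'' elements $\theta_\lambda$ for $\lambda$ a coweight — together with the finite Hecke algebra attached to the finite Weyl group $W_0$ generate $H$. The coweight lattice (or the sublattice realized inside $\tilde{W}$) is a finitely generated free abelian monoid once we pass to the dominant cone, and the $\beta_i$ are chosen to be multiples of the simple coweights precisely so that they actually lie in $\tilde{W}$ rather than merely in $\hat W$. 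So the abstract structure we want is: the dominant part of the translation lattice is freely generated as a monoid by $\beta_1,\dots,\beta_r$ up to a bounded correction, and every element of $\tilde W$ is a product of a bounded ``left correction'' in $A$, a dominant translation, and a bounded ``right correction'' in $B$, with lengths adding up.

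First I would set up the length-additivity book-keeping. For the extended Coxeter group $\tilde W$, recall the standard fact that for $w\in\tilde W$ and $s$ a simple reflection, $l_{\tilde W}(ws)=l_{\tilde W}(w)\pm 1$, and more usefully that for a \emph{dominant} coweight translation $t_\lambda$ one has $l_{\tilde W}(t_\lambda)=\langle 2\rho, \lambda\rangle$ (a sum over positive roots), which is additive: $l_{\tilde W}(t_{\lambda+\mu})=l_{\tilde W}(t_\lambda)+l_{\tilde W}(t_\mu)$ for $\lambda,\mu$ both dominant. Since each $\beta_i$ is a positive multiple of a simple coweight $\omega_i^\vee$, the products $\beta_1^{m_1}\cdots\beta_r^{m_r}$ range over a finite-index submonoid of the dominant translation monoid, and on this submonoid lengths add. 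The next step is to produce the finite sets $A$ and $B$: write an arbitrary $w\in\tilde W$ using the Iwahori/Bruhat decomposition $G=I\tilde W I$ as $w = u\, t_\lambda\, v$ with $u,v$ in the finite Weyl group part and $t_\lambda$ a translation; then push $\lambda$ into the dominant cone at the cost of modifying $u,v$ within a bounded set, and finally absorb the bounded discrepancy between the full dominant submonoid and the submonoid generated by the $\beta_i$ into $A$ (on the left) and the ``straightening'' remainder into $B$ (on the right). Uniqueness of the expression $w=a\beta_1^{m_1}\cdots\beta_r^{m_r}b$ then follows because the $m_i$ are determined by reading off the coweight $\lambda$ modulo the finite correction, i.e. by the image of $w$ under the projection $\tilde W\to \tilde W/(\text{finite part})\hookrightarrow$ coweight lattice, and $a,b$ are then pinned down.

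Once the group-level statement is in hand, the Hecke-algebra identity $h_w = h_a h_{\beta_1}^{m_1}\cdots h_{\beta_r}^{m_r} h_b$ is the standard consequence of the relation $h_x h_y = h_{xy}$ whenever $l_{\tilde W}(xy)=l_{\tilde W}(x)+l_{\tilde W}(y)$ — which is exactly the quadratic/braid-relation multiplicativity in the Iwahori--Hecke algebra — applied repeatedly using the length-additivity established above, together with $h_{\beta_i}^m = h_{\beta_i^m}$ (which is the same multiplicativity applied to powers of a single dominant translation). The main obstacle, and the point deserving the most care, is the uniqueness and length-additivity simultaneously: one must choose $A$ and $B$ so that no length is ``lost'' when straightening a general $w$ into the normal form, which requires that the dominant translations interact with the finite corrections in a length-additive way — this is where the precise choice of the $\beta_i$ as multiples of simple coweights, and the fundamental inequality $H(ak)\le_{\mathfrak a}H(a)$ (Lemma~\ref{lem:HFunctionTechnicalLemma}) and its group-theoretic shadow, do the real work. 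I would expect to invoke \citep[Theorem 22.1]{kamber2016lpcomplex} directly for the precise statement, since the argument there already carries out exactly this straightening; the role of the present proof is to record that it applies verbatim in our setup.
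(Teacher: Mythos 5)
The paper does not prove this theorem: it is imported verbatim by the citation to \citep[Theorem 22.1]{kamber2016lpcomplex}, and you ultimately defer to that same reference, so your proposal takes essentially the same route as the paper. Your intervening sketch is a reasonable heuristic for what the cited argument does --- length-additivity on the dominant translation submonoid and the rule $h_xh_y=h_{xy}$ when $l_{\tilde W}(xy)=l_{\tilde W}(x)+l_{\tilde W}(y)$ --- though two points would need real care if you wanted to make it self-contained: the length-additivity $l_{\tilde W}(w)=l_{\tilde W}(a)+\sum m_i l_{\tilde W}(\beta_i)+l_{\tilde W}(b)$ is not automatic from ``pushing $\lambda$ into the dominant cone'' but depends on a specific choice of $A$ and $B$, and the uniqueness of $(a,m_1,\dots,m_r,b)$ is likewise a nontrivial combinatorial claim about the normal form, not just a matter of reading off $\lambda$ modulo a bounded set; also, the appeal to Lemma~\ref{lem:HFunctionTechnicalLemma} is a slight misattribution, since that lemma concerns the Iwasawa $H$-function on $G$ rather than the Coxeter length on $\tilde W$.
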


Let us now generalize this theorem to arbitrarily small compact open subgroups $K'\subset G$. The following theorem is based on the results of \citep{bernshtein1974all} (see also \citep[Chapter II, Section 2]{bernstein1992notes}).

\begin{thm}[Bernstein's Decomposition]
\label{thm:Bernstein Decomposition}There exist arbitrarily small
compact open subgroups $K'$ such that for each $K'$ there exist
two finite sets $A,B\subset K'\backslash G/K'$ and $\beta_1,...,\beta_{r}\in K'\backslash G/K'$,
$r=\rank_{k}G$, such that:
\begin{enumerate}
    \item For each $1\le i\le r$ and $m\ge0$, $h_{K',\beta_{i}}^{m}=h_{K',\beta_{i}^{m}}$.
    \item The operators $h_{K',\beta_{i}}$, $1\le i\le r$, commute.
    \item For each $g\in G$ there exist $b\in B$, $a\in A$ and $m_{i}\ge0$
such that 
\begin{align}
h_{K',g} & =h_{K',a}h_{K'\beta_{1}}^{m_{1}}\cdot....\cdot h_{K',\beta_{r}}^{m_{r}}h_{K',b}.\label{eq:Decomposition of an operator}
\end{align}
\end{enumerate}

\end{thm}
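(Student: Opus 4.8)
The plan is to prove Theorem~\ref{thm:Bernstein Decomposition} by reducing everything to one classical fact — the multiplicativity of double cosets $K'\lambda K'$ for $\lambda$ a ``positive'' element of the torus — which is the elementary backbone of Bernstein's analysis of $H_{K'}$ in \citep{bernshtein1974all}; the deeper parts of that theory (the elements $\theta_\lambda$, the Bernstein centre) enter only if one insists on arbitrary compact open $K'$, whereas for the existence statement above it suffices to work with $K'$ normal in the good maximal compact $K$. The shape of the answer (finite $A$, finite $B$, a monomial in the $h_{K',\beta_i}$ in between) will mirror the Iwahori case of Theorem~\ref{thm:Bernstein decomposition for Iwahori-Hecke}, but obtained from the Cartan decomposition of $G$ rather than from the extended affine Weyl group.

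\textbf{Step 1 (choice of $K'$ and the multiplicativity lemma).} Fix a minimal $k$-parabolic $P=MN$ and its opposite $\bar P=M\bar N$, and let $K'$ range over a cofinal family of open subgroups of $K$ that are normal in $K$ and carry an Iwahori factorisation $K'=U^-U^0U^+$, $U^\pm=K'\cap\bar N,\ K'\cap N$, $U^0=K'\cap M$ (e.g.\ principal congruence subgroups for a fixed integral model of $G$); these are arbitrarily small. Call $\lambda\in\Lambda=M/M^0$ \emph{positive} if conjugation by $\lambda$ carries $U^+$ into itself and $U^-$ into a subgroup of $\bar N$ containing it. First I would record the standard lemma: for positive $\lambda,\lambda'$ one has $K'\lambda K'\cdot K'\lambda'K'=K'\lambda\lambda'K'$ and $q_{K',\lambda}\,q_{K',\lambda'}=q_{K',\lambda\lambda'}$, hence
\[
h_{K',\lambda}*h_{K',\lambda'}=h_{K',\lambda\lambda'}.
\]
This follows directly from the factorisation of $K'$ (it is exactly what makes Bernstein's $\theta_\lambda$ multiplicative). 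Choosing $\beta_1,\dots,\beta_r\in\Lambda$ ($r=\rank_k G$) to be positive, $\Z$-independent multiples of the fundamental coweights, property (1) is immediate, and, $\Lambda$ being abelian, $h_{K',\beta_i}h_{K',\beta_j}=h_{K',\beta_i\beta_j}=h_{K',\beta_j\beta_i}=h_{K',\beta_j}h_{K',\beta_i}$, which is property (2); the $h_{K',\beta_1^{m_1}\cdots\beta_r^{m_r}}$ then span a commutative polynomial subalgebra of $H_{K'}$.

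\textbf{Step 2 (property (3)).} Because $K'$ is normal in $K$, a one-line convolution computation using $K'k=kK'$ for $k\in K$ gives $h_{K',k_1}*h_{K',g'}*h_{K',k_2}=h_{K',k_1g'k_2}$ for all $k_1,k_2\in K$ and $g'\in G$. Next I would run a bookkeeping argument on the dominant cone $\Lambda^+$: the sub-semigroup generated by $\beta_1,\dots,\beta_r$ has finite index in $\Lambda^+$, so every $a\in\Lambda^+$ can be written $a=a_0\beta_1^{m_1}\cdots\beta_r^{m_r}$ with $m_i\ge0$ and $a_0$ in a fixed finite set of positive elements, whence by Step~1 $h_{K',a}=h_{K',a_0}*h_{K',\beta_1}^{m_1}*\cdots*h_{K',\beta_r}^{m_r}$. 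Finally, for arbitrary $g\in G$ use the Cartan decomposition $g=k_1ak_2$ ($k_1,k_2\in K$, $a\in A_+$), choose $k_1,k_2$ from a fixed set $S$ of representatives of $K/K'$, and combine:
\[
h_{K',g}=h_{K',k_1}*h_{K',a_0}*h_{K',\beta_1}^{m_1}*\cdots*h_{K',\beta_r}^{m_r}*h_{K',k_2}=h_{K',k_1a_0}*h_{K',\beta_1}^{m_1}\cdots h_{K',\beta_r}^{m_r}*h_{K',k_2},
\]
which is of the required form with $A$ the (finite) set of double cosets $K'k_1a_0K'$ and $B=S$.

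\textbf{Main obstacle.} The substantive input is the multiplicativity lemma of Step~1; the delicate part is the administration around $K'$: checking that a cofinal family of normal, Iwahori-factored subgroups exists with $\Lambda$ normalising $U^0$, and — crucially — fixing ``positive'' so that the lemma applies to \emph{every} partial product occurring in the bookkeeping step. The clean way to arrange this is to take the $\beta_i$ large and to split $\Lambda^+$ into finitely many residue classes modulo the semigroup $\langle\beta_1,\dots,\beta_r\rangle$ in such a way that each tail $a_0$ is itself positive; I expect this to be the one place where a careless argument goes wrong. If instead one wants the theorem for $K'$ not normal in $K$, the normality computation of Step~2 must be replaced by Bernstein's description of $H_{K'}$ as a finite module over the commutative algebra generated by the $\theta_\lambda$, which is the genuinely heavier route.
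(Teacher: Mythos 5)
Your proposal is correct and takes essentially the same route as the paper: Cartan decomposition, multiplicativity of $K'$-double cosets for dominant torus elements, $\beta_i$ generating a finite-index subsemigroup of $A_+$, with the finitely many remainders absorbed into the finite sets $A,B$ and the $K$-parts handled via normality of $K'$ in $K$. The only difference is expository: you derive the multiplicativity lemma directly from an Iwahori factorisation of $K'$ (which is what Bruhat's theorem in \citep{bernstein1992notes} encapsulates, and which the paper simply cites), and the worry you flag about keeping each remainder $a_0$ dominant is exactly what the paper handles by choosing the representatives $\mu_1,\dots,\mu_l\in A_+$ outright.
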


\begin{rem}
Equation~\eqref{eq:Decomposition of an operator} is equivalent to the
double coset decomposition 
\[
K'gK'=K'aK'\beta_{1}^{m_{1}}K'\cdot...K'\beta_{r}^{m_{r}}K'bK'=K'a\beta_{1}^{m_{1}}\cdot...\cdot\beta_{r}^{m_{r}}bK'.
\]
\end{rem}

Unlike in Theorem~\ref{thm:Bernstein Decomposition}, there is no
uniqueness in the claim.
\begin{proof}
We follow \citep[Chapter II, Section 2]{bernstein1992notes}. Start
from the Cartan decomposition $KA_{+}K$, where $A_{+}\subset P$
are the dominant elements in the lattice $M/(M\cap K)$
($A_{+}$ is denoted $\Lambda_{+}$ in \citep{bernstein1992notes}).
Let $\beta_1,...,\beta_{r}\in A_{+}$ be generators of a free semigroup
$\tilde{A}_{+}$ in $A_{+}$ (they may be chosen so that their lift
to $M$ commute, not just as elements in $M/(M\cap K)$).
Let $\mu_1,\dots,\mu_{l}\in A_{+}$ be elements such that $A_{+}=\cup_{i=1}^{l}\tilde{A}_+\mu_{i}$,
the union being disjoint.

By Bruhat's Theorem in \citep{bernstein1992notes}, there exist arbitrary
small compact open subgroups $K'\subset K$ such that:
\begin{itemize}
\item $K'$ is normal in $K$.
\item For every $a,b\in A_{+}$ it holds that $h_{K',a}h_{K',b}=h_{K',ab}$,
i.e. 
\begin{equation}
K'aK'bK'=K'abK'.\label{eq:double coset for A+}
\end{equation}
\end{itemize}
Let $x_1,...,x_{m}\in K$ be representatives of right cosets of
$K'$ in $K$. Since $K'$ is normal in $K$ they are also representatives of left cosets. Let $A=\left\{ x_1,...,x_{r}\right\} $ and let
$B=\left\{ \mu_{i}x_{j}:i=1,\dots,l,j=1,\dots,r\right\} $. By the Cartan decomposition for each $g\in G$ there exist $x\in A$, $\mu x'\in B$
and $m_1,...,m_{r}$ such that $K'gK'=K'x\beta_{1}^{m_{1}}\cdot...\cdot\beta_{r}^{m_{r}}\mu x'K'$.
It remains to show that 
\begin{equation}
K'xK'\beta_{1}^{m_{1}}K'\cdot...\cdot\beta_{r}^{m_{r}}K'\mu x'K'=K'x\beta_{1}^{m_{1}}\cdot...\cdot\beta_{r}^{m_{r}}\mu x'K.\label{eq:proof of bernstein decomposition}
\end{equation}
Since $\beta_1,...,\beta_{m},\mu\in A_{+}$, $K'\beta_{1}^{m_{1}}K'\cdot...\cdot\beta_{r}^{m_{r}}K'\mu K'=K'\beta_{1}^{m_{1}}\cdot...\cdot\beta_{r}^{m_{r}}\mu K'$
by Equation~\eqref{eq:double coset for A+}. Finally, since $K'$ is
normal in $K$ and $x,x'\in K$, $K'x=xK'$ and $K'x'=x'K'$. Applying
those equalities we get Equation~\eqref{eq:proof of bernstein decomposition}.
\end{proof}

\subsection{Non-Backtracking Operators and Temperedness}

We may now deduce:
\begin{thm}
\label{thm:Temperedness by Riemann}Let $(\pi,V)$ be a unitary irreducible representation of $G$, let $V^{K'}$ the $K'$-fixed vectors and assume that $V^{K'}\ne\left\{ 0\right\}$. Consider the action of $H_{K'}=C_{c}(K'\backslash G/K')$ on $V^{K'}$. Then $\pi$ is $p$-tempered if and only if, for every $1\le i\le r$, for every eigenvalue $\lambda$ of $\pi(h_{K',\beta_{i}})$ on $V^{K'}$
it holds that $\left|\lambda\right|\le q_{K',\beta_{i}}^{1-\nicefrac{1}{p}}$.
\end{thm}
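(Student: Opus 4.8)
The plan is to reduce the condition ``$p(\pi) \le p$'' to an estimate on matrix coefficients via the spherical-type bounds already available, and then to express the matrix coefficients in terms of the eigenvalues of the commuting, self-reproducing operators $h_{K',\beta_i}$ using Bernstein's Decomposition (Theorem \ref{thm:Bernstein Decomposition}). First I would fix a simultaneous eigenvector $v \in V^{K'}$ for the commuting family $\{\pi(h_{K',\beta_i})\}_{i=1}^r$ (such a vector exists since the $h_{K',\beta_i}$ commute and act on the finite-dimensional space $V^{K'}$, which is finite-dimensional by admissibility); say $\pi(h_{K',\beta_i})v = \lambda_i v$. Using property (1) of Theorem \ref{thm:Bernstein Decomposition}, $\pi(h_{K',\beta_i^m})v = \lambda_i^m v$, and using the decomposition (3) of an arbitrary $h_{K',g}$ together with the fact that $A,B$ are finite, every matrix coefficient $\langle w, \pi(g) v\rangle$ with $w \in V^{K'}$ is, up to a bounded factor depending only on $K'$, a sum of at most $|A|\,|B|$ terms of the shape $c\cdot \prod_i \lambda_i^{m_i}$, where the exponents $m_i = m_i(g)$ are controlled by $l(g)$.

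Next I would make the length bookkeeping precise. Since $\beta_i$ is a (fixed multiple of a) simple coweight, $l(\beta_i^{m_i}) = m_i\, l(\beta_i)$, and by the additivity in the double-coset decomposition $K'gK' = K'a\beta_1^{m_1}\cdots\beta_r^{m_r}b K'$ one gets $l(g) = \sum_i m_i\, l(\beta_i) + O_{K'}(1)$ (the $O_{K'}(1)$ absorbing the contributions of the finite sets $A,B$). Here I would use the elementary fact, recorded before Theorem \ref{thm:Temperedness by Riemann}, that $q_{K',g} \asymp_{K'} q^{l(g)}$, and in particular $q_{K',\beta_i} \asymp q^{l(\beta_i)}$; so the hypothesis $|\lambda_i| \le q_{K',\beta_i}^{1-1/p}$ is equivalent, up to constants, to $|\lambda_i| \le q^{l(\beta_i)(1-1/p)}$. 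Assuming this, the product $\prod_i \lambda_i^{m_i}$ is bounded by $q^{(1-1/p)\sum_i m_i l(\beta_i)} = q^{(1-1/p)l(g)}\cdot O_{K'}(1)$, so $|\langle v, \pi(g) v\rangle| \ll_{K'} q^{-l(g)/p}\cdot q^{l(g)}\cdot$, i.e., after renormalizing $v$ to be the normalized $K'$-eigenvector and accounting for $\n{\pi(h_{K',\beta_i})}\le 1$ only on the $L^\infty$ side, one obtains a decay bound of the form $|\langle v, \pi(g) v\rangle| \ll_{K',\epsilon} q^{-l(g)(1/p-\epsilon)}$. Feeding this into the size-of-ball estimate \eqref{eq:size of ball-1} gives $\int_G |\langle v,\pi(g)v\rangle|^{p'}\,dg < \infty$ for all $p' > p$, hence $p(\pi) \le p$. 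For the converse direction I would argue contrapositively: if some $|\lambda_i| > q_{K',\beta_i}^{1-1/p}$, then iterating $\pi(h_{K',\beta_i})$ along the geodesic ray $\beta_i^m$ forces $|\langle v,\pi(\beta_i^m)v\rangle| \gg |\lambda_i|^m / q_{K',\beta_i^m}$ — here I must be careful to normalize by the correct power $q_{K',\beta_i^m} \asymp q^{m\,l(\beta_i)}$ coming from the $L^2$-normalization of the operator $h_{K',\beta_i^m}$ acting by right convolution — which then grows like $q^{m\,l(\beta_i)(1-1/p+\eta)}/q^{m\,l(\beta_i)} = q^{m\,l(\beta_i)(\eta - 1/p)}$ for some $\eta>0$, violating the integrability that $p(\pi)\le p$ would require along this ray (again via \eqref{eq:size of ball-1}).

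The main obstacle I expect is the bookkeeping of normalizations: the operator $\pi(h_{K',\beta_i})$ is normalized so that $h_{K',\beta_i} = \frac{1}{\mu(K')}\mathbf{1}_{K'\beta_i K'}$, and converting between its eigenvalue $\lambda_i$, the ``geometric'' quantity $\langle v,\pi(\beta_i)v\rangle$, and the correct power of $q$ requires tracking factors of $q_{K',\beta_i} = \mu(K'\beta_i K')/\mu(K')$ carefully in both directions; a sign error here would swap the roles of $q^{1/p}$ and $q^{1-1/p}$. A secondary technical point is that the $\epsilon$-losses are genuinely needed only on the ``$\Leftarrow$'' side (to pass from the polynomial factor $p(l(g))$ in \eqref{eq:size of ball-1} to a clean exponent), while the ``$\Rightarrow$'' direction should be obtainable with an essentially sharp exponent by choosing a ray on which a single $\lambda_i$ dominates. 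I would also need to invoke that $p(\pi)$ is detected by $K'$-finite (indeed $K'$-fixed) vectors when $V^{K'}\ne\{0\}$, which is standard, and that the finite sets $A,B$ contribute only $O_{K'}(1)$ — both of these are routine given the earlier material and Theorem \ref{thm:Bernstein Decomposition}.
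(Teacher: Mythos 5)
Your overall strategy matches the paper's: reduce $p$-temperedness to convergence of a sum over double cosets, then control that sum via Bernstein's decomposition and the eigenvalues of $\pi(h_{K',\beta_i})$. The ``only if'' direction (large eigenvalue $\Rightarrow$ not $p$-tempered) is essentially correct modulo bookkeeping: as you say, take an eigenvector $v$ (here a simultaneous eigenvector is not needed — an eigenvector of a single $\pi(h_{K',\beta_i})$ suffices), compute $\langle v,\pi(h_{K',\beta_i^m})v\rangle=\lambda_i^m\|v\|^2$, and observe that the corresponding summand $q_{K',\beta_i}^{m(1-p')}|\lambda_i|^{mp'}$ is bounded below by $1$ when $|\lambda_i|\ge q_{K',\beta_i}^{1-1/p'}$, so the sum diverges. (Your phrase ``grows like $q^{m\,l(\beta_i)(\eta-1/p)}$'' has the sign story slightly scrambled; what matters is that the weighted summand is $\gg 1$, not that the matrix coefficient itself grows.)

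The genuine gap is in the ``if'' direction. You propose to fix a simultaneous eigenvector $v$ and claim that $\langle w,\pi(g)v\rangle$ is, up to bounded factors, a product $\prod_i\lambda_i^{m_i}$ times a constant. This fails for two reasons. First, Bernstein's decomposition writes $\pi(h_{K',g})=\pi(h_{K',a})\,\pi(h_{K',\beta_1})^{m_1}\cdots\pi(h_{K',\beta_r})^{m_r}\,\pi(h_{K',b})$ with $\pi(h_{K',b})$ acting \emph{first}; since $h_{K',b}$ does not commute with the $h_{K',\beta_i}$, the vector $\pi(h_{K',b})v$ is generally not an eigenvector of the $\pi(h_{K',\beta_i})$, and no scalar identity of the type you write down holds. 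Second — and this is the obstruction you would hit if you tried to fix the first point by expanding $\pi(h_{K',b})v$ in a simultaneous eigenbasis — the commuting operators $\pi(h_{K',\beta_i})$ are neither self-adjoint nor normal, so they need not be simultaneously (or even individually) diagonalizable on $V^{K'}$; a simultaneous eigenbasis may not exist. The paper sidesteps both issues by working with \emph{operator norms} instead of eigenvector coordinates: after the same factorization, it bounds the sum by $\prod_i\sum_{m_i\ge0}q_{K',\beta_i}^{m_i(1-p')}\,\n{\pi(h_{K',\beta_i}^{m_i})}^{p'}$, and then invokes the spectral radius formula (``theory of matrix norms''): $\n{M^m}\ll_\epsilon(\rho(M)+\epsilon)^m$, where $\rho(M)$ is the largest modulus of an eigenvalue of $M$. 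Only the eigenvalue bound — not diagonalizability — is needed. To repair your argument, replace the simultaneous-eigenvector step by this operator-norm/spectral-radius estimate applied to each $\pi(h_{K',\beta_i})$, and absorb the finitely many $\pi(h_{K',a})$, $\pi(h_{K',b})$ factors into a constant depending on $K'$, exactly as you intended.
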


\begin{proof}
First, note that $V$ is $p$-tempered if and only if for every $0\ne v_{0}\in V^{K'}$
and $p'>p$,
\begin{align*}
\intop_{G}\left|\left\langle v_{0},\pi(g)\cdot v_{0}\right\rangle \right|^{p'}dg & =\sum_{g\in[K'\backslash G/K']}\mu(K'gK')\left(\frac{\left|\left\langle v_{0},\pi(K'gK')v_{0}\right\rangle \right|}{\mu(K'gK')}\right)^{p'}\\
 & =\sum_{g\in[K'\backslash G/K']}(\mu(K'gK'))^{1-p'}(\mu(K')\left|\left\langle v_{0},\pi(h_{K',g})v_{0}\right\rangle \right|)^{p'}\\
 & \asymp\sum_{g\in[K'\backslash G/K']}(q_{K',g})^{1-p'}(\left|\left\langle v_{0},\pi(h_{K',g})v_{0}\right\rangle \right|)^{p'}<\infty.
\end{align*}

The only if part is easier and does not require Bernstein's decomposition.
For $p=2$ it is essentially the main result of \citep{lubetzky2020random}.

Assume that some eigenvalue $\lambda$ of $h_{K',\beta_{i}}$ satisfies
$\left|\lambda\right|>q_{K',\beta_{i}}^{1-\nicefrac{1}{p}}$. Let
$v_{0}\in V^{K}$ be an eigenvector of $h_{K',\beta_{i}}$ with eigenvalue
$\lambda$. Then, for $p'>p$ such that $\left|\lambda\right|\ge q_{K',\beta_{i}}^{1-\nicefrac{1}{p'}}$,
\begin{align*}
\intop_{G}\left|\left\langle v_{0},\pi(g)\cdot v_{0}\right\rangle \right|^{p'}dg & \gg \sum_{g\in [K'\backslash G/K']}(q_{K',g})^{1-p}(\left|\left\langle v_{0},\pi(h_{K',g})v_{0}\right\rangle \right|)^{p}\\
& \ge\sum_{m=0}^{\infty}(q_{K',\beta_{i}^{m}})^{1-p}(\left|\left\langle v_{0},\pi(h_{K',\beta_{i}^{m}})v_{0}\right\rangle \right|)^{p}\\
 & =\sum_{m=0}^{\infty}(q_{K',\beta_{i}})^{m(1-p)}(\left|\left\langle v_{0},\pi(h_{K',\beta_{i}})^{m}v_{0}\right\rangle \right|)^{p}\\
 & =\sum_{m=0}^{\infty}(q_{K',\beta_{i}})^{m(1-p)}(\left|\lambda\right|^{m}\n{v_{0}}^{2})^{p}\\
 & \ge\n{v_{0}}^{2p}\sum_{m=0}^{\infty}(q_{K',\beta_{i}})^{m((1-p)+p(1-\nicefrac{1}{p}))}=\n{v_{0}}^{2p}\sum_{m=0}^{\infty}1=\infty,
\end{align*}
and $V$ is not $p$-tempered.

We now prove the if part. One should prove that for $p'>p$, the matrix
coefficients are in $L^{p'}(G)$. By Bernstein decomposition,
\begin{align*}
\intop_{G}\left|\left\langle v_{0},\pi(g)\cdot v_{0}\right\rangle \right|^{p'}dg  \asymp&\sum_{g\in[K'\backslash G/K']}(q_{K',g})^{1-p'}(\left|\left\langle v_{0},\pi(h_{K',g})v_{0}\right\rangle \right|)^{p'}\\
 \le& \sum_{a\in A,b\in B}\sum_{m_{1}\ge0,...,m_{r}\ge0}(q_{K',a}q_{K',\beta_{1}}^{m_{1}}\cdot...\cdot q_{K',\beta_{r}}^{m_{r}}q_{K',b})^{1-p'}\\
 & \cdot(\left|\left\langle \pi(h_{K',a})^{\ast}v_{0},\pi(h_{K',\beta_{1}}^{m_{1}}\cdot....\cdot h_{K',\beta_{r}}^{m_{r}})\pi(h_{K',b})v_{0}\right\rangle \right|)^{p'}\\
 \ll_{K'}& \sum_{a\in A,b\in B}(q_{K',a}q_{K',b})^{1-p'}\n{\pi(h_{K',a})}^{p'}\n{\pi(h_{K',b})}^{p'} \\
 & \cdot \sum_{m_{1}\ge0,...,m_{r}\ge0}(q_{K',\beta_{1}}^{m_{1}}\cdot...\cdot q_{K',\beta_{r}}^{m_{r}})^{1-p'} \cdot\n{\pi(h_{K',\beta_{1}}^{m_{1}}\cdot...\cdot h_{K',\beta_{r}}^{m_{r}})}^{p'}\\
 \ll_{K'}& (\sum_{m_{1}\ge0}q_{K',\beta_{1}}^{m_{1}(1-p')}\n{\pi(h_{K',\beta_{1}}^{m_{1}})}^{p'})(\sum_{m_{2}\ge0}q_{K',\beta_{2}}^{m_{2}(1-p')}\n{\pi(h_{K',\beta_{2}}^{m_{2}})}^{p'})\\
 & \cdot...\cdot(\sum_{m_{2}\ge0}q_{K',\beta_{2}}^{m_{r}(1-p')}\n{\pi(h_{K',\beta_{r}}^{m_{r}})}^{p'}).
\end{align*}

Since all the eigenvalues of $\pi(h_{K',\beta_{i}})$ are
bounded by $q_{K',\beta_{i}}^{1-\nicefrac{1}{p}}$, it follows from
the theory of matrix norms that the sum converges (note that $\pi(h_{K',\beta_{1}})$
is usually not unitary or self-adjoint, so one should be a bit careful
here).
\end{proof}
For $d_{0}\in\R_{\ge0}$, choose $m_{i}$,
so that $q_{\beta_{i}}^{m_{i}-1}\le q^{d_{0}/2}\le q_{\beta_{i}}^{m_i}$ and denote $f_{d_0}=\sum_{i=1}^{r}(h_{\beta_{i}}^{m_{i}})^{\ast}h_{\beta_{i}}^{m_{i}}$, where $f^*(g) = \overline{f(g^{-1})}$ for a function $f\colon G\to \C$.
The following theorem proves Theorem~\ref{thm:good family padic intro}
\begin{thm}
\label{thm:Good family p-adic}The functions $f_{d_0}\in H_{K'}$
are a good family for the set $\Pi(G)_{K'-\sph}$:
\begin{enumerate}
\item For every $(\pi,V)\in\Pi(G)$ with a non-trivial
$K'$-invariant vector, it holds that 
\[
q^{d_{0}(1-\nicefrac{1}{p(\pi)})}\ll_{K'}\tr (\pi(f_{d_0})).
\]
\item For every $(\pi,V)\in\Pi(G)$ without a non-trivial
$K'$-invariant vector, it holds that 
\[
\tr (\pi(f_{d_0}))=0.
\]
\item It holds that $f_{d_0}(g)\ll_{\epsilon}q^{d_{0}\epsilon}\psi_{d_0}(g)$.
\end{enumerate}
\end{thm}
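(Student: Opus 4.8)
The plan is to verify the three stated properties directly, using Theorems~\ref{thm:Bernstein Decomposition} and~\ref{thm:Temperedness by Riemann} as black boxes, so that this theorem becomes essentially a corollary of them. Throughout, $K'$ is fixed, so implied constants may depend on it; recall $\mu(K'gK')\asymp_{K'}q^{l(g)}$, that $h_{\beta_i}^{m_i}=h_{K',\beta_i^{m_i}}=\frac{1}{\mu(K')}\boldsymbol{1}_{K'\beta_i^{m_i}K'}$ by part~(1) of Theorem~\ref{thm:Bernstein Decomposition}, and that the choice of $m_i$ forces $q_{K',\beta_i}^{m_i}\asymp_{K'}q^{d_0/2}$, hence $l(\beta_i^{m_i})=\tfrac{d_0}{2}+O_{K'}(1)$. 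Property~(2) is immediate: if $V^{K'}=\{0\}$ then $\pi(h_{K',e})=0$, where $h_{K',e}=\frac{1}{\mu(K')}\boldsymbol{1}_{K'}$ is the projection onto $V^{K'}$; since every $h\in H_{K'}$ satisfies $h_{K',e}*h*h_{K',e}=h$, we get $\pi(h)=0$, in particular $\pi(f_{d_0})=0$ and $\tr(\pi(f_{d_0}))=0$.

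For property~(1), note that $\pi$ is an algebra homomorphism for convolution and $\pi(f^*)=\pi(f)^*$, so for each $i$ the operator $\pi((h_{\beta_i}^{m_i})^**h_{\beta_i}^{m_i})=\pi(h_{\beta_i}^{m_i})^*\pi(h_{\beta_i}^{m_i})$ is positive and of finite rank (its range lies in $V^{K'}$, finite-dimensional by uniform admissibility), whence $\tr(\pi(f_{d_0}))=\sum_{i=1}^r\tr\big(\pi(h_{\beta_i}^{m_i})^*\pi(h_{\beta_i}^{m_i})\big)\ge\n{\pi(h_{\beta_{i_0}}^{m_{i_0}})}^2$ for any $i_0$, because the trace of a positive finite-rank operator dominates the square of its operator norm. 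I will then invoke Theorem~\ref{thm:Temperedness by Riemann}: for every $p<p(\pi)$ the representation $\pi$ is not $p$-tempered, so there is an index $i$ and an eigenvalue $\lambda$ of $\pi(h_{K',\beta_i})$ on $V^{K'}$ with $|\lambda|>q_{K',\beta_i}^{1-1/p}$; as there are finitely many indices and, for fixed $i$, finitely many eigenvalues, one fixed $i_0$ and one fixed eigenvalue $\lambda_0$ work along a sequence $p\to p(\pi)^-$, so $|\lambda_0|\ge q_{K',\beta_{i_0}}^{1-1/p(\pi)}$. Since $\pi(h_{\beta_{i_0}}^{m_{i_0}})=\pi(h_{K',\beta_{i_0}})^{m_{i_0}}$ has $\lambda_0^{m_{i_0}}$ as an eigenvalue and $1-1/p(\pi)\ge\tfrac12>0$,
\[
\tr(\pi(f_{d_0}))\ \ge\ |\lambda_0|^{2m_{i_0}}\ \ge\ \big(q_{K',\beta_{i_0}}^{m_{i_0}}\big)^{2(1-1/p(\pi))}\ \gg_{K'}\ \big(q^{d_0/2}\big)^{2(1-1/p(\pi))}\ =\ q^{d_0(1-1/p(\pi))}.
\]

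For property~(3) I will combine non-negativity with the Convolution Lemma~\ref{lem:convolution lemma}. Each $h_{\beta_i}^{m_i}=\frac{1}{\mu(K')}\boldsymbol{1}_{K'\beta_i^{m_i}K'}$ is non-negative, and $K'\beta_i^{m_i}K'\subseteq K\beta_i^{m_i}K\subseteq\{g:l(g)\le l(\beta_i^{m_i})\}$, so $h_{\beta_i}^{m_i}\le\Phi_i:=\frac{1}{\mu(K')}\chi_{l(\beta_i^{m_i})}$ pointwise, with $\Phi_i\in C_c^\infty(K\backslash G/K)$; the same bound holds for $(h_{\beta_i}^{m_i})^*$, which is supported on the inverse coset $K'\beta_i^{-m_i}K'$ of the same length since $l(g)=l(g^{-1})$. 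As convolution of non-negative functions is monotone, $(h_{\beta_i}^{m_i})^**h_{\beta_i}^{m_i}\le\Phi_i*\Phi_i$ pointwise, and $\mu(K')\Phi_i\le\chi_{D_i}$ with $D_i=l(\beta_i^{m_i})=\tfrac{d_0}{2}+O_{K'}(1)$; the Convolution Lemma then gives $\Phi_i*\Phi_i(g)\ll_{K',\epsilon}q^{d_0\epsilon}\psi_{2D_i+2}(g)=q^{d_0\epsilon}\psi_{d_0+O_{K'}(1)}(g)$. Summing over the finitely many $i$ yields $f_{d_0}(g)\ll_{K',\epsilon}q^{d_0\epsilon}\psi_{d_0}(g)$, where, exactly as in Theorem~\ref{thm:rank 1 has good family}, the bounded additive shift in the index of $\psi$ is harmless.

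The main points requiring care are the pigeonhole-and-limit passage in property~(1) (from ``not $p$-tempered for all $p<p(\pi)$'' to a single index $i_0$ and a single eigenvalue inequality at $p=p(\pi)$, using finiteness of the index set and of $\operatorname{spec}\pi(h_{K',\beta_{i_0}})|_{V^{K'}}$), and the bookkeeping of the $O_{K'}(1)$ discrepancies in property~(3), between $l(\beta_i^{m_i})$ and $d_0/2$ and between $\psi_{d_0}$ and $\psi_{d_0+O_{K'}(1)}$; both are routine but should be stated explicitly.
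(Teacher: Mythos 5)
Your proposal is correct and follows essentially the same route as the paper's proof: trace $\ge$ norm for the positive operator $\pi(f_{d_0})$, Theorem~\ref{thm:Temperedness by Riemann} to produce a large eigenvalue, and the Convolution Lemma for the pointwise bound. You fill in two details that the paper leaves implicit — the pigeonhole-and-limit step that upgrades ``not $p$-tempered for all $p<p(\pi)$'' to a fixed index $i_0$ and a fixed eigenvalue $\lambda_0$ with $|\lambda_0|\ge q_{K',\beta_{i_0}}^{1-1/p(\pi)}$, and the replacement of the only-$K'$-bi-invariant $h_{\beta_i}^{m_i}$ by the $K$-bi-invariant dominator $\Phi_i\gg h_{\beta_i}^{m_i}$ before invoking Lemma~\ref{lem:convolution lemma}, which is stated for $K$-bi-invariant functions; both are small but genuine precisions over the paper's terse wording.
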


\begin{proof}
The second condition is obvious, since if there is no non-trivial $K'$-invariant
vector then $\pi(f_{d_0})=0$.

The fact that $f_{d_0}(g)\ll_{\epsilon}q^{d_{0}\epsilon}\psi_{d_0}(g)$
follows from Lemma~\ref{lem:convolution lemma}, as $h_{\beta_{i}}^{m_{i}}(g)\ll\chi_{d_{0}/2}$.

It remains to prove the first condition. Since $\pi(f_{d_0})$
is non-negative and self-adjoint it is diagonalizable on $V^{K'}$and
all its eigenvalues are non-negative.

On the other hand $\tr \pi(f_{d_0})\ge\n{\pi(f_{d_0})}$ (as a matter of fact, by uniform admissibility (\citep{bernshtein1974all}), $\dim V^{K'}\ll_{K'}1$, so $\tr \pi(f_{d_0})\asymp \n{\pi(f_{d_0})}$.
It holds that
\[
\max_{i=1}^{r}\n{h_{\beta_{i}}^{m_{i}}}^{2}\le\n{\pi(f_{d_0})}.
\]

Since $V$ is not $p'$-tempered for $p'<p$, by Theorem~\ref{thm:Temperedness by Riemann},
for some $1\le i\le r$ we have an eigenvalue $\lambda$ of $h_{\beta_{i}}$
with $\left|\lambda\right|\ge q_{K',\beta_{i}}^{1-\nicefrac{1}{p}}$.
Therefore, $h_{\beta_{i}}^{m_{i}}$ has an eigenvalue $\lambda^{m_{i}}$
with $\left|\lambda^{m_{i}}\right|\ge q_{K',\beta_{i}}^{m_{i}(1-\nicefrac{1}{p})}\asymp_{K'}q^{l(\beta_{i})m_{i}(1-\nicefrac{1}{p})}\asymp q^{\nicefrac{d_0}{2}(1-\nicefrac{1}{p})}$.
Therefore, $\n{\pi(f_{d_0})}\ge\n{h_{\beta_{i}}^{m_{i}}}^{2}\gg q^{d_0(1-\nicefrac{1}{p})}$,
as needed.
\end{proof}

\section{\label{sec:pointwise lower bound}Lower Bounds on Matrix Coefficients
for a Specific Representation}

In this section, we prove Theorem~\ref{thm: good family pointwise intro}.
We assume that $G$ is Archimedean and $(\pi,V)$ is an
irreducible unitary representation of $G$.

We will prove that $\left\{ \pi\right\} $ has a good family of functions.
Let $v\in V^{\tau}$, $\n v=1$, belong to a fixed $\tau$-type of
$K$, i.e. $K$ acts on $U_{0}=\spann\left\{ Kv\right\} $ as
a $K$-irreducible representation $\tau$. In particular, $v$ is
$K$-finite. Let

\[
g_{d_{0}/2}(g)=\chi_{d_{0}/2}(g)q^{l(g)p(\pi)^{-1}}\overline{\left\langle v,\pi(g)v\right\rangle },
\]
and let 
\[
f_{d_0}=g_{d_{0}/2}^{*}\ast g_{d_{0}/2}.
\]

We claim that $f_{d_0}$ satisfies the condition of Definition~\ref{def:Good family} for $\{\pi\}$. We first verify conditions $(2)$ and $(3)$.
Since $f_{d_0}$ is self-adjoint and non-negative, $(2)$ follows.

By the theory of leading exponents (which is described below), $\left|\left\langle v,\pi(g)v\right\rangle \right|\ll_{\pi,v,\epsilon}q^{-l(g)(\nicefrac{1}{p(\pi)}+\epsilon)}$,
so $\left|g_{d_{0}/2}(g)\right|\ll q^{d_{0}\epsilon}\chi_{d_{0}/2}(g)$.
It follows that also $\left|g_{d_{0}/2}^{*}(g)\right|\ll q^{d_{0}\epsilon}\chi_{d_{0}/2}(g)$.
By Lemma~\ref{lem:convolution lemma}, $\left|f_{d_0}(g)\right|\ll_{\epsilon}q^{d_{0}\epsilon}\psi_{d_0+2}(g)$.

It remains to prove $(1)$, which will concern the rest of this section. We note that since $f_{d_0}$ is self-adjoint and non-negative  
\[
\tr (\pi(f_{d_0}))\gg\n{\pi(f_{d_0})}=\n{\pi(g_{d_{0}/2})}^{2}.
\]

Now, 
\begin{align*}
\left\langle v,\pi(g_{d_{0}/2})v\right\rangle  & =\intop_{G}g_{d_{0}/2}(g)\left\langle v,\pi(g)v\right\rangle \\
 & =\intop_{G}\chi_{d_{0}/2}(g)q^{l(g)p(\pi)^{-1}}\overline{\left\langle v,\pi(g)v\right\rangle }\left\langle v,\pi(g)v\right\rangle dg\\
 & \ge\intop_{l(d)\le d_{0}/2}q^{l(g)p(\pi)^{-1}}\left|\left\langle v,\pi(g)v\right\rangle \right|^{2}dg,
\end{align*}
and conclude that
\[
\sqrt{\tr (\pi(f_{d_0}))}\gg\intop_{l(g)\le d_{0}/2}q^{l(g)p(\pi)^{-1}}\left|\left\langle v,\pi(g)v\right\rangle \right|^{2}dg.
\]

Therefore, to prove that $\tr (\pi(f_{d_0}))\gg_\epsilon q^{d_{0}(1-\nicefrac{1}{p(\pi)}-\epsilon)}$,
one should prove (after changing $d_{0}$ and $d_{0}/2$)
\begin{equation}
\intop_{l(g)\le d_{0}}q^{l(g)p(\pi)^{-1}}\left|\left\langle v,\pi(g)v\right\rangle \right|^{2}dg\stackrel{!}{\gg}_{\pi,\epsilon}q^{d_{0}(1-p(\pi)^{-1}-\epsilon)}.\label{eq:lower bound on matrix coeff}
\end{equation}

So far, our proof is essentially the same as (part of) the proof of
\citep[Theorem 3]{sarnak1991bounds}, which only concerns rank $1$.

We start by simplifying the left-hand side of equation~\eqref{eq:lower bound on matrix coeff}.

Applying the Cartan decomposition we get
\[
\intop_{l(g)\le d_{0}}q^{l(g)p(\pi)^{-1}}\left|\left\langle v,\pi(g)v\right\rangle \right|^{2}dg=\intop_{K}\intop_{K}\intop_{a\in A_{+},l(a)\le d_{0}}q^{l(a)p(\pi)^{-1}}S(a)\left|\left\langle v,\pi(kak')v\right\rangle \right|^{2}dkdk'da.
\]
Using the logarithm map, we identify $A_{+}\subset\a$, and give $a\in A_+$ coordinates $(x_1,...,x_{r})$, $x_{i}\ge0$ by $(x_1,...,x_{r})\to\sum_{i=1}^{r}x_{i}\omega_{i}$,
where $\omega_1,...,\omega_{r}$ are the fundamental coweights. Recall
that for $a\in A_{+}$, $l(a)=2\rho(a)$. For $\kappa>0$,
denote by $A_{+}^{\kappa}\subset A_{+}$ the set of $a\in A_{+}$
with $x_{i}>\kappa$). Then for $a\in A_{+}^{\kappa}$ it holds that
$S(a)\asymp_{\kappa}q^{l(a)}=q^{2\rho(a)}$. Then: 
\begin{align*}
 & \intop_{K}\intop_{K}\intop_{a\in A_{+},l(a)\le d_{0}}q^{l(a)p(\pi)^{-1}}S(a)\left|\left\langle v,\pi(kak')v\right\rangle \right|^{2}dkdk'da\\
 & \gg_{\kappa}\intop_{K}\intop_{K}\intop_{a\in A_{+}^{\kappa},l(a)\le d_{0}}q^{2\rho(a)(1+p(\pi)^{-1})}\left|\left\langle v,\pi(kak')v\right\rangle \right|^{2}dkdk'da,
\end{align*}
so we should prove that for some $\kappa>0$,
\begin{equation}
\intop_{K}\intop_{K}\intop_{a\in A_{+}^{\kappa},l(a)\le d_{0}}q^{2\rho(a)(1+p(\pi)^{-1})}\left|\left\langle v,\pi(kak')v\right\rangle \right|^{2}dkdk'da\stackrel{!}{\gg}_{\epsilon}q^{d_{0}(1-p(\pi)^{-1}-\epsilon)}.\label{eq:Simplified lower bound}
\end{equation}

\subsection{\label{subsec:Leading-Exponents}Leading Exponents}

We recall the Casselman-Harish Chandra-Milicic theory of \emph{Leading
Exponents}. We follow \citep[Chapter VIII]{knapp2016representation}.

Equation~\eqref{eq:Simplified lower bound} is very similar to \citep[Theorem 8.48,(b) implies (a)]{knapp2016representation},
which is one of the more technical parts of the theory. We will follow
the same proof closely, while deriving an explicit expression. A side
benefit is that the proofs below somewhat simplify the proof of \citep[Theorem 8.48,(b) implies (a)]{knapp2016representation}.

By the theory of leading exponents, we may associate with an irreducible
unitary representation (actually, to any irreducible admissible representation)
$(\pi,V)$ a finite subset called leading exponent $\mathcal{F}\subset \a_{\C}^{*}$ such that the following two theorems hold:
\begin{thm}[{\citep[Theorem 8.47]{knapp2016representation}}]
\label{thm:lead_coeff_upper_bound}The following are equivalent:
\begin{itemize}
\item For $\nu_{0}\colon a\to\R$ a real character, every $K$-finite matrix coefficient
$\left\langle v,av\right\rangle $ for $a\in A_{+}$ is bounded in
absolute value by $\ll_{\pi,v}e^{(\nu_{0}-\rho)(a)}l(a)^N$
where $N$ is some constant.
\item For every $\nu\in\mathcal{F}$, and every fundamental weight $\omega_{i}$,
$1\le i\le r$, $\re\nu(\omega_{i})\le\nu_{0}(\omega_{i})$.
\end{itemize}
\end{thm}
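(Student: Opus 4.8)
The plan is to deduce both implications from the asymptotic expansion of $K$-finite matrix coefficients along $A_+$, i.e.\ from the Casselman--Harish-Chandra--Milicic theory of leading exponents, in the framework of \citep[Chapter VIII]{knapp2016representation}. Label the two conditions (a) (the matrix-coefficient bound $\left|\left\langle v,\pi(a)v\right\rangle\right|\ll_{\pi,v}e^{(\nu_0-\rho)(a)}l(a)^N$) and (b) (the exponent inequality $\re\nu(\omega_i)\le\nu_0(\omega_i)$ for all $\nu\in\mathcal F$ and all $\omega_i$, the coweight dual to $\alpha_i$). The basic input, which I would quote rather than prove, is that for $K$-finite $v_1,v_2\in V$ the function $f(a)=\left\langle v_1,\pi(a)v_2\right\rangle$, regarded on $A_+$ and written additively via $\log$, admits a convergent expansion $f(a)=\sum_{\mu}a^{\mu-\rho}p_\mu(\log a)$ valid on a deep subchamber $\{X\in\a^+:\alpha_i(X)>t\ \forall i\}$, where every exponent $\mu$ lies in $\bigcup_{\nu\in\mathcal F}\big(\nu-\sum_i\Z_{\ge0}\alpha_i\big)$, the set $\mathcal F$ of leading exponents is finite, the $p_\mu$ are polynomials of degree bounded in terms of $\pi$, and the tail of the series past any level $m$ is dominated by $C_m\,a^{\mu_m-\rho}$ with the $\mu_m$ pushed arbitrarily far down in the simple-root order.

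For (b) $\Rightarrow$ (a): assume $\re\nu(\omega_i)\le\nu_0(\omega_i)$ for every $\nu\in\mathcal F$ and every $\omega_i$. If $\mu=\nu-\sum_j n_j\alpha_j$ is an exponent, then $\re\mu(\omega_i)=\re\nu(\omega_i)-n_i\le\nu_0(\omega_i)$; since any $a\in A_+$ has $\log a=\sum_i x_i\omega_i$ with all $x_i\ge0$, this yields $\re\mu(\log a)\le\nu_0(\log a)$, so each term is bounded by $|p_\mu(\log a)|\,e^{(\nu_0-\rho)(a)}$. Summing finitely many terms up to a level $m$, absorbing the tail (whose exponents satisfy the same inequality), and using that $l(a)=2\rho(a)\asymp\n{\log a}_\a$ on the dominant chamber so that $|p_\mu(\log a)|\ll(1+l(a))^N$, we get $|f(a)|\ll_{\pi,v}e^{(\nu_0-\rho)(a)}l(a)^N$ on the deep subchamber. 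The remaining and principal difficulty is to propagate this bound from the deep subchamber to all of $A_+$: the near-wall region is unbounded, so no compactness argument is available. I would handle it by induction on the semisimple rank via parabolic descent -- near a wall indexed by $S\subset\Delta$ one passes to the standard Levi $M_S$ and uses that the asymptotics of the relevant matrix coefficients of $\pi$ along $M_S$ are governed by truncations of the exponents of $\pi$, which still satisfy the analogous inequalities -- or alternatively one pushes matrix coefficients onto Casselman's compactification of $G/K$, whose boundary strata encode exactly this induction.

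For (a) $\Rightarrow$ (b): fix $\nu\in\mathcal F$. By definition of a leading exponent there is a $K$-finite matrix coefficient $f$ whose expansion has $p_\nu\not\equiv0$, and since $\nu$ is minimal in the simple-root order among its exponents, along a ray $sY$, $s\to\infty$, with $Y$ in the interior of $\a^+$ chosen off the finitely many hyperplanes $\{\re(\nu-\mu)=0\}$ (possible as $\mathcal F$ is finite with distinct elements), the dominant contribution is $a^{\nu-\rho}p_\nu(\log a)$, so $|f(\exp sY)|\asymp e^{(\re\nu-\rho)(sY)}s^{\deg p_\nu}$, after possibly replacing $f$ by a derivative from $U(\mathfrak g)$ to rule out polynomial cancellation. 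Comparing with the assumed bound $e^{(\nu_0-\rho)(sY)}s^N$ and letting $s\to\infty$ forces $\re\nu(Y)\le\nu_0(Y)$ for $Y$ in an open subcone, hence for all dominant directions by taking closures, and in particular $\re\nu(\omega_i)\le\nu_0(\omega_i)$.

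In short, both directions are bookkeeping with the asymptotic expansion except for one genuine obstacle -- propagating the estimate in (b) $\Rightarrow$ (a) from the deep chamber to the walls -- which is the technical heart of the theory and is cleanest to dispatch by an induction on rank through parabolic subgroups.
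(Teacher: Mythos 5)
This statement is quoted from Knapp's book (Theorem 8.47 of \citep{knapp2016representation}) and the paper gives no proof of it; it is an input to the paper, not one of its results. So there is no in-paper proof to compare against, and your task was really to check that your sketch is a valid proof of Knapp's theorem.

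As such, your outline follows the standard route (the one Knapp himself uses): feed the asymptotic expansion along the deep subchamber from Theorem~\ref{thm:Leading exponents expansion} into each direction, and handle the near-wall region separately. You correctly single out the genuine technical obstacle, namely extending the bound from a translate $\exp(\{X:\alpha_i(X)>t\})$ of the chamber to all of $A_+$, and the remedy you name (parabolic descent / Casselman's compactification, which amounts to controlling the constant term along standard parabolics by induction on rank) is the right one; you just do not carry it out, which is acceptable for a sketch as long as it is flagged, as you did.

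Two smaller points worth fixing. First, in the $(a)\Rightarrow(b)$ direction you write that $\nu\in\mathcal F$ is \emph{minimal} in the simple-root order: it is \emph{maximal} (this is what ``leading'' means, and what makes its term decay slowest). Second, even after throwing away the degenerate hyperplanes $\{\re(\nu-\mu)(Y)=0\}$, the exponent that dominates along a given generic ray $sY$ need not be the particular $\nu$ you fixed -- when $\mathcal F$ has several pairwise incomparable members, each one dominates only on a subcone, and those subcones need not cover the chamber. The argument still closes, but for a slightly different reason than you state: whatever exponent $\mu$ dominates along $Y$ satisfies $\re\mu(Y)\ge\re\nu(Y)$ because $\nu$ is also in the expansion, so if one had $\re\nu(\omega_i)>\nu_0(\omega_i)$ then taking $Y$ close to $\omega_i$ and off the bad hyperplanes already gives $\re\mu(Y)>\nu_0(Y)$ for the dominant $\mu$, contradicting the upper bound. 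So you should compare the bound against the dominant exponent rather than insist that it be $\nu$ itself.
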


\begin{thm}[{\citep[Theorem 8.48]{knapp2016representation}}]
\label{thm:lead_coeff_Lp}The following are equivalent:
\begin{itemize}
\item Every $K$-finite matrix coefficient $\phi(g)=\left\langle v,\pi(g)v\right\rangle $
is in $L^{p}(G)$.
\item For every $\nu\in\mathcal{F}$, and every fundamental weight $\omega_{i}$,
$1\le i\le r$, $\re\nu(\omega_{i})<(1-\frac{2}{p})\rho(\omega_{i})$.
\end{itemize}
\end{thm}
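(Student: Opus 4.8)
The plan is to transfer the question from $G$ to the positive chamber $A_{+}$ via the Cartan decomposition and to weigh the growth of $K$-finite matrix coefficients against the Jacobian $S(a)$, using the Casselman--Milicic asymptotic expansion. Recall that for a $K$-finite matrix coefficient $\phi(g)=\left\langle v,\pi(g)w\right\rangle$ and $a\in A_{+}$ there is an expansion
\[
\phi(a)=\sum_{\nu\in\mathcal{F}}\sum_{\mu\in Q^{+}}a^{\nu-\rho-\mu}\,p_{\nu,\mu}(\log a),
\]
where $Q^{+}$ is the monoid of non-negative integer combinations of the simple roots, the $p_{\nu,\mu}$ are polynomials, and $\mathcal{F}$ is the finite set of leading exponents of $\pi$ -- those $\nu$ that are maximal for the order $\nu\preceq\nu'\Leftrightarrow\nu'-\nu\in Q^{+}$ among the exponents occurring, equivalently those for which $a^{\nu-\rho}p_{\nu,0}(\log a)$ is genuinely present for a suitable choice of $v,w$. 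By the integration formula $\intop_{G}F(g)dg=\intop_{K}\intop_{K}\intop_{A_{+}}F(kak')S(a)\,da\,dk\,dk'$ one has $\intop_{G}|\phi(g)|^{p}dg\asymp\intop_{K}\intop_{K}\intop_{A_{+}}|\phi(kak')|^{p}S(a)\,da\,dk\,dk'$, and for each fixed $k,k'$ the function $a\mapsto\phi(kak')$ is a $K$-finite matrix coefficient to which the expansion and the bounds of Theorem~\ref{thm:lead_coeff_upper_bound} apply uniformly, since $\pi(k)^{-1}v$ and $\pi(k')w$ remain in the fixed finite-dimensional $K$-stable spaces spanned by $Kv$ and $Kw$. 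Finally $S(a)=\prod_{\alpha\in\Phi_{+}}(\sinh\chi_{\alpha}(a))^{\dim\mathfrak{g}_{\alpha}}$ satisfies $S(a)\asymp e^{2\rho(a)}$ away from the walls and $S(a)\ll e^{2\rho(a)}$ everywhere, the walls costing at most a polynomial in $\log a$; writing $a=\exp(\sum_{i}x_{i}\omega_{i})$ with $x_{i}\ge0$, a character $e^{\lambda(a)}$ is integrable on $A_{+}$ against any fixed polynomial weight exactly when $\re\lambda(\omega_{i})<0$ for every $i$.

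For the implication that the exponent condition forces $\phi\in L^{p}(G)$: since $\mathcal{F}$ is finite and $\rho(\omega_{i})>0$, we may pick $\eta\in(0,1)$ so that the real character $\nu_{0}=\eta(1-\tfrac{2}{p})\rho$ satisfies $\re\nu(\omega_{i})\le\nu_{0}(\omega_{i})<(1-\tfrac{2}{p})\rho(\omega_{i})$ for all $\nu\in\mathcal{F}$ and all $i$. Theorem~\ref{thm:lead_coeff_upper_bound} then gives $|\phi(kak')|\ll_{\pi,v,w}e^{(\nu_{0}-\rho)(a)}l(a)^{N}$, hence $|\phi(kak')|^{p}S(a)\ll e^{(p\nu_{0}+(2-p)\rho)(a)}l(a)^{pN}$; since $(p\nu_{0}+(2-p)\rho)(\omega_{i})=p(\nu_{0}(\omega_{i})-(1-\tfrac{2}{p})\rho(\omega_{i}))<0$ for each $i$, the triple integral converges.

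For the converse I would argue by contraposition: assuming $\re\nu(\omega_{i})\ge(1-\tfrac{2}{p})\rho(\omega_{i})$ for some $\nu\in\mathcal{F}$ and some index $i$, I would produce a $K$-finite matrix coefficient outside $L^{p}(G)$; by the polarization identity, which expresses $\left\langle v,\pi(g)w\right\rangle$ as a linear combination of diagonal coefficients, it suffices to find some $\left\langle v,\pi(g)w\right\rangle\notin L^{p}(G)$. The steps would be: (i) among such exponents fix $\nu$ with $\re\nu(\omega_{i})$ maximal and choose $v,w$ with $p_{\nu,0}\not\equiv0$, which is possible precisely because $\nu$ is a leading exponent, i.e.\ it is detected by the pairing of $V$ with its asymptotic (Jacquet) module; (ii) work in a thin unbounded tube $\mathcal{T}\subset A_{+}$ around the edge of the chamber in the $\omega_{i}$-direction, where $S(a)\asymp e^{2\rho(a)}$ is retained, arranged so that on $\mathcal{T}$ the term $a^{\nu-\rho}p_{\nu,0}(\log a)$ dominates every other term, giving $|\phi(a)|\gg e^{(\re\nu-\rho)(a)}|p_{\nu,0}(\log a)|$ there, shrinking $\mathcal{T}$ to avoid the zero locus of $p_{\nu,0}$ and to absorb the oscillation coming from the imaginary part of $\nu$ (and, should several leading exponents share the maximal value of $\re(\cdot)(\omega_{i})$, separating their contributions); (iii) bound $\intop_{\mathcal{T}}|\phi(a)|^{p}S(a)\,da\gg\intop_{\mathcal{T}}e^{(p\re\nu+(2-p)\rho)(a)}\,da$, which diverges since $(p\re\nu+(2-p)\rho)(\omega_{i})=p\re\nu(\omega_{i})-(p-2)\rho(\omega_{i})\ge0$ and $\mathcal{T}$ is unbounded in the $\omega_{i}$-direction (in the borderline case $\re\nu(\omega_{i})=(1-\tfrac{2}{p})\rho(\omega_{i})$ the divergence is merely polynomial in $x_{i}$, but still a divergence).

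Steps (i) and (ii) are where the real work lies and constitute the main obstacle: making the leading coefficient $p_{\nu,0}$ nonzero for an explicit $K$-finite pair, and carving out an unbounded domination region that reaches infinity in the prescribed coordinate direction while simultaneously controlling all competing leading exponents $\nu'\in\mathcal{F}$, the shifted exponents $\nu'-\mu$, and the imaginary part of $\nu$. This is exactly the hard half of \citep[Theorem 8.48]{knapp2016representation}; carried out with explicit constants -- and with the extra $\intop_{K}\intop_{K}$ averaging, which softens (i) by replacing a single diagonal matrix coefficient with the $K$-bi-invariant quantity $\intop_{K}\intop_{K}|\left\langle v,\pi(kak')v\right\rangle|^{2}\,dk\,dk'$ -- the same bookkeeping yields the quantitative lower bound~\eqref{eq:Simplified lower bound} needed in the sequel.
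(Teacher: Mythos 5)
This theorem is quoted in the paper verbatim from \citep[Theorem 8.48]{knapp2016representation} and is not reproved there, so what you are sketching is the content of Knapp's argument rather than something the paper establishes; the paper's own contribution, in Section~\ref{sec:pointwise lower bound}, is a quantitative refinement of the hard direction, namely Equation~\eqref{eq:Simplified lower bound}, proved ``following the same proof closely, while deriving an explicit expression.'' With that caveat, your outline is essentially the standard one, and it matches the route the paper follows for the quantitative version: pass to $A_{+}$ via Cartan, compare the Casselman--Mili\v{c}i\'{c} expansion against the Jacobian $S(a)\asymp e^{2\rho(a)}$ away from the walls, and carve out an unbounded tube in the $\omega_{i}$--direction where the relevant leading exponent survives.

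Two points you should fix or be aware of. First, in the easy direction your choice $\nu_{0}=\eta(1-\tfrac{2}{p})\rho$ with $\eta\in(0,1)$ degenerates at $p=2$: then $\nu_{0}=0$ and $p\nu_{0}+(2-p)\rho=0$, giving only polynomial integrand rather than exponential decay. One should instead take $\nu_{0}$ strictly dominated by the real parts, e.g.\ $\nu_{0}=-\epsilon\rho$ for small $\epsilon>0$ when the $\re\nu(\omega_{i})$ are all negative, so that $p\nu_{0}+(2-p)\rho$ is strictly negative on every $\omega_{i}$. Second, and more substantively, steps (i) and (ii) of your converse are acknowledged placeholders, and they are where the theorem actually lives: (i) producing $K$-finite $v,w$ for which the leading polynomial $p_{\nu,0}$ does not vanish identically, and (ii) carving out a region going to infinity in the $\omega_{i}$--direction on which $a^{\nu-\rho}p_{\nu,0}(\log a)$ dominates \emph{including when several leading exponents share the same $\re(\cdot)(\omega_{i})$ and only differ by imaginary parts.} The second difficulty is precisely what the paper's Lemma~\ref{lem:Terrible technical lemma} and Lemma~\ref{lem:terrible technical lemma2} are built to handle: one gets a lower bound for a finite sum of exponentials $\sum_{i}c_{i}e^{-\alpha_{i}x}x^{n_{i}}$ by integrating $\bigl|\sum_{i}c_{i}e^{-\alpha_{i}x}\bigr|^{2}$ over a long window so that the diagonal terms $\sum_{i}|c_{i}|^{2}$ beat the oscillating cross terms, then lift this to the several-variable setting on $K\times(0,\infty)^{r-1}\times K$. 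As stated, your proposal correctly identifies where the work is and is compatible with the paper's machinery, but it is a sketch of Knapp's proof rather than a proof; supplying those two lemmas (or their equivalents) is what would close the gap, and the ``moreover'' clause of Lemma~\ref{lem:Terrible technical lemma} is what gives the uniformity over $(k,k')\in K\times K$ that you invoke informally when saying the bounds apply ``uniformly'' over the $K$-orbit.
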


Note that the second theorem implies that 
\[
p(\pi)=\min\left\{ p:\forall\nu\in\mathcal{F},1\le i\le r,\,\re\nu(\omega_{i})\le(1-\frac{2}{p})\rho(\omega_{i})\right\},
\]
and that some matrix coefficient is not in $L^{p(\pi)}(g)$.

To state the main theorem about leading exponents, let us set some
notations. Assume that $0\ne v\in V$ is $K$-finite, and let $E_{0}\colon V\to U_{0}$
be a projection onto a finite dimensional $K$-invariant subspace
$U_{0}\subset V$ such that $v\in U_{0}$. We define $F\colon A_+\to\End_{\C}(U_{0})$
by $F(a)=E_{0}\pi(a)E_{0}$.

We denote by $\mathcal{H}_{\End U_{0}}$ the set of holomorphic
functions $f\colon D^{r}\to\End (U_{0})$, where $D=\left\{ z\in\C:\left|z\right|<1\right\} $
is the open unit ball. Each such function has a convergent multiple
power series, which is absolutely and uniformly convergent on compact
subsets of $D^{r}$.

As before, we identify $a\in A_{+}\subset\a$ with coordinates $(x_1,...,x_{r})$,
$x_{i}\ge0$ by $(x_1,...,x_{r})\to\sum_{i=1}^{r}x_{i}\omega_{i}$.

We say that $\nu,\nu'\in\a_{\C}^{*}$ are integrally equivalent
if their difference $\nu-\nu'$ in an integral combination of simple
roots. If the difference is a \emph{non-negative} integral combination of simple roots, we write $\nu'\le\nu$.
\begin{thm}[{\citep[Theorem 8.32]{knapp2016representation}}]
\label{thm:Leading exponents expansion}There exist $n_{0}\in\N$ and a finite set $\mathcal{F}'$ satisfying:
\begin{enumerate}
    \item $\mathcal{F}\subset\mathcal{F}'$.
    \item Each $\nu'\in\mathcal{F}'$ satisfies $\nu'\le\nu$ for some $\nu\in\mathcal{F}$.
    \item It holds that for $x_{1}>0,\dots,x_{r}>0$, 
\begin{equation}
F(a)=F(x_1,...,x_{r})=\sum_{\nu\in\mathcal{F}'}\sum_{1\le n_{1}\le n_0\dots1\le n_{r}\le n_0}G_{\nu,n_1,\dots,n_{r}}(x_1,...,x_{r})e^{(\nu-\rho)(x_1,...,x_{r})}x_{1}^{n_{1}}\cdot\dots\cdot x_{r}^{n_{r}},\label{eq:Leading coefficient expression}
\end{equation}
such that for $\nu\in\mathcal{F}'$, $1\le n_{i}\le n_{0}$, $G_{\nu,n_1,...,n_{r}}\colon (0,\infty)^{r}\to\End_{\C}(U_{0})$
are functions given by $G_{\nu,n_1,...,n_{r}}(x_1,...,x_{r})=f_{\nu,n_1,...,n_{r}}(e^{-x_{1}},...,e^{-x_{r}})$,
where $f_{\nu,n_1,...,n_{1}}\in\mathcal{H}_{\End U_{0}}$.

Moreover, if $f_{\nu,n_1,...,n_{r}}\ne0$ then $f_{\nu,n_1,...,n_{r}}(0,\dots,0)\ne0$
and for each $\nu\in\mathcal{F}'$ there exist $n_1,\dots,n_{r}$
with $f_{\nu,n_1,...,n_{r}}\ne0$.

\end{enumerate}
\end{thm}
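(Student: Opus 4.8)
The statement is Theorem~8.32 of \citep{knapp2016representation}, part of the Harish-Chandra--Casselman--Milicic theory of asymptotics of matrix coefficients (\citep{casselman1982asymptotic}); the plan is to recall the structure of its proof rather than reprove it in full. One exhibits a holonomic system of linear differential equations satisfied by $F$, shows that this system has a regular singularity along the corner at infinity in the positive chamber, and then applies the several-variable Frobenius method. To produce the equations: since $\pi$ is irreducible and admissible, $V$ is a finitely generated admissible $(\mathfrak{g},K)$-module, hence is annihilated by an ideal of finite codimension in the center $Z(\mathfrak{g})$ of $U(\mathfrak{g})$ (in particular the Casimir acts locally finitely), and $U(\mathfrak{k})$ acts locally finitely on the finite-dimensional space $U_{0}$. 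Using a Poincar\'e--Birkhoff--Witt basis adapted to the Iwasawa decomposition $\mathfrak{g}=\mathfrak{k}\oplus\a\oplus\mathfrak{n}$, I would rewrite the action of $Z(\mathfrak{g})$ --- in particular the Casimir, together with enough of $U(\mathfrak{k})$ to localize in each coordinate --- on $a\mapsto E_{0}\pi(a)E_{0}$: commuting the $\mathfrak{n}$-part past $a\in A_{+}$ multiplies root vectors by monomials in the variables $t_{i}:=e^{-x_{i}}$, so these terms become holomorphic near $t=0$, whereas the $\mathfrak{k}$-part returns to $U_{0}$ after applying $E_{0}$. Assembling the resulting relations shows that $F$ satisfies a matrix system $\Delta F=0$ with $\Delta$ analytic in $t\in D^{r}$ which, after clearing denominators, has leading part in each variable an Euler operator $(t_{i}\partial_{t_{i}})^{m}$, i.e.\ a regular singularity at $t=0$.

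I would then solve this system near the corner, which is the content of the several-variable Fuchs--Frobenius theory (the appendix of \citep{casselman1982asymptotic}, or Knapp's Chapter~VIII): the indicial equation in each $t_{i}$ has finitely many roots, determined up to integral shifts by the infinitesimal character, and intersecting the possibilities over $i$ yields a finite set of exponents, from which one selects representatives $\mathcal{F}'\subset\a_{\C}^{*}$ so that no two differ by a nonzero element of $\sum_{i}\Z_{\ge0}\alpha_{i}$, the remaining downward translates being absorbed into the Taylor expansions in $t$. Solutions then have the asserted shape, with holomorphic coefficients $f_{\nu,n_{1},\dots,n_{r}}\in\mathcal{H}_{\End U_{0}}$ and polynomial factors $x_{1}^{n_{1}}\cdots x_{r}^{n_{r}}$ of degree at most some $n_{0}$ arising from repeated indicial roots; finiteness of $\mathcal{F}'$ and boundedness of $n_{0}$ both follow from holonomicity together with the fact that the indicial roots lie in a fixed finite set. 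Finally, $\mathcal{F}\subseteq\mathcal{F}'$ is defined as the set of exponents maximal for $\le$; that every $\nu'\in\mathcal{F}'$ satisfies $\nu'\le\nu$ for some $\nu\in\mathcal{F}$ holds by construction, and the normalization $f_{\nu,n_{1},\dots,n_{r}}(0,\dots,0)\ne0$ whenever $f_{\nu,n_{1},\dots,n_{r}}\ne0$ is arranged by replacing $\nu$ with a lower exponent whenever the constant term vanishes (which shifts only along $\sum_{i}\Z_{\ge0}\alpha_{i}$, so the exponent set stays finite).

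The step I expect to be the main obstacle is exactly the regular-singularity statement for the radial system together with the careful bookkeeping of integral shifts in the several-variable Frobenius expansion: this is the technical core of Chapter~VIII of \citep{knapp2016representation} (equivalently of \citep{casselman1982asymptotic}), and since a complete account would essentially reproduce that material, in practice I would simply cite it.
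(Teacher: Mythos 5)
Your proposal is correct and ultimately takes the same route as the paper: both reduce to a citation of Knapp's Theorem~8.32 and then renormalize the expansion so that $f_{\nu,n_1,\dots,n_r}(0,\dots,0)\neq 0$, by absorbing vanishing constant terms into shifts of $\nu$ along nonnegative integral combinations of simple roots. The difference is one of emphasis: you spend most of your space sketching the regular-singularity/Frobenius proof of the cited theorem, which the paper does not attempt, whereas the short argument the paper actually carries out --- deriving conditions (1) and (2) from the unique power-series decomposition $F=\sum_\nu F_{\nu-\rho}$ (Knapp's Equation~8.52) together with the characterization of $\mathcal{F}$ as the $\le$-maximal exponents with $F_{\nu-\rho}\neq 0$ --- is compressed into a single clause at the end of your proposal and would benefit from being spelled out.
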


\begin{proof}
The theorem follows from \citep[Theorem 8.32]{knapp2016representation}
and the discussion following it. Let us explain: by \citep[Theorem 8.32]{knapp2016representation},
$F$ has a decomposition like Equation~\eqref{eq:Leading coefficient expression}
for a certain set $\mathcal{F}'$. If $f_{\nu,n_1,...,n_{r}}(0,...,0)= 0$, we may use its power series expansion to replace $\nu$ by other elements in $\a_{\C}^{*}$, which are integrally equivalent to it. 

It remains to prove that $\mathcal{F}\subset\mathcal{F'}$, and that each $\nu' \in \mathcal{F}'$ has $\nu \in \mathcal{F}$ with $\nu'\le \nu$.
By power series expansion, we have a \emph{unique}
decomposition (see \citep[Equation 8.52]{knapp2016representation})
\begin{align*}
F(x_1,...,x_{r}) & =\sum_{\nu\in\a_{\C}^{*}}F_{\nu-\rho}(x_1,\dots,x_{r})\\
F_{\nu-\rho}(x_1,\dots,x_{r}) & =\sum_{1\le n_{1}\le n_0,\dots,1\le n_{r}\le n_0}c_{\nu,n_1,...,n_r}e^{(\nu-\rho)(x_1,...,x_{r})}x_{1}^{n_{1}}\cdot\dots\cdot x_{r}^{n_{r}},
\end{align*}
for some $c_{\nu,n_1,...,n_r} \in \End U_0$. Each term $F_{\nu-\rho}$ can be calculated from $G_{\nu',n_1,\dots,n_{r}}$, for $\nu\le \nu'$.
The set of leading exponent is the set of maximal elements relative to $\le$ for $\nu\in\mathcal{F}$ with $F_{\nu-\rho}\ne0$. This immediately implies that $\mathcal{F}\subset\mathcal{F}'$. Moreover, each $\nu''\in\a_{\C}^{*}$ with $F_{\nu''-\rho}\ne0$ satisfies $\nu''\le\nu$ for some $\nu\in\mathcal{F}$,
which says that each $\nu'\in\mathcal{F}'$ satisfies $\nu'\le\nu$
for some $\nu\in\mathcal{F}$.
\end{proof}
We remark that Theorem~\ref{thm:Leading exponents expansion} does
not directly imply the upper bound given in Theorem~\ref{thm:lead_coeff_upper_bound},
since it does not give bounds for $x_{i}\to0$. Such bounds are available using asymptotic expansion near the walls (\citep[Chapter VIII, Section 12]{knapp2016representation}).

\subsection{Some Technical Lemmas}
\begin{lem}[{Compare \citep[Lemma B.24]{knapp2016representation}}]
\label{lem:Terrible technical lemma}Let $f\colon \R\to\C$ be a function
defined as $f(x)=e^{\beta x}\sum_{i=1}^{k}c_{i}e^{-\alpha_{i}x}x^{n_{i}}$
with $\alpha_{i},c_{i}\in\C$, $\re(\alpha_{i})\ge0$,
$\beta\in\R$, $\beta\ge0$, $n_{i}\in\N$. 

Assume that there is $0\le i\le k$
such that $\re(\alpha_{i})=0,\,c_{i}\ne0$ and let
\[
n_{0}=\max_{1\le i\le k}\left\{ n_{i}:\re(\alpha_{i})=0\text{ and }c_{i}\ne0\right\} .
\]
Then for $T$ large enough, 
\[
\intop_{0}^{T}\left|f(x)\right|dx\gg_{f}e^{\beta T}T^{n_{0}}.
\]
Moreover, if we assume that $n_{0}=\max_{1\le i\le k}\left\{ n_{i}:\re(\alpha_{i})=0\right\} $
then the underlying lower bound on $T$ and the constants are continuous
for small perturbations of the $c_{i}$.
\end{lem}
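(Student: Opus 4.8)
The plan is to isolate the genuinely oscillatory, slowly-growing part of $f$ and to estimate its $L^{1}$-mass over a window of \emph{fixed} length $L$ placed at the right end of $[0,T]$, where the weight $e^{\beta x}x^{n_{0}}$ is largest. First I would split the sum according to $\re(\alpha_{i})$ and write $f(x)=e^{\beta x}\bigl(P(x)+r(x)\bigr)$, where $P(x)=\sum_{\re\alpha_{i}=0}c_{i}e^{-i\operatorname{Im}(\alpha_{i})x}x^{n_{i}}$ and $r(x)=\sum_{\re\alpha_{i}>0}c_{i}e^{-\alpha_{i}x}x^{n_{i}}$. The remainder is harmless: $|r(x)|\le C_{0}x^{N}e^{-cx}$ for $x\ge1$, with $N=\max_{i}n_{i}$ and $c=\min_{\re\alpha_{i}>0}\re\alpha_{i}>0$, so over any window $[T-L,T]$ it contributes $O\bigl(T^{N}e^{(\beta-c)T}\bigr)=o\bigl(e^{\beta T}T^{n_{0}}\bigr)$. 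I would then group $P$ by polynomial degree, $P(x)=\sum_{n=0}^{n_{0}}x^{n}P_{n}(x)$ with $P_{n}(x)=\sum_{\re\alpha_{i}=0,\,n_{i}=n}c_{i}e^{-i\operatorname{Im}(\alpha_{i})x}$ a trigonometric polynomial; by hypothesis (after combining any repeated frequencies) $P_{n_{0}}\not\equiv0$.

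The core of the argument is the following elementary lemma on trigonometric polynomials, which replaces the compactness/analyticity step of \citep[Lemma B.24]{knapp2016representation}: if $Q(x)=\sum_{j}b_{j}e^{i\mu_{j}x}$ with distinct real $\mu_{j}$ and $Q\not\equiv0$, then there are $L=L(Q)>0$ and $c_{1}=c_{1}(Q)>0$ with $\int_{a}^{a+L}|Q(x)|\,dx\ge c_{1}$ for all $a\in\R$. To see this, expand
\[
\int_{a}^{a+L}|Q(x)|^{2}\,dx=L\sum_{j}|b_{j}|^{2}+\sum_{j\ne k}b_{j}\overline{b_{k}}\,\frac{e^{i(\mu_{j}-\mu_{k})(a+L)}-e^{i(\mu_{j}-\mu_{k})a}}{i(\mu_{j}-\mu_{k})} .
\]
The off-diagonal sum is bounded by $\sum_{j\ne k}2|b_{j}b_{k}|/|\mu_{j}-\mu_{k}|$, a constant independent of $L$, while the diagonal term grows linearly in $L$; hence for $L$ large enough $\int_{a}^{a+L}|Q|^{2}\ge\tfrac{L}{2}\sum_{j}|b_{j}|^{2}$ for every $a$, and then $\int_{a}^{a+L}|Q|\ge\|Q\|_{\infty}^{-1}\int_{a}^{a+L}|Q|^{2}\ge L\sum_{j}|b_{j}|^{2}/\bigl(2\|Q\|_{\infty}\bigr)=:c_{1}$.

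Applying the lemma to $Q=P_{n_{0}}$ gives $L,c_{1}$. For $x\ge1$ one has $|P(x)|\ge x^{n_{0}}|P_{n_{0}}(x)|-C_{1}x^{n_{0}-1}$ with $C_{1}=\sum_{n<n_{0}}\|P_{n}\|_{\infty}$, so integrating over $[T-L,T]$ and using the lemma with $a=T-L$ yields $\int_{T-L}^{T}|P(x)|\,dx\ge(T-L)^{n_{0}}c_{1}-C_{1}LT^{n_{0}-1}\ge\tfrac{c_{1}}{2}(T-L)^{n_{0}}$ for $T$ large. Since $e^{\beta x}\ge e^{\beta(T-L)}$ on $[T-L,T]$ and $|f|\ge e^{\beta x}(|P|-|r|)$, this gives $\int_{0}^{T}|f|\ge\int_{T-L}^{T}|f|\ge e^{\beta(T-L)}\bigl(\tfrac{c_{1}}{2}(T-L)^{n_{0}}-C_{0}'T^{N}e^{-c(T-L)}\bigr)\gg e^{\beta T}T^{n_{0}}$ once $T$ is large (recall $L$ is fixed), which is the assertion; note this also covers the degenerate case $\beta=n_{0}=0$, where the claim is just $\int_{0}^{T}|f|\gg1$.

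For the ``moreover'' part, the assumption $n_{0}=\max\{n_{i}:\re\alpha_{i}=0\}$ guarantees that a small perturbation of the $c_{i}$ changes neither the frequencies $\operatorname{Im}\alpha_{i}$, nor the exponents $n_{i}$, nor $c$, nor $\beta$; the quantities $\sum_{j}|b_{j}|^{2}$, $\|P_{n_{0}}\|_{\infty}$, $C_{1}$, $C_{0}$ vary continuously and, using $P_{n_{0}}\not\equiv0$ at the base point, stay in the non-degenerate range on a small enough neighbourhood, so $L$, $c_{1}$, the threshold $T_{0}$ and the implied constant can all be chosen uniformly. I expect the only real obstacle to be the uniform-in-$a$ lower bound of the trigonometric-polynomial lemma — the trick being that the off-diagonal contribution to $\int_{a}^{a+L}|Q|^{2}$ is bounded \emph{independently} of $L$; everything else is bookkeeping about orders of magnitude.
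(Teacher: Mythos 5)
Your proof is correct and takes essentially the same route as the paper: both localize to a window $[T-L,T]$ of fixed length, split off the purely oscillatory terms of top degree $n_0$, and use the comparison $\int|Q|\ge\|Q\|_\infty^{-1}\int|Q|^2$ together with the observation that the off-diagonal contributions to $\int_a^{a+L}|Q|^2$ are bounded uniformly in $L$. Packaging the key step as a standalone uniform-in-$a$ lemma for nonvanishing trigonometric polynomials is a slightly cleaner way to organize the same computation that the paper carries out in situ with $f_0$.
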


\begin{rem}
The condition on $n_0$ in the "moreover part" comes to deal with the case that after a small perturbation, some $c_i=0$ becomes non-zero. 
\end{rem}

\begin{proof}
During the proof, $\gg$ may depend on $f$. Fix $M$ large enough,
depending on $f$, to be chosen later. Then 
\[
\intop_{0}^{T}\left|f(x)\right|dx\ge\intop_{T-M}^{T}\left|f(x)\right|dx.
\]

Let $\alpha_{0}=\min_{1\le i\le k}\left\{ \re(\alpha_{i}):\re(\alpha_{i})\ne0\right\} $,
$N=\max_{1\le i\le k}\left\{ n_{i}\right\} $.

After re-arranging the summands, write $f=f_{0}+f_{1}$, with $f_{0}(x)=e^{\beta x}\sum_{i=1}^{l}c_{i}e^{-\alpha_{i}x}x^{n_{0}}$,
$f_{1}=e^{\beta x}\sum_{i=l+1}^{k}c_{i}e^{-\alpha_{i}x}x^{n_{i}}$,
where the summands $1\le i\le l$ contain all factors with $\re(\alpha_{i})=0$ and $n_{i}=n_{0}$. Moreover, we may assume that all the $\alpha_i$, $1\le i \le l$ are different.

Then for $\epsilon>0$ small enough and $T$ large enough, 
\[
\intop_{T-M}^{T}\left|f_{1}(x)\right|dx\le M\max_{T-M\le x\le T}\left\{ f_{1}(x)\right\} \ll((e^{(\beta-\alpha_0)(T-M)}+e^{(\beta-\alpha_0)T})T^N+e^{\beta T}T^{n_{0}-1})=o(e^{\beta T}T^{n_{0}}).
\]
Now, 
\begin{align*}
\intop_{T-M}^{T}\left|f_{0}(x)\right|dx & =\intop_{T-M}^{T}x^{n_{0}}e^{\beta x}\left|\sum_{i=1}^{l}c_{i}e^{-\alpha_{i}x}\right|dx\\
 & \ge(T-M)^{n_{0}}e^{\beta(T-M)}\intop_{T-M}^{T}\left|\sum_{i=1}^{l}c_{i}e^{-\alpha_{i}x}\right|dx
\end{align*}
Note that $\left|\sum_{i=1}^{l}c_{i}e^{\alpha_{i}x}\right|\gg\left|\sum_{i=1}^{l}c_{i}e^{-\alpha_{i}x}\right|^{2}=\sum_{i=1}^{l}\left|c_{i}\right|^{2}+\sum_{1\le i\ne j\le l}c_{i}\overline{c_{j}}e^{(\alpha_{i}-\alpha_{j})x}$,
since this value is bounded. 
\begin{align*}
\intop_{T-M}^{T}\left|\sum_{i=1}^{l}c_{i}e^{\alpha_{i}x}\right|dx & \gg\intop_{T-M}^{T}(\sum_{i=1}^{l}\left|c_{i}\right|^{2}+\sum_{1\le i\ne j\le l}c_{i}\overline{c_{j}}e^{(\alpha_{i}-\alpha_{j})x})dx\\
 & \ge M(\sum_{i=1}^{l}\left|c_{i}\right|^{2})-\sum_{1\le i\ne j\le l}\left|\frac{c_{i}\overline{c_{j}}}{\alpha_{i}-\alpha_{j}}\right|\\
 & \gg M-O(1)
\end{align*}
For $M$ large enough the last value is $\gg1$, so 
\[
\intop_{T-M}^{T}\left|f_{0}(x)\right|dx\gg e^{\beta T}T^{n_{0}}
\]
and 
\[
\intop_{0}^{T}\left|f(x)\right|dx\ge\intop_{T-M}^{T}\left|f(x)\right|dx\ge\intop_{T-M}^{T}\left|f_{0}(x)\right|dx-\intop_{T-M}^{T}\left|f_{1}(x)\right|dx\gg e^{\beta T}T^{n_{0}}.
\]
For the "moreover part" one follows the proof carefully and notices that it remains true for small perturbations in the $c_i$. 
\end{proof}
\begin{rem}
For $\beta>0$ our lower bound agrees with a similar upper bound. For
$\beta=0$ it is no longer true, but a similar proof will give the
right lower bound $T^{n_{0}+1}$.
\end{rem}

\begin{lem}
\label{lem:terrible technical lemma2}Let $M$ be an open subset of
a smooth Riemannian manifold (e.g., a Lie group), $F:M\times[R,\infty)$ a function defined by 
\begin{equation}
F(m,x)=\sum_{i=1}^{k}e^{s_{i}x}x^{n_{i}}F_{i}(m,x),\label{eq:terrible lemma 2}
\end{equation}
such that: $s_{i}\in\C$, $n_{i}\in\N$, and $F_{i}(m,x)=f_{i}(m,e^{-x})$
for some function $f_{i}(m,z)$ real analytic on $M\times\overline{D}_{e^{-R}}$,
where $\overline{D}_{r}\subset\C$ is the closed ball of radius $r$,
and holomorphic in the second variable. Assume also that for each
$1\le i\le k$ there is $m\in M$ with $f_{i}(m,0)\ne0$.
Let $s_{0}=\max_{1\le i\le k}\re s_i$ and assume that $s_{0}\ge0$.

Then for $T$ large enough
\[
\intop_{M}\intop_{R}^{T}\left|F(s,r)\right|dr\gg e^{s_{0}T}.
\]
\end{lem}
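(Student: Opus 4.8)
The plan is to reduce Lemma~\ref{lem:terrible technical lemma2} to the one-variable Lemma~\ref{lem:Terrible technical lemma}, by expanding each $f_i$ in powers of $z=e^{-x}$, isolating a genuinely leading term, and then applying the perturbation-stability clause of that lemma on a small neighbourhood in $M$. First I would write, for each $i$, $f_i(m,z)=\sum_{j\ge 0}c_{i,j}(m)z^j$ with $c_{i,j}\colon M\to\C$ real analytic and the series uniformly convergent on $\overline U\times\overline D_{e^{-R}}$ for every relatively compact open $U\Subset M$; put $s_0=\max_i\re s_i\ (\ge 0)$ and $d=\max_i n_i$. Truncating at a large order $N$ gives
\[
F(m,x)=P_N(m,x)+E_N(m,x),\qquad P_N(m,x)=e^{s_0x}\sum_{i}\sum_{j=0}^{N}c_{i,j}(m)\,x^{n_i}e^{-\alpha_{i,j}x},
\]
where $\alpha_{i,j}:=s_0+j-s_i$ satisfies $\re\alpha_{i,j}\ge 0$, with equality exactly when $j=0$ and $\re s_i=s_0$; a Cauchy estimate on the tail of each $f_i$ gives $|E_N(m,x)|\ll_{U}x^{d}e^{s_0x}e^{-(N+1)(x-R)}$ for $m\in\overline U$, hence $\int_R^T|E_N(m,x)|\,dx\le\varepsilon_N$ for all $T\ge R$, with $\varepsilon_N\to 0$ as $N\to\infty$.

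Next I would pick a non-degenerate point. Grouping terms by the distinct values $\sigma$ of the $s_i$, set $N_\sigma=\max\{n_i:s_i=\sigma\}$ and $P_\sigma(m)=\sum_{i:\,s_i=\sigma,\ n_i=N_\sigma}c_{i,0}(m)$, a real-analytic function on $M$. A preliminary normalization — combine terms with a common $s_i$, and whenever the associated $P_\sigma$ vanishes identically extract a factor $e^{-x}$ from that bundle and re-expand — terminates, leaves $F$ itself unchanged, and does not lower $s_0$ (if it did, $F(m,\cdot)$ would grow strictly slower than $e^{s_0x}$ for every $m$, so the asserted bound could not hold; in the situation of Theorem~\ref{thm:Leading exponents expansion} to which this lemma is applied, this non-degeneracy is already built into the canonical form). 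So I may assume that $P_\sigma\not\equiv 0$ for every $\sigma$ occurring with $\re\sigma=s_0$. Let $N^*=\max\{n_i:\re s_i=s_0\}$, fix $\sigma$ with $\re\sigma=s_0$ and $N_\sigma=N^*$, choose $m_0$ with $P_\sigma(m_0)\ne0$, and a relatively compact open $U\ni m_0$ on which $|P_\sigma|\ge\delta>0$ and all $|c_{i,j}|$ with $j\le N$ are bounded.

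Then I would apply Lemma~\ref{lem:Terrible technical lemma} to $x\mapsto P_N(m,x)$, for each fixed $m\in U$, with $\beta=s_0$. The terms with $\re\alpha_{i,j}=0$ are precisely the $(i,0)$ with $\re s_i=s_0$; distinct $\sigma$ give distinct exponents $\alpha_{i,0}$, their top $x$-power is $N^*$, and the coefficient at that power for our $\sigma$ is $P_\sigma(m)$, bounded away from $0$ on $U$. Thus in the notation of that lemma $n_0=N^*$, the hypotheses hold, and $n_0$ also equals the maximum of the $n_i$ over all $\re\alpha=0$ terms, so the ``moreover'' clause applies. The extra truncation terms ($j\ge1$) have $\re\alpha_{i,j}\ge1$ and enter only the lower-order part, affecting how large $T$ must be but not the constants; together with continuity of $m\mapsto(c_{i,j}(m))$ and the uniform bounds on $U$ this produces $T_0$ and $c>0$, independent of $m\in U$, with $\int_R^T|P_N(m,x)|\,dx\ge c\,e^{s_0T}$ for $T\ge T_0$. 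Taking $N$ large enough that $\varepsilon_N<c/2$ and enlarging $T_0$ gives $\int_R^T|F(m,x)|\,dx\ge\tfrac{c}{2}e^{s_0T}$ for all $m\in U$ and $T\ge T_0$; integrating over $U$ (which has positive measure) yields $\int_M\int_R^T|F(m,x)|\,dx\,dm\ge\tfrac{c}{2}\,\mathrm{vol}(U)\,e^{s_0T}\gg e^{s_0T}$, as required.

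The main obstacle is the non-degeneracy step: a priori the top-order coefficient could vanish identically on $M$ through cancellation between distinct terms sharing an exponent $s_i$, and one must argue — via the $e^{-x}$-extraction normalization above, or by invoking the canonical leading-exponents expansion of Theorem~\ref{thm:Leading exponents expansion} — that the genuine growth rate $e^{s_0x}$ is exhibited by a coefficient function that is not identically zero. Once a good point $m_0$ is fixed, the rest is routine bookkeeping around Lemma~\ref{lem:Terrible technical lemma}.
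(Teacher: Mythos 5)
Your argument is essentially the paper's: isolate the degree-zero Taylor coefficients $c_i(m)=f_i(m,0)$, treat the remainder as an $e^{-x}$-smaller error, pick $m_0$ with a surviving top-order coefficient, and apply Lemma~\ref{lem:Terrible technical lemma} together with its perturbation-stability clause over a small neighbourhood $M_0$. One structural difference is that the truncation to order $N$ with its Cauchy tail $\varepsilon_N\to0$ is more machinery than needed: the paper writes $f_i(m,z)=c_i(m)+g_i(m,z)z$, observes that $g_i(m,e^{-r})$ is bounded on any relatively compact piece of $M\times[R,\infty)$, and so the entire remainder $\sum_i e^{(s_i-1)r}r^{n_i}g_i(m,e^{-r})$ has exponents of real part $\le s_0-1$ and is $o(e^{s_0T})$ after integration, with no balancing of $N$ against the main-term constant required.

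The more substantive issue is the ``normalization'' step, which is not a valid proof move. The justification --- ``if the effective growth rate dropped the asserted bound could not hold'' --- invokes the conclusion you are trying to establish. And the hypothesis as written (separate non-vanishing of each $f_i(\cdot,0)$) genuinely does \emph{not} rule out two indices $i\ne j$ with $(s_i,n_i)=(s_j,n_j)$ and $f_i(\cdot,0)\equiv-f_j(\cdot,0)$; in that configuration $F$ grows like $e^{(s_0-1)x}$ and the lemma is simply false. You should be aware, though, that the paper's own proof carries the same unstated assumption: it fixes $m_0$ with $f_1(m_0,0)\ne0$ and never checks that no other $f_j$ with the same $(s_j,n_j)$ cancels it. The clean repair is a strengthened hypothesis rather than a manipulation: require the pairs $(s_i,n_i)$ to be pairwise distinct (equivalently, merge coinciding terms first and impose the non-vanishing condition on the merged coefficients), after which each $P_\sigma$ is a single $c_{i,0}$, there is nothing to normalize, and both your argument and the paper's go through. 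As you note at the end, this distinctness is automatic in the sole application of the lemma via the uniqueness in Theorem~\ref{thm:Leading exponents expansion}; that observation should be promoted to a hypothesis of the lemma, not left as a circular appeal inside its proof.
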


\begin{proof}
By restriction to a compact subset, we may assume that the closure of $M$ is compact. 

Decompose $f_{i}(m,z)=c(m)+g_{i}(m,z)z$,
where $g_{i}(m,z)$ in also holomorphic in $\left|z\right|\le e^{-R}$.
Then 
\[
F_{i}(m,r)=c_{i}(m)+e^{-r}G_{i}(m,r),
\]
where $G_{i}(m,r)$ is bounded for $m,r\in M\times[R,\infty)$.
Therefore, 
\begin{equation}
F(m,r)=\sum_{i=1}^{k}e^{s_{i}r}r^{n_{i}}c_{i}(m)+\sum_{i=1}^{k}e^{(s_{i}-1)r}r^{n_{i}}G_{i}(m,r).\label{eq:terrible lemma1}
\end{equation}

Without loss of generality, $\re(s_{1})=s_{0}$ and
$n_{1}=\max\left\{ n_{i}:\re(s_{i})=s_{0}\right\} $.
Let $m_{0}\in M$ be a point with $f_{1}(m_{0},0)\ne0$.
Choose a small enough neighborhood $M_{0}\subset M$ of $m_{0}$.
We have 
\[
\intop_{M}\intop_{R}^{T}\left|F(m,r)\right| dr dm 
\gg\intop_{M_{0}}\intop_{R}^{T} \left|F(m,r)\right| dr dm.
\]
Since $G_{i}(m,r)$ is bounded on $M_0$, for $T$ large enough the
second summand of Equation~\eqref{eq:terrible lemma1} satisfies
\[
\intop_{M_{0}}\intop_{R}^{T}\left|\sum_{i=1}^{k}e^{(s_{i}-1)r}r^{n_{i}}G_{i}(m,r)\right|dr=o(e^{s_{0}T}).
\]
As for the first summand of Equation~\eqref{eq:terrible lemma1}, by
Lemma~\ref{lem:Terrible technical lemma} and
the fact that $c_{1}(m_{0})\ne0$, it holds that 
\[
\intop_{M_{0}}\intop_{R}^{T}\left|\sum_{i=1}^{k}e^{s_{i}r}r^{n_{i}}c_{i}(m)\right|dr\gg T^{n_{1}}e^{s_{0}T}
\]
and we are done.
\end{proof}
We can finally prove Equation~\eqref{eq:Simplified lower bound}.
\begin{proof}
Recall that we chose $v\in V$, $\n v=1$ to span a representation
$\tau$ of $K$. We choose in Theorem~\ref{thm:Leading exponents expansion}
$U_{0}=\spann Kv$.

Using Theorem~\ref{thm:lead_coeff_Lp}, choose $\nu_{0}\in\mathcal{F}'$
and $1\le i\le r$ such that $\re\nu_{0}(\omega_{i})=(1-\frac{2}{p(\pi)})\rho(\omega_{i})$.
Without loss of generality $i=1$. Moreover, we assume that among all
$\nu\in\mathcal{F}'$ satisfying this condition, $\nu_{0}$ has maximal
$0\le N_{1}\le n_{0}$ such that for some constants $z_{2},...,z_{l}\ne0$,
$0\le n_{2},...,n_{r}\le n_{0}$,$\lim_{z_{1}\to0}f_{\nu,N_1,...,n_{k}}(z_1,z_{2},\dots,z_{r})\ne0$,
where $f_{\nu,N_1,...,n_{k}}(z_1,z_{2},\dots,z_{r})$
is taken from Theorem~\ref{thm:Leading exponents expansion} (notice that $\nu_{0}$ may belong to $\mathcal{F}'-\mathcal{F}$), so it may not be a \emph{leading exponent}).

By Theorem~\ref{thm:Leading exponents expansion}, if we let $M=K\times(0,\infty)^{r-1}\times K$,
we identify $m\in M$ with $m=(k_1,x_{2},...,x_{r},k_{2})$,
and let $G:M\times(0,\infty)\to\infty$ be 
\[
G(m,x_{1})=\left\langle v,\pi(k_{1}a(x_1,....,x_{r})k_{2})v\right\rangle ,
\]
then $G(m,x_{1})$ has the form of Equation~\eqref{eq:terrible lemma 2},
with $s_{1}=(\nu_{0}-\rho)(\omega_{1})=-\frac{2}{p(\pi)}\rho(\omega_{1})$,
$n_{1}=N_{1}$. Note that $G^{2}(m,x_{1})$ also has this
form, with $s_{1}=-\frac{4}{p(\pi)}\rho(\omega_{1})$.
We Let 
\[
F(m,x_{1})=e^{2\rho(\omega_{1})(1+p(\pi)^{-1})x_{1}}G^{2}(m,x_{1}),
\]
and $F$ also has a similar form, with $s_{1}=2\rho(\omega_{1})(1-p(\pi)^{-1})$.
Let $m_{0}=(k_1,x_{2},...,x_{r},k_{2})$ be a point
where the condition of Lemma~\ref{lem:terrible technical lemma2}
holds. By Lemma~\ref{lem:terrible technical lemma2}, for a small
neighborhood $M_{0}$ of $m_{0}$, it holds for $d_{0}$ large enough
and some constant $C$
\[
\intop_{M_{0}}\intop_{1}^{(d_{0}-C)/2\rho(\omega_{1})}\left|F(m,x_{1})\right|dx_{1}dm\gg e^{d_{0}(1-p(\pi)^{-1})}
\]
Finally, for $M_{0},\kappa$ small enough, for each $m=(k_1,x_{2},...,x_{r},,k_{2})\in M_{0}$
and $1\le x_{1}\le(d_{0}-C)/2\rho(\omega_{1})$,
it holds that $a=(x_1,...,x_{r})\in A_{+}^{\kappa}$
and $l(a)\le d_{0}$. Therefore, 
\begin{align*}
&\intop_{K}\intop_{K}\intop_{a\in A_{+}^{\kappa},l(a)\le d_{0}}q^{2\rho(a)(1-p(\pi)^{-1})}\left|\left\langle v,\pi(kak')v\right\rangle \right|^{2}dkdk'da \\
&\gg\intop_{M_{0}}\intop_{0}^{(d_{0}-1)/2\rho(\omega_{1})}\left|F(m,x_{1})\right|dx_{1}dm\\
&\gg e^{d_{0}(1-p(\pi)^{-1})}
\end{align*}
as needed in Equation~\eqref{eq:Simplified lower bound}.
\end{proof}

\section*{Index}

The following notations appear throughout the paper.
\begin{itemize}
\item $k$ -- a local field.
\item $q$ -- if $k$ is Archimedean $q=e$. Otherwise $q$ is the size
of the quotient field of $k$,
\item $G$ -- the $k$-rational points of a semisimple algebraic group
over $k$.
\item $\Gamma$ -- a lattice in $G$. If there is a sequence $(\Gamma_N) $
of lattices then $\Gamma_N$ is a finite index subgroup of $\Gamma_1$,
with $[\Gamma_{1}:\Gamma_N]\to\infty$.
\item $K$ -- a good maximal compact subgroup of $G$.
\item $X$ -- the locally symmetric space $\Gamma\backslash G/K$.
\item $\Pi(G)$ -- the set of equivalence classes of irreducible
unitary representations of $G$. A representation is usually denoted
by $(\pi,V)$.
\item $p(\pi)$ -- the minimal $p$ such that all $K$-finite
matrix coefficients of $(\pi,V)$ are in $L^{p+\epsilon}(G)$.
\item $\lambda$ -- an eigenvalue of the Casimir operator. Appears only
in the Archimedean case.
\item $l\colon G\to\R_{\ge0}$ -- a length function on $G$. A length is usually denoted $d_{0}$.
\item $\chi_{d_0}$ -- a smooth approximation for the characteristic
set $\left\{ g\in G:l(g)\le d_{0}\right\} $.
\item $\psi_{d_0}$ -- a smooth approximation for $q^{(d_{0}-l(g))/2}\chi_{d_0}$.
\item $b_{x_0,\delta}$ -- for $x\in\Gamma\backslash G/K$, $\delta\in\R_{>0}$.
In the non-Archimedean case it is the characteristic function of $\{x\}$.
In the Archimedean case it is a smooth approximation for the characteristic
function of a ball of radius $\delta$ around $x$.
\end{itemize}
\bibliographystyle{acm}
\bibliography{./database}

\end{document}